\newcommand*{\arxiv}[1]{\href{http://www.arxiv.org/abs/#1}{arXiv: #1}}
\numberwithin{equation}{section}
\theoremstyle{plain}
\newtheorem{theorem}[equation]{Theorem}
\newtheorem{lemma}[equation]{Lemma}
\newtheorem{proposition}[equation]{Proposition}
\newtheorem{corollary}[equation]{Corollary}
\theoremstyle{definition}
\newtheorem{definition}[equation]{Definition}
\theoremstyle{remark}
\newtheorem{remark}[equation]{Remark}
\newtheorem{example}[equation]{Example}
\DeclareMathOperator{\cspn}{\overline{span}}
\DeclareMathOperator{\spn}{span}
\DeclareMathOperator{\supp}{\mathrm{supp}}
\newcommand*{\dual}[1]{\widehat{#1}}
\newcommand*{\nb}{\nobreakdash}
\newcommand*{\Star}{\(^*\)\nobreakdash-}
\newcommand{\cA}{\mathcal{A}}
\newcommand{\A}{\mathcal{A}}
\newcommand{\cB}{\mathcal{B}}
\newcommand{\B}{\mathcal{B}}
\newcommand*{\M}{\mathcal{M}}
\newcommand*{\C}{\mathbb{C}}
\newcommand{\bk}{\mathbb{k}}
\newcommand{\bK}{\mathcal{K}}
\newcommand*{\Lb}{\mathcal{L}}
\newcommand*{\cont}{C}
\newcommand*{\contz}{\cont_0}
\newcommand*{\contc}{\cont_c}
\newcommand*{\id}{\textup{id}}
\newcommand*{\Ad}{\textup{Ad}}
\newcommand{\wot}{\operatorname{wot}}
\newcommand*{\defeq}{\mathrel{\vcentcolon=}}
\newcommand*{\congto}{\xrightarrow\sim}
\newcommand*{\ket}[1]{\lvert#1\rangle}
\newcommand*{\bra}[1]{\langle#1\rvert}
\newcommand*{\braket}[2]{\langle#1\!\mid\!#2\rangle}
\newcommand*{\sbe}{\subseteq} 
\newcommand*{\F}{\mathbb F}
\newcommand*{\cstar}{\texorpdfstring{$C^*$\nobreakdash-\hspace{0pt}}{$C^*$-}}
\newcommand*{\into}{\hookrightarrow}
\newcommand*{\onto}{\twoheadrightarrow}
\newcommand*{\red}{\mathrm{r}}
\newcommand*{\weaks}{{\operatorname{w}^*}}
\newcommand{\cspan}{\overline{\operatorname{span}}}
\newcommand*{\s}{s} 
\newcommand*{\rg}{r}
\newcommand{\alg}{\operatorname{alg}}
\begin{document}


\title[Amenability for partial actions and Fell bundles]{Amenability and approximation properties\\for partial actions and Fell bundles}
\author{Fernando Abadie}
\email{fabadie@cmat.edu.uy}
\address{Centro de Matemática\\
Facultad de Ciencias\\
Universidad de la República\\
Iguá 4225, 11400, Montevideo\\ 
Uruguay.}

\author{Alcides Buss}
\email{alcides.buss@ufsc.br}
\address{Departamento de Matem\'atica\\
 Universidade Federal de Santa Catarina\\
 88.040-900 Florian\'opolis-SC\\
 Brazil}

\author{Dami\'{a}n Ferraro}
\email{dferraro@unorte.edu.uy}
\address{Departamento de Matemática \\
y Estadística del Litoral\\
Universidad de la República \\
Rivera 1350, 50000, Salto\\
Uruguay}

\date{\today}
\subjclass[2010]{46L55 (Primary), 46L99 (Secondary)}
\keywords{Fell bundles, approximation property, amenability, group actions}
\thanks{The second named author was supported by Print/Capes, Humboldt and CNPq.}

\begin{abstract}
Building on previous papers by Anantharaman-Delaroche (AD) we introduce and study the notion of AD-amenability for partial actions and Fell bundles over discrete groups. We prove that the cross-sectional $C^*$-algebra of a Fell bundle is nuclear if and only if the underlying unit fibre is nuclear and the Fell bundle is AD-amenable. If a partial action is globalisable, then it is AD-amenable if and only if its globalisation is AD-amenable. Moreover, we prove that AD-amenability is preserved by (weak) equivalence of Fell bundles and, using a very recent idea of Ozawa and Suzuki, we show that AD-amenabity is equivalent to an approximation property introduced by Exel.
\end{abstract}

\maketitle

\tableofcontents

\section{Introduction}

In her seminal paper \cite{Anantharaman-Delaroche:Systemes} Anantharaman-Delaroche introduced a notion of amenability (which we call AD-amenability) for actions of discrete groups on $C^*$-algebras. Her definition is based on previous papers \cites{Anantharaman-Delaroche:ActionI,Anantharaman-Delaroche:ActionII} where she studies amenability for group actions on W*-algebras (i.e. von Neumann algebras). More precisely, an action $\gamma$ of a discrete group $G$ on a W*-algebra $N$ is said to be amenable in the sense of Anantharaman-Delaroche (W*AD-amenable for short) if there exists a $G$-equivariant conditional expectation $P\colon \ell^\infty(G,N)\onto N$ with respect to the diagonal $G$-action $\tilde\gamma$ on $\ell^\infty(G,N)=\ell^\infty(G)\bar\otimes N$ where $G$ acts on $\ell^\infty(G)$ by (left) translations; the map $P$ should be interpreted as a $G$-equivariant mean for the action. An action $\alpha$ of $G$ on a $C^*$-algebra $A$ is then said to be AD-amenable if the induced action $\alpha''$ on the enveloping (bidual) W*-algebra $A''$ is W*AD-amenable.

One of the main results in \cite{Anantharaman-Delaroche:Systemes} (namely Theorem~3.3) shows that equivariant means $P\colon \ell^\infty(G,N)\to N$ can be approximated with respect to the pointwise weak* (i.e. ultraweak) topology by using certain nets of functions from $G$ to $Z(N)$.
One precise form of such approximation (that will be specially important to us) is given by a net of functions of finite support $\{a_i\colon G\to Z(N)\}_{i\in I}$ which is bounded when viewed as a net of the Hilbert $N$-module $\ell^2(G,N)$ and satisfies
\begin{equation}\label{eq:def-W*AD-amenable}
\braket{a_i}{\tilde\gamma_g(a_i)}_2=\sum_{h\in G}a_i(h)^*\gamma_g(a_i(g^{-1}h))\to 1
\end{equation}
with respect to the weak*-topology for all $g\in G$. This condition indeed characterises amenability and shows, among other things, that $\gamma$ is W*AD-amenable if and only if so is its restriction to the centre $Z(N)$. Moreover, W*AD-amenability behaves well with respect to injectivity of W*-algebras and nuclearity of $C^*$-algebras: if $N$ is injective then $\gamma$ is W*AD-amenable if and only if the W*-crossed product $N\bar\rtimes_\gamma G$ is injective. And similarly, if $A$ is a nuclear $C^*$-algebra, then the reduced $C^*$-crossed product $A\rtimes_{\red,\alpha}G$ is nuclear if and only if $\alpha$ is AD-amenable.

Notice that the AD-amenability of an action on a $C^*$-algebra $A$ requires (and is equivalent to) the existence of a net as above with values in $Z(A'')$. While this is a huge commutative algebra in general, finding explicitly such an approximate equivariant mean might be a very difficult task -- if not impossible. Hoping for more concrete realisations of such approximate means one might wonder whether it is not always possible to find a net with values in $Z(A)$ or at least in $ZM(A)$, the centre of the multiplier algebra. This is indeed possible for commutative $A$ (by \cite{Anantharaman-Delaroche:Systemes}*{Theorem~4.9}) and hence more generally for $A$ admitting (nondegenerate) $G$\nb-equivariant \Star{}homomorphism $\contz(X)\to ZM(A)$ for some amenable $G$-space $X$. Unfortunately this is not possible in general: striking recent results by Suzuki in \cite{MR3589332} show that every exact group admits an AD-amenable action on a unital simple nuclear $C^*$-algebra $A$ (and one can even choose such algebra for which the crossed product is in the same class). For such an $A$ we have $Z(A)=ZM(A)=\C\cdot 1$ so that the existence of an approximate mean as above with values in $Z(A)$ forces $G$ to be amenable. On the other hand, dropping the commutativity completely and  asking only for a net $\{a_i\}_{i\in I}\subseteq \ell^2(G,N)$ satisfying~\eqref{eq:def-W*AD-amenable} is also not a good idea because then one adds undesirable actions. For instance the adjoint action $\gamma=\Ad_\lambda$ of the left regular representation on $N=\Lb(\ell^2G)=\bK(\ell^2G)''$ has this weaker property because $\ell^\infty(G)\into \Lb(\ell^2G)$ equivariantly. But this action is AD-amenable only if $G$ is amenable.

Fortunately there is an alternative out of this: we only need to change~\eqref{eq:def-W*AD-amenable} slightly, requiring instead the existence of a bounded net of finitely supported functions $\{a_i\}_{i\in I}\subseteq \ell^2(G,N)$ satisfying
\begin{equation}\label{eq:def-W*AD-amenable1}
\braket{a_i}{b\tilde\gamma_g(a_i)}_2=\sum_{h\in G}a_i(h)^*b\gamma_g(a_i(g^{-1}h))\to b
\end{equation}
for the weak*-topology for all $g\in G$ and $b\in N$. It turns out that this is equivalent to the W*AD-amenability of $\gamma$ (and hence to the existence of a central net satisfying~\eqref{eq:def-W*AD-amenable}).
Moreover, we prove that if $A$ is a weak*-dense $G$-invariant $C^*$-subalgebra of $N$, then the above condition (hence the W*AD-amenability of $N$) is equivalent to the existence of a bounded net of finitely supported functions $\{a_i\}_{i\in I}\subseteq \ell^2(G,N)$ satisfying~\eqref{eq:def-W*AD-amenable1} for all $b\in A$, see Lemma~\ref{lem:characterization of AD amenability with dense subalgebra}.
In particular, an action $\alpha$ on a $C^*$-algebra $A$ is AD-amenable if and only if there exists a bounded net $\{a_i\}_{i\in I}\subseteq \ell^2(G,A)$ of functions with finite supports satisfying
\begin{equation}\label{eq:def-W*AD-amenable2}
\braket{a_i}{b\tilde\alpha_g(a_i)}_2=\sum_{h\in G}a_i(h)^*b\alpha_g(a_i(g^{-1}h))\to b
\end{equation}
with respect to the weak topology on $A$ for all $g\in G$ and $b\in A$. We call this the weak approximation property (WAP).  If we replace the weak by the norm topology, that is, if the net of \eqref{eq:def-W*AD-amenable2} converges in norm for all $g\in G$ and $b\in A$, then we arrive exactly at the approximation property (or just the AP for short) as defined by Exel in \cite{Exel:Amenability} for Fell bundles (over discrete groups); more precisely, this is the approximation property for the semidirect product bundle $\cB_\alpha=A\times G$ associated to $\alpha$. It would be desirable to get from AD-amenability the stronger AP instead of just the WAP because it is usually more concrete and easier to handle in practice.
Indeed, while this paper was under review the recent preprints \cite{buss2020amenability} by Buss, Echterhoff and Willett and \cite{ozawa2020characterizations} by Ozawa an Suzuki answered this in the positive (the former for discrete groups, the latter for general locally compact groups), that is, they show that an action on a $C^*$-algebra is AD-amenable if and only if it has the AP of Exel. The main step of the proof uses a convex approximation argument replacing the original net $\{a_i\}_{i\in I}$ by a new one where the norm convergence is achieved.

The main goal of this paper is to study amenability in the more general context of partial actions or even Fell bundles over (discrete) groups.
A Fell bundle over $G$ consists of a collection $\B=\{B_t\}_{t\in G}$ of
Banach spaces (called fibres) indexed over $G$ endowed with multiplications $B_s\times B_t\to
B_{st}$ and involutions $B_s\to B_{s^{-1}}$ with properties resembling
those of a $C^*$-algebra. In particular the fibre $B_e$ is a $C^*$-\nb-algebra,
where $e\in G$ denotes the unit of $G$. We refer to
\cite{Doran-Fell:Representations} for the basic theory of Fell
bundles, where they are called $C^*$-algebraic bundles. As already
indicated above, Fell bundles incorporate the theory of actions, even
that of (twisted) partial actions of groups.  We refer to
\cite{Exel:TwistedPartialActions} for the general construction of
(semidirect product) Fell bundles associated with (twisted) partial
actions of groups on $C^*$-algebras. To a Fell bundle $\B$ we can attach
two $C^*$-algebras, the so-called full and reduced cross-sectional
$C^*$-algebras of $\B$, denoted usually by $C^*(\B)$ and
$C^*_\red(\B)$. They play the role of the full $A\rtimes_\alpha G$ and reduced $A\rtimes_{\alpha,\red}G$ crossed products in the context of (partial) actions.

The approximation property (AP) of Exel makes sense for general Fell bundles; indeed, it was already defined in this context by Exel in \cite{Exel:Amenability} and the results proved there already indicate that it is a good amenability-type condition similar to the amenability of actions as defined by Anantharaman-Delaroche, that is, the AD-amenability. Since it is now known that these notions of amenability coincide for ordinary global actions, it is a natural question whether one can also extend AD-amenability to Fell bundles and prove a similar result. It is one of the main goals of this paper to address this question and introduce some amenability notions for Fell bundles, like AD-amenability and the WAP, proving that they end up all being equivalent. Besides the interesting fact that this gives different characterisations of amenability for Fell bundles, this also has other advantages and applications.

The first natural candidate for an amenability notion of a Fell bundle is the AP, and this is the spirit of our article. We are going to show that any other reasonable notion of amenability ends up being equivalent to the AP. 

There is a natural way to `convert' a Fell bundle $\cB=\{B_t\}_{t\in G}$ into an ordinary action while still keeping many of the original features of $\cB$; this applies in particular to amenability. More precisely, one can assign to $\cB$ the so-called $C^*$-algebra of kernels $\bk(\cB)$. This is defined in \cite{Abadie:Enveloping} and it carries a canonical action $\beta$ of $G$ that plays the role of the `Morita enveloping action' of $\B$. Indeed, it is proved in \cite{Abadie-Ferraro:Equivalence_of_Fell_Bundles} that $\cB$ is always `weakly equivalent' -- in a certain precise sense -- to the (semi-direct product) Fell bundle $\cB_\beta$. If $\cB=\cB_\alpha$ already comes from an action $\alpha$ of $G$, then $\alpha$ and $\beta$ are Morita equivalent as actions, hence amenability passes from $\alpha$ to $\beta$. More generally, we are going to show that all (equivalent) notions of amenability for Fell bundles are preserved by weak equivalence of Fell bundles. In particular a Fell bundle $\cB$ is AD-amenable if and only if its Morita enveloping $G$-action $\beta$ on $\bk(\cB)$ is AD-amenable, or equivalently, it has the AP. This has some interesting applications. For instance, taking advantage of this fact, one can answer a long-standing open question of Ara, Exel and Katsura (see \cite{Ara-Exel-Katsura:Dynamical_systems}*{Remark~6.5}) whether the nuclearity of $C^*_r(\B)$ for a Fell bundle $\B$ implies its approximation property. This is answered affirmatively in \cite{buss2020amenability}*{Corollary~4.23} using some of the main results of this paper.

Although one could define amenability of $\cB$ in terms of $\bk(\cB)$, we choose a more direct and intrinsic approach: we pass from $\cB$ to its bidual Fell bundle $\cB''$, which bring us back to the von Neumann algebra setting. Hence, in order to study amenability for Fell bundles, we take a detour via W*-Fell bundles (which we define and study here) mainly because approximation properties are easier to deal with in this context. We define AD-amenable Fell bundles and W*-Fell bundles using W*-Fell bundles and certain partial actions on $C^*$-algebras. Only after that we prove that amenability can be characterised via approximation properties.

We must say that in a first preprint version of the present article, published in the ArXiv, we were only able to show that the AD-amenability of the canonical action on $\bk(\cB)$ was equivalent to a weak version of the AP for $\cB$ (which we conveniently named the weak approximation property, WAP). Ozawa-Suzuki's characterisation of AD-amenability of $C^*$-dynamical systems in terms of the AP was the missing piece we needed to prove the WAP was equivalent to the AP. We thank the reviewer for pointing out this fact to us. We were able to adapt Ozawa-Suzuki's arguments to Fell bundles quite easily (Theorem~\ref{thm:the mega theorem}) and this has simplified many arguments of the original version of this article.

The structure of the paper is as follows. In Section~\ref{sec:partial-actions-vN} we introduce and study the notion of partial actions of groups on W*-algebras. It seems this has not been studied before, but it will be important to us here as it opens a canonical general link between the $C^*$- and W*-theory of partial actions. In particular we show that every partial action $\alpha$ on a $C^*$-algebra $A$ extends to a canonical enveloping W*-partial action $\alpha''$ on $A''$. One of the main results of this section states that every W*-partial action admits an enveloping global W*-action. This allows us to canonically extend Anantharaman-Delaroche's notion of amenability to partial actions on $C^*$- and W*-algebras; we do this in Section~\ref{sec:ADA-partial}. We give some basic examples and prove that amenability in this sense behaves well with respect to taking restrictions and enveloping actions. In Section~\ref{sec:Morita-ADA-partial} we prove that AD-amenability is preserved by equivalences of partial actions, both for $C^*$- and W*-algebras.

In Section~\ref{sec:ADA-Fell-bundles} we start to extend the theory of
AD-amenability to Fell bundles. We introduce the notion of W*-Fell
bundles and prove that every Fell bundle admits a canonical enveloping
W*-Fell bundle. We also introduce the W*-algebra of kernels of a
W*-Fell bundle in this section, which is a W*-counterpart of the
$C^*$-algebra of kernels of a Fell bundle introduced in
\cite{Abadie:Enveloping}. This W*-algebra always carries a global
W*-action and, by definition, the bundle is W*AD-amenable if this action is
W*AD-amenable (Definition~\ref{def:AD amenable}). We show that the canonical action on the $C^*$-algebra of kernels of a Fell bundle $\cB$ is AD-amenable if and only if $\cB''$ is W*AD-amenable.

In Section~\ref{sec:AP-ADA} we study Exel's approximation property and translate it into the context of W*-Fell bundles.
We prove that this new property, the W*AP, gives an alternative description of W*AD-amenability. Thus a Fell bundle $\cB$ is AD-amenable if and only if $\cB''$ has the W*AP. We will say $\cB$ has the \textit{weak AP} (WAP) if $\cB''$ has the W*AP and will justify this name by proving that the WAP is equivalent to the condition one obtains by taking Exel's original definition of the AP and replacing norm convergence by weak convergence (in the Banach space sense). Hence, AP implies WAP and a convex combination argument adapted from \cite{ozawa2020characterizations} will be used to prove the converse. Once this is done, AD-amenability and the AP become the same thing.

The seventh section is dedicated to study what kind of properties pass from the unit fibre $B_e$ of a Fell bundle $\cB$ to the cross-sectional $C^*$-algebra $C^*(\cB)$ provided $\cB$ has the AP. In \cite{ExelNg:ApproximationProperty} Exel and Ng associate a reduced cross-sectional $C^*$-algebra $C^*_{\red}(\cB)$ to every Fell bundle (over a locally compact group) and show $C^*(\cB)=C^*_{\red}(\cB)$ provided that $\cB$ has the AP. If $G$ is discrete and the unit fibre $B_e$ of $\cB$ is nuclear, then $C^*_{(\red)}(\cB)$ is nuclear if and only if $\cB$ has the AP \cites{Exel:Partial_actions,buss2020amenability}; where the parenthesis $(\red)$ means that the claim holds for full and reduced cross-sectional $C^*$-algebras. We show how to obtain these two results, among others, from Anantharaman-Delaroche's original results by using the canonical actions on the $C^*$-algebras of kernels.

Finally, in the last section, we study amenability for Fell bundles $\cB=\{B_t\}_{t\in G}$ whose unit fibre $B_e$ is Morita equivalent to a commutative $C^*$-algebra. The amenability of such bundles is equivalent to the amenability of a $C^*$-partial action of $G$ on $C_0(\widehat{B}_e)$ defined by $\cB,$ where $\widehat{B}_e$ denotes the spectrum of $B_e$ formed by equivalence classes of its irreducible representations.

We shall only consider discrete groups in this paper, although probably many of the things we do here also extend to locally compact groups. The approximation property of Fell  bundles has been extended to locally compact groups by Exel and Ng in \cite{ExelNg:ApproximationProperty}. The notions of equivalences of Fell bundles are also available for bundles over locally compact groups \cites{Abadie-Ferraro:Equivalence_of_Fell_Bundles,Abadie-Buss-Ferraro:Morita_Fell}. Anantharaman-Delaroche defines amenability for actions of discrete groups in \cite{Anantharaman-Delaroche:Systemes} although she actually considers locally compact groups when acting on von Neumann algebras \cites{Anantharaman-Delaroche:ActionI,Anantharaman-Delaroche:ActionII}.  AD-amenable actions of locally compact groups on $C^*$-algebras have received some attention recently and the equivalence between AD-amenability and Exel-Ng's approximation property was shown to hold for $C^*$-dynamical systems \cites{buss2020amenability,ozawa2020characterizations,mckee2020amenable}\footnote{Notice that the first version of the present article published in the ArXiv predates all those works, and some of them actually use ideas from this former version.}. We would not be surprised if most of the statements of the present article were shown to hold for Fell bundles over locally compact groups (see, for example, Proposition~\ref{prop:kB exact iff Be exact} and the proof of \cite{buss2020amenability}*{Theorem 3.11}).

\section{Partial actions on von Neumann algebras}\label{sec:partial-actions-vN}

A lot is already known about partial actions of groups on $C^*$-algebras but it seems that partial actions on W*-algebras (i.e. von Neumann algebras) have never been studied.
Probably the main reason is that every partial action in this setting is the restriction of a global action on a W*-algebra (see Proposition~\ref{pro:enveloping-vn-partial}).
However, starting with a partial action $\alpha$ on a $C^*$-algebra $A$, its bidual $A''$ von Neumann algebra carries a natural partial action $\alpha''$ that will serve as one of our main tools in this paper.
This is the reason why we develop the basic theory of partial actions on von Neumann algebras here.

\begin{definition}\label{def:set theoretic partial action}
A partial action of a group $G$ on a set $X$ is a family of functions $\sigma=\{\sigma_t\colon X_{t^{-1}}\to X_t\}_{t\in G}$ such that:
\begin{enumerate}[(i)]
\item For every $t\in G$, $X_t$ is a subset of $X$.
\item $X_e=X$ and $\alpha_e$ is the identity of $X$.
\item Given $s,t\in G$ and $x\in X_{t^{-1}}$ such that $\sigma_t(x)\in X_{s^{-1}}$,  it follows that $x\in X_{(st)^{-1}}$ and $\sigma_{st}(x)=\sigma_s(\sigma_t(x))$.
\end{enumerate}
\end{definition}

An action (or global action) is a partial action such that $X_t=X,$ $\forall\ t\in G$.

Given two partial actions, $\sigma$ and $\tau$ of $G$ on sets $X$ and $Y$ respectively, a morphism $f\colon \sigma\to \tau $ is a function $f\colon X\to Y$ such that $f(X_t)\sbe Y_t$ and $f(\sigma_t(x))=\tau_t(f(x))$ for all $t\in G$ and $x\in X_{t^{-1}}$. The composition of morphisms is just the composition of functions.

The restriction of $\sigma$ to a subset $Y\sbe X$ is $\sigma|_Y:=\{ \sigma|_{Y,t}\colon Y_{t^{-1}}\to Y_t  \}$, where $Y_t:=Y\cap \sigma_t(X_{t^{-1}}\cap Y)$ and $\sigma|_{Y,t}(y)=\sigma_t(y)$.
It follows that $\sigma|_Y$ is a partial action of $G$ on the set $Y$
and, if $Z\sbe Y$, then $\sigma|_Y|_Z=\sigma|_Z$. When
a partial action $\tau$ can be expressed as a restriction
$\tau=\sigma|_Y$ of a global action $\sigma$, we say that $\tau$ is
globalisable, and that $\sigma$ is a globalisation of $\tau$. 

In this article we work with discrete groups exclusively, so here
``group'' actually means ``discrete group''  (unless otherwise specified).

A $C^*$-partial action $\alpha=\{\alpha_t\colon A_{t^{-1}} \to A_t\}_{t\in G}$ of a group $G$ on a $C^*$-algebra $A$ is a partial action of $G$ on the set $A$ for which each $A_t$ is a closed two-sided ideal of $A$, and each
$\alpha_t$ is an isomorphism of $C^*$-algebras. If
$\beta$ is a global action of $G$ on a $C^*$-algebra $B$ and $A$ is a
closed two-sided ideal of $B$, then the restriction $\alpha:=\beta|_A$
is a $C^*$-partial action on $A$.

In case $B$ equals $I:=\cspn\{\beta_t(A)\colon t\in G\}$ we say $\beta$ ($B,$ respectively) is an enveloping action (enveloping algebra) for $\alpha.$
The equality $I=B$ is a minimality condition on the globalisation.
The restriction $\beta|_I$ is always an enveloping action for $\alpha,$ so the existence of globalisations is equivalent to the existence of enveloping actions.

Enveloping actions (and algebras) are unique if they exist; and up to Morita equivalence of partial actions they do always exist \cite{Abadie:Enveloping}.
A necessary and sufficient condition for the existence of enveloping actions is given in \cite{MR3795739}.
In our context we highlight the fact that if each $A_t$ is unital, then $\alpha$ has an enveloping action, this is always the case for the W*-partial actions we define below.

\begin{definition}
A \emph{W*-partial action} of a group $G$ on a W*-algebra $M$ is a set theoretic partial action of $G$ on $M$, $\gamma \defeq \{\gamma_t\colon M_{t^{-1}}\to M_t \}_{t\in G}$, where each $M_t$ is a W*-ideal of $M$ (possibly $\{0\}$) and each $\gamma_t$ is a W*-isomorphism.
A morphism of W*-partial actions is just a morphism of set theoretic partial actions which is also a morphism of W*-algebras (a $\weaks$-continuous morphism of *-algebras).
\end{definition}

Here we always view W*-algebras as Banach space duals, $M\cong (M_*)'$, endowed with the $\weaks$-topology.
A W*-ideal is then just a \Star{}ideal of $M$ that is closed for the $\weaks$-topology. A W*-isomorphism between two W*-algebras is a \Star{}isomorphism that is $\weaks$-continuous. Actually, every \Star{}isomorphism between W*-algebras is normal (preserves suprema of increasing bounded nets) and it is therefore automatically a W*-isomorphism \cite{MR2188261}*{Proposition~III.2.2.2}.

As in the case of actions on sets or on $C^*$-algebras,
we have suitable notions of restriction and globalisation of
partial actions in the category of $W^*$-algebras:  
   
\begin{example}[Restriction and globalisation]\label{exa:restriction} 
Given an ordinary (global) W*-action $\gamma$ of $G$ on a
W*-algebra $N$ and a W*-ideal $M\unlhd N$, the restriction
$\gamma|_M$ is a W*-partial action. More generally, the restriction of a W*-partial
action to a W*-ideal is again a W*-partial action.
When a given W*-partial action $\alpha$ on a W*-algebra $M$ can be written as
  $\alpha:=\gamma|_M$, where $\gamma$ is a global W*-action on a
  W*-algebra that contains $M$ as a W*-ideal, we say
  that $\alpha$ is globalisable, and that $\gamma$ is a globalisation
  of $\alpha$. 
\end{example}

\begin{example}[Bidual partial actions]
Given a $C^*$-partial action $\alpha=\{\alpha_t\colon A_{t^{-1}}\to A_t\}_{t\in G}$ of $G$ on a $C^*$-algebra $A$, the double dual (enveloping) W*-algebra $A''$ of $A$ carries a canonical W*-partial action $\alpha'':=\{\alpha_t''\colon A_{t^{-1}}''\to A_t''\}_{t\in G}$ which is the unique W*-partial action such that $\alpha''|_A=\alpha$.
Here we view the bidual algebra $A_t''$ as a W*-ideal of $A''$ and $\alpha_t''$ as the unique $\weaks$-continuous extension of $\alpha_t$.
\end{example}

One of our goals is to show that every W*-partial action is (isomorphic to) a restriction of a global W*-action as in Example~\ref{exa:restriction}.
One may think that this is trivial since every von Neumann algebra has a unit, so that all the ideals of a W*-partial action are unital.
However the following example shows that the $C^*$-enveloping action might be not a W*-algebra.

\begin{example}\label{exa:'trivial'-partial-action}
Consider the ``trivial'' partial action of $G$ on the W*-algebra $M\defeq \C$ in which all the ideals are zero except for $M_e\defeq M$.
This can also be viewed as the restriction of the global action of $G$ by (left) translations on the $C^*$-algebra $\contz(G)$ to the ideal $\C\cong \C\delta_e\sbe \contz(G)$. Moreover, since the linear orbit of this ideal is dense in the entire algebra $\contz(G)$, this action is (up to isomorphism) the enveloping action of the original partial action on $\C$.
But if $G$ is infinite, $\contz(G)$ is not a W*-algebra. On the other hand, we may also view $\C\cong \C\delta_e$ as a W*-ideal of the W*-algebra $\ell^\infty(G)$. And since the linear orbit of this ideal is $\weaks$-dense, this is a W*-enveloping action in the following sense.
\end{example}

\begin{definition}
A \emph{W*-enveloping action} of a W*-partial action $\gamma$ of a group $G$ on a W*-algebra $M$ is a W*-global action $\sigma$ of $G$ on a W*-algebra $N$ together with a W*-ideal $\tilde{M}$ of $N$ and an isomorphism of W*-partial actions $\iota\colon \gamma \to \sigma|_{\tilde{M}}$, such that the linear $\sigma$-orbit of $\tilde{M}$ is $\weaks$-dense in $N$.
We summarise this situation by saying that $(N,\sigma)$ is a W*-enveloping action of $(M,\gamma)$.
\end{definition}

\begin{proposition}\label{pro:enveloping-vn-partial}
Every W*-partial action $\gamma$ of $G$ on a W*-algebra $M$ has a W*-enveloping action that is unique up to isomorphism.
\end{proposition}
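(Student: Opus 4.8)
The plan is to mimic, in the von Neumann setting, the construction of the C*\nobreakdash-algebraic enveloping action, taking advantage of the crucial simplification available here: every W*\nobreakdash-ideal of $M$ has the form $M_t=z_tM$ for a unique central projection $z_t\in M$, with $z_e=1$. The isomorphisms $\gamma_t\colon M_{t^{-1}}\to M_t$ then encode axiom (iii) as the pair of relations $\gamma_r(z_{r^{-1}}z_u)=z_rz_{ru}$ for all $r,u\in G$, together with $\gamma_r(z_{r^{-1}})=z_r$. I would record these identities first, since every later verification reduces to them.

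Next I would realise the enveloping action inside the W*\nobreakdash-algebra $\ell^\infty(G,M)\cong\ell^\infty(G)\bar\otimes M$, equipped with the global (manifestly normal) translation action $\sigma$, $(\sigma_g f)(t)=f(g^{-1}t)$. I embed $(M,\gamma)$ by $\iota\colon M\to\ell^\infty(G,M)$, $\iota(m)(t)\defeq\gamma_{t^{-1}}(z_t m)$. Using that each $z_t$ is a central projection and each $\gamma_t$ a normal \Star{}isomorphism, one checks that $\iota$ is an injective normal \Star{}homomorphism (injectivity because $\iota(m)(e)=m$), and that it intertwines $\gamma$ with the translation action in the sense that $\iota(\gamma_t(a))=\sigma_t(\iota(a))$ for $a\in M_{t^{-1}}$; this last identity is precisely where the two compatibility relations above are used.

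I would then set $\tilde M\defeq\iota(M)$ and define $N$ to be the weak*-closed linear span of the orbit $\{\sigma_g(\tilde M):g\in G\}$. The decisive computation is the product formula $\iota(m)\,\sigma_g(\iota(n))=\iota\bigl(m\,\gamma_g(z_{g^{-1}}n)\bigr)$ for all $m,n\in M$ and $g\in G$. Granting it, products of orbit elements land again in the orbit, so the linear span is a \Star{}subalgebra and $N$ is a W*\nobreakdash-algebra; combining the formula with the fact that $\iota$ is a \Star{}homomorphism (hence $\tilde M$ is self-adjoint) shows that $\tilde M$, which is weak*-closed as the image of a normal injection, is a two-sided W*\nobreakdash-ideal of $N$ on which $\sigma$ restricts to a copy of $\gamma$. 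The weak*-density of the linear orbit is then automatic, so $(N,\sigma)$ is a W*\nobreakdash-enveloping action of $(M,\gamma)$ in the sense of the preceding definition. The heart of the matter is this product formula, whose proof is a slightly delicate manipulation of the projections $z_t$ via $\gamma_r(z_{r^{-1}}z_u)=z_rz_{ru}$; I expect it to be the main technical obstacle.

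Finally, for uniqueness I would argue in the standard way. Given two W*\nobreakdash-enveloping actions $(N_1,\sigma^1,\iota_1)$ and $(N_2,\sigma^2,\iota_2)$, I define a map on the dense orbits by $\sum_g\sigma^1_g(\iota_1(m_g))\mapsto\sum_g\sigma^2_g(\iota_2(m_g))$. The point is that all relevant products and adjoints, in particular the expressions $(\sigma_g\iota(m))^*\,\sigma_h\iota(n)$ and their compressions into $\tilde M$ (obtained by multiplying with the central support projection of $\tilde M$), are computed intrinsically from $\gamma$ alone by the intertwining and ideal properties, hence coincide in $N_1$ and in $N_2$. This makes the assignment a well-defined, isometric, normal \Star{}isomorphism on the dense orbits that intertwines $\sigma^1$ with $\sigma^2$ and $\iota_1$ with $\iota_2$, and it extends by weak*-continuity to the desired W*\nobreakdash-isomorphism $N_1\to N_2$. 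The only subtle point is well-definedness, i.e. independence of the chosen finite representation of an orbit element, but this again reduces to the same intrinsic computations.
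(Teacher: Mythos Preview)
Your existence argument is essentially identical to the paper's: the same embedding $\iota(m)(t)=\gamma_{t^{-1}}(z_t m)$ into $\ell^\infty(G,M)$, the same translation action, the same key identity $\gamma_r(z_{r^{-1}}z_u)=z_rz_{ru}$, and the same product formula $\iota(m)\,\sigma_g(\iota(n))=\iota\bigl(m\,\gamma_g(z_{g^{-1}}n)\bigr)$ used to show $\tilde M$ is an ideal with $\sigma_g(\tilde M)\cap\tilde M=\iota(M_g)$.

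For uniqueness, however, the paper takes a different and cleaner route than yours. Rather than defining a map on finite orbit sums and wrestling with well-definedness, the paper maps \emph{any} enveloping action $(\tilde N,\tilde\sigma)$ directly into $\ell^\infty(G,M)$ via $\tilde\iota(x)(t)\defeq\tilde\sigma_{t^{-1}}(x)\,1_e$, checks that this extends $\iota$, is equivariant, injective (because $x\cdot\tilde\sigma_t(1_e)=0$ for all $t$ forces $x=0$ by weak*-density of the orbit), and has image exactly $N$. This sidesteps the representation-independence issue entirely. Your approach does work---the well-definedness reduces to showing that $\sum_g\sigma_g(\iota(n_g))=0$ can be detected by compressing with each $\sigma_t(\iota(1))$, and each such compression is computed intrinsically via your product formula---but you have only gestured at this, and carrying it out is more laborious than the paper's argument. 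The paper's trick is worth remembering: comparing two universal objects by mapping both into a common ambient one is often tidier than building a direct isomorphism.
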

\begin{proof}
Define $\iota\colon M\to \ell^\infty(G,M)$ by $\iota(x)(t)\defeq \gamma_{t^{-1}}(1_t x)$, where $1_t$ denotes the unit of the W*-algebra $M_t$ ($1_t=0$ if $M_t=\{0\}$).
This unit is a central projection of $M$ because $M_t$ is a W*-ideal of $M$.
The map $\iota$ is an injective $\weaks$-continuous \Star{}homomorphism whose image consists of functions $f\in \ell^\infty(G,M)$ with $f(t)=\gamma_{t^{-1}}(f(e)1_t)$.
The image $\tilde M\defeq \iota(M)$ is a $\weaks$-closed W*-subalgebra of $\ell^\infty(G,M)$ which is therefore isomorphic to $M$ via $\iota$.
We now endow $\ell^\infty(G,M)$ with the $G$-action $\tau$ by left translations: $\tau_t(f)(s)\defeq f(t^{-1}s)$.
This is a W*-global action and $\iota\colon \gamma\to \tau$ is a morphism.
Let $N$ be the $\weaks$-closure of the linear $\tau$-orbit of $\tilde M$, that is, the $\weaks$-closure of $\spn\{\tau_t(f): f\in \tilde M,\ t\in G\}$.
Moreover, $N$ is $\tau$-invariant, so that $\tau$ restricts to a W*-global action $\sigma$ of $G$ on $N$ and this is the desired W*-enveloping action of $(M,\gamma),$ as we now show.

It follows from Definition \ref{def:set theoretic partial action} that $\gamma_t(1_{t^{-1}}1_s)=1_t1_{ts}$ for all $s,t\in G,$ because $1_{t^{-1}}1_s$ and $1_t1_{ts}$ are the units of $M_{t^{-1}}\cap M_s = M_{t^{-1}} M_s$ and  $M_t\cap M_{ts}=M_tM_{ts},$ respectively.
This implies $\iota\colon \gamma\to \sigma$ is a morphism because, for all $s,t\in G$ and $x\in M_t$:
\begin{equation*}
\tau_t(\iota(x))(s)=\gamma_{s^{-1}t}(x1_{t^{-1}s})=\gamma_{s^{-1}}(\gamma_t(x1_{t^{-1}}1_{t^{-1}s}))=\gamma_{s^{-1}}(\gamma_t(x)1_{s})=\iota(\gamma_t(x))(s).
\end{equation*}

In order to prove that $\tilde M$ is a W*-ideal of $N$ and that $\iota\colon \gamma\to \sigma|_{\tilde M}$ is an isomorphism it suffices to show that $\tau_t(\tilde M)\cap \tilde M = \iota(M_t),$ for all $t\in G.$
For all $x,y\in M:$
\begin{align*}
 [\tau_t(\iota(a))\iota(b)](s)
    & = \gamma_{s^{-1}t}(a1_{t^{-1}s})\gamma_{s^{-1}}(b1_s)
      = \gamma_{s^{-1}t}(a1_{t^{-1}s})1_{s^{-1}t}1_{s^{-1}}\gamma_{s^{-1}}(b1_s)\\
    & = \gamma_{s^{-1}t}(a1_{t^{-1}s})\gamma_{s^{-1}t}(1_{t^{-1}s}1_{t})\gamma_{s^{-1}}(b1_s) \\
    & = \gamma_{s^{-1}t}(a1_{t^{-1}s}1_{t^{-1}})\gamma_{s^{-1}}(b1_s)
       = \gamma_{s^{-1}}(\gamma_t(a1_{t^{-1}})b1_{s^{-1}})\\
    & = \iota(\gamma_t(a1_{t^{-1}})b)(s).
\end{align*}
We then conclude that $\tau_t(\tilde M)\cap \tilde M = \tau_t(\tilde M)\tilde M\subseteq \iota(M_t)\subseteq \tau_t(\tilde M)\cap \tilde M,$ where the last inclusion follows from the fact that $\iota\colon \gamma\to \sigma$ is a morphism.
At this point we know $(N,\sigma)$ is a W*-enveloping action for $(M,\gamma).$

For uniqueness, assume that $M$ is a W*-ideal of a W*-algebra $\tilde N$ carrying a W*-global action $\tilde\sigma$ whose restriction to $M$ is $\gamma$ and such that the linear $\tilde\sigma$-orbit of $M$ is $\weaks$-dense in $\tilde N$ (for simplicity, we omit the inclusion map $M\into\tilde N$ here, that is, we already assume $M\sbe \tilde N$).
Then we extend $\iota$ to $\tilde\iota\colon \tilde N\to \ell^\infty(G,M)$ by $\tilde\iota(x)(t)\defeq \tilde\sigma_{t^{-1}}(x)1_e$.
First we show that $\tilde \iota$ is in fact an extension of $\iota$.
For $x\in M$:
\begin{equation*}\tilde\iota(x)(t) = \tilde\sigma_{t^{-1}}(x)1_e=\tilde\sigma_{t^{-1}}(x)1_{t^{-1}}=\tilde\sigma_{t^{-1}}(x1_t)=\sigma_{t^{-1}}(x1_t) = \iota(x)(t)\end{equation*}
because $\tilde\sigma_{t^{-1}}(x)1_e\in \tilde\sigma_{t^{-1}}(M)M=M1_t$.
A similar computation shows that $\tilde\iota$ is equivariant.
Observe that $\tilde\iota$ is injective (hence isometric) because, if $\tilde\iota(x)=0$, then $x\tilde\sigma_t(1_e)=0$ for all $t\in G$.
This is equivalent to $xy=0$ for all $y$ in the linear $\tilde\sigma$-orbit of $M$, which is $\weaks$-dense in $\tilde N$ by assumption.
Since $\tilde \iota$ is an isometry and it is $\weaks$-continuous in $\{x\in \tilde N\colon \|x\|\leq 1\}$, its range is $\weaks$-closed (hence a W*-subalgebra) and it is a W*-isomorphism over its image.
Finally, $\tilde\iota(\tilde N)$ is the $\weaks$-closure of the linear span of $\tilde\iota(M)=\iota(M)$.
Thus $\tilde\iota(\tilde N)=N$ and $\tilde\sigma$ is isomorphic, as a W*-partial action, to $\sigma$.
\end{proof}

\begin{proposition}\label{pro:center-enveloping}
Let $\gamma=\{\gamma_t\colon M_{t^{-1}}\to M_t\}_{t\in G}$ be a W*-partial action of $G$ on a W*-algebra $M$ and let $(N,\sigma)$ be its enveloping W*-action. Then the restriction $\gamma|_{Z(M)}=\{\gamma_t\colon Z(M_{t^{-1}})\to Z(M_t)\}$ of $\gamma$ to $Z(M)$ is a W*-partial action whose enveloping W*-action is the restriction of $\sigma$ to $Z(N)$.
\end{proposition}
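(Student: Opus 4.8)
The plan is to verify that $(Z(N),\sigma|_{Z(N)})$ satisfies the definition of a W*-enveloping action of $(Z(M),\gamma|_{Z(M)})$ and then invoke the uniqueness clause of Proposition~\ref{pro:enveloping-vn-partial}. First I would set up the algebraic skeleton. Identifying $M$ with its image as a W*-ideal of $N$, there is a central projection $p\defeq 1_M\in Z(N)$ with $M=pN$, and the fibre ideals are $M_t=M\cap\sigma_t(M)=p\sigma_t(p)N$ with unit $1_t=p\sigma_t(p)$ (this is the content of the identity $\tau_t(\tilde M)\cap\tilde M=\iota(M_t)$ from the previous proof). Since a W*-isomorphism carries centres onto centres, $\gamma_t$ restricts to a W*-isomorphism $Z(M_{t^{-1}})=1_{t^{-1}}Z(M)\to 1_t Z(M)=Z(M_t)$, and each $Z(M_t)$ is a W*-ideal of $Z(M)$. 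Combined with $M_{t^{-1}}\cap Z(M)=Z(M_{t^{-1}})$, the restriction formula gives $Z(M)\cap\gamma_t(M_{t^{-1}}\cap Z(M))=Z(M)\cap Z(M_t)=Z(M_t)$, so $\gamma|_{Z(M)}$ is indeed a W*-partial action with the stated fibres.

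Next I would record the easy compatibilities. As $\sigma$ is a global W*-action, each $\sigma_t$ fixes $Z(N)$ setwise, so $\sigma|_{Z(N)}$ is a global W*-action; and $Z(M)=pZ(N)$ is a W*-ideal of $Z(N)$. Using transitivity of restriction ($\sigma|_Y|_Z=\sigma|_Z$ for $Z\sbe Y$) twice, through $Z(M)\sbe M\sbe N$ and through $Z(M)\sbe Z(N)\sbe N$, together with the identification $\gamma=\sigma|_M$, I obtain $(\sigma|_{Z(N)})|_{Z(M)}=\sigma|_{Z(M)}=\gamma|_{Z(M)}$. Thus the only substantial point left is the density condition: that the linear $\sigma|_{Z(N)}$-orbit of $Z(M)$ is $\weaks$-dense in $Z(N)$.

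This density is the heart of the argument. Writing $p_t\defeq\sigma_t(p)\in Z(N)$, one has $\sigma_t(Z(M))=p_tZ(N)\sbe Z(N)$, so the $\weaks$-closed orbit $P$ is contained in $Z(N)$ and I only need $Z(N)\sbe P$. The key observations are: (1) $\bigvee_{t}p_t=1_N$, since the W*-ideal $(\bigvee_t p_t)N$ contains every $\sigma_t(M)=p_tN$ and hence the $\weaks$-dense linear orbit of $M$, forcing it to be all of $N$; and (2) for $z\in Z(N)$ and a finite $\emptyset\neq S\sbe G$, choosing $t_0\in S$ gives $z\prod_{t\in S}p_t=(zp_{t_0})\prod_{t\in S\setminus\{t_0\}}p_t\in p_{t_0}Z(N)=\sigma_{t_0}(Z(M))$, because $zp_{t_0}\in Z(N)\cap\sigma_{t_0}(M)=\sigma_{t_0}(Z(M))$ and $p_{t_0}Z(N)$ is an ideal of $Z(N)$. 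By inclusion–exclusion for the commuting central projections, $q_F\defeq\bigvee_{t\in F}p_t=\sum_{\emptyset\neq S\sbe F}(-1)^{|S|+1}\prod_{t\in S}p_t$, whence $zq_F\in\spn\{\sigma_t(Z(M)):t\in F\}\sbe P$ for every finite $F\sbe G$. Finally, the increasing net $(q_F)_F$ converges $\weaks$ to $\bigvee_t p_t=1_N$, and since multiplication by the fixed element $z$ is $\weaks$-continuous, $zq_F\to z$ in the $\weaks$-topology; hence $z\in P$. This yields $Z(N)=P$, and together with the previous paragraphs shows $(Z(N),\sigma|_{Z(N)})$ is a W*-enveloping action of $(Z(M),\gamma|_{Z(M)})$, which is the claim.

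I expect the density step to be the main obstacle: passing from the individual corner compressions $zp_t\in\sigma_t(Z(M))$ back to an arbitrary central $z$ requires the inclusion–exclusion/join manipulation of central projections together with the $\weaks$-convergence $q_F\to 1_N$. Everything else is bookkeeping with central projections and the transitivity of restriction.
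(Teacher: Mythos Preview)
Your proof is correct and follows essentially the same route as the paper's: set up $Z(M)=pZ(N)$ as a W*-ideal, use transitivity of restriction to get $(\sigma|_{Z(N)})|_{Z(M)}=\gamma|_{Z(M)}$, and then prove the $\weaks$-density of the linear orbit of $Z(M)$ in $Z(N)$ via the net $q_F=\bigvee_{t\in F}\sigma_t(p)\to 1_N$. The only cosmetic difference is in the density step: the paper observes that each $q_F$ (the unit of $\sum_{t\in F}\sigma_t(M)$) already lies in $\spn\{\sigma_t(Z(M)):t\in F\}$, hence $1_N\in P$, and concludes by noting that $P$ is a W*-ideal of $Z(N)$; you instead multiply through by an arbitrary $z\in Z(N)$ first and take the limit $zq_F\to z$, which bypasses the need to remark that $P$ is an ideal. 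Both arguments hinge on the same inclusion--exclusion identity for the commuting central projections.
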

\begin{proof}
First notice that $Z(M_t)$ is indeed a W*-ideal of $Z(M)$. In fact, if $M_t=1_tM$, where $1_t\in Z(M)$ is the central projection of $M$ representing the unit of $M_t$, then $Z(M_t)=1_tZ(M)$. It is then clear that the restriction of $\gamma$ to $Z(M)$ defines a W*-partial action. For the same reason, viewing $M$ as a W*-ideal of $N$, $M$ is then the ideal generated by the central projection $p=1_e$, and then $Z(M)=pZ(N)$ is the W*-ideal of $Z(N)$ generated by the same projection.
The restriction $\sigma|_{Z(N)}$ is clearly $\sigma|_{Z(N)}|_{Z(M)}= \sigma|_{Z(M)} = \sigma|_M|_{Z(M)} =\gamma|_{Z(M)}$.
To see that $\sigma|_{Z(N)}$ is the enveloping action of $\gamma|_{Z(M)}$ it remains to show that the linear $\sigma$-orbit of $Z(M)$ is $\weaks$-dense in $Z(N)$. For each finite subset $F\sbe G$, we define $M_F:=\sum_{t\in F} \sigma_t(M)$. This is a W*-ideal of $N$ (being a finite sum of such) and the union of all these ideals is the linear $\sigma$-orbit of $M$, so it is $\weaks$-dense in $N$ since $(N,\sigma)$ is the enveloping action of $(M,\gamma)$. On the other hand the linear $\sigma$-orbit of $Z(M)$ is the $\weaks$-closure $P$ of the ideal $\cup_{F}Z(M)_F$, where $Z(M)_F=\sum_{t\in F}\sigma_t(Z(M))$. Notice $P$ is a W*-ideal of $Z(N)$. To see that $P=Z(N)$ it is enough to show that the unit of $N$ is contained in $P$. For this notice that the unit $1_F$ of $M_F$ is a (finite) linear combination of $1_t$, and this is also the unit of $Z(M)_F$. The net $(1_F)_F$ is increasing and bounded and its $\weaks$-limit is the unit of $N$ because $\cup_F M_F$ is $\weaks$-dense in $N$. However this limit is also the unit of $P$.
\end{proof}

\begin{remark}\label{rem:abelian-injective}
If $(N,\sigma)$ is a W*-enveloping action of $(M,\gamma)$, then $N$ is abelian if and only if $M$ is abelian.
Indeed, clearly $M$ is abelian if $N$ is. For the converse observe that, by the proof of the Proposition~\ref{pro:enveloping-vn-partial}, $N$ is isomorphic to a W*-subalgebra of the abelian W*-algebra $\ell^\infty(G,M)$.

Another property that is preserved by taking enveloping actions is
injectivity, in the sense that if $N$ is the W*-enveloping action
of $M$, then $M$ is injective if and only if $N$ is injective. Indeed,
since injectivity passes to ideals, the reverse direction is
clear. For the converse one uses that injectivity passes to (finite)
sums and directed unions of ideals and the description of $N$ as the $\weaks$-closure of $\cup_F M_F$ as in the proof of the last proposition above.
\end{remark}

\section{Amenability of partial actions}\label{sec:ADA-partial}

First let us recall the notion of amenability for (global) actions of  groups on $C^*$-algebras and W*-algebras introduced by Anantharaman-Delaroche, see \cites{Anantharaman-Delaroche:Systemes, Anantharaman-Delaroche:ActionI,Anantharaman-Delaroche:ActionII}.

\begin{definition}
We say that a (global) action of a group $G$ on a W*-algebra $M$ is \emph{Anantharaman-Delaroche amenable} (\emph{W*AD-amenable for short}) if there exists a linear positive contractive and $G$-equivariant map $P\colon\ell^\infty(G,M)\onto M$ whose composition with the canonical embedding (by constant functions) $M\into \ell^\infty(G,M)$ is the identity map $M\to M$. Here $\ell^\infty(G,M)$ is endowed with the diagonal $G$-action: $\tilde{\gamma}_t(f)(r)=\gamma_t(f(t^{-1}r))$, where $\gamma$ denotes the $G$-action on $M$.

An action $\alpha$ of $G$ on a $C^*$-algebra $A$ is \emph{AD-amenable} 
if the corresponding double dual W*-action $\alpha''$ on $A''$ is W*AD-amenable.
\end{definition}

\begin{example}\label{exa:traslation action}
The translation $G$-action on itself, viewed as a $G$-action on the $C^*$-\nb-algebra $\contz(G),$ is always AD-amenable (this action is even proper). By definition, this means that the translation action on $\contz(G)''\cong\ell^\infty(G)$ is W*AD-amenable as a W*-action. However the translation action on $\ell^\infty(G)$ is AD-amenable as a $C^*$-action if and only if $G$ is exact, see \cite{Brown-Ozawa:Approximations}*{Theorem~5.1.7}.

More generally, if $M$ is a $G$-W*-algebra, the $G$-W*-algebra $\ell^\infty(G,M)$ endowed with the diagonal $G$-action is always W*AD-amenable because we have a canonical $G$-equivariant unital embedding $\ell^\infty(G)\into Z\ell^\infty(G,M)=\ell^\infty(G,Z(M))$ (see \cite{Anantharaman-Delaroche:ActionII}*{Corollary~3.8}). As before, here $\ell^\infty(G)$ carries the translation $G$-action.
\end{example}

Before we proceed, let us highlight some of the most important characterisations of AD-amenability obtained by Anantharaman-Delaroche in her papers \cites{Anantharaman-Delaroche:Systemes, Anantharaman-Delaroche:ActionI,Anantharaman-Delaroche:ActionII}.

\begin{theorem}[Anantharaman-Delaroche]\label{the:ADA-characterisations}
The following are equivalent for a global action $\gamma$ of a group $G$ on a W*-algebra $M$:
\begin{enumerate}[(i)]
\item $\gamma$ is W*AD-amenable;
\item the restriction of $\gamma$ to the center $Z(M)$ is W*AD-amenable, that is, there is a $G$-equivariant norm-one projection $\ell^\infty(G,Z(M))\onto Z(M)$;
\item there is a net $\{a_i\colon G\to Z(M)\}_{i\in I}$ of finitely supported functions with $$\braket{a_i}{a_i}_2\defeq \sum_{g\in G}a_i(g)^*a_i(g)\leq 1$$ for all $i$ and $\braket{a_i}{\tilde\gamma_g(a_i)}_2\to 1$ ultraweakly for all $g\in G$.
\end{enumerate}
Moreover, if $M$ is injective as a W*-algebra, then the above are also equivalent to
\begin{enumerate}
\item[(iv)] the W*-crossed product $M\bar\rtimes G$ is an injective W*-algebra.
\end{enumerate}
If $\alpha$ is an AD-amenable action of $G$ on a $C^*$-algebra $A$, then the full and reduced $C^*$-crossed products coincide, that is, $A\rtimes_{\alpha}G=A\rtimes_{\red,\alpha}G$. And if $A$ is nuclear, then $\alpha$ is AD-amenable if and only if $A\rtimes_{\red,\alpha} G$ is a nuclear $C^*$-algebra.
\end{theorem}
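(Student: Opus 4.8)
The statement collects several of Anantharaman-Delaroche's results, so the plan is to reduce the equivalences to the abelian (central) case and then to invoke her injectivity and nuclearity theorems. \emph{First}, for (i)$\Leftrightarrow$(ii) I would use that the centre of $\ell^\infty(G,M)$ is exactly $\ell^\infty(G,Z(M))$ and that the inclusion $Z(M)\into M$ is $G$-equivariant. Given a $G$-equivariant norm-one projection $P\colon\ell^\infty(G,M)\onto M$, its behaviour on the central subalgebra, combined with the canonical centre-valued conditional expectation of $M$, yields an equivariant projection $\ell^\infty(G,Z(M))\onto Z(M)$; this is the easy direction. The converse (ii)$\Rightarrow$(i) is subtler: from a central equivariant mean one must build an equivariant conditional expectation on all of $M$, exploiting that $M$ is a $Z(M)$-algebra so that a central mean suffices to average the full module structure.

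\emph{Second}, the core is (ii)$\Leftrightarrow$(iii), carried out entirely inside the abelian W*-algebra $Z(M)$. Viewing each $a_i$ as an element of the Hilbert module $\ell^2(G,Z(M))$, the condition $\braket{a_i}{a_i}_2\le 1$ together with $\braket{a_i}{\tilde\gamma_g(a_i)}_2\to 1$ is a square-root Reiter/Følner-type invariance. For (iii)$\Rightarrow$(ii) I would set $P_i(f)\defeq\braket{a_i}{f a_i}_2=\sum_{g}a_i(g)^*f(g)a_i(g)$, check these are positive and contractive, pass to a $\weaks$-cluster point $P$, and use the Reiter condition to see that $P$ restricts to the identity on $Z(M)$ and is asymptotically, hence in the limit, $G$-equivariant. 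The implication (ii)$\Rightarrow$(iii) is where I expect the \emph{main obstacle}: one must manufacture finitely supported square-summable central functions out of an abstract mean. This requires a Day-type convexity argument --- the associated positive-type functions $g\mapsto\braket{a_i}{\tilde\gamma_g(a_i)}_2$ form a convex set, and a Hahn--Banach separation in the appropriate duality upgrades the $\weaks$-approximation of the mean to the desired net.

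\emph{Finally}, for (iv) and the $C^*$-statements I would import Anantharaman-Delaroche's theorems. Assuming $M$ injective, one direction builds a conditional expectation onto $M\bar\rtimes G$ (realised inside some $B(H)$) out of the equivariant mean and the injectivity of $M$, while the converse restricts a projection witnessing injectivity of the crossed product to recover the mean. For the $C^*$-algebra $A$, I pass to the bidual: AD-amenability of $\alpha$ means W*AD-amenability of $\alpha''$, which produces a faithful conditional expectation from the full onto the reduced crossed product and hence $A\rtimes_\alpha G=A\rtimes_{\red,\alpha}G$. When $A$ is nuclear, $A''$ is injective and $A''\bar\rtimes G\cong(A\rtimes_{\red,\alpha}G)''$, so the equivalence (i)$\Leftrightarrow$(iv) applied to $\alpha''$ identifies W*AD-amenability of $\alpha''$ with injectivity of $(A\rtimes_{\red,\alpha}G)''$, that is, with nuclearity of $A\rtimes_{\red,\alpha}G$.
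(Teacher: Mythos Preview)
The paper does not actually prove this theorem: it is stated as a summary of Anantharaman-Delaroche's results, with citations to \cites{Anantharaman-Delaroche:Systemes, Anantharaman-Delaroche:ActionI, Anantharaman-Delaroche:ActionII}, and no proof is given in the paper itself. So there is no ``paper's own proof'' to compare your proposal against.

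That said, your outline is a reasonable sketch of how the original arguments go, with one genuine slip. In your (i)$\Rightarrow$(ii) you invoke ``the canonical centre-valued conditional expectation of $M$'', but no such map exists for a general W*-algebra (centre-valued traces are a feature of finite von Neumann algebras). The correct argument is simpler and avoids this: if $P\colon\ell^\infty(G,M)\onto M$ is a norm-one projection, then by Tomiyama's theorem it is a conditional expectation, hence an $M$-bimodule map over the constant functions. For $f\in\ell^\infty(G,Z(M))$ and $m\in M$ one then has $P(f)m=P(fm)=P(mf)=mP(f)$, so $P(f)\in Z(M)$ automatically; restricting $P$ therefore already gives the required equivariant projection $\ell^\infty(G,Z(M))\onto Z(M)$. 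The rest of your sketch (the Day--Hahn--Banach convexity argument for (ii)$\Leftrightarrow$(iii), and the passage to biduals and injectivity for (iv) and the $C^*$-statements) matches the shape of Anantharaman-Delaroche's proofs.
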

Let us also remark that for an action on a commutative $C^*$-algebra $A=\contz(X)$, its AD-amenability is equivalent to amenability of the associated transformation groupoid $X\rtimes G$ in the sense of Anantharaman-Delaroche and Renault, see \cite{Renault_AnantharamanDelaroche:Amenable_groupoids}. This is usually called \emph{topological amenability} of the $G$-action on $X$.
Moreover, the AD-amenability in this case is equivalent to the existence of a net $\{a_i\colon G\to Z(A)=A\}_{i\in I}$ with the same properties as in (iii) above, except that the ultraweak convergence in (iii) can be strengthened to the convergence with respect to the strict topology on $A\sbe M(A)$ (the multiplier algebra), see \cite{Anantharaman-Delaroche:Systemes}*{Th\'eor\'eme~4.9}. This cannot be expected -- and indeed it is not true -- for noncommutative algebras because simple unital $C^*$-algebras can carry AD-amenable actions of non-amenable groups, see Remark~\ref{rem:Suzuki}.

We are now ready to introduce the notion of amenability for partial actions on $C^*$-algebras and W*-algebras:

\begin{definition}
A partial action of a group $G$ on a W*-algebra $M$ is \emph{W*AD-amenable} if its W*-enveloping action
is W*AD-amenable.

We say that a partial action of $G$ on a $C^*$-algebra $A$ is \emph{AD-amenable}
if the induced W*-partial action on $A''$ is W*AD-amenable.
\end{definition}

Of course, a global action is AD-amenable if and only if it is AD-amenable as a partial action. Before we give some proper examples of amenable partial actions, we observe the following general fact:

\begin{proposition}\label{prop:AD-amenable-center}
A W*-partial action $(M,\gamma)$ is W*AD-amenable if and only if its restriction to the center $Z(M)$ is W*AD-amenable.
\end{proposition}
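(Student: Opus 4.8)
The plan is to reduce the statement to the already-settled global case by passing to the W*-enveloping action and then invoking Anantharaman-Delaroche's characterisation of amenability through the centre. Write $(N,\sigma)$ for the W*-enveloping action of $(M,\gamma)$, which exists and is unique by Proposition~\ref{pro:enveloping-vn-partial}. By the definition of W*AD-amenability for partial actions, $(M,\gamma)$ is W*AD-amenable exactly when the global action $(N,\sigma)$ is W*AD-amenable.

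First I would apply the equivalence (i)$\Leftrightarrow$(ii) of Theorem~\ref{the:ADA-characterisations} to the global action $\sigma$. Since each $\sigma_t$ preserves the centre, the restriction $\sigma|_{Z(N)}$ is a genuine global action on $Z(N)$, and the theorem says that $\sigma$ is W*AD-amenable if and only if $\sigma|_{Z(N)}$ is.

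Next I would identify $\sigma|_{Z(N)}$ as an enveloping action. By Proposition~\ref{pro:center-enveloping}, $\gamma|_{Z(M)}$ is a W*-partial action whose W*-enveloping action is precisely $(Z(N),\sigma|_{Z(N)})$. Hence, again by the definition of W*AD-amenability for partial actions, $\gamma|_{Z(M)}$ is W*AD-amenable if and only if $\sigma|_{Z(N)}$ is. Chaining the three equivalences---definition, Theorem~\ref{the:ADA-characterisations}, and Proposition~\ref{pro:center-enveloping}---yields that $(M,\gamma)$ is W*AD-amenable if and only if $\gamma|_{Z(M)}$ is, which is the claim.

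I expect no serious obstacle: the substantive work has already been carried out in Proposition~\ref{pro:center-enveloping} (passing to the centre commutes with forming the W*-enveloping action) and in Anantharaman-Delaroche's theorem (a global action is amenable iff its central restriction is). The only subtlety to keep in mind is that, with our conventions, the W*AD-amenability of the partial action $\gamma|_{Z(M)}$ is \emph{by definition} a statement about its enveloping action; so one must invoke Proposition~\ref{pro:center-enveloping} to recognise that enveloping action as $\sigma|_{Z(N)}$, rather than attempting to compare the centres $Z(M)$ and $Z(N)$ directly.
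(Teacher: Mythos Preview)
Your proposal is correct and follows exactly the same approach as the paper's own proof, which simply states that the result follows directly from the definition, Proposition~\ref{pro:center-enveloping}, and Theorem~\ref{the:ADA-characterisations}. You have merely spelled out the chain of equivalences in more detail.
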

\begin{proof}
This follows directly from the definition, Proposition~\ref{pro:center-enveloping} and Theorem~\ref{the:ADA-characterisations}.
\end{proof}

\begin{remark}\label{rem:Suzuki}
The above result does not hold for partial actions on $C^*$-algebras, not even for global actions. Indeed, the results of Suzuki in \cite{MR3589332} show that every exact group admits an AD-amenable action on a unital simple (and nuclear) $C^*$-algebra. Such an algebra has trivial center and a trivial global action can only be AD-amenable if the group is amenable.
\end{remark}

\begin{example}\label{exa:trivial-partial}
The ``trivial'' partial action of $G$ on $A=\C$ appearing in Example~\ref{exa:'trivial'-partial-action} is AD-amenable, both in $C^*$- and W*-sense. This is because $A=A''$ has as its enveloping W*-action the global translation $G$-action on $\ell^\infty(G)$ as explained in Example~\ref{exa:'trivial'-partial-action}, and this W*-action is W*AD-amenable.
In the same way, we can consider the partial action on a W*-algebra $M$ as in Example~\ref{exa:'trivial'-partial-action} with all domain ideals $M_g=0$ except for $M_e=M$. This partial action is always W*AD-amenable because its enveloping W*-action is the translation action on $\ell^\infty(G,M)$, which is W*-amenable by Example~\ref{exa:traslation action}.
For the same reason, any $C^*$-algebra $A$ endowed with the ``trivial'' partial action (in which all the domain ideals are $A_g=0$ except for $A_e=A$) is always AD-amenable because then $A''$ carries the ``trivial'' partial $G$-action which is W*AD-amenable.
\end{example}

The example above can be generalised as follows.

\begin{example}
Take an amenable subgroup $H\sbe G$ acting (globally) on a $C^*$-algebra $A$ (or on a W*-algebra $M$) and ``extend'' this to a partial $G$-action on $A$ ``by zero'' in the sense that $A_h=A$ for $h\in H$, $A_g=0$ for $g\in G\backslash H$ and $\alpha_g\colon A_{g^{-1}}\to A_g$ acts via the original $H$-action for $g\in H$ and by zero otherwise.  This partial action (which is global only if $H=G$) is always AD-amenable.
Indeed, the canonical action of $G$ on $\ell^\infty(G/H)$, $\nu_t(f)(sH)=f(t^{-1}sH)$, plays an important role here.
This W*-action is W*AD-amenable if (and only if) $H$ is amenable. Indeed, $\ell^\infty(G/H)=\contz(G/H)''$ and the crossed product $\contz(G/H)\rtimes_{\red} G$ is Morita equivalent to $C^*_\red(H)$ by Green's imprimitivity theorem.

To see that the partial action of $G$ defined above is AD-amenable, it is enough to consider the von Neumann algebraic situation. For this, let us write $\gamma$ for the action of $H$ on a W*-algebra $M$ and name $\bar\gamma$ its extension to $G$.

To prove amenability of this partial $G$-action, we give an explicit description of its W*-enveloping action. Consider the W*-subalgebra of $\ell^\infty(G,M)$
\begin{equation*}N:=\{f\in  \ell^\infty(G,M)\colon  f(s)=\gamma_{s^{-1}t}(f(t))\mbox{ if }sH=tH \}.\end{equation*}
This subalgebra is invariant under the action $\tau$ of $G$ on $\ell^\infty(G,M)$ given by $\tau_t(f)(s)=f(t^{-1}s)$.
We name $\delta$ the restriction of $\tau$ to $N$.
In order to view $M$ as a W*-ideal of $N$ in such a way that $\delta$ is the W*-globalisation of $\bar\gamma$, we consider the map $\iota\colon M\to N$ given by
\begin{equation*} \iota(a)(s)=\begin{cases}
                \gamma_{s^{-1}}(a) \mbox{ if }s\in H\\
                0 \mbox{ if }s\notin H.
               \end{cases}
\end{equation*}
Note that in case $M=\C$, we have $N=\ell^\infty(G/H)$ and $\tau=\nu$.
In any case, we may view $\ell^\infty(G/H)$ as a unital $\delta$-invariant subalgebra of $Z(N)$ by considering the inclusion $\kappa\colon \ell^\infty(G/H)\to N$, $\kappa(f)(t)=f(tH)$.
Moreover, the restriction of $\delta$ to $\ell^\infty(G/H)$ is $\nu$, from which it follows that $\delta$ is W*AD-amenable, hence so is $\bar\gamma$.
\end{example}

\begin{example}\label{exa:restriction to subgroup}
If $(M,\gamma)$ is an AD-amenable partial W*-action of $G$ and $H\sbe G$ is any subgroup, then the restriction of the partial $G$-action on $M$ to $H$, namely, $\gamma|_H=\{\gamma_h\colon M_{h^{-1}}\to M_h\}_{h\in H}$ is also AD-amenable.
A similar assertion holds for $C^*$-partial actions.
Indeed, it is clearly enough to check this for W*-partial
actions. This is known to hold for global W*-actions and a simple proof is as follows. Let $C$ be the set of cosets of the form $Ht$ ($t\in G$) and choose a function $C\to G,\ \alpha\mapsto t_\alpha$,  such that $t_\alpha\in \alpha$ for all $\alpha\in C.$
Define the function $\iota\colon \ell^\infty(H,M)\to \ell^\infty(G,M)$ as the unique such that $\iota(f)(st_\alpha)=f(s)$ for all $f\in \ell^\infty(H,M),$ $s\in H$ and $\alpha \in C.$ If $P\colon \ell^\infty(G,M)\to M$ is a positive, contractive and equivariant linear projection, then $Q:=P\circ \iota\colon \ell^\infty(H,M)\to M$ is also.
Thus $\gamma|_H$ is W*AD-amenable if $\gamma $ is.

Now for a general W*-partial action $(M,\gamma)$ of $G$, take its
globalisation W*-action $(N,\sigma)$. For simplicity we identify
$M$ as a W*-ideal of $N$. Let $H\cdot M=\sum_{t\in H}\sigma_t(M)$
be the $H$-linear orbit of $M$ in $N$. Notice that this is an ideal of
$N$ and its $\weaks$-closure $N_H:=\overline{H\cdot M}^{\weaks}$ is an
$H$-invariant W*-ideal of $N$ which can be viewed as an $H$-globalisation of $\gamma|_H$. If $(M,\gamma)$ is W*AD-amenable, then by definition $(N,\sigma)$ is W*AD-amenable, and then so is $(N,\sigma|_H)$ and hence also every $H$-invariant W*-ideal, like $(N_H,\sigma|_H)$. Therefore $(M,\gamma|_H)$ is W*AD-amenable.
\end{example}

Next we look at restrictions of partial actions to ideals and prove that amenability behaves nicely also in this direction.

\begin{proposition}\label{prop:restriction of AD amenable are amenable}
The restriction of a W*AD-amenable W*-partial action of a group to a W*-ideal is again W*AD-amenable. Similarly, AD-amenability is preserved by restrictions to $C^*$-ideals.
\end{proposition}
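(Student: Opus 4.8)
The plan is to reduce the $C^*$-statement to the W*-statement, and then to prove the W*-statement by realising the enveloping action of the restricted partial action as a $\sigma$-invariant W*-ideal inside the enveloping action of the original one, to which W*AD-amenability descends.

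First I would dispose of the $C^*$-case assuming the W*-case. Let $\alpha$ be an AD-amenable $C^*$-partial action of $G$ on $A$ and let $J\unlhd A$ be a closed two-sided ideal, so that $J''$ is a W*-ideal of $A''$ (the $\weaks$-closure of $J$ in $A''$). I claim $(\alpha|_J)''=\alpha''|_{J''}$. Indeed, $\alpha''|_{J''}$ is a W*-partial action on $J''$, being the restriction of a W*-partial action to a W*-ideal, and its restriction to $J$ is $(\alpha''|_{J''})|_J=\alpha''|_J=(\alpha''|_A)|_J=\alpha|_J$ by transitivity of restrictions together with $\alpha''|_A=\alpha$. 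Since the bidual W*-partial action is characterised as the \emph{unique} W*-partial action on the bidual whose restriction to the original algebra is the given one, this forces $\alpha''|_{J''}=(\alpha|_J)''$. As $\alpha''$ is W*AD-amenable by hypothesis, the W*-case applied to the W*-ideal $J''\unlhd A''$ shows $\alpha''|_{J''}=(\alpha|_J)''$ is W*AD-amenable, which is exactly the assertion that $\alpha|_J$ is AD-amenable.

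For the W*-case, let $(M,\gamma)$ be W*AD-amenable and $L\unlhd M$ a W*-ideal. Let $(N,\sigma)$ be the enveloping W*-action of $(M,\gamma)$ from Proposition~\ref{pro:enveloping-vn-partial}, so that $\sigma$ is W*AD-amenable and, identifying $M$ with a W*-ideal of $N$, $\sigma|_M=\gamma$. Since a W*-ideal of a W*-ideal is again a W*-ideal, $L\unlhd N$; hence each $\sigma_t(L)$ is a W*-ideal of $N$, and $N_L:=\overline{\spn}^{\weaks}\{\sigma_t(L):t\in G\}$ is a $\sigma$-invariant W*-ideal of $N$. I would then verify that $(N_L,\sigma|_{N_L})$ is precisely the enveloping W*-action of $(L,\gamma|_L)$: one has $L\unlhd N_L$, the restriction satisfies $(\sigma|_{N_L})|_L=\sigma|_L=(\sigma|_M)|_L=\gamma|_L$ by transitivity, and by construction the linear $\sigma$-orbit of $L$ is $\weaks$-dense in $N_L$. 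By the uniqueness part of Proposition~\ref{pro:enveloping-vn-partial}, $(N_L,\sigma|_{N_L})$ is the enveloping action of $(L,\gamma|_L)$, so it remains only to see that it is W*AD-amenable.

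The last point is that W*AD-amenability of the global action $\sigma$ passes to the $\sigma$-invariant W*-ideal $N_L$, the fact already invoked in Example~\ref{exa:restriction to subgroup}. Writing $N_L=qN$ for the central projection $q=1_{N_L}\in Z(N)$, which is fixed by $\sigma$ because $N_L$ is $\sigma$-invariant, I would take a net $\{a_i\colon G\to Z(N)\}$ of finitely supported functions as in part~(iii) of Theorem~\ref{the:ADA-characterisations} and cut it down to $b_i:=q\,a_i\colon G\to Z(N_L)=qZ(N)$. Using that $q$ is central and $\sigma$-fixed (so $\tilde\sigma_g(b_i)=q\,\tilde\sigma_g(a_i)$) and that multiplication by $q$ is $\weaks$-continuous, one gets $\braket{b_i}{b_i}_2=q\braket{a_i}{a_i}_2\le q=1_{N_L}$ and $\braket{b_i}{\tilde\sigma_g(b_i)}_2=q\braket{a_i}{\tilde\sigma_g(a_i)}_2\to q=1_{N_L}$ ultraweakly; criterion~(iii) applied to $N_L$ then gives W*AD-amenability of $\sigma|_{N_L}$, completing the proof. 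The only genuine work is the identification of the enveloping action in the previous paragraph — in particular the $\weaks$-density of the $\sigma$-orbit of $L$ and the equality $\sigma|_L=\gamma|_L$; the transfer of the net is routine once $q$ is known to be $\sigma$-fixed and central.
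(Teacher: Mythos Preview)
Your proposal is correct and follows essentially the same route as the paper: reduce the $C^*$-case to the W*-case via the identification $(\alpha|_J)''=\alpha''|_{J''}$, then realise the W*-enveloping action of $\gamma|_L$ as the $\sigma$-invariant W*-ideal generated by the $\sigma$-orbit of $L$ inside the enveloping algebra $N$, and use that W*AD-amenability passes to $\sigma$-invariant W*-ideals. The only difference is that the paper simply asserts this last fact, while you spell it out by cutting down the net from Theorem~\ref{the:ADA-characterisations}(iii) with the $\sigma$-fixed central projection $q=1_{N_L}$; this is a welcome clarification but not a different argument.
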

\begin{proof}
 First we deal with W*-partial actions.
 Let $\gamma$ be a W*AD-amenable W*-partial action of the group $G$ on $M$ and let $J$ be a W*-ideal of $M$.
 We know that $M$ can be viewed as a W*-ideal of a W*-algebra $N$ carrying a W*AD-amenable W*-global action $\sigma$ of $G$ with $\sigma|_M=\gamma$.
 Then $J$ is a W*-ideal of $N$ and the $\weaks$-closure of $\sum_{t\in G}\sigma_t(J)$, denoted by $[J]$, is a $\sigma$-invariant W*-ideal of $N$. Moreover, $\sigma|_{[J]}$ is the W*-enveloping action of $\gamma|_J$ because $\sigma|_{[J]}|_J=\sigma|_J=\sigma|_M|_J=\gamma|_J$ and it is also W*AD-amenable because it is a restriction of a global W*AD-amenable W*-action to a $G$-invariant W*-ideal.

 Now let $\beta$ be an AD-amenable $C^*$-partial action of $G$ on $B$ and let $A$ be a $C^*$-ideal of $B$.
Then we may view $A''$ as the $\weaks$-closure of $A$ in $B''$.
 Notice $(\beta|_A)''$ is the unique W*-partial action of $G$ on $A''$ extending $\beta|_A$.
 But $\beta''|_{A''}$ is a W*-partial action  such that $\beta''|_{A''}|_A = \beta''|_A = \beta''|_B|_A=\beta|_A$.
 Thus $\beta''|_{A''}=(\beta|_A)''$.
 By the previous paragraph, $\beta|_A$ is AD-amenable if $\beta$ is.
\end{proof}

\begin{proposition}\label{prop:enveloping action of AD amenable is AD amenable}
 Let $\beta$ be a $C^*$-global action of a group $G$ on $B$ and let $A$ be a $C^*$-ideal of $B$ such that the norm closure of $\sum_{t\in G}\beta_t(A)$ is $B$.
 In other words, $\beta$ is the $C^*$-enveloping action of $\alpha:=\beta|_A$.
 Then $\beta''$ is the W*-enveloping action of $\alpha''$ and $\alpha$ is AD-amenable if and only if $\beta$ is AD-amenable.
\end{proposition}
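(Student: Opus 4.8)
The plan is to reduce the whole statement to unwinding definitions, once the structural claim that $\beta''$ is the W*-enveloping action of $\alpha''$ has been established; that structural claim carries essentially all the content. First I would record that, since $A$ is a $C^*$-ideal of $B$, its bidual $A''$ sits inside $B''$ as a W*-ideal, identified with the $\weaks$-closure of $A$ in $B''$ (the unit of $A''$ being a central projection of $B''$). Next I would verify that $\beta''$ restricts to $\alpha''$ on this W*-ideal. Indeed $\beta''|_{A''}$ is a W*-partial action of $G$ on $A''$, and since $\beta''|_{A''}|_A = \beta''|_A = \beta|_A = \alpha$, the uniqueness of the $\weaks$-continuous extension (the defining property of $\alpha''$) forces $\beta''|_{A''} = \alpha''$. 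This is precisely the argument already used in the proof of Proposition~\ref{prop:restriction of AD amenable are amenable}.

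The remaining, and only mildly substantive, ingredient for the structural claim is the minimality condition: that the linear $\beta''$-orbit of $A''$ is $\weaks$-dense in $B''$. Here I would use that $\beta''_t$ is the $\weaks$-continuous extension of $\beta_t$, so that $\beta_t(A) = \beta''_t(A) \sbe \beta''_t(A'')$ for every $t\in G$. By hypothesis $\cspn\{\beta_t(A) : t\in G\} = B$ in norm, so the $\weaks$-closure of $\spn\{\beta''_t(A'') : t\in G\}$ contains $B$; since $B$ is $\weaks$-dense in $B''$, this closure is all of $B''$. Combined with the previous paragraph, this shows $(B'',\beta'')$ is a W*-enveloping action of $(A'',\alpha'')$, and by the uniqueness established in Proposition~\ref{pro:enveloping-vn-partial} it is \emph{the} W*-enveloping action.

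Once this is in place the equivalence is immediate. By definition the $C^*$-partial action $\alpha$ is AD-amenable exactly when $\alpha''$ is W*AD-amenable; by definition the W*-partial action $\alpha''$ is W*AD-amenable exactly when its W*-enveloping action is W*AD-amenable; and by the structural claim this W*-enveloping action is $\beta''$. Finally, since $\beta$ is a \emph{global} $C^*$-action, $\beta$ is AD-amenable exactly when its bidual W*-action $\beta''$ is W*AD-amenable. Chaining these equivalences gives $\alpha$ AD-amenable $\Leftrightarrow$ $\beta$ AD-amenable.

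I expect the only real obstacle to be the compatibility of the bidual construction with the enveloping-action structure, i.e. assembling the three ingredients — the W*-ideal inclusion $A'' \sbe B''$, the identity $\beta''|_{A''} = \alpha''$, and the $\weaks$-density of the orbit — and in particular checking that $\weaks$-density genuinely transfers from the norm-density hypothesis. None of this is deep, but the density step is where one must be careful about which topology the closures are taken in and about the fact that $\beta''_t$ is merely the $\weaks$-continuous extension of $\beta_t$, so that its behaviour on $A''$ is only accessed through $B$ and density.
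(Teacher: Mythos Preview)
Your proposal is correct and follows essentially the same approach as the paper's proof: identify $A''$ as a W*-ideal of $B''$, invoke the argument from Proposition~\ref{prop:restriction of AD amenable are amenable} to get $\beta''|_{A''}=\alpha''$, then check the $\weaks$-density of the orbit by passing from the norm-density hypothesis through the inclusion $\sum_{t}\beta_t(A)\sbe\sum_t\beta''_t(A'')$ and taking successive closures. The paper compresses your final chain of equivalences into the single sentence ``the rest of the proof follows directly from the definition of AD-amenability for partial actions,'' but the content is identical.
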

\begin{proof}
We view $A''$ as the $\weaks$-closure of $A$ in $B''$, thus $A''$ is a W*-ideal of $B''$.
In the proof of Proposition~\ref{prop:restriction of AD amenable are amenable} we showed that $\beta''|_{A''}=\alpha''$.
Thus, to show that $\beta''$ is a W*-enveloping action of $\alpha''$, it suffices to prove that $B''$ is the $\weaks$-closure of $J_0:=\sum_{t\in G}\beta_t''(A'');$ let us write $J$ for this closure.
 Notice $\sum_{t\in G}\beta_t(A)\sbe J_0\sbe J$ and, taking norm closure, this implies $B\sbe J$.
 Now taking $\weaks$-closure we get $J=B''$.
 The rest of the proof follows directly from the definition of AD-amenability for partial actions.
\end{proof}

\section{Morita equivalence of partial actions}\label{sec:Morita-ADA-partial}

Many $C^*$-partial actions do not admit a $C^*$-enveloping action,
but every $C^*$-partial action has a Morita enveloping action, as defined in \cite{Abadie:Enveloping}, which is unique up to Morita equivalence of actions. It is therefore important to see how amenability behaves in terms of Morita equivalences.

We shortly recall  some facts on (Morita) equivalence of $C^*$- and W*-algebras (and partial actions). The machinery described here is not new, see for example \cites{RieffelMoritaEquivalenceCandW,RieffelInducedRep,raeburn1998morita}. Equivalences of $C^*$-partial actions are defined in \cite{Abadie:Enveloping} and here we adapt this concept to the W*-case.

 Given $C^*$-algebras $A$ and $B,$ a Hilbert $A-B-$bimodule is a (not necessarily full) Hilbert left $A-$ right $B-$module $X\equiv {}_AX_B$ with inner products ${}_A\braket{\ }{\ }$ and $\braket{\ }{\ }_B$ such that ${}_A\braket{x}{y}z=x\braket{y}{z}_B$ for all $x,y,z\in X.$
We say $X$ is an equivalence bimodule (imprimitivity bimodule  \cite{raeburn1998morita}*{Definition 3.1}) if the spaces spanned by inner products are norm-dense in $A$ and $B.$
When convenient we think of Hilbert modules as Zettl's ternary $C^*$-rings ($C^*$-trings) specially if we need to represent them as TROs, see \cite{Zl83} for more details.

 Morita equivalence of $C^*$-partial actions is defined in \cite{Abadie:Enveloping} by thinking in terms of $C^*$-trings, thus it is natural to define Morita equivalence of W*-partial actions using Zettl's W*-trings.
In doing so one must require the ideals of \cite{Abadie:Enveloping}*{Definition 4.2} to be $\weaks-$closed and the *-homomorphisms between W*-trings to be $\weaks-$continuous.
Then one can adapt all the constructions from Definition 4.3 to Proposition 4.5 of \cite{Abadie:Enveloping} to the W*-context.
For example, the tensor products of Lemma 4.2 should be replaced with the W*-tensor product and Proposition 4.4 implies that Morita equivalence of W*-partial actions is an equivalence relation.

 We use W*-trings in our arguments mostly because one can treat them as if they were W*-algebras, but we recognise most readers do not prefer this way of thinking.
So it is convenient to state all our definitions and results in terms of Hilbert W*-modules.
Essentially, a W*-equivalence module is a W*-tring for which the map $T$ of \cite{Zl83}*{Theorem 4.1} is the identity map.
Thus we get the following.

\begin{definition}\label{def:Wstar equivalence}
   The W*-partial actions of the group $G$ on the W*-algebras $M$ and $N,$ $\gamma=\{\gamma_t\colon M_{t^{-t}}\to M_t\}_{t\in G}$ and $\delta=\{\delta_t\colon N_{t^{-t}}\to N_t\}_{t\in G}$ respectively, are equivalent if there exists a Hilbert $M-N-$bimodule $X\equiv {}_MX_N$ with inner products ${}_M\braket{\ }{\ }$ and $\braket{\ }{\ }_N$ and a set theoretic partial action $\sigma=\{\sigma_t\colon X_{t^{-1}}\to X_t\}_{t\in G}$ such that:
 \begin{enumerate}
  \item $X$ is isometrically isomorphic to the dual space of a Banach space (the predual $X_*$ is unique by \cite{Zl83}).
  \item For all $t\in G,$ $X_t$ is a $\weaks-$closed linear subspace of $X$ and $NX_t\subseteq X_t\supseteq X_tM.$
  \item For all $t\in G,$ the sets ${}_M\braket{X_t}{X_t}$ and $\braket{X_t}{X_t}_N$ span $\weaks-$dense subspaces of $M_t$ and $N_t,$ respectively.
  \item For all $t\in G$ and $x,y,z\in X_{t^{-1}},$ $\sigma_t\colon X_{t^{-1}}\to X_t$ is linear and 
  \begin{align*}
   {}_M\braket{x}{y}z & = x\braket{y}{z}_N\\
   \sigma_t({}_M\braket{x}{y}z) = \gamma_t({}_M\braket{x}{y})\sigma_t(z) & = {}_M\braket{\sigma_t(x)}{\sigma_t(y)}\sigma_t(z) =\sigma_t(x)\delta_t(\braket{y}{z}_N)
  \end{align*}
 \end{enumerate}
\end{definition}

\begin{remark}
  The definition of equivalence for $C^*$-partial actions is obtained from the one above by forgetting the $\weaks$-topology and replacing ``$\weaks-$closed'' and ``$\weaks-$dense'' with ``norm-closed'' and ``norm-dense'', respectively. 
\end{remark}

\begin{remark}\label{rem:bidual of pa on module}
 Assume $\delta$ is a partial action on an equivalence module ${}_AX_B$ establishing a Morita equivalence between $C^*$-partial actions $\alpha=\{\alpha_t\colon A_{t^{-1}}\to A_t\}_{t\in G}$ and $\beta=\{\beta_t\colon B_{t^{-1}}\to B_t\}_{t\in G}.$
 The linking partial action $\delta=\{\delta_t\colon L_{t^{-1}}\to L_t\}_{t\in G}$ of $\gamma$ (defined in \cite{Abadie:Enveloping}) is a $C^*$-partial action of $G$ on the linking algebra $L$ of $X.$
 By construction, $\delta|_A=\alpha,$ $\delta|_X=\gamma$ and $\delta|_B=\beta.$
 Since the bidual $X''$ may be regarded as the $\weaks-$closure of $X$ in the enveloping W*-algebra $L'',$ we may define the bidual partial action $\gamma''$ as the (set theoretic) restriction of $\delta''$ to $X''.$
 Then $\gamma''$ establishes an equivalence between $\alpha''$ and $\beta''.$
\end{remark}

 The reader may feel we have forgotten some conditions in Definition~\ref{def:Wstar equivalence}, but this is not the case as the following result shows.

\begin{proposition}\label{prop:fact on W-Hilbert modules}
  In the conditions of the last definition above, the following claims hold:
 \begin{enumerate}
  \item The module operations and the inner products are continuous when one of the variables is fixed (i.e. they are ``separate $\weaks-$continuous'').
  \item $X$ is a self-dual Hilbert module (see \cite{Zl83}*{Section 2} or \cite{Blecher-Merdy:Operator}*{Section 8.5}).
  \item For all $t\in G,$ $x,y\in X_{t^{-1}},$ $a\in M_{t^{-1}}$ and $b\in N_{t^{-1}},$
  \begin{align*}
   \gamma_t({}_M\braket{x}{y}) &  ={}_M\braket{\sigma_t(x)}{\sigma_t(y)} & \delta_t(\braket{x}{y}_N) & =\braket{\sigma_t(x)}{\sigma_t(y)}_N\\
   \gamma_t(a)\sigma_t(x) & = \sigma_t(ax) & \sigma_t(x)\delta_t(b) & = \sigma_t(xb).
  \end{align*}
  \item For all $t\in G,$ $\gamma_t\colon X_{t^{-1}}\to X_t$ is a $\weaks-$homeomorphism.
  \item $M$ is the multiplier algebra of the $C^*$-algebra of generalised compact operators $\bK(X)$ or, in other words, $M$ is the $C^*$-algebra $\Lb(X)$ of adjointable operators of $X$ (considered as a right Hilbert $N-$module).
 \end{enumerate}
 \begin{proof}
 We deal with the right $N-$module structure, the claims for the Hilbert $M-$module structure follow by symmetry.

 By \cite{Zl83}*{Proposition 3.2}, we view the $C^*$-algebra $\mathfrak{A}$ of \cite{Zl83}*{Section 4} as the norm closed linear subspace of $N$ generated by the inner products.
Then \cite{Blecher-Merdy:Operator}*{Proposition 8.5.3} says that $N$ is $C^*$-isomorphic to the multiplier algebra $M(\mathfrak{A})$ of \cite{Zl83}*{Section 4}, which is a W*-algebra.
Uniqueness of preduals implies that we may identify $N=M(\mathfrak{A})$ (as W*-algebras).
Hence claims (1), (2) and (5) follow from the results and constructions of \cite{Zl83}*{Section 4}.

 The identity $\delta_t(\braket{x}{y}_N)=\braket{\sigma_t(x)}{\sigma_t(y)}_N$ holds for all $x,y\in X_{t^{-1}}$ and $t\in G$ because $(\delta_t(\braket{x}{y}_N)-\braket{\sigma_t(x)}{\sigma_t(y)}_N)b=0$ for all $b\in N_t.$
Using (4) in Defintion~\ref{def:Wstar equivalence}, notice that $\sigma_t(x)\delta_t(b)=\sigma_t(xb)$ for all $x\in X_{t^{-1}}$ and $b\in N_{t^{-1}}$ of the form $b=\braket{y}{z}_N$, and $t\in G$.
Then the $\weaks$-density of $\spn \braket{X_{t^{-1}}}{X_{t^{-1}}}$ in $N_{t^{-1}}$ and the separate $\weaks-$continuity imply that the same holds for $b\in N_{t^{-1}}$.

 Each $\gamma_t$ is an isometry because for all $x\in X_{t^{-1}}$ we have
\begin{equation*}
 \|\sigma_t(x)\|=\|\braket{\sigma_t(x)}{\sigma_t(x)}_N\|^{1/2}=\|\delta_t(\braket{x}{x}_N)\|^{1/2}=\|\braket{x}{x}_N\|^{1/2}=\|x\|.
\end{equation*}
 Hence, the (unique) predual of $X_t$ is also the (unique) predual of $X_{t^{-1}}$ and $\gamma_t$ must be a $\weaks-$homeomorphism.
 \end{proof}
\end{proposition}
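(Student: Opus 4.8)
The plan is to split the five assertions into two layers. Claims (1), (2) and (5) are purely structural: they concern only the Hilbert $M$-$N$-bimodule $X$, the predual hypothesis of Definition~\ref{def:Wstar equivalence}(1), and the $\weaks$-density of the inner products, with the partial actions playing no role. Claims (3) and (4) are the equivariant statements, and I would derive them from the algebraic identities built into Definition~\ref{def:Wstar equivalence}(4) once the structural layer is in place. Throughout I would argue for the right $N$-module structure, the left $M$-module assertions following by the symmetric argument applied to the conjugate bimodule.

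For the structural layer I would pass to the linking algebra. Assembling $M$, $N$, $X$ and its conjugate $\tilde X$ into the matrix $L=\left(\begin{smallmatrix}M & X\\ \tilde X & N\end{smallmatrix}\right)$ produces a $C^*$-algebra with $M,N$ as complementary diagonal corners and $X,\tilde X$ off the diagonal. The decisive observation is that $L$ is again a dual Banach space: $M$ and $N$ have preduals because they are W*-algebras, and $X$ (hence $\tilde X$) has one by Definition~\ref{def:Wstar equivalence}(1), so $M_*\oplus N_*\oplus X_*\oplus \tilde X_*$ is a predual for $L$. Thus $L$ is a W*-algebra, and claim (1) is immediate, since the module operations and both inner products are restrictions of the separately $\weaks$-continuous multiplication of $L$ to the four corners. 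Self-duality, claim (2), then follows from Paschke's theorem that a Hilbert module over a W*-algebra which is itself a dual Banach space is automatically self-dual, in the form recorded in \cite{Blecher-Merdy:Operator}*{Section~8.5}. For claim (5), self-duality makes $\Lb(X)$ a W*-algebra in which $\bK(X)$ is $\weaks$-dense; the left action embeds $M$ into $\Lb(X)$ and, by the imprimitivity relation, sends ${}_M\braket{x}{y}$ to the rank-one operator $\theta_{x,y}$, so the image of $M$ is exactly the $\weaks$-closure of $\bK(X)$. Since ${}_M\braket{X}{X}$ is $\weaks$-dense in $M=M_e$ by Definition~\ref{def:Wstar equivalence}(3) (recall $X_e=X$), this forces $M=\Lb(X)=M(\bK(X))$.

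With separate $\weaks$-continuity and the faithful realisation $M=\Lb(X)$ from the structural layer in hand, the equivariant claims become density arguments. The identity $\gamma_t({}_M\braket{x}{y})\sigma_t(z)={}_M\braket{\sigma_t(x)}{\sigma_t(y)}\sigma_t(z)$ from Definition~\ref{def:Wstar equivalence}(4) holds for every $z\in X_{t^{-1}}$, and since $\sigma_t$ maps onto $X_t$ the element $\gamma_t({}_M\braket{x}{y})-{}_M\braket{\sigma_t(x)}{\sigma_t(y)}$ of $M_t$ annihilates $X_t$; as the left action is faithful it vanishes, giving the first formula, and the $\delta_t$-formula is symmetric. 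The two module identities $\gamma_t(a)\sigma_t(x)=\sigma_t(ax)$ and $\sigma_t(x)\delta_t(b)=\sigma_t(xb)$ hold first for inner-product elements $a={}_M\braket{x'}{y'}$ and $b=\braket{y'}{z'}_N$ straight from Definition~\ref{def:Wstar equivalence}(4), and extend to all $a\in M_{t^{-1}}$ and $b\in N_{t^{-1}}$ by $\weaks$-density of the inner products together with separate continuity. Finally, for claim (4), from $\braket{\sigma_t(x)}{\sigma_t(x)}_N=\delta_t(\braket{x}{x}_N)$ and the fact that $\delta_t$ is an isometry onto $N_t$ one gets $\|\sigma_t(x)\|=\|x\|$, so $\sigma_t\colon X_{t^{-1}}\to X_t$ is an isometric bijection; by uniqueness of preduals it carries the predual of $X_{t^{-1}}$ onto that of $X_t$ and is therefore a $\weaks$-homeomorphism.

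The step I expect to be the genuine obstacle is the structural one, namely certifying that the Banach-space predual furnished by Definition~\ref{def:Wstar equivalence}(1) is compatible with the Hilbert-module structure, so that $X$ really is a W*-module to which Paschke self-duality applies. Separate $\weaks$-continuity of the operations and uniqueness of the predual are somewhat entangled---one wants continuity in order to propagate identities by density, but continuity itself is what the W*-structure is supposed to supply---and the linking-algebra device is precisely what disentangles them, collapsing every continuity claim to the single fact that multiplication in a W*-algebra is separately ultraweakly continuous. After that, all of claims (3) and (4) are routine density-and-continuity manipulations.
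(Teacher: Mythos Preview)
Your layered plan—establish the structural W*-properties of $X$ first, then run density arguments for the equivariant claims—matches the paper's strategy, and your treatment of claims (3) and (4) is essentially identical to the paper's. The difference lies in how you handle claims (1), (2) and (5).

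The paper does not go through the linking algebra. Instead it invokes Zettl's theory of W*-trings directly: the hypothesis that $X$ has an isometric Banach-space predual is precisely Zettl's definition of a W*-tring, and Zettl's Section~4 (together with \cite{Blecher-Merdy:Operator}*{Proposition~8.5.3}) then yields separate $\weaks$-continuity, self-duality, and the identification $M\cong\Lb(X)$ in one stroke. Your linking-algebra route is a natural alternative, but the step ``$M_*\oplus N_*\oplus X_*\oplus\tilde X_*$ is a predual for $L$, thus $L$ is a W*-algebra'' has a gap. The $\ell^1$-direct sum of the corner preduals dualises to the $\ell^\infty$-direct sum of the corners, and this is \emph{not} the C*-norm on $L$: for instance, if $m\in M$ is unitary and $x\in X$ has $\|x\|=1$, the element $\left(\begin{smallmatrix} m & x\\ 0 & 0\end{smallmatrix}\right)$ has C*-norm $\|mm^*+{}_M\braket{x}{x}\|^{1/2}$, which can exceed $1$. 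Sakai's characterisation of W*-algebras requires an \emph{isometric} predual, so your argument does not yet show $L$ is W*. Without that, you cannot read off separate $\weaks$-continuity of the inner products from the multiplication of $L$, and your derivation of (1) becomes circular: you would need (1) to verify that the candidate $\weaks$-topology on $L$ is compatible with its C*-multiplication.

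The fix is exactly what the paper does: appeal to Zettl (or the equivalent statements in \cite{Blecher-Merdy:Operator}*{\S8.5}) for the implication ``isometric predual on the ternary ring $X$ $\Rightarrow$ separate $\weaks$-continuity of the triple product.'' Once that is in hand, your linking algebra is then easily seen to be W* (e.g.\ by representing $X$ as a $\weaks$-closed TRO inside some $\Lb(H,K)$, as the paper sketches just before Remark~\ref{rem:isomorphism between the centres}), and the rest of your argument goes through unchanged. Alternatively, you can bypass the linking algebra entirely and follow the paper's order: Zettl gives (1), (2), (5) directly, and then your arguments for (3) and (4)—which coincide with the paper's—finish the proof.
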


Hilbert W*-modules are defined in \cite{Blecher-Merdy:Operator} as self-dual Hilbert modules over W*\nb-al\-ge\-bras, this approach is equivalent to ours (via W*-trings) by \cite{Blecher-Merdy:Operator}*{Lemma 8.5.4}.
The advantage of W*-modules is that the $\weaks$-topology of the module is (implicitly) determined by that of the algebra.
More precisely, if $X$ is a (right) Hilbert W*-module over $M,$ then on bounded sets of $X$ the $\weaks$-topology agrees with the locally convex topology determined by the functionals of the form $X\to \C,\ x\mapsto \varphi(\braket{y}{x}_M),$ with $y\in X$ and $\varphi\in M_*.$ 
This property is used by Zettl to represent W*-trings as we explain below.

 Suppose $X$ is a (right) Hilbert W*-module over $N,$ with inner products spanning a $\weaks-$dense subset of $N.$
Take a unital faithful W*-representation $N\subseteq \Lb(H)$ and set $K:=X\otimes_N H.$
Then $\pi\colon X\to \Lb(H,K),\ \pi(x)h=x\otimes_N h ,$ is an isometric linear map. Moreover, when restricted to the closed unit ball of $X,$ $\pi$ is $\weaks-\wot$ continuous ($\wot$ being the weak operator topology).
This construction guarantees the existence of a unique normal faithful representation $\pi^r\colon N\to \Lb(H)$ such that $\pi^r(\braket{x}{y}_N)=\pi(x)^*\pi(y).$
In other words, $\pi^r$ is just the inclusion $N\subseteq \Lb(H)$ and $\braket{x}{y}_N=\pi(x)^*\pi(y).$
In case $X$ is a W*-equivalence bimodule between $M$ and $N,$ there exists a faithful unital and normal representation $\pi^l\colon M\to \Lb(K)$ such that $\pi^l({}_M\braket{x}{y})=\pi(x)\pi(y)^*.$
Notice that this last claim is a consequence of Proposition~\ref{prop:fact on W-Hilbert modules}.

\begin{remark}\label{rem:isomorphism between the centres}
 Every equivalence bimodule ${}_AX_B$ induces a homeomorphism between the primitive ideals spaces of $A$ and $B$ \cite{raeburn1998morita}*{Corollary 3.33} (see also \cite{raeburn1998morita}*{Proposition 5.7}).
 The W*-counterpart of this is \cite{RieffelMoritaEquivalenceCandW}*{Proposition 8.1}, which we state as follows: given a W*-equivalence bimodule ${}_MX_N$ there exists a unique W*-isomorphism $\pi\colon Z(N)\to Z(M)$ such that $xa=\pi(a)x$ for all $a\in Z(N)$ and $x\in X.$
\end{remark}

\begin{proposition}\label{pro:Morita=same-center}
 Let $\mu$ and $\nu$ be W*-Morita equivalent W*-partial actions of a group $G$ on the algebras $M$ and $N$, respectively.
 Then the restrictions $\sigma:=\mu|_{Z(M)}$ and $\tau:=\nu|_{Z(N)}$ are isomorphic (as W*-partial actions).
\end{proposition}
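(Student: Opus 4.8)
The plan is to promote the Morita-equivalence datum to the centres and verify that it is equivariant. By hypothesis there is a Hilbert \(M\)--\(N\)-bimodule \({}_MX_N\) together with a set-theoretic partial action \(\rho=\{\rho_t\colon X_{t^{-1}}\to X_t\}_{t\in G}\) as in Definition~\ref{def:Wstar equivalence} implementing the equivalence between \(\mu\) and \(\nu\). Write \(p_t\in Z(M)\) and \(q_t\in Z(N)\) for the central projections giving the units of the W*-ideals \(M_t\) and \(N_t\), so that \(M_t=p_tM\), \(N_t=q_tN\), and hence \(Z(M_t)=p_tZ(M)\), \(Z(N_t)=q_tZ(N)\). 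By Remark~\ref{rem:isomorphism between the centres} the bimodule already produces a canonical W*-isomorphism \(\pi\colon Z(N)\to Z(M)\) determined by \(xa=\pi(a)x\) for all \(a\in Z(N)\) and \(x\in X\). I claim that \(\pi\) is an isomorphism of W*-partial actions from \(\tau=\nu|_{Z(N)}\) to \(\sigma=\mu|_{Z(M)}\).

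First I would identify the submodules \(X_t\) with support projections, namely \(X_t=Xq_t=p_tX\) for every \(t\in G\). The inclusion \(Xq_t\subseteq X_t\) follows by writing \(q_t\) as a weak*-limit of finite sums \(\sum_k\braket{u_k}{v_k}_N\) with \(u_k,v_k\in X_t\) (possible by the weak*-density in condition~(3) of Definition~\ref{def:Wstar equivalence}), using \(x\braket{u_k}{v_k}_N={}_M\braket{x}{u_k}v_k\in MX_t\subseteq X_t\) together with the separate weak*-continuity of the module operations from Proposition~\ref{prop:fact on W-Hilbert modules}(1) and the fact that \(X_t\) is weak*-closed. The reverse inclusion \(X_t\subseteq Xq_t\) follows from \(\braket{x}{x}_N\in N_t=q_tN\): a direct expansion gives \(\braket{x-xq_t}{x-xq_t}_N=0\), whence \(x=xq_t\). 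The identity \(X_t=p_tX\) is proved symmetrically. Combining these with the defining relation of \(\pi\) gives \(p_tX=Xq_t=\pi(q_t)X\), so \((p_t-\pi(q_t))X=0\); since the left \(M\)-action is faithful (if \(cX=0\) then \(c\,{}_M\braket{x}{y}={}_M\braket{cx}{y}=0\), and \({}_M\braket{X}{X}\) is weak*-dense in \(M\)), this forces \(\pi(q_t)=p_t\), and therefore \(\pi(Z(N_t))=\pi(q_tZ(N))=p_tZ(M)=Z(M_t)\). Thus \(\pi\) respects the ideal structure of the two partial actions.

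It remains to check equivariance, i.e. \(\pi(\nu_t(a))=\mu_t(\pi(a))\) for all \(t\in G\) and \(a\in Z(N_{t^{-1}})\) (note that \(\pi(a)\in Z(M_{t^{-1}})\) by the previous step, so the right-hand side makes sense). For \(x\in X_{t^{-1}}\) I would compute, using the relations \(\rho_t(x)\nu_t(b)=\rho_t(xb)\) and \(\mu_t(c)\rho_t(x)=\rho_t(cx)\) from Proposition~\ref{prop:fact on W-Hilbert modules}(3) together with \(xa=\pi(a)x\),
\begin{equation*}
\rho_t(x)\nu_t(a)=\rho_t(xa)=\rho_t(\pi(a)x)=\mu_t(\pi(a))\rho_t(x),
\end{equation*}
while on the other hand \(\rho_t(x)\nu_t(a)=\pi(\nu_t(a))\rho_t(x)\) by the defining relation of \(\pi\) (as \(\nu_t(a)\in Z(N)\)). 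Hence \((\pi(\nu_t(a))-\mu_t(\pi(a)))y=0\) for every \(y\) in \(\rho_t(X_{t^{-1}})=X_t\). Since both elements lie in \(Z(M_t)\) and \({}_M\braket{X_t}{X_t}\) is weak*-dense in \(M_t\), the same faithfulness/density argument as above upgrades this to equality in \(M_t\), giving \(\pi(\nu_t(a))=\mu_t(\pi(a))\). Together with the second paragraph this shows \(\pi\colon\tau\to\sigma\) is an isomorphism of W*-partial actions, so \(\sigma\cong\tau\).

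I expect the main obstacle to be the first step -- the identification \(X_t=Xq_t=p_tX\) and the resulting matching \(\pi(q_t)=p_t\) -- since it is precisely where one must convert the weak*-density of the inner products into genuine equalities of submodules and of central projections; the equivariance is then a short manipulation of the relations already recorded in Proposition~\ref{prop:fact on W-Hilbert modules}, closed off by the same weak*-density and faithfulness principle.
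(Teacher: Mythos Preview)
Your proof is correct and follows essentially the same route as the paper's: both use the Rieffel isomorphism $\pi\colon Z(N)\to Z(M)$ from Remark~\ref{rem:isomorphism between the centres}, establish $\pi(q_t)=p_t$ (the paper does this in one line via $p_tx=(p_tx)q_t=p_t(xq_t)=xq_t$, while you spell out the underlying identification $X_t=Xq_t=p_tX$ more carefully), and then verify equivariance by the same manipulation of the identities in Proposition~\ref{prop:fact on W-Hilbert modules}(3). The only cosmetic difference in the equivariance step is that the paper runs the computation for an arbitrary $x\in X$ (inserting $q_t$ to land in $X_t$ and using $\gamma_t\gamma_{t^{-1}}=\id$ there), whereas you start from $x\in X_{t^{-1}}$, apply $\rho_t$, and close with the faithfulness/density argument on $X_t$; both reach $\pi(\nu_t(a))=\mu_t(\pi(a))$.
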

\begin{proof}
 Let ${}_MX_N$ be a W*-Morita equivalence bimodule equipped with a W*-partial action $\gamma$ of $G$ inducing an equivalence between W*-partial actions $\mu$ and $\nu$ (on $M$ and $N,$ respectively).
It suffices to check the W*-isomorphism $\pi\colon Z(N)\to Z(M)$ of Remark~\ref{rem:isomorphism between the centres} intertwines the partial actions $\sigma$ and $\tau$.
 First of all, if $p_t$ and $q_t$ are the units of $M_t$ and $N_t$ (respectively), then for all $x\in X$: $p_tx = (p_tx)q_t=p_t(xq_t)=xq_t$.
 Hence $\pi(q_t)=p_t$ and $\pi(Z(N)_t)=Z(M)_t$.

 Now fix $t\in G$ and $a\in Z(N)_{t^{-1}}$.
 For every $x\in X$ we have
 \begin{align*}
  \pi(\nu_t(a))x  & = x \nu_t(a) = xq_t\nu_t(a) = \gamma_t(\gamma_{t^{-1}}(xq_t)a )\\
   &=\gamma_t( \pi(a) \gamma_{t^{-1}}(p_tx)) = \mu_t(\pi(a))p_tx = \mu_t(\pi(a))x.
 \end{align*}
This implies $\pi(\mu_t(a))=\nu_t(\pi(a))$ and the proof is complete.
\end{proof}

As a consequence we derive the following important result:

\begin{proposition}\label{prop:cornerstone}
 AD-amenability is preserved by Morita equivalence of partial actions, both in $C^*$- and W*-contexts.
\end{proposition}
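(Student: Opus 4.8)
The plan is to reduce the $C^*$-case to the W*-case, and the W*-case to the already-established result on centers, Proposition~\ref{prop:AD-amenable-center}, via Proposition~\ref{pro:Morita=same-center}. This gives a remarkably clean argument once the bidual machinery is in place.

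\begin{proof}
We first treat W*-partial actions. Suppose $\mu$ and $\nu$ are W*-Morita equivalent W*-partial actions of $G$ on the W*-algebras $M$ and $N$, and assume $\mu$ is W*AD-amenable. By Proposition~\ref{pro:Morita=same-center}, the restrictions $\mu|_{Z(M)}$ and $\nu|_{Z(N)}$ are isomorphic as W*-partial actions. Now Proposition~\ref{prop:AD-amenable-center} tells us that $\mu$ is W*AD-amenable if and only if $\mu|_{Z(M)}$ is, and likewise $\nu$ is W*AD-amenable if and only if $\nu|_{Z(N)}$ is. Since W*AD-amenability is obviously preserved by isomorphisms of W*-partial actions (an isomorphism transports an equivariant mean to an equivariant mean), we conclude that $\mu|_{Z(M)}$ is W*AD-amenable if and only if $\nu|_{Z(N)}$ is, and hence $\mu$ is W*AD-amenable if and only if $\nu$ is.

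For the $C^*$-case, suppose $\alpha$ and $\beta$ are Morita equivalent $C^*$-partial actions of $G$ on $C^*$-algebras $A$ and $B$, with $\alpha$ AD-amenable. By definition, AD-amenability of a $C^*$-partial action means precisely W*AD-amenability of the bidual W*-partial action. By Remark~\ref{rem:bidual of pa on module}, the bidual partial actions $\alpha''$ and $\beta''$ are again W*-Morita equivalent, the equivalence being implemented by the bidual $\gamma''$ of the partial action $\gamma$ on the equivalence module. Applying the W*-case already proved, $\alpha''$ is W*AD-amenable if and only if $\beta''$ is, which is exactly the statement that $\alpha$ is AD-amenable if and only if $\beta$ is.
\end{proof}

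The main conceptual point — and the only place where real work is hidden — is the reduction to centers: everything rests on the fact, established in Proposition~\ref{pro:Morita=same-center}, that Morita equivalent partial actions induce isomorphic partial actions on their centers. That is where the structural content of Morita equivalence enters, and it is the step one should expect to have done the heavy lifting. Once that is granted, the argument here is purely formal, amounting to combining the center-reduction of amenability with the compatibility of Morita equivalence and bidual constructions (Remark~\ref{rem:bidual of pa on module}). The only subtlety worth double-checking is that the bidual construction genuinely preserves the equivalence, but this was arranged precisely by defining $\gamma''$ as a restriction of the bidual linking partial action $\delta''$.
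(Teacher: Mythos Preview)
Your proof is correct and follows exactly the same route as the paper: reduce the $C^*$-case to the W*-case via Remark~\ref{rem:bidual of pa on module}, and settle the W*-case by combining Proposition~\ref{pro:Morita=same-center} with Proposition~\ref{prop:AD-amenable-center}. The paper's proof is a one-line invocation of precisely these three ingredients, and your write-up simply unpacks that invocation.
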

\begin{proof}
 By Remark~\ref{rem:bidual of pa on module}, it suffices to consider the W*-case; which follows directly as a combination of Propositions~\ref{pro:Morita=same-center} and~\ref{prop:AD-amenable-center}.
\end{proof}

The above result applies in particular to global actions and shows that AD\nb-amena\-bi\-li\-ty is preserved by Morita equivalence of group actions. We believe that this is known for specialists but we could not find a reference.

\begin{corollary}\label{cor:ADamenability and enveloping}
 A $C^*$-partial action $\alpha$ is AD-amenable if and only if one (hence all) of its Morita enveloping actions is AD-amenable.
\end{corollary}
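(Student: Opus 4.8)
The plan is to reduce everything to the two facts already in place: Proposition~\ref{prop:enveloping action of AD amenable is AD amenable}, which equates AD-amenability of a partial action with that of its genuine $C^*$-enveloping action, and Proposition~\ref{prop:cornerstone}, which says that AD-amenability is a Morita invariant of partial actions. The only additional input is the structure of the Morita enveloping action coming from \cite{Abadie:Enveloping}.

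First I would unwind the definition of a Morita enveloping action. By \cite{Abadie:Enveloping}, a Morita enveloping action $\beta$ of $\alpha$ does not arise from $\alpha$ directly (which need not admit an enveloping action) but comes packaged with an intermediate $C^*$-partial action $\alpha'$ that is Morita equivalent to $\alpha$ and for which $\beta$ is an honest $C^*$-enveloping action; that is, $\alpha'=\beta|_{A'}$ for a $C^*$-ideal $A'$ whose linear $\beta$-orbit is norm-dense in the underlying algebra of $\beta$. Having isolated this $\alpha'$, the equivalence is immediate by chaining: Proposition~\ref{prop:cornerstone} gives that $\alpha$ is AD-amenable if and only if $\alpha'$ is AD-amenable, since $\alpha$ and $\alpha'$ are Morita equivalent; and Proposition~\ref{prop:enveloping action of AD amenable is AD amenable}, applied to the pair $(\alpha',\beta)$, gives that $\alpha'$ is AD-amenable if and only if $\beta$ is AD-amenable. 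Composing the two equivalences yields that $\alpha$ is AD-amenable if and only if $\beta$ is.

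For the parenthetical ``hence all'' clause I would invoke the uniqueness statement from \cite{Abadie:Enveloping}: any two Morita enveloping actions of $\alpha$ are Morita equivalent as global actions, hence \emph{a fortiori} as partial actions. Proposition~\ref{prop:cornerstone} then forces AD-amenability to hold for one of them exactly when it holds for all. I do not expect any genuine obstacle here, as the substantive work lives in the two cited propositions; the single point demanding care is correctly extracting the intermediate Morita-equivalent partial action $\alpha'$ together with its genuine enveloping action $\beta$ from the construction of \cite{Abadie:Enveloping}, so that Proposition~\ref{prop:enveloping action of AD amenable is AD amenable} applies verbatim rather than having to be re-proved in the Morita setting.
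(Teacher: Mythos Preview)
Your proposal is correct and follows essentially the same route as the paper: unpack the definition of a Morita enveloping action to get an intermediate partial action $\gamma$ (your $\alpha'$) Morita equivalent to $\alpha$ with $\beta$ as its honest enveloping action, then chain Proposition~\ref{prop:cornerstone} and Proposition~\ref{prop:enveloping action of AD amenable is AD amenable}. Your treatment of the ``hence all'' clause via uniqueness up to Morita equivalence is a reasonable explicit addendum that the paper leaves implicit.
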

\begin{proof}
 Let $\alpha$ be a $C^*$-partial action of a group and let $\beta$ be one of its Morita enveloping actions.
 This means that $\alpha$ is Morita equivalent to a restriction $\gamma$ of $\beta$ and $\beta$ is the $C^*$-enveloping action of $\gamma$.
 By Propositions~\ref{prop:cornerstone} and~\ref{prop:enveloping action of AD amenable is AD amenable}, $\alpha$ is AD-amenable if and only if $\gamma$ is AD-amenable if and only if $\beta$ is AD-amenable.
\end{proof}

\section{AD-amenability of Fell bundles}\label{sec:ADA-Fell-bundles}
One of our goals in this paper is to extend Anantharaman-Delaroche's notion of amenability to Fell bundles over discrete groups.
For this we need some preparation because, as the case of partial actions already indicates, the definition of AD-amenability requires going to the W*-setting.

 When we say that $\cB=\{B_t\}_{t\in G}$ is a Fell bundle we mean 
 $\cB$ is a $C^*$-algebraic bundle in the sense of \cite{Doran-Fell:Representations}; meaning that $G$ is the base group and the fibre over $t\in G$ is the Banach space $B_t.$
The multiplication and the involution of $\cB,$ $(a,b)\mapsto ab$ and $a\mapsto a^*,$ are compatible with the multiplication and the involution of $G$ (respectively) in the sense that $B_sB_t\subseteq B_{st}$ and $B_t^*=B_{t^{-1}}.$
The identities $\|ab\|\leq \|a\|\|b\|$ and $\|a^*a\|=\|a\|^2$ hold for all $a,b\in \cB$ and this makes the fibre $B_e$ (over the unit $e\in G$) a $C^*$-algebra.
We denote $B_sB_t$ the closed linear span of $\{ab\colon a\in B_s,b\in B_t\}$ and similarly for $B_sB_t^*.$

The $L^1-$cross-sectional algebra $\ell^1(\cB)$ is formed by all the cross-sections $f\colon G\to \cB$ such that $\sum_{t\in G}\|f(t)\|<\infty;$ the product being given by $f*g(t)=\sum_{s\in G}f(s)g(s^{-1}t)$ and the involution by $f^*(t)=f(t^{-1})^*.$
$L^1-$cross-sectional algebras of Fell bundles have injective *-representations as bounded operators in Hilbert spaces \cite{Doran-Fell:Representations}*{VIII 16.4}.
Consequently, we may view $\ell^1(\cB)$ as a dense *-subalgebra of its enveloping algebra $C^*(\cB),$ which is called the cross-sectional $C^*$-algebra of $\cB.$
We view each $b\in B_t$ as a section taking the value $b$ at $t$ and vanishing elsewhere.
Thus we get inclusions $\cB\subseteq \ell^1(\cB)\subseteq C^*(\cB)$ that determine the *-algebraic operations of $\cB$ and $\ell^1(\cB).$

 From now on, by a representation of a Banach *-algebra or a Fell bundle we mean a *-representation in the sense of \cite{Doran-Fell:Representations}. Given a representation $\pi\colon C^*(\cB)\to \Lb(X),$ the restriction $\pi|_{\cB}$ is a repre\-sen\-tation of $\cB$ and every representation of $\cB$ arises in this way because we are working with discrete groups and, in this case, the ``recovery'' process of \cite{Doran-Fell:Representations}*{VIII 13} is given by restriction $\pi\mapsto \pi|_\cB.$
Fell's integration process (which in the discrete case only involves sums) is the inverse of restriction.
Notice that the inclusion $\cB\subseteq C^*(\cB)$ determines the norm of $\cB$ because there exist representations of $\cB$ which are isometric on each fibre \cite{Doran-Fell:Representations}*{VIII 16.10}.

We recall from the introduction that semidirect product bundles of (twisted) $C^*$-partial actions are Fell bundles \cite{Exel:TwistedPartialActions}. The semidirect product bundle of a $C^*$- or W*-partial action $\alpha=\{\alpha_t\colon A_{t^{-1}}\to A_t\}_{t\in G}$ will be denoted $\cB_\alpha=\{A_t\delta_t\}_{t\in G},$ with product and involution given by $(a\delta_s) (b\delta_t)=\alpha_s(\alpha_{s^{-1}}(a)b)\delta_{st}$ and $(a\delta_s)^*=\alpha_{s^{-1}}(a)^*\delta_{s^{-1}}.$ The norm is $\|a\delta_s\|=\|a\|$ and the fibre $A_s\delta_s$ is isometrically isomorphic to $A_s$ (as a Banach space, and as a $C^*$-algebra for $s=e$).

The $C^*$-algebra of kernels of a Fell bundle and the canonical action on it (both defined and studied in  \cite{Abadie:Enveloping}) will play a central r\^{o}le in the rest of the article; so it is convenient to recall their definition with some detail. A kernel of the Fell bundle $\cB=\{B_t\}_{t\in G}$ is a function $k\colon G\times G\to \cB$ such that $k(s,t)\in B_{st^{-1}}$ for all $s,t\in G.$ The set $\bk_c(\cB)$ of kernels of finite support is a normed *-algebra when equipped with the product $h*k(r,s)=\sum_{t\in G} k(r,t)h(t,s); $ the involution $k^*(r,s)=k(s,r)^*$ and the norm $\|k\|_2:=(\sum_{s,t\in G}\|k(s,t)\|^2)^{1/2}.$ The $C^*$-algebra of kernels $\bk(\cB)$ is, by definition, the enveloping $C^*$-algebra of the Banach *-algebra obtained by the completion of $(\bk_c(\cB),\|\ \|_2).$ The canonical action $\beta$ of $G$ on $\bk(\cB)$ is the unique $C^*$-action of $G$ on $\bk(\cB)$ such that $\beta_t(k)(r,s)=k(rt,st)$ for all $r,s,t\in G$ and $k\in \bk_c(\cB).$

The right $B_e-$Hilbert module $\ell^2(\cB)$ of \cite{Exel:Amenability} (or, more generally, of \cite{ExelNg:ApproximationProperty}) is the completion of the finitely supported sections of $\cB,$ $C_c(\cB),$ with the inner product $\braket{f}{g}:=\sum_{t\in G}f(t)^*g(t)$ and the action $(fa)(t)=f(t)a.$ Alternatively, $\ell^2(\cB)$ is the direct sum of the fibres $\{B_t\}_{t\in G}$ considered as right $B_e-$Hilbert modules. The (left) regular representation $\Lambda\colon \cB\to \Lb(\ell^2(\cB))$ is determined by the condition $(\Lambda_b g)(s)= bg(t^{-1}s)$ for all $s,t\in G,$ $b\in B_t$ and $g\in C_c(\cB).$ The unique *-homomorphism $\widetilde{\Lambda}\colon C^*(\cB)\to \Lb(\ell^2(\cB))$ such that $\widetilde{\Lambda}|_\cB=\Lambda$ is determined by the condition $\widetilde{\Lambda}_fg=f*g$ (holding for all $f,g\in C_c(\cB)$). The reduced $C^*$-algebra of $\cB$ is $C^*_\red(\cB):=\widetilde{\Lambda}(C^*(\cB))\subseteq \Lb(\ell^2(\cB))$ and $\widetilde{\Lambda}$ is the regular representation of $C^*(\cB).$

In \cite{Abadie-Buss-Ferraro:Morita_Fell} the authors developed the notions of weak and strong Morita equivalence for Fell bundles and characterised them using the canonical actions on the algebras of kernels. We will use this second form of the equivalence, which is as follows. Two Fell bundles, $\cA$ and $\cB,$ over the same group are weakly equivalent if and only if the respective canonical actions ($\alpha$ and $\beta$) on the $C^*$-algebras of kernels ($\bk(\cA)$ and $\bk(\cB)$) are Morita equivalent $C^*$-actions. Recall from \cite{Abadie:Enveloping} that the $C^*$-algebra of generalised compact operators $\bK(\cA):=\bK(\ell^2(\cA))$ is a $C^*$-ideal of $\bk(\cA)$ and that $\alpha$ is (canonically) the enveloping action of $\alpha|_{\bK(\cA)}.$
Then $\cA$ and $\cB$ are strongly equivalent if and only if $\alpha|_{\bK(\cA)}$ is Morita equivalent to $\beta|_{\bK(\cB)}.$

\subsection{W*-enveloping Fell bundles}
 We want to construct a W*-enveloping bundle of a Fell bundle pretty much in the same way one constructs enveloping W*-algebras (biduals) of $C^*$-algebras. Biduals of Fell bundles over inverse semigroups are already described in \cite{Buss-Exel-Meyer:Reduced}*{Section~3}, although there only saturated Fell bundles are considered.
For the convenience of the reader and to make this article as self-contained as possible, we provide the complete construction here.

 Let $\cB=\{B_t\}_{t\in G}$ be a Fell bundle. The $\weaks-$completion of each fibre $B_t$ in the W*-enveloping algebra $C^*(\cB)''$ is isometricaly isomorphic to the bidual $B_t''.$
Thus we may use the W*-structure of $C^*(\cB)''$ to make the bundle of biduals $\cB'':=\{B_t''\}_{t\in G}$ into a Fell bundle in such a way that $\cB$ is a Fell subbundle of $\cB''.$ This trick works because one may view $B_t$ as a $B_tB_t^*-B_t^*B_t-$equivalence module and consider its linking algebra as a $C^*$-subalgebra of the $2\times 2$ matrices with entries in $C^*(\cB).$ We call $\cB''=\{B_t''\}_{t\in G}$ the bidual of $\cB$ and it is a W*-Fell bundle in the following sense.

\begin{definition}
 A \emph{W*-Fell bundle} (or \emph{W*-algebraic bundle}) over the group $G$ is a Fell bundle $\M=\{M_t\}_{t\in G}$ such that each $M_t$ is isometrically isomorphic to the dual of a Banach space and, for every $s,t\in G$ and $a\in M_s$, the functions $M_t\to M_{t^{-1}}, \ b\mapsto b^*$, and $M_t\to M_{st},\ b\mapsto ab$, are $\weaks$-continuous.
\end{definition}

 Two remarks are in order. Firstly, the preduals of the fibres $M_t$ are unique because each $M_t$ is a W*-tring with the operation $(x,y,z):=xy^*z,$ see \cite{Zl83}. Secondly, the $\weaks$-topology of each fibre $M_t$ is determined by that of $M_e$ because $M_t$ is a self-dual $M_e-$Hilbert module.

\begin{remark}\label{remark: bidual partial action and bidual bundle}
 Let $\alpha$ be a partial action of $G$ on a $C^*$-algebra $A$.
 Then $(\cB_\alpha)''$ is canonically W*-isomorphic to $\cB_{\alpha''}$.
\end{remark}

\subsection{Central partial actions of W*-Fell bundles}\label{sec:central-partial}
Take a W*-Fell bundle $\M$ over a group $G$.
For every $t\in G$ we define $I_t$ as the W*-algebra generated by $M_tM_t^*$ in $M_e$.
Notice $I_t$ is in fact a W*-ideal of $M_e$.
The fibre $M_t$ has a natural W*-equivalence $I_t$-$I_{t^{-1}}$-bimodule structure with the multiplication of $\M$ defining the left and right actions and the inner products ${}_{I_t}\langle x,y\rangle:=xy^*$ and $\langle x,y\rangle_{I_{t^{-1}}}:=x^*y$.
The equivalence bimodule $M_t$ then induces an isomorphism $\sigma_t\colon Z(I_{t^{-1}})\to Z(I_t)$.

We claim that $\sigma:=\{\sigma_t\colon Z(I_{t^{-1}})\to Z(I_t)\}_{t\in G}$ is a W*-partial action of $G$ on $Z(M_e)$.
To prove this it suffices to show that $\sigma$ is a set theoretic partial action and when doing this we write $Z_t$ instead of $Z(I_{t})$.

It is clear that $I_e=M_e$. Moreover, $\sigma_e$ is the isomorphism corresponding to the W*-algebra $M_e$ viewed as the identity W*-equivalence $M_e$-$M_e$-bimodule, hence $\sigma_e$  is the identity of $Z_e$.

Let us show that $\sigma_t( Z_{t^{-1}}\cap Z_s )\sbe Z_t\cap Z_{ts}$.
Writing $p_t$ for the unit of $I_t$, it suffices to prove that $\sigma_t(p_{t^{-1}}p_s)=p_tp_{ts}$.
For every $x\in M_t$ we have $ \sigma_t(p_{t^{-1}}p_s)x =x p_{t^{-1}}p_s$, hence $\sigma_t(p_{t^{-1}}p_s)$ is the unit of
\begin{multline*}
 \cspan^\weaks M_t p_{t^{-1}}p_s(M_t p_{t^{-1}}p_s)^*
     =\cspan^\weaks M_t  p_{t^{-1}}p_sM_{t}^*\\
     =\cspan^\weaks M_t M_t^* M_t M_sM_s^* M_t^*
     \sbe \cspan^\weaks  M_t M_t^* M_{ts} M_{ts}^*
     \sbe I_t\cap I_{ts}\\
     \sbe \cspan^\weaks  M_t M_t^* M_{ts} M_{ts}^*  p_t
       = \cspan^\weaks  M_t M_t^* M_{ts} M_{ts}^*  M_tM_{t}^*\\
     \sbe \cspan^\weaks  M_tp_{t^{-1}} M_{s} M_{s}^* M_{t}^*
    \sbe  \cspan^\weaks M_t  p_{t^{-1}}p_sM_{t}^*.
\end{multline*}
Thus  $\sigma_t(p_{t^{-1}}p_s)$ is the unit of $I_t\cap I_{ts}$ and we have $\sigma_t(p_{t^{-1}}p_s)=p_tp_{ts}$.

Now take $x\in Z_{t^{-1}}\cap Z_{t^{-1}s^{-1}}$.
We already know that $\sigma_t(x)\in Z_t\cap Z_{s^{-1}}$ and $\sigma_s(\sigma_t(x))\in Z_{st}\cap Z_s$.
Also $\sigma_{st}(x)\in Z_{st}\cap Z_s$.
We can write $p_s$ as a $\weaks$-limit of the form $p_s =\lim_i \sum_{j=1}^{n_i}  u_{i,j}v_{i,j}^*$ with $u_{i,j},v_{i,j}\in M_s$.
Then, for all $z\in M_{st}$:
\begin{align*}
 \sigma_s(\sigma_t(x))z
  & = \sigma_s(\sigma_t(x))p_sz
    = \sigma_s(\sigma_t(x)) \lim_i\sum_{j=1}^{n_i} u_{i,j}v_{i,j}^*z
    = \lim_i \sum_{j=1}^{n_i}\sigma_s(\sigma_t(x)) u_{i,j}v_{i,j}^*z\\
  & = \lim_i \sum_{j=1}^{n_i} u_{i,j}\sigma_t(x) v_{i,j}^*z
    = \lim_i \sum_{j=1}^{n_i} u_{i,j} v_{i,j}^*z x
    = p_s z x
    =\sigma_{st}(x)p_s z\\
  & =\sigma_{st}(x) z.
\end{align*}
This implies $\sigma_{st}(x)=\sigma_s(\sigma_t(x))$.

\begin{definition}
 Let $\M$ be a W*-Fell bundle over a group $G$.
 The \emph{central partial action} of $\M$ is the W*-partial action $\sigma$ of $G$ on $Z(M_e)$ constructed above.
\end{definition}

\begin{example}\label{exa:central partial action of semidirect product bundle}
 Let $\gamma=(\{M_t\}_{t\in G},\{\gamma_t\}_{t\in G})$ be a W*-partial action of a group $G$ on a W*-algebra $M$.
 If $\M$ is the semidirect product bundle of $\gamma$, which is a W*-Fell bundle, then the central partial action of $\M$ is the restriction of $\gamma$ to $Z(M)$.

 To prove the claim above note that the W*-ideals $I_t$ of $M=M\delta_e$ generated by $(M_t\delta_t)(M_t\delta_t)^*=\gamma_t(\gamma_{t^{-1}}(M_tM_t^*))\delta_e=M_t\delta_e$ are just $M_t$ seen as a subalgebra of $M=M\delta_e$.
 Then the domains of $\sigma$ and $\gamma|_{Z(M)}$ agree.
 If $x\in Z(M_{t^{-1}})$ and $y\in M_t$, then
 \begin{equation*}
  \gamma_t(x)\delta_e y\delta_t
     = \gamma_t(x)y\delta_t
     = y\gamma_t(x)\delta_t
     = \gamma_t(\gamma_{t^{-1}}(y)  x)\delta_t
     = y\delta_t x\delta_e
     = \sigma_t(x)\delta_ey\delta_t
 \end{equation*}
and this implies $\gamma_t(x)=\sigma_t(x)$ (because we identify $x\in M_e$ with $x\delta_e\in M_e\delta_e$).
\end{example}

\subsection{Cross-sectional W*-algebras of W*-Fell bundles}\label{ssec:reduced von Neuman algebra}

To a W*-Fell bundle $\M$ one can naturally assign a cross-sectional W*-algebra $W^*_\red(\M)$ as follows: the usual Hilbert $M_e$-module $\ell^2(\M)$ is not suitable here because it might not be a W*-module, that is, it is possibly not self-dual. We look at its self-dual completion, which can be concretely described as follows. Let $\ell^2_{\weaks}(\M)$ be the space of sections $\xi\colon G\to \M$ for which the net of finite sums $\sum_{t\in F}\xi(t)^*\xi(t)$ with $F\sbe G$ finite is bounded; since this net is increasing and consists of positive elements, it $\weaks$-converges to some element $\braket{\xi}{\xi}_{M_e}:=\sum_{t\in G}\xi(t)^*\xi(t)\in M_e$. The space $\ell^2_\weaks(\M)$ is then a right W*-Hilbert $M_e$-module when endowed with right $M_e$-action $(\xi b)(t):=\xi(t) b$ and inner product $\braket{\xi}{\eta}_{M_e}:=\sum_{t\in G}\xi(t)^*\eta(t)$, the limit of this sum being with respect to the $\weaks$-topology, for all $\xi,\eta\in \ell^2_{\weaks}(\M).$

Next we define the left regular representation of $\M$. This is done as in the $C^*$-case, except that we now act on $\ell^2_{\weaks}(\M)$.
More precisely, for each $t\in G$ we define the map $\Lambda_t\colon
M_t\to \Lb(\ell^2_{\weaks}(\M))$ by
$(\Lambda_t(a)\xi)(s):=a\xi(t^{-1}s)$ (the multiplication
performed in $\M$) for all $s\in G$, $a\in M_t$ and $\xi\in
\ell^2_{\weaks}(\M)$. As in the $C^*$-setting \cite{Exel:Partial_dynamical}*{Section 17}, a routine argument
shows that $\Lambda_t(a)$ is a well-defined adjointable operator with
$\Lambda_t(a)^*=\Lambda_{t^{-1}}(a^*)$ and that
$\Lambda=\{\Lambda_t\}_{t\in G}$ is a representation of $\M;$  which can be extended to $\ell^1(\M)$ and from there to $C^*(\M)$ as an integrated form $\tilde{\Lambda}.$
Recall that $\Lb(\ell^2_{\weaks}(\M))$ is a $W^*$-algebra (see for example \cite{MR0355613}*{Proposition~3.10}).

\begin{definition}
The cross-sectional W*-algebra of $\M$ is the W*-subalgebra
$W^*_\red(\M)$ of $\Lb(\ell^2_{\weaks}(\M))$ generated by the image of
its regular representation $\Lambda$.
\end{definition}

The (reduced) cross-sectional W*-algebra of a W*-Fell bundle $\M$ is exactly the W*-counterpart of the reduced cross-sectional $C^*$-algebra of a Fell bundle.

The linear span of the image of $\Lambda$ is already a \Star{}subalgebra, so that $W^*_\red(\M)$ is just the $\weaks$-closure of that subalgebra. We also observe that the cross-sectional $C^*$-algebra $C^*_\red(\M)$ embeds as a $\weaks$-dense $C^*$-subalgebra of $W^*_\red(\M)$. Moreover, since $\ell^2_{\weaks}(\M)$ is the self-dual completion of $\ell^2(\M)$, every adjointable operator on $\ell^2(\M)$ extends to an adjointable operator on $\ell^2_{\weaks}(\M)$ and this gives a $C^*$-embedding $\Lb(\ell^2(\M))\into \Lb(\ell^2_{\weaks}(\M))$ that restricts to the embedding $C^*_\red(\M)\into W^*_\red(\M)$.
 This inclusion can be also constructed adapting the proof of the proposition below and, additionally, justifies the abuse of notation we committed when using the symbol $\Lambda$ to denote the representations of $\M$ in $\Lb(\ell^2(\M))$ and in $\Lb(\ell^2_\weaks(\M))$.

\begin{proposition}\label{prop:faithful representation of W(B)}
Let $\M$ be a W*-Fell bundle over a group $G$ and $\pi\colon M_e\to \Lb(H)$ be a $\weaks-$continuous representation.
 Then the map
 \begin{equation*}
 \Lambda_\pi \colon \M\to \Lb( \ell^2_\weaks(\M)\otimes_\pi H),\ b\mapsto  \Lambda(b) \otimes \id
 \end{equation*}
 is a representation which is $\weaks$-continuous on each fibre.
 The integrated form $\tilde{\Lambda}_\pi$ factors through a representation of $C^*_{\red}(\M)$ that can be extended to a weak* continuous representation $\tilde{\Lambda}_\pi^{\weaks}$ of $W^*_{\red}(\M)$ in a unique way.
 Moreover, $\tilde{\Lambda}_\pi^{\weaks}$ is unital and $\tilde{\Lambda}_\pi^{\weaks}(W^*_{\red}(\M))= \tilde{\Lambda}_\pi^{\weaks}(\M)''$ (the bicommutant).
 If $\pi$ is injective, then so is $\tilde{\Lambda}_\pi^{\weaks}$  and $\Lambda_\pi$ is isometric on each fibre.
\end{proposition}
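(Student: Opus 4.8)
The plan is to organise the whole argument around a single auxiliary map. Writing $\En\defeq \ell^2_\weaks(\M)$ for the self-dual right Hilbert $M_e$-module carrying $\Lambda$, consider the amplification
\[
\Phi\colon \Lb(\En)\to \Lb(\En\otimes_\pi H),\qquad \Phi(T)\defeq T\otimes\id,
\]
determined on elementary tensors by $\Phi(T)(\xi\otimes h)=(T\xi)\otimes h$. By construction $\Lambda_\pi=\Phi\circ\Lambda$ and, after integration, $\tilde\Lambda_\pi=\Phi\circ\tilde\Lambda$, where $\tilde\Lambda\colon C^*(\M)\onto C^*_\red(\M)\subseteq \Lb(\En)$ is the regular representation. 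I would establish three properties of $\Phi$ and then read every assertion off from them: (a) $\Phi$ is a unital $*$-homomorphism; (b) $\Phi$ is normal ($\weaks$-to-$\weaks$-continuous); and (c) if $\pi$ is injective, then $\Phi$ is injective, hence isometric. Property (a) is the standard interior-tensor-product computation; here $\Phi(\id_\En)=\id$, and since $\id_\En=\Lambda_e(1_{M_e})\in W^*_\red(\M)$ the algebra $W^*_\red(\M)$ is unital with unit $\id_\En$, so $\Phi$ sends its unit to the unit.

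The heart of the matter is (b), which I would prove on bounded sets, where the $\weaks$-topology of $\Lb(\En\otimes_\pi H)$ agrees with the weak operator topology. Let $T_i\to T$ be a bounded $\weaks$-convergent net in $\Lb(\En)$. Since elementary tensors are total and the net is bounded, it suffices to check convergence of the matrix coefficients
\[
\langle \Phi(T_i)(\xi\otimes h),\eta\otimes k\rangle=\langle \pi(\braket{\eta}{T_i\xi}_{M_e})h,k\rangle.
\]
Now the functionals $T\mapsto \varphi(\braket{\eta}{T\xi}_{M_e})$ with $\varphi\in (M_e)_*$ are normal on the W*-algebra $\Lb(\En)$ (this is exactly Paschke's description of the predual of $\Lb(\En)$ for a self-dual W*-module \cite{MR0355613}), so $\braket{\eta}{T_i\xi}_{M_e}\to \braket{\eta}{T\xi}_{M_e}$ in the $\weaks$-topology of $M_e$; as $\pi$ is normal, $\pi(\braket{\eta}{T_i\xi}_{M_e})\to \pi(\braket{\eta}{T\xi}_{M_e})$ weakly, and the matrix coefficients converge. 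Thus $\Phi$ is $\weaks$-continuous on bounded sets, hence normal. The same computation applied to $\Lambda_t$ in place of $T$, using the separate $\weaks$-continuity of the multiplication of the W*-Fell bundle, shows that each $\Lambda_t$ (and therefore $\Lambda_{\pi,t}=\Phi\circ\Lambda_t$) is $\weaks$-continuous on fibres.

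For (c), if $\Phi(T)=0$ then $\pi(\braket{\eta}{T\xi}_{M_e})=0$ for all $\xi,\eta$; faithfulness of $\pi$ forces $\braket{\eta}{T\xi}_{M_e}=0$, and since the inner product separates points of the self-dual module $\En$ this gives $T=0$. Hence $\Phi$ is an injective $*$-homomorphism of $C^*$-algebras, therefore isometric; in particular $\|\Lambda_\pi(b)\|=\|\Lambda(b)\|=\|b\|$ on each fibre, the last equality being the fibrewise isometry of $\Lambda$ (proved as in the $C^*$-case by testing on a section supported at $e$ whose value is a spectral projection of $a^*a$). With the three properties in hand the remaining claims follow formally: $\tilde\Lambda_\pi=\Phi\circ\tilde\Lambda$ visibly factors through $C^*_\red(\M)$; restricting the normal map $\Phi$ to $W^*_\red(\M)$ gives a normal representation $\tilde\Lambda_\pi^\weaks$ extending $\tilde\Lambda_\pi$, and it is the unique such extension because $C^*_\red(\M)$ is $\weaks$-dense in $W^*_\red(\M)$ and normal maps agree once they agree on a $\weaks$-dense subset. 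Since $\Phi$ is normal, $\tilde\Lambda_\pi^\weaks(W^*_\red(\M))$ is the image of a W*-algebra under a normal $*$-homomorphism, hence $\weaks$-closed; it contains $\Lambda_\pi(\M)$ and therefore $\Lambda_\pi(\M)''$, while $\weaks$-continuity of $\tilde\Lambda_\pi^\weaks$ together with $W^*_\red(\M)=\Lambda(\M)''$ gives the reverse inclusion, so $\tilde\Lambda_\pi^\weaks(W^*_\red(\M))=\Lambda_\pi(\M)''=\tilde\Lambda_\pi^\weaks(\M)''$. Finally, when $\pi$ is injective, $\tilde\Lambda_\pi^\weaks=\Phi|_{W^*_\red(\M)}$ is injective because $\Phi$ is.

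The step I expect to be the real obstacle is the normality of $\Phi$ in (b): it is the only place where one genuinely needs structural information about the $\weaks$-topology of $\Lb(\En)$ for the self-dual module $\En=\ell^2_\weaks(\M)$, namely Paschke's identification of its normal functionals. Everything else is either a formal consequence of $\Phi$ being a normal, unital, and (when $\pi$ is faithful) injective $*$-homomorphism, or a routine fibrewise computation with the module inner product.
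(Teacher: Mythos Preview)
Your proposal is correct and follows essentially the same route as the paper: both arguments hinge on the single amplification map $T\mapsto T\otimes\id$ from $\Lb(\ell^2_\weaks(\M))$ to $\Lb(\ell^2_\weaks(\M)\otimes_\pi H)$ (which the paper calls $\rho$ and you call $\Phi$), establish that it is a normal unital \Star{}homomorphism that is injective when $\pi$ is, and then read off every claim from these properties. The only notable difference is that you spell out the normality of $\Phi$ via Paschke's description of the predual of $\Lb(\En)$, whereas the paper simply asserts $\weaks$-continuity on bounded balls; and the paper invokes the identification $\ell^2_\weaks(\M)\otimes_\pi H=\ell^2(\M)\otimes_\pi H$ to see immediately that $\tilde\Lambda_\pi$ factors through $C^*_\red(\M)$, while you obtain the same factorisation formally from $\tilde\Lambda_\pi=\Phi\circ\tilde\Lambda$.
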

\begin{proof}
 Consider the map $\rho\colon \Lb(\ell^2_\weaks(\M))\to \Lb(\ell^2_\weaks(\M)\otimes_\pi H),$ $\rho(R)=R\otimes \id$.
 Then $\Lambda_\pi:= \rho\circ \Lambda \colon \M\to \Lb(\ell^2_\weaks(\M)\otimes_\pi H)$ is clearly a representation that, when restricted to the closed unit ball of a fibre, is $\weaks$-continuous.
 Hence $\Lambda_\pi$ is a representation which is $\weaks$-continuous on each fibre.
 If $\pi$ is faithful, then so is $\rho$ and $\Lambda_\pi$ is isometric on each fibre because $\Lambda$ is.

 Notice that $\ell^2_\weaks(\M)\otimes_\pi H=\ell_2(\M)\otimes_\pi H.$
 Thus we may think of $\Lambda_\pi\colon \M\to \Lb(\ell_2(\M)\otimes_\pi H)$ as the composition of the $C^*$-regular representation $\M\to \Lb(\ell_2(\M))$ with $C^*_{\red}(\M)\subseteq \Lb(\ell_2(\M))\to \Lb(\ell_2(\M)\otimes_\pi H), $ $T\mapsto T\otimes \id.$
 This clearly implies that $\tilde{\Lambda}_\pi$ factors through $C^*_{\red}(\M).$

 The restriction $\rho|_{W^*_{\red}(\M)}\colon W^*_{\red}(\M)\to \Lb(\ell^2_\weaks(\M)\otimes_\pi H)$ is a $\weaks$-continuous representation that clearly extends the integrated form of $\Lambda_\pi.$
 Hence this integrated form can be extended to $W^*_{\red}(\M)$ (as $\rho|_{W^*_{\red}(\M)}$) and the extension is faithful if $\pi$ is.
 The rest of the proof follows by the Bicommutant Theorem.
\end{proof}

\begin{theorem}[c.f. {\cite{ExelNg:ApproximationProperty}*{Corollary 2.15}}]\label{theo:Exel-Ng representations}
Assume that $\M=\{M_t\}_{t\in G}$ is a W*-Fell bundle over $G$ and write $\lambda$ for the left regular representation of $G$ by unitary operators on $\ell_2(G).$
 Let $T \colon \M\to\Lb(H)$ be a nondegenerate representation which is weak* continuous on each fibre and let $\mu_{\lambda,T}\colon \M\to \Lb(\ell_2(G,H))$ be the representation such that $\mu_{\lambda,T}(m)=\lambda_t\otimes T_m,$ for every $m\in M_t$ and $t\in G.$
 Then the integrated form of $\mu_{\lambda,T},$ denoted $\tilde{\mu}_{\lambda,T},$ factors through a representation of $C^*_{\red}(\M)$ that has a unique $\weaks$-continuous extension to a representation $\tilde{\mu}^\weaks_{\lambda,T}$ of $W^*_{\red}(\M).$
 Moreover, if $T|_{M_e}$ is faithful then so is $\tilde{\mu}^\weaks_{\lambda,T}.$
\end{theorem}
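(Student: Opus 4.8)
The plan is to deduce the whole statement from Proposition~\ref{prop:faithful representation of W(B)} together with the Fell-absorption principle, taking the $C^*$-level factoring as essentially known (it is the content of \cite{ExelNg:ApproximationProperty}*{Corollary~2.15}) and concentrating the work on the passage to the $\weaks$-topology and on faithfulness. First I would check that $\mu_{\lambda,T}$ really is a representation of $\M$ that is $\weaks$-continuous on each fibre: multiplicativity and $*$-preservation follow because $\lambda$ is a unitary representation of $G$ and $T$ a representation of $\M$, so $(\lambda_s\otimes T_a)(\lambda_t\otimes T_b)=\lambda_{st}\otimes T_{ab}$ and $(\lambda_t\otimes T_b)^*=\lambda_{t^{-1}}\otimes T_{b^*}$, while fibre-wise $\weaks$-continuity is inherited from that of $T$ since $\lambda_t$ is fixed. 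Put $\pi\defeq T|_{M_e}$. Nondegeneracy of $T$ forces $\pi(1_e)=1_H$ (for $a\in M_t$ one has $a=a1_e$, hence $T_a=T_a\pi(1_e)$ and so $\overline{T(\M)H}\sbe\pi(1_e)H$), so $\pi$ is a unital normal representation of $M_e$ to which Proposition~\ref{prop:faithful representation of W(B)} applies.

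Next I would establish the factoring of $\tilde\mu_{\lambda,T}$ through $C^*_\red(\M)$. Up to the flip $\ell^2(G)\otimes H\cong H\otimes\ell^2(G)$, the integrated form $\tilde\mu_{\lambda,T}$ equals the composition $(\tilde T\otimes\id)\circ\delta_\M$, where $\tilde T\colon C^*(\M)\to\Lb(H)$ is the integrated form of $T$ and $\delta_\M\colon C^*(\M)\to C^*(\M)\otimes C^*_\red(G)$ is the canonical coaction determined by $b\mapsto b\otimes\lambda_t$ for $b\in M_t$. Because the second leg of $\delta_\M$ already lives in the reduced group algebra, the Fell-bundle form of Fell's absorption principle (\cite{ExelNg:ApproximationProperty}*{Corollary~2.15}) shows this composition annihilates $\ker\tilde\Lambda$; equivalently $\norm{\tilde\mu_{\lambda,T}(f)}\le\norm{f}_{C^*_\red(\M)}$ for $f\in C_c(\M)$, so $\tilde\mu_{\lambda,T}$ descends to a representation of $C^*_\red(\M)$.

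To produce the $\weaks$-continuous extension and its faithfulness I would exhibit the explicit intertwiner with the regular representation of Proposition~\ref{prop:faithful representation of W(B)}. Define $U\colon\ell^2(\M)\otimes_\pi H\to\ell^2(G,H)$ on elementary tensors by $U(b\otimes h)\defeq\delta_t\otimes T_b h$ for $b\in M_t$; the identity $\langle T_b h,T_c h'\rangle=\langle h,\pi(b^*c)h'\rangle$ shows $U$ is isometric, and a direct computation gives $U\Lambda_\pi(a)=\mu_{\lambda,T}(a)U$ for all $a\in\M$. Hence the range projection $P\defeq UU^*$ commutes with $\mu_{\lambda,T}(\M)$ and $\mu_{\lambda,T}|_{P}\cong\Lambda_\pi$. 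Proposition~\ref{prop:faithful representation of W(B)} extends the integrated form of $\Lambda_\pi$ uniquely to a normal representation $\tilde\Lambda_\pi^\weaks$ of $W^*_\red(\M)$; transporting it through $U$ and combining with the factoring of the second paragraph yields the extension $\tilde\mu^\weaks_{\lambda,T}$, whose uniqueness is forced by the $\weaks$-density of $C^*_\red(\M)$ in $W^*_\red(\M)$. When $\pi$ is faithful, Proposition~\ref{prop:faithful representation of W(B)} makes $\tilde\Lambda_\pi^\weaks$ faithful, hence isometric on $C^*_\red(\M)$; since $\Lambda_\pi$ is a subrepresentation of $\mu_{\lambda,T}$ via $U$ we get $\norm{f}_{C^*_\red(\M)}=\norm{\tilde\Lambda_\pi(f)}\le\norm{\tilde\mu_{\lambda,T}(f)}$, which with the reverse inequality from the factoring gives $\norm{\tilde\mu_{\lambda,T}(f)}=\norm{f}_{C^*_\red(\M)}$ and thus faithfulness of $\tilde\mu^\weaks_{\lambda,T}$.

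The main obstacle is precisely the reconciliation in the third paragraph: the isometry $U$ is rarely surjective, its range being $\bigoplus_{t}\delta_t\otimes\pi(p_t)H$ with $p_t$ the unit of the ideal $I_t$ generated by $M_tM_t^*$ (as in Section~\ref{sec:central-partial}), so $P\neq 1$ unless $\M$ is saturated. Thus $\mu_{\lambda,T}$ is genuinely larger than $\Lambda_\pi$, and one must argue that the complementary summand also extends normally and is controlled by $C^*_\red(\M)$. I would handle this either by recognising the complement as a regular representation $\Lambda_\sigma$ for a suitable amplification $\sigma$ of $\pi$, so that Proposition~\ref{prop:faithful representation of W(B)} applies verbatim, or by exploiting the two-sided weak containment $\Lambda_\pi\preceq\mu_{\lambda,T}\preceq\Lambda_\pi$ coming from the isometry $U$ and the factoring, which makes $\mu_{\lambda,T}$ quasi-equivalent to $\Lambda_\pi$ and hence forces the normal extension to exist.
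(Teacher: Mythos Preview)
Your setup is sound: the isometry $U$ you define is exactly the $V$ of \cite{ExelNg:ApproximationProperty}*{Proposition~2.13} used in the paper, the factoring through $C^*_\red(\M)$ is correctly outsourced to Exel--Ng, and your faithfulness argument via the subrepresentation $\Lambda_\pi\hookrightarrow\mu_{\lambda,T}$ is fine once the normal extension exists. The genuine gap is precisely where you flag it, and neither of your proposed fixes closes it.

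Fix (b) is incorrect: mutual weak containment of two representations of $C^*_\red(\M)$ (i.e.\ equal kernels) does \emph{not} imply quasi-equivalence, and in particular does not force the existence of a normal extension to $W^*_\red(\M)$. Two faithful representations of a $C^*$-algebra can generate non-isomorphic von Neumann algebras. Fix (a) is too vague to be an argument: the orthogonal complement $P^\perp\ell^2(G,H)=\bigoplus_t\delta_t\otimes(1-\pi(p_t))H$ has no evident reason to carry a representation of the form $\Lambda_\sigma$, and you give no mechanism for producing $\sigma$.

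The paper resolves this with one extra ingredient you are missing: the \emph{right} regular representation $\rho$ of $G$. The operators $\rho_r\otimes 1$ commute with $\mu_{\lambda,T}(\M)$, so each translate $(\rho_r\otimes 1)U$ is again an intertwiner $\Lambda_\pi\to\mu_{\lambda,T}$, and the ranges of these translates together span $\ell^2(G,H)$ (in the $t$-component the range of $(\rho_r\otimes 1)U$ is $\pi(p_{tr})H$, and $\sup_r p_{tr}=1$ since $p_e=1$). This exhibits $\mu_{\lambda,T}$ as covered by copies of $\Lambda_\pi$ and yields the explicit matrix-coefficient identity
\[
\langle (\rho_r\otimes 1)Uu,\ \tilde\mu_{\lambda,T}(f)\,(\rho_s\otimes 1)Uv\rangle
= \langle (\rho_{s^{-1}r}\otimes 1)Uu,\ U(\tilde\Lambda(f)\otimes 1)v\rangle,
\]
which the paper uses to define $\tilde\mu^\weaks_{\lambda,T}(R)$ for $R\in W^*_\red(\M)$ directly and then to check linearity, $\weaks$-continuity on bounded sets, the $*$-property and multiplicativity by hand. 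In effect the paper realises your fix (a) not on the complement but on all of $\ell^2(G,H)$, and the ``suitable amplification'' is the $G$-fold amplification furnished by $\{\rho_r\otimes 1\}_{r\in G}$. Once you insert this observation, the rest of your outline goes through.
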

\begin{proof}
 It was shown in \cite{ExelNg:ApproximationProperty} that $\tilde{\mu}_{\lambda,T}$ factors through a representation of $C^*_{\red}(\M).$
 To extend $\tilde{\mu}_{\lambda,T}$ to $W^*_{\red}(\M)$ take $R\in W^*_\red(\M).$
 Then, by \cite{MR641217}, there exists a bounded net $\{f_j\}_{j\in J}\subseteq C^*(\M)$ such that $R = \weaks\lim_j \Lambda(f_j).$
 Notice that $\{\Lambda(f_j)\}_{j\in J}\subseteq C^*_\red(\cB).$

 Every closed ball of $\Lb(\ell_2(G,H))$ is compact in the weak* topology, thus there exists $S\in \Lb(\ell_2(G,H))$ and a subnet $\{f_{j_l}\}_{l\in L}$ such that $S=\weaks \lim_l \tilde{\mu}_{\lambda,T}(f_{j_l}).$
 In order to prove that $S=\weaks \lim_j \tilde{\mu}_{\lambda,T}(f_j)$ it suffices to show the existence of a set $X\subseteq \ell_2(G,H)$ spanning a dense subset of $\ell_2(G,H)$ and such that, for every $x,y\in X,$ $\{\langle x,\tilde{\mu}_{\lambda,T}(f_j)y\rangle\}_{j\in J}$ is a convergent net.

 Using the notation of \cite{ExelNg:ApproximationProperty}*{Propositon 2.13} we define
 \begin{equation*}
   X:=\bigcup_{r\in G} \{ (\rho_r\otimes 1)\circ V u \colon u\in \ell_2(\M)\otimes_{T} H, h\in H\},
 \end{equation*}
 recalling that $\rho\colon G\to \ell_2(G)$ is the right regular representation and that $\rho_r\otimes 1$ lies in the commutant of $\tilde{\mu}_{\lambda,T}(C^*_{\red}(\M)).$
 Recall also that $V\colon \ell_2(\M)\otimes_{T} H\to \ell_2(G,H)$ is an isometry such that $V(z\otimes h)(t)=T_{z(t)}h.$
 Take $x=\rho_r\otimes 1\circ V u\in X$ and $y=\rho_r\otimes 1\circ Vv\in X.$
 Then, by \cite{ExelNg:ApproximationProperty}*{Proposition 2.6},
 \begin{align*}
  \lim_j \langle x,\tilde{\mu}_{\lambda,T}(f_j)y\rangle
     &= \lim_j\langle V u,  (\rho_{r^{-1}}\otimes 1)\tilde{\mu}_{\lambda,T}(f_j) (\rho_s\otimes 1)Vv\rangle\\
     &= \lim_j\langle V u, (\rho_{r^{-1}s}\otimes 1)V(\Lambda(f_j)\otimes 1)v\rangle\\
     & =\langle Vu, (\rho_{r^{-1}s}\otimes 1)V (R\otimes 1)v\rangle.
 \end{align*}

 This not only shows that $\{\tilde{\mu}_{\lambda,T}(f_{j})\}_{j\in J}$ converges in the weak (and weak*) topology, but also that its limit is completely determined by $R=\weaks\lim_j \Lambda(f_j).$
 Of course, we define $\tilde{\mu}^\weaks_{\lambda,T}(R):=S.$

 Define $V^r:=(\rho_r\otimes 1)\circ V.$
 Then $\tilde{\mu}^\weaks_{\lambda,T}\colon W^*_{\red}(\M)\to \Lb(\ell_2(G,H))$ is uniquely determined by the condition
 \begin{equation}\label{equ:condition defining the extension}
  \langle V^r u,\tilde{\mu}_{\lambda,T}(R) V^s v\rangle = \langle V^{s^{-1}r}u, V (R\otimes 1)v\rangle,\ \forall \ u,v\in \ell_2(\M), \ r,s\in G.
 \end{equation}
 This condition immediately implies that $\tilde{\mu}^\weaks_{\lambda,T}$ is linear and $\weaks$-continuous in any closed ball.
 Moreover, it is also straightforward to prove that $\tilde{\mu}^\weaks_{\lambda,T}$ preserves the involution.
 To show that $\tilde{\mu}^\weaks_{\lambda,T}$ is multiplicative take $R,S\in W^*_{\red}(\M)$ and bounded nets $\{f_j\}_{j\in J},\{g_l\}_{l\in L}\subseteq C^*_{\red}(\M)$ weak* converging to $R$ and $S,$ respectively.
 Then, using that multiplication is separately weakly continuous, we deduce
 \begin{align*}
  \tilde{\mu}^\weaks_{\lambda,T}(RS)
    & =\lim_j \tilde{\mu}^\weaks_{\lambda,T}(f_jS)
      =\lim_j \lim_l \tilde{\mu}^\weaks_{\lambda,T}(f_jg_l)
      =\lim_j \lim_l \tilde{\mu}^\weaks_{\lambda,T}(f_j)\tilde{\mu}^\weaks_{\lambda,T}(g_l)\\
    & = \lim_j \tilde{\mu}^\weaks_{\lambda,T}(f_j)\tilde{\mu}^\weaks_{\lambda,T}(S)
      = \tilde{\mu}^\weaks_{\lambda,T}(R)\tilde{\mu}^\weaks_{\lambda,T}(S).
 \end{align*}

 Assume $T|_{M_e}$ is faithful and $\tilde{\mu}^\weaks_{\lambda,T}(R)=0.$
 Then \eqref{equ:condition defining the extension} implies (with $r=s=e$) that $\langle u,(R\otimes 1)v\rangle=0$ for all $u,v\in \ell^2_\weaks(\M)\otimes_{T|_{M_e}} H.$
 Since  $T|_{M_e}$ is faithful, we have $R=0.$
\end{proof}

\begin{definition}
 Let $\M$ be a W*-Fell bundle over the discrete group $G.$
 We say that the subset $\mathcal{N}\subseteq \M$ is a W*-Fell subbundle if it is a Fell subbundle and the $\weaks$-topology of each fibre $N_t$ is the restriction of the $\weaks$-topology of $M_t.$
\end{definition}

 The definition above implies that $\mathcal{N}$ is a W*-Fell bundle on its own right. Reciprocally, if a Fell subbundle $\cB$ of $\M$ is a W*-Fell bundle (with some appropriate $\weaks$-topology) then $\cB$ is a W*-Fell subbundle of $\M$ if and only if $B_e$ is $\weaks-$closed in $M_e.$ This is so because the $\weaks$-topology of $B_e$ determines the $\weaks$-topology of all the fibres $B_t.$

\begin{proposition}
Let $\mathcal{N}\subseteq \M$ be a W*-Fell subbundle.
 If we view $C^*_{\red}(\mathcal{N})$ as a $C^*$-subalgebra of $C^*_{\red}(\M)$ (as in \cite{Abadie:Enveloping}*{Proposition 3.2}), and if $C^*_{\red}(\M)$ is viewed as a $C^*$-subalgebra of $W^*_{\red}(\M)$, then $W^*_{\red}(\mathcal{N})$ is isomorphic to the $\weaks$-closure of $C^*_{\red}(\mathcal{N})$ in $W^*_{\red}(\M).$
\end{proposition}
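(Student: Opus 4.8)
The plan is to realise both cross-sectional W*-algebras spatially by means of Proposition~\ref{prop:faithful representation of W(B)} and then to compare them inside a single Hilbert space by compressing to a reducing subspace. First I would fix a faithful normal representation $\pi\colon M_e\to\Lb(H)$. Since $\mathcal{N}\sbe\M$ is a W*-Fell subbundle, $N_e$ is $\weaks$-closed in $M_e$, so $\pi_0:=\pi|_{N_e}$ is again faithful and normal. Applying Proposition~\ref{prop:faithful representation of W(B)} to $(\M,\pi)$ and to $(\mathcal{N},\pi_0)$ produces faithful normal representations $\tilde\Lambda_\pi^\weaks\colon W^*_\red(\M)\to\Lb(\tilde H)$ with $\tilde H=\ell^2(\M)\otimes_\pi H$, and $\tilde\Lambda_{\pi_0}^\weaks\colon W^*_\red(\mathcal{N})\to\Lb(K)$ with $K=\ell^2(\mathcal{N})\otimes_{\pi_0}H$. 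Because for sections supported in $\mathcal{N}$ the $M_e$-valued inner product already lands in $N_e$ and coincides with the $N_e$-valued one, the inclusion $\ell^2(\mathcal{N})\sbe\ell^2(\M)$ induces an isometric embedding $K\into\tilde H$.

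Next I would record the compatibility on fibres. Recall from the proof of Proposition~\ref{prop:faithful representation of W(B)} that on the regular image $\tilde\Lambda_\pi^\weaks$ acts by $\Lambda_\pi(b)=\Lambda(b)\otimes\id$. For $b\in N_t$ the operator $\Lambda(b)$ sends $\mathcal{N}$-valued sections to $\mathcal{N}$-valued sections (since $N_tN_{t^{-1}s}\sbe N_s$), and likewise for $b^*\in N_{t^{-1}}$; hence $K$ is a \emph{reducing} subspace for $\tilde\Lambda_\pi^\weaks(\mathcal{N})$, and the restriction of $\Lambda_\pi(b)$ to $K$ is exactly $\tilde\Lambda_{\pi_0}^\weaks(b)$. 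As $C^*_\red(\mathcal{N})$ is generated by $\mathcal{N}$, the subspace $K$ reduces the whole $C^*$-algebra $\tilde\Lambda_\pi^\weaks(C^*_\red(\mathcal{N}))$, and compression to $K$ maps it onto $\tilde\Lambda_{\pi_0}^\weaks(C^*_\red(\mathcal{N}))$. Writing $A:=C^*_\red(\mathcal{N})$, viewed as a $\weaks$-dense $C^*$-subalgebra of $W:=\cl{C^*_\red(\mathcal{N})}^\weaks\sbe W^*_\red(\M)$, and transporting through the normal (hence $\weaks$-bicontinuous) isomorphism $\tilde\Lambda_\pi^\weaks$, I get $\tilde\Lambda_\pi^\weaks(W)=\cl{\tilde\Lambda_\pi^\weaks(A)}^\weaks$; since $K$ reduces $\tilde\Lambda_\pi^\weaks(A)$ it reduces this $\weaks$-closure too. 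Compression $T\mapsto T|_K$ is then a normal $*$-homomorphism that agrees with $\tilde\Lambda_{\pi_0}^\weaks$ on $A$, so it carries $\tilde\Lambda_\pi^\weaks(W)$ onto $\cl{\tilde\Lambda_{\pi_0}^\weaks(A)}^\weaks=\tilde\Lambda_{\pi_0}^\weaks(W^*_\red(\mathcal{N}))$. Composing, I obtain a surjective normal $*$-homomorphism $\Theta\colon W\to W^*_\red(\mathcal{N})$ restricting to the identity on $A$.

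The crux is the injectivity of $\Theta$. I would decompose $\tilde\Lambda_\pi^\weaks|_A$ along $\tilde H=K\oplus K^\perp$ as $\tilde\Lambda_{\pi_0}^\weaks|_A\oplus\rho$ for a representation $\rho$ of $A$ on $K^\perp$. The decisive point is that $\tilde\Lambda_{\pi_0}^\weaks|_A$ is \emph{faithful} on the $C^*$-algebra $A=C^*_\red(\mathcal{N})$ by Proposition~\ref{prop:faithful representation of W(B)} (as $\pi_0$ is faithful); hence $\norm{\rho(x)}\le\norm{x}=\norm{\tilde\Lambda_{\pi_0}^\weaks(x)}$ for all $x\in A$, i.e. $\rho$ is weakly contained in $\tilde\Lambda_{\pi_0}^\weaks|_A$. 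Passing to the normal extensions to the bidual $A''$, weak containment gives the inclusion of central support projections $z_\rho\le z_{\tilde\Lambda_{\pi_0}^\weaks}$, so $\tilde\Lambda_\pi^\weaks|_A=\tilde\Lambda_{\pi_0}^\weaks|_A\oplus\rho$ has the same central support as its summand $\tilde\Lambda_{\pi_0}^\weaks|_A$. Therefore the two quotients of $A''$ coincide, $\tilde\Lambda_\pi^\weaks(W)\cong A''/\ker(\tilde\Lambda_{\pi_0}^\weaks)^{\vphantom{\weaks}}\cong W^*_\red(\mathcal{N})$, and this identification is precisely $\Theta$; thus $\Theta$ is an isomorphism.

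I expect the genuine obstacle to be exactly this injectivity step: the compression to $K$ is visibly surjective but could a priori annihilate the part of $W$ living on $K^\perp$. What rescues it is the faithfulness built into Proposition~\ref{prop:faithful representation of W(B)}, combined with the elementary fact that every representation of a $C^*$-algebra is weakly contained in a faithful one, so that no extra information can survive on $K^\perp$ beyond what is already seen on $K$. Everything else — the isometric embedding $K\into\tilde H$, the verification that $K$ reduces $\tilde\Lambda_\pi^\weaks(C^*_\red(\mathcal{N}))$, and the normality of the maps involved — is routine bookkeeping with the constructions of Proposition~\ref{prop:faithful representation of W(B)}.
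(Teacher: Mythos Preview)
Your setup through the construction of the surjective normal $*$-homomorphism $\Theta\colon W\to W^*_\red(\mathcal{N})$ is fine, but the injectivity argument has a genuine gap. The implication ``$\rho$ is weakly contained in a faithful representation $\sigma$ of $A$, hence $z_\rho\le z_\sigma$ in $A''$'' is \emph{false}. Weak containment at the $C^*$-level only says $\ker\sigma\sbe\ker\rho$ inside $A$; it says nothing about the kernels of the normal extensions to $A''$. For a concrete counterexample take $A=\cont[0,1]$, let $\sigma$ be the (faithful) multiplication representation on $L^2[0,1]$ and $\rho$ the evaluation at $0$. Then $\rho\preceq\sigma$ trivially, yet the minimal central projection $p_0\in A''$ supporting $\rho$ satisfies $\tilde\sigma(p_0)=0$ (the point $\{0\}$ is Lebesgue-null), so $p_0\le 1-z_\sigma$ and $z_\rho\not\le z_\sigma$. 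In your situation the representation $\rho$ on $K^\perp$ involves left multiplication of $\mathcal{N}$ on the fibres $M_s$ (not just on $N_s$), and there is no a~priori reason this should be quasi-contained in the regular representation of $\mathcal{N}$.

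The paper avoids this difficulty altogether by working not with $\Lambda_\pi$ from Proposition~\ref{prop:faithful representation of W(B)} but with the Exel--Ng representation $\mu_{\lambda,T}$ of Theorem~\ref{theo:Exel-Ng representations}. One chooses a fibrewise faithful, fibrewise $\weaks$-continuous $T\colon\M\to\Lb(H)$, sets $H_0:=T_{1_{N_e}}H$ and $R:=T|_{\mathcal{N}}$ acting on $H_0$. Because every $a\in N_t$ satisfies $a=1_{N_e}\,a\,1_{N_e}$, one gets $T_a=T_{1_{N_e}}T_aT_{1_{N_e}}$, so in the decomposition $\ell^2(G,H)=\ell^2(G,H_0)\oplus\ell^2(G,H_0)^\perp$ one has
\[
\mu_{\lambda,T}(a)=\begin{pmatrix}\mu_{\lambda,R}(a)&0\\0&0\end{pmatrix}\qquad(a\in\mathcal{N}).
\]
Thus the piece on the orthogonal complement is \emph{identically zero}, not merely weakly contained in the other summand, and the injectivity of the compression is immediate. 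If you prefer to keep your $\Lambda_\pi$ picture, an alternative repair is to use the faithful normal conditional expectation $E\colon W^*_\red(\M)\onto M_e$ of Remark~\ref{rem:cond-exp}: it restricts to a faithful normal expectation $W\onto N_e$ that is intertwined by $\Theta$ with the corresponding expectation on $W^*_\red(\mathcal{N})$, and faithfulness then forces $\Theta$ to be injective.
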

\begin{proof}
  Our proof is a slight modification of that of \cite{Abadie:Enveloping}*{Proposition 3.2}.
  By Proposition \ref{prop:faithful representation of W(B)} there
  exists a representation $T\colon \M\to \Lb(H)$ with $T|_{M_e}$
  faithful and $\weaks$-continuous on each fibre. 
  Define $H_0:=T_{1_N}H,$ where $1_N$ is the unit of $N_e,$ and the restriction map $R\colon \mathcal{N}\to \Lb(H_0)$ by $R_a:=T_a|_{H_0}.$
  Then $R$ is a representation $\weaks$-continuous on each fibre and with $R|_{N_e}$ faithful.

  In terms of the decomposition $\ell_2(G,H)=\ell_2(G,H_0)\oplus \ell_2(G,H_0)^\perp,$ we have
  \begin{equation}\label{equ: mu lambda T and mu lambda R}
   \mu_{\lambda,T}(a)=\left(\begin{array}{cc}
                             \mu_{\lambda,R}(a) & 0\\
                             0  & 0
                            \end{array}\right),\ \forall a\in \mathcal{N}.
  \end{equation}

  We get the desired result by considering the integrated forms of $\mu_{\lambda,T}$ and $\mu_{\lambda,R}$ and the respective $\weaks-$continuous extensions to $W^*_{\red}(\M)$ and $W^*_{\red}(\mathcal{N}).$
\end{proof}

\begin{remark}
 If we add to the hypotheses of the last theorem the condition that $\mathcal{N}$ is hereditary in $\M$ (that is $\mathcal{N}\M\mathcal{N}\subseteq \mathcal{N}$) then $W^*_{\red}(\mathcal{N})$ is hereditary in $W^*_{\red}(\M).$
 Indeed, this follows from separate $\weaks$-continuity of the product and the fact that $C^*_{\red}(\mathcal{N})$ is hereditary in $C^*_{\red}(\M).$
\end{remark}

\subsection{The W*-algebra of kernels}\label{ssec:wstar algebra of kernels}
Here we define a W*-version of the $C^*$-algebra of kernels $\bk(\M)$ and also of the canonical action $\beta$ on $\bk(\M).$

Consider the canonical representation $\pi\colon \bk(\M)\to \Lb(\ell^2(\M))$ given by $\pi(k)f(s)=\sum_{t\in G}k(s,t)f(t)$ for every $k\in \bk_c(\M)$ and $f\in C_c(\cB)$ with finite support. This representation has been already considered in \cite{Abadie:Enveloping}.
Using the canonical embedding $\Lb(\ell^2(\M))\into \Lb(\ell^2_\weaks(\M))$, 
we view $\pi$ as a representation $\pi\colon \bk(\M)\to \Lb(\ell^2_\weaks(\M))$.

With the canonical action of $G$ on $\bk(\M)$ we construct the *-homomorphism $\pi^\beta\colon \bk(\M)\to \ell^\infty(G,\Lb(\ell^2_\weaks(\M)))$ defined by $\pi^\beta(f)|_t=\pi(\beta_t^{-1}(f))$.
Notice that $\pi^\beta$ is equivariant with respect to the translation W*-action $\gamma$ on $\ell^\infty(G,\Lb(\ell^2_\weaks(\M)))$.

Recall that we may view the algebra of (generalised) compact operators $\bK(\M):=\bK(\ell^2(\M))$ as an ideal of $\bk(\M)$ and $\beta$ is the enveloping action of $\beta|_{\bK(\M)}$.
In the $C^*$-case we know that $\pi\colon \bk(\M)\to \Lb(\ell^2(\M))$ is the identity when restricted to $\bK(\M)$. In particular $\pi\colon \bk(\M)\to \Lb(\ell^2_\weaks(\M))$ is injective on $\bK(\M)$. 

Now we can prove that $\pi^\beta$ is injective.
Indeed,  $\pi^\beta(f)=0$ implies that $\pi(\beta_t(f)x)=0$ for every $t\in G$ and $x\in \bK(\M)$ and since $\pi$ is faithful on $\bK(\M)$, this implies $f\beta_t(x)=0$ for every $t\in G$ and $x\in\bK(\M)$, and this is equivalent to $f=0$ because the linear $G$-orbit of $\bK(\M)$ is dense in $\bk(\M)$.

Let $\bk_\weaks(\M)$ and $\bK_\weaks(\M)$ be the
$\weaks$-closures of $\pi^\beta(\bk(\M))$ and $\pi^\beta(\bK(\M))$, respectively.
Then clearly $\bK_\weaks(\M)$ is a W*-ideal of $\bk_\weaks(\M)$ and $\beta^\weaks:=\gamma|_{\bk_\weaks(\M)}$ is the W*-enveloping action of $\beta^\weaks|_{\bK_\weaks(\M)}=\gamma|_{\bK_\weaks(\M)}$.

Our construction implies that $\beta^\weaks$ is a quotient of $\beta''$.
This quotient is such that we can faithfully view $\beta$ as a restriction of $\beta^\weaks$.
Notice that $\bK(\ell^2_\weaks(\M))$ is $\weaks$-dense in $N:=\Lb(\ell^2_\weaks(\M))$ (this follows, for instance, from \cite{Blecher-Merdy:Operator}*{Lemma~8.5.23}).
We claim that $\bK_\weaks(\M)$ is canonically isomorphic to $N$. Indeed, the evaluation at $e\in G$, $ev_e\colon \ell^\infty(G,N)\to N,$ is a surjective $\weaks$-continuous *-homomorphism. Moreover, $ev_e$ is injective when restricted to $\bK_\weaks(\M)$ because $ev_e\circ\pi^\beta|_{\bK_\weaks(\M)}$ is just $\pi|_{\bK_\weaks(\M)}$.
Thus $ev_e|_{\bK_\weaks(\M)}$ is an isomorphism between $\bK_\weaks(\M)$ and $N=\Lb(\ell^2_\weaks(\M))$.

\begin{definition}
 The W*-algebra $\bk_\weaks(\M)$ constructed above will be called the \emph{W*-algebra of kernels of
 $\M$}. It will be always endowed with the canonical W*-action $\beta^\weaks$ of $G$ defined above.
\end{definition}

\begin{definition}\label{def:equivalence of wstar Fell bundles}
 We say that two W*-Fell bundles are \emph{weakly W*-equivalent} if the canonical actions on their W*-algebras of kernels are W*-Morita equivalent.
\end{definition}

\begin{remark}
 W*-equivalence of W*-Fell bundles is an equivalence relation because, as in the $C^*$-case, we have inner tensor products of W*-equivalence bimodules.
\end{remark}

\begin{theorem}\label{thm:enveloping action of the central partial action}
  Let $\M$ be a W*-Fell bundle over a group $G$. Then the W*\nb-envelo\-ping action of the central partial action $\sigma$ of $\M$ is the restriction of $\beta^\weaks$ to the centre of $\bk_\weaks(\M)$.
\end{theorem}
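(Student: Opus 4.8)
The plan is to route everything through the W*-ideal $\bK_\weaks(\M)$, whose centre is a copy of $Z(M_e)$, and to recognise the partial action induced there as $\sigma$ by comparing it with the explicit model of the enveloping action built in Proposition~\ref{pro:enveloping-vn-partial}. By construction $\beta^\weaks=\gamma|_{\bk_\weaks(\M)}$ is the W*\nb-enveloping action of its restriction to the W*-ideal $\bK_\weaks(\M)$, so Proposition~\ref{pro:center-enveloping} shows that $(Z(\bk_\weaks(\M)),\beta^\weaks|_{Z(\bk_\weaks(\M))})$ is the W*-enveloping action of $(Z(\bK_\weaks(\M)),\beta^\weaks|_{Z(\bK_\weaks(\M))})$. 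By uniqueness of W*-enveloping actions (Proposition~\ref{pro:enveloping-vn-partial}) the theorem then reduces to producing an isomorphism of W*-partial actions
\[
(Z(\bK_\weaks(\M)),\beta^\weaks|_{Z(\bK_\weaks(\M))})\;\cong\;(Z(M_e),\sigma).
\]

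For the underlying algebras I would use the isomorphism $ev_e\colon\bK_\weaks(\M)\xrightarrow{\sim}N:=\Lb(\ell^2_\weaks(\M))$ from the construction. Since the $e$-component of $\ell^2_\weaks(\M)$ is $M_e$ itself, whose inner products already exhaust $M_e$, the module $\ell^2_\weaks(\M)$ is full over $M_e$; Morita theory (Remark~\ref{rem:isomorphism between the centres}) then yields a canonical W*-isomorphism $Z(M_e)\xrightarrow{\sim}Z(N)=Z(\bK_\weaks(\M))$, $a\mapsto R_a$, where $R_a$ is right multiplication by the central element $a$, i.e. $(R_a\xi)(t)=\xi(t)a$. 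The heart of the proof is to compute $ev_e^{-1}(R_a)\in\bK_\weaks(\M)\sbe\ell^\infty(G,N)$ fibrewise. Writing $R_a=\weaks\lim_i\pi(x_i)$ with $x_i\in\bK(\M)$ of finite rank, $\weaks$-continuity of $ev_e^{-1}$ and of each evaluation map gives $ev_e^{-1}(R_a)(s)=\weaks\lim_i\pi(\beta_{s^{-1}}(x_i))$; computing the matrix of this operator on $\ell^2_\weaks(\M)=\bigoplus_t M_t$, the off-diagonal coefficients vanish in the limit while the diagonal one acts on $M_r$ by left multiplication by $p_r\,\sigma_{rs^{-1}}(p_{sr^{-1}}a)$, the projection $p_r$ appearing because an element of $M_e$ acts on $M_r$ only through its compression to $I_r$. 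Using the relation $xa=\sigma_\rho(p_{\rho^{-1}}a)x$ for $x\in M_\rho$, the identity $\sigma_t(p_{t^{-1}}p_u)=p_tp_{tu}$ and the cocycle property of $\sigma$ from Section~\ref{sec:central-partial}, this collapses to left multiplication by $\sigma_r\big(p_{r^{-1}}\sigma_{s^{-1}}(p_s a)\big)$, i.e.
\[
ev_e^{-1}(R_a)(s)=R_{\,\sigma_{s^{-1}}(p_s a)}\in Z(N),
\]
so that, under the pointwise identification $Z(N)\cong Z(M_e)$ and the formula $\iota_\sigma(a)(s)=\sigma_{s^{-1}}(p_s a)$ of Proposition~\ref{pro:enveloping-vn-partial}, the element $ev_e^{-1}(R_a)$ corresponds exactly to $\iota_\sigma(a)$.

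Granting this, the map $a\mapsto ev_e^{-1}(R_a)=\iota_\sigma(a)$ is an isomorphism of $Z(M_e)$ onto $Z(\bK_\weaks(\M))$ which intertwines $\sigma$ with the restriction of the translation action $\beta^\weaks=\gamma$ to $\iota_\sigma(Z(M_e))$ — precisely the assertion that $\iota_\sigma$ is an isomorphism of partial actions in Proposition~\ref{pro:enveloping-vn-partial} — and this takes care of the domain ideals for free. This establishes the displayed isomorphism, and combined with the first paragraph it proves that $\beta^\weaks|_{Z(\bk_\weaks(\M))}$ is the W*-enveloping action of $\sigma$.

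The main obstacle is the fibrewise computation in the second paragraph: the genuine partiality of $\beta^\weaks|_{\bK_\weaks(\M)}$ lives precisely in the cut-off projections $p_r$, the naive ``shift the kernel'' formula is false, and it is only the interaction of these projections with the structural identities of $\sigma$ that turns the limiting diagonal operator into the single central element $R_{\sigma_{s^{-1}}(p_s a)}$. A secondary point requiring care is that the $\weaks$-limit is correctly read off from the matrix coefficients; for this one uses that $\beta_{s^{-1}}$ is isometric and $\pi$ is isometric on $\bK(\M)$, yielding a uniform norm bound on the net $\pi(\beta_{s^{-1}}(x_i))$.
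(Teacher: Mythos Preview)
Your strategy is sound and genuinely different from the paper's: rather than comparing the bimodules $X_t\otimes^\weaks_{I_t}M_t$ and $J_t\delta_t\otimes^\weaks_{J_{t^{-1}}}X_{t^{-1}}$ via the canonical $L^2$-bundle $\Lb\M$ of \cite{Abadie-Buss-Ferraro:Morita_Fell} (which is what the paper does), you attempt to read the partial action directly off the concrete embedding $\bK_\weaks(\M)\subseteq\ell^\infty(G,N)$ and match it with the explicit model $\iota_\sigma$ of Proposition~\ref{pro:enveloping-vn-partial}. The target formula $ev_e^{-1}(R_a)(s)=R_{\sigma_{s^{-1}}(p_s a)}$ is correct, and your algebraic simplification $p_r\sigma_{rs^{-1}}(p_{sr^{-1}}a)=\sigma_r(p_{r^{-1}}\sigma_{s^{-1}}(p_sa))$ is fine.

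The gap is in the limit computation itself. Knowing $\pi(x_i)\to R_a$ in the $\weaks$-topology gives you control over $u^*x_i(rs^{-1},ts^{-1})v$ for $u\in M_{rs^{-1}}$, $v\in M_{ts^{-1}}$; what you need is control over $\xi^*x_i(rs^{-1},ts^{-1})\eta$ for $\xi\in M_r$, $\eta\in M_t$. These test the kernel entry against \emph{different} fibres, and for a non-saturated bundle there is no reason $M_{rs^{-1}}M_s$ should be dense in $M_r$, so one cannot simply factor $\xi$ through $M_{rs^{-1}}$. Your assertion that the off-diagonal blocks vanish in the limit is therefore unjustified as written. For the diagonal at a general $r$ the situation is better --- since $x_i(rs^{-1},rs^{-1})\in I_{rs^{-1}}$ is bounded and $M_{rs^{-1}}$ is a W*-equivalence bimodule, the tested convergence upgrades to $x_i(rs^{-1},rs^{-1})\to\sigma_{rs^{-1}}(p_{sr^{-1}}a)$ in the $\weaks$-topology of $M_e$, and separate $\weaks$-continuity of multiplication then transports this to $M_r$ --- but you do not spell this out either.

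There is an easy repair that stays within your framework and avoids both issues. Observe first that $ev_e^{-1}(R_a)\in Z(\bK_\weaks(\M))=qZ(\bk_\weaks(\M))\subseteq Z(\bk_\weaks(\M))$ (with $q$ the central support of the ideal), and that $ev_e(\bk_\weaks(\M))\supseteq ev_e(\bK_\weaks(\M))=N$; hence $T_s:=ev_e^{-1}(R_a)(s)=ev_e(\beta^\weaks_{s^{-1}}(ev_e^{-1}(R_a)))$ already lies in $Z(N)$, so $T_s=R_b$ for a unique $b\in Z(M_e)$ and is automatically diagonal. You then only need the single block at $r=e$: the argument above gives $x_i(s^{-1},s^{-1})\to\sigma_{s^{-1}}(p_sa)$ in $M_e$, whence $b=\sigma_{s^{-1}}(p_sa)$. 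With this patch your route is complete and arguably more elementary than the paper's, at the cost of the explicit bookkeeping with kernel entries.
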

\begin{proof}
By Proposition~\ref{pro:center-enveloping}, $\beta^\weaks|_{Z(\bk_\weaks(\M))}$ is the W*-enveloping action of $\tau:=\beta^\weaks|_{Z(\bK_\weaks(\M))}$.
  Hence all we need is to show that $\tau$ is isomorphic to $\sigma$.

  The module $\ell^2_\weaks(\M)$ is a W*-equivalence bimodule between $\bK_\weaks(\M)$ and $M_e$, hence it induces a W*-isomorphism $\mu\colon Z(M_e)\to Z(\bK_\weaks(\M))$ which we claim is an isomorphism of W*-partial actions between $\sigma$ and $\tau$.

  To simplify our notation we write $ZM$ and $Z\M$ instead of $Z(M_e)$ and $Z(\bK_\weaks(\M))$, respectively.
  Consequently, the domains of $\sigma$ and $\tau$ will be denoted $ZM_t$ and $Z\M_t$ for $t\in G$.

  We must show that $\mu(ZM_t)=Z\M_t$ or, equivalently, that $\ell^2_\weaks(\M)$ induces the ideal $I_t=\cspn^\weaks M_tM_t^*$ to $J_t:=\bK_\weaks(\M)\cap \beta^\weaks_t(\bK_\weaks(\M))$.
  From the proof of \cite{Abadie-Buss-Ferraro:Morita_Fell}*{Theorem~3.5} we know that $\ell^2(\M)\sbe \ell^2_\weaks(\M)$ induces $\cspn^{\|\|}M_tM_t^*$ to $\bK(\M)\cap \beta_t(\bK(\M))$.
  By taking $\weaks$-closures in $M_e$ and $\ell^\infty(G,\Lb(\ell^2_\weaks(\M)))$, respectively, we get the desired induction.

  The composition $\mu\circ \sigma_t$ equals the composition $\mu|_t\circ \sigma_t$, where $\mu|_t$ represents the restriction and co-restriction of $\mu$ to $ZM_t$ (in the domain) and $Z\M_t$ (in the co-domain).
  But $\mu|_t$ is the isomorphism corresponding to the bimodule
  \begin{equation*}
  X_t:=J_t\ell^2_\weaks(\M)I_t=\ell^2_\weaks(\M)I_t=J_t\ell^2_\weaks(\M).
  \end{equation*}

  Hence we may view $\mu\circ \sigma_t$ as the isomorphism corresponding to the bimodule
  $X_t\otimes^\weaks_{I_t} M_t $.
  In the same way we may view $\tau_t\circ \mu$ as the isomorphism corresponding to the bimodule $J_t\delta_t\otimes^\weaks_{J_{t^{-1}}\delta_e} X_{t^{-1}}$, where $J_t\delta_t$ is the fibre over $t$ of the semidirect product bundle of $\beta^\weaks|_{\bK_\weaks(\M)}$, $\cB^\weaks$, and $J_{t^{-1}}\delta_e$ is the ideal $J_{t^{-1}}$ seen as an ideal of the unit fibre of that bundle.
  Once again we will make use of the $C^*$-version of all these constructions.

  The semidirect product bundle of $\beta|_{\bK(\ell^2(\M))}$ will be denoted $\cB$, and we will think of it as a Fell subbundle of $\cB''$.
  The fibre over $t$ of $\cB$ is $\bK(\M)_t\delta_t$ and $\bK(\M)_t\delta_t(\bK(\M)_t\delta_t)^*=\bK(\M)_t\delta_e\sbe \bK(\M)\delta_e$

  Define $I_t^{\|\|}$ and $J_t^{\|\|}$ as the $C^*$-algebras generated by $M_tM_t^*$ and
  \begin{equation*}
    (\bK(\M)_t\delta_t)(\bK(\M)_t\delta_t)^*
  \end{equation*}
  in $M_e$ and $\bK(\M)\delta_e$, respectively.
  If we set
  \begin{equation*}
   X_t^{\|\|} :=J^{\|\|}_t\ell^2(\M)I^{\|\|}_t = \ell^2(\M)I^{\|\|}_t = J_t^{\|\|}\ell^2(\M),
  \end{equation*}
  then $X_t^{\|\|}\otimes_{I_t^{\|\|}} M_t$ and $\bK(\M)_t\delta_t\otimes_{J^{\|\|}_{t^{-1}}}X_{t^{-1}}$ are isomorphic as $C^*$-trings.
  To prove this claim consider the canonical $L^2$-bundle of $\M$, $\Lb\M=\{ L_t \}_{t\in G}$, which establishes a strong equivalence between $\cB$ and $\M$ \cite{Abadie-Buss-Ferraro:Morita_Fell}.
  Then $X_t^{\|\|}$ is exactly $J_t^{\|\|}L_e=L_eI_t^{\|\|}=J_t^{\|\|}L_eI_t^{\|\|}$, and we have canonical injective maps
  \begin{align*}
   \nu_1 &\colon X_t^{\|\|}\otimes_{I_t^{\|\|}} M_t\to L_t,\ x\otimes y\mapsto xy,\\
   \nu_2&\colon \bK(\M)_t\delta_t\otimes_{J^{\|\|}_{t^{-1}}}X_{t^{-1}} \to L_t, \ T\otimes x\mapsto Tx,
  \end{align*}
  where the actions used are the actions of $\cB$ and $\M$ on $\Lb\M$.
   These maps are homomorphisms of $C^*$-trings in the sense of \cite{Abadie:Enveloping}*{Proposition 4.1}.
  Hence, they define left and right maps $\nu^r_j$ and $\nu^l_j$ for $j=1,2$.
    
  The images of $\nu_1$ and $\nu_2$ are $L_eM_t\sbe L_t$ and $\bK(\M)_t\delta_t L_e\sbe L_t$, respectively, because $M_t=I_t^{\|\|}M_t$ and $\bK(\M)_t=\bK(\M)_tJ^{\|\|}_{t^{-1}}$ (due to Cohen-Hewitt Theorem we do not need closed linear spans here).
  Recalling the definition of strong equivalence and understanding the products below as norm closed linear spans of products, we obtain:
  \begin{align*}
   L_e M_t
     & = L_e M_tM_t^*M_t
       \sbe  L_e \langle L_{t^{-1}},L_{t^{-1}}\rangle_{\M} M_t
       \sbe   {}_{\Lb\cB}\langle L_e ,L_{t^{-1}}\rangle L_{t^{-1}} M_t \\
     & \sbe   {}_{\Lb\cB}\langle L_e ,L_{t^{-1}}\rangle L_e
       \sbe    \bK(\M)_t\delta_t L_e\sbe \ldots \sbe L_e M_t.
  \end{align*}
  It can be directly shown that $\nu^r_j$ and $\nu^l_j$ ($j=1,2$) are the natural inclusions of $I^{\|\|}_t$ and $J^{\|\|}_t$ on $M_e$ and $\bK(\M)\delta_e$.
  This is due to the fact that we are allowed to use the inner products of $\Lb\M$ in the computations of the tensor product.
  Hence
  \begin{equation}\label{equ:key isomorphism, previous}
 \nu_2^{-1}\circ \nu_1\colon   X_t^{\|\|}\otimes_{I_t^{\|\|}} M_t \to \bK(\M)_t\delta_t\otimes_{J^{\|\|}_{t^{-1}}}X_{t^{-1}}
  \end{equation}
  is an isomorphism of $C^*$-trings with $(\nu_2^{-1}\circ \nu_1)^r$ and $(\nu_2^{-1}\circ \nu_1)^l$ being the identities on $I^{\|\|}_{t^{-1}}$ and $J^{\|\|}_{t}$, respectively.

  The question is now if we can extend $\nu_2^{-1}\circ \nu_1$ to an isomorphism
  \begin{equation}\label{equ:key isomorphism}
   \overline{\nu_2^{-1}\circ \nu_1}\colon X_t\otimes^\weaks_{I_t} M_t\to J_t\delta_t\otimes^\weaks_{J_{t^{-1}}\delta_e} X_{t^{-1}}
  \end{equation}
 We give an indication of how to do this and leave the details to de reader. Choose a concrete (faithful, normal) W*-representation  $I_{t^{-1}}\subseteq \Lb(H)$ and represent the $C^*$-\nb-equivalence modules of \eqref{equ:key isomorphism, previous} and the W*-equivalence modules of \eqref{equ:key isomorphism} using the concrete description of $I_t$ as we explained right before Remark~\ref{rem:isomorphism between the centres}.
By thinking in terms of concrete (i.e. ``represented'') modules, the W*-modules are the $\weaks-$closures of the $C^*$-modules and the isomorphism of \eqref{equ:key isomorphism} is the unique $\weaks-$extension of \eqref{equ:key isomorphism, previous}.

  After constructing the isomorphism $\overline{\nu_2^{-1}\circ \nu_1}$ as a $\weaks$-extension of $\nu_2^{-1}\circ \nu_1$ it follows directly that $\overline{\nu_2^{-1}\circ \nu_1}^r$ and $\overline{\nu_2^{-1}\circ \nu_1}^l$ are the identities (on $J_t$ and $I_t$, respectively).
  Now the isomorphism in \eqref{equ:key isomorphism} implies $\mu\circ \sigma_t=\tau_t\circ\mu$.
\end{proof}

\begin{corollary}\label{cor:equivalence of AD amenability}
 For a W*-Fell bundle $\M$ the following are equivalent:
 \begin{enumerate}[(i)]
  \item The canonical action $\beta^\weaks$ on $\bk_\weaks(\M)$  is W*AD-amenable.
  \item The restriction of $\beta^\weaks$ to $Z(\bK_\weaks(\M))$ is W*AD-amenable.
  \item The central partial action of $\M$ is W*AD-amenable.
 \end{enumerate}
\end{corollary}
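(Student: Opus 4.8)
The plan is to assemble the statement from the definition of W*AD-amenability together with three facts already at our disposal: Theorem~\ref{thm:enveloping action of the central partial action}, which identifies the W*-enveloping action of the central partial action $\sigma$; the centre characterisation of W*AD-amenability for global actions in Theorem~\ref{the:ADA-characterisations}; and its partial-action counterpart, Proposition~\ref{prop:AD-amenable-center}. No new estimate is required, so the whole argument is bookkeeping: one must keep careful track of which algebra ($\bk_\weaks(\M)$ or its ideal $\bK_\weaks(\M)$) and which centre is in play at each step.

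First I would establish (i)$\iff$(iii). The action $\beta^\weaks$ on $\bk_\weaks(\M)$ is global, so Theorem~\ref{the:ADA-characterisations} applies directly and shows that (i) holds if and only if the restriction of $\beta^\weaks$ to the centre $Z(\bk_\weaks(\M))$ is W*AD-amenable. By Theorem~\ref{thm:enveloping action of the central partial action}, this restriction is precisely the W*-enveloping action of the central partial action $\sigma$; hence, by the very definition of W*AD-amenability for W*-partial actions, that restriction is W*AD-amenable exactly when $\sigma$ is, i.e. exactly when (iii) holds. Chaining these equivalences gives (i)$\iff$(iii).

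Next I would deal with (i)$\iff$(ii). Here the key structural fact is that $\bK_\weaks(\M)$ is a W*-ideal of $\bk_\weaks(\M)$ and that $\beta^\weaks$ is the W*-enveloping action of its restriction $\beta^\weaks|_{\bK_\weaks(\M)}$. Thus, by the definition of W*AD-amenability for partial actions, (i) is equivalent to the W*AD-amenability of the W*-partial action $\beta^\weaks|_{\bK_\weaks(\M)}$. Now Proposition~\ref{prop:AD-amenable-center}, applied to this partial action on the W*-algebra $\bK_\weaks(\M)$, shows it is W*AD-amenable if and only if its restriction to $Z(\bK_\weaks(\M))$ is, and this last condition is exactly (ii); this closes the cycle.

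The only point requiring care --- and what I expect to be the sole source of possible confusion rather than a genuine difficulty --- is that two different centres intervene: the equivalence (i)$\iff$(iii) runs through $Z(\bk_\weaks(\M))$ and the global Theorem~\ref{the:ADA-characterisations}, whereas (i)$\iff$(ii) runs through $Z(\bK_\weaks(\M))$ and the partial-action Proposition~\ref{prop:AD-amenable-center}. That these two routes are consistent is guaranteed by the proof of Theorem~\ref{thm:enveloping action of the central partial action}, where $\beta^\weaks|_{Z(\bk_\weaks(\M))}$ is exhibited as the enveloping action of $\beta^\weaks|_{Z(\bK_\weaks(\M))}$ and the latter is identified with $\sigma$; one could alternatively derive (ii)$\iff$(iii) at once from that isomorphism, bypassing (i) altogether.
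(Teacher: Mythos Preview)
Your proof is correct and follows essentially the same route as the paper's, invoking precisely the four ingredients the paper cites (the definition of W*AD-amenability for partial actions, Proposition~\ref{prop:AD-amenable-center}, Theorem~\ref{thm:enveloping action of the central partial action}, and Theorem~\ref{the:ADA-characterisations}); you have simply unpacked the one-line citation into an explicit chain of equivalences. Your closing remark that (ii)$\iff$(iii) can be read off directly from the isomorphism $\sigma\cong\beta^\weaks|_{Z(\bK_\weaks(\M))}$ established in the proof of Theorem~\ref{thm:enveloping action of the central partial action} is a valid shortcut, also implicit in the paper.
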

\begin{proof}
 Follows at once from the definition of W*AD-amenability of partial actions, Proposition~\ref{prop:AD-amenable-center} and Theorems~\ref{thm:enveloping action of the central partial action} and~\ref{the:ADA-characterisations}. 
\end{proof}

\begin{definition}\label{def:AD amenable}
A W*-Fell bundle is said to be \emph{W*AD-amenable} if the equivalent conditions of the corollary above are satisfied.
 A Fell bundle $\cB$ is \emph{AD-amenable} if the enveloping W*-Fell bundle $\cB''$ is W*AD-amenable.
\end{definition}

By \cite{Anantharaman-Delaroche:ActionI}*{Proposition 3.6}, every W*-action of an amenable group is W*AD-amenable.
Hence every (W*-)Fell bundle over an amenable group is (W*)AD-amenable.

Given a W*-Fell bundle $\M=\{M_t\}_{t\in G}$ and a subgroup $H\subset G,$ the restriction $\M_H=\{B_t\}_{t\in H}$ is a W*-Fell bundle. Moreover, the central partial action of $\M_H$ is the restriction to $H$ of the central partial action of $\M$. Hence, Example~\ref{exa:restriction to subgroup} implies $\M_H$ is W*AD-amenable if $\M$ is. Moreover, for every Fell bundle $\cB$ over $G$ we have $(\cB_H)''=(\cB'')_H$ so that $\cB_H$ is AD-amenable if $\cB$ is. Therefore, restriction of AD-amenable Fell bundles to a subgroup remain AD-amenable.

\begin{remark}
W*AD-amenability is preserved by weak equivalence of W*-Fell bundles.
\end{remark}

\begin{remark}\label{rem:AD amenability of partial action and semidirect product bundle}
 Proposition~\ref{prop:AD-amenable-center} and Example~\ref{exa:central partial action of semidirect product bundle} imply that a W*-partial action is W*AD-amenable if and only if its semidirect product bundle (which is a W*-Fell bundle) is W*AD-amenable.
 Hence the same conclusion holds for $C^*$-partial actions and AD-amenability.
\end{remark}

\begin{theorem}\label{the:bidual and wstar algebra of kernels}
 Let $\cB$ be a Fell bundle over a group and let $\cB''$ be the enveloping W*-Fell bundle of $\cB$.
 Then the canonical action $\beta^\weaks$ on $\bk_\weaks(\cB'')$ and the bidual $\beta''$ of the canonical action $\beta$ on $\bk(\cB)$ are isomorphic as W*-actions.
 In particular, $\bk_\weaks(\cB'') $ is isomorphic to $\bk(\cB)''$.
\end{theorem}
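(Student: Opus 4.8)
The plan is to recognise both W*-actions as W*-enveloping actions of essentially the same W*-partial action, living on their ``compact cores'', and then to invoke the uniqueness of W*-enveloping actions to conclude.

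First I would record the two enveloping-action descriptions. On the one hand, recall that $\beta$ is the $C^*$-enveloping action of its restriction $\beta|_{\bK(\cB)}$ to the ideal $\bK(\cB)=\bK(\ell^2(\cB))$ of $\bk(\cB)$; hence Proposition~\ref{prop:enveloping action of AD amenable is AD amenable}, applied with $B=\bk(\cB)$ and $A=\bK(\cB)$, shows that $\beta''$ is the W*-enveloping action of the W*-partial action $(\beta|_{\bK(\cB)})''=\beta''|_{\bK(\cB)''}$ on $\bK(\cB)''$. On the other hand, the very construction of $\bk_\weaks(\cB'')$ shows that $\beta^\weaks$ is the W*-enveloping action of $\gamma|_{\bK_\weaks(\cB'')}$ on $\bK_\weaks(\cB'')$. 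By the uniqueness part of Proposition~\ref{pro:enveloping-vn-partial}, the theorem follows once I produce a W*-isomorphism $\Psi\colon \bK(\cB)''\to \bK_\weaks(\cB'')$ intertwining $(\beta|_{\bK(\cB)})''$ and $\gamma|_{\bK_\weaks(\cB'')}$.

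To build $\Psi$ I would use two identifications of the cores with operators on $\ell^2_\weaks(\cB'')$. The construction of $\bk_\weaks(\cB'')$ already supplies a W*-isomorphism $ev_e\colon \bK_\weaks(\cB'')\congto\Lb(\ell^2_\weaks(\cB''))$. For the other core, note that $\cB$ is a Fell subbundle of $\cB''$, so $\bk(\cB)\into\bk(\cB'')$ and $\bK(\cB)\into\bK(\cB'')$ compatibly with the canonical actions, since the formula $\beta_t(k)(r,s)=k(rt,st)$ is the same in both. Moreover the isometric inclusion $\ell^2(\cB)\into\ell^2_\weaks(\cB'')$ exhibits $\ell^2_\weaks(\cB'')$ as the self-dual completion of the $B_e''$-module generated by $\ell^2(\cB)$ (for each $t$ the $\weaks$-closure of $B_tB_e''$ is $B_t''$). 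Extension of operators then yields a faithful $*$-homomorphism $\iota\colon\bK(\ell^2(\cB))\to\Lb(\ell^2_\weaks(\cB''))$ with $\weaks$-dense range, and by the standard identification of the bidual of the algebra of compacts on a Hilbert module with the adjointables on its self-dual completion (see \cite{Blecher-Merdy:Operator}) its normal extension $\iota''\colon\bK(\cB)''\to\Lb(\ell^2_\weaks(\cB''))$ is a W*-isomorphism. I set $\Psi:=(ev_e)^{-1}\circ\iota''$.

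Finally I would check equivariance. On the norm-dense subalgebra $\bK(\cB)$ one has $\iota=\pi|_{\bK(\cB)}$ and $ev_e\circ\pi^\beta=\pi$, so that $\Psi|_{\bK(\cB)}=\pi^\beta|_{\bK(\cB)}$. Since $\pi^\beta$ is $\gamma$-equivariant and the canonical action on $\bk(\cB)$ is the restriction of that on $\bk(\cB'')$, for $k$ in the relevant domain ideal one gets $\Psi(\beta_t(k))=\pi^\beta(\beta_t(k))=\gamma_t(\pi^\beta(k))=\gamma_t(\Psi(k))$; as $\Psi$ is normal and $\bK(\cB)$ is $\weaks$-dense in $\bK(\cB)''$, it follows that $\Psi$ intertwines $(\beta|_{\bK(\cB)})''$ and $\gamma|_{\bK_\weaks(\cB'')}$, with matching domain ideals (the $\weaks$-closures of $\bK(\cB)\cap\beta_t(\bK(\cB))$ and their images). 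Uniqueness of the W*-enveloping action then gives the desired equivariant isomorphism $(\bk(\cB)'',\beta'')\cong(\bk_\weaks(\cB''),\beta^\weaks)$, and in particular $\bk(\cB)''\cong\bk_\weaks(\cB'')$. I expect the main obstacle to be the core identification $\bK(\cB)''\cong\Lb(\ell^2_\weaks(\cB''))$: one must verify carefully both that $\ell^2_\weaks(\cB'')$ is genuinely the self-dual completion of the $B_e''$-module generated by $\ell^2(\cB)$ and that the ensuing bidual isomorphism is implemented by the concrete extension map $\iota$, since it is exactly this that guarantees the actions are transported correctly.
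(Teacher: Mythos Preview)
Your proposal is correct and follows essentially the same route as the paper: both proofs identify $\bK(\cB)''\cong\bK_\weaks(\cB'')$ (via $\Lb(\ell^2_\weaks(\cB''))$), check that the two W*-partial actions agree on the norm-dense copy of $\bK(\cB)$ and hence coincide, and then conclude by uniqueness of W*-enveloping actions. The paper establishes the core identification by first proving $\ell^2_\weaks(\cB'')\cong\ell^2(\cB)''$ as W*-Hilbert $B_e''$-modules (using the universal representation of $B_e$ and a linking-algebra argument), which is exactly the point you flag as the main obstacle; your phrasing via self-dual completions and the extension map $\iota$ amounts to the same thing.
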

\begin{proof}
We first show that $\ell^2_\weaks(\cB'')\cong \ell^2(\cB)''$ as W*-Hilbert $B_e''$-modules. This is the crucial point
if we follow the original construction of $\bk_\weaks(\cB'')$ at the beginning of Section~\ref{ssec:wstar algebra of kernels}.

 Let $\rho\colon B_e\to \Lb(H)$ be the universal representation; we extend it to the bidual and view it as a faithful W*-representation $\rho''\colon B_e''\to \Lb(H)$.
We may then view $\ell^2_\weaks(\cB'')$ as a $\wot$-closed subspace of $\Lb(H,\ell^2_\weaks(\cB'')\otimes_{\rho''}H)$.
 But $\ell^2_\weaks(\cB'')\otimes_{\rho''}H=\ell^2(\cB)\otimes_{\rho}H=:K$ and we have a faithful representation $U\colon \ell^2_\weaks(\cB'')\to \Lb(H,K)$ such that $U(x)h=x\otimes h$. Moreover, $\ell^2_\weaks(\cB'')$ is the $\wot$-closure of $U(\ell^2(\cB))$, i.e. $\ell^2(\cB)''$.

Looking at the linking algebra $L$ of $\ell^2(B_e)$, we may view $\bK_\weaks(G,\cB'')$ as the W*-completion of $\bK(\cB)$ in $L''$.
 But this closure is also equal to $\bK(\cB)''$.
 Now we have
  \begin{align*}
 (\beta''|_{\bK(\cB)''})|_{\bK(\cB)}
     & =\beta''|_{\bk(\cB)}|_{\bK(\cB)}
       =\beta|_{\bK(\cB)}
     = \beta^\weaks|_{\bk(\cB'')}|_{\bK(\cB)}
      = (\beta^\weaks|_{\bK(\cB)''})|_{\bK(\cB)}.
 \end{align*}
 Hence we have two W*-partial actions on $\bK(\cB)''$, namely $\beta''|_{\bK(\cB)''}$ and $\beta^\weaks|_{\bK_\weaks(\cB'')}$, which are the unique W*-actions extending the $C^*$-partial action $\beta|_{\bK(\cB)}$. Therefore $\beta''|_{\bK(\cB)''}=\beta^\weaks|_{\bK_\weaks(\cB'')}$.
 But $\beta''$ and $\beta^\weaks$ are both W*-enveloping actions of $\beta''|_{\bK(\cB)''}$, then uniqueness of W*-enveloping actions implies that $\beta''$ is isomorphic to~$\beta^\weaks$.
\end{proof}

\begin{corollary}\label{cor:invariance of AD amenability under weak equivalence}
 If two Fell bundles $\cA$ and $\cB$ over the same group are weakly equivalent, then their enveloping W*-Fell bundles $\cA''$ and $\cB''$ are weakly W*-equivalent.
 In particular, AD-amenability of Fell bundles is preserved by weak equivalence of Fell bundles.
\end{corollary}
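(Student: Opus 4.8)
The plan is to reduce both sides of the statement to assertions about the canonical $G$\nobreakdash-actions on the algebras of kernels, and then transport a $C^*$\nobreakdash-equivalence to the biduals using the identification from Theorem~\ref{the:bidual and wstar algebra of kernels}. First I would unwind the definition of weak equivalence: by the characterisation of weak equivalence recalled earlier (in terms of the canonical actions on $C^*$-algebras of kernels), saying that $\cA$ and $\cB$ are weakly equivalent means precisely that the canonical $C^*$-actions $\alpha$ on $\bk(\cA)$ and $\beta$ on $\bk(\cB)$ are Morita equivalent as $C^*$-actions. Concretely, this provides an equivalence bimodule ${}_{\bk(\cA)}X_{\bk(\cB)}$ carrying a $G$-action compatible with $\alpha$ and $\beta$.

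The next step is to pass to biduals. Since global actions are a special case of partial actions, Remark~\ref{rem:bidual of pa on module} applies directly: the bidual module $X''$, equipped with the bidual partial action, establishes a W*-Morita equivalence between $\alpha''$ on $\bk(\cA)''$ and $\beta''$ on $\bk(\cB)''$. This is the step where the linking-algebra construction does the real work of promoting a $C^*$-equivalence of actions to a genuine W*-equivalence in the sense of Definition~\ref{def:Wstar equivalence}. I would then invoke Theorem~\ref{the:bidual and wstar algebra of kernels} twice, once for $\cA$ and once for $\cB$, to identify the canonical W*-action $\beta^\weaks$ on $\bk_\weaks(\cA'')$ with the bidual $\alpha''$ on $\bk(\cA)''$, and likewise on the $\cB$ side. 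Composing these two identifications with the W*-Morita equivalence just obtained yields a W*-Morita equivalence between the canonical actions on $\bk_\weaks(\cA'')$ and $\bk_\weaks(\cB'')$. By Definition~\ref{def:equivalence of wstar Fell bundles} this is exactly weak W*-equivalence of $\cA''$ and $\cB''$, which proves the first assertion.

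For the ``in particular'' clause I would combine this with Proposition~\ref{prop:cornerstone}. If $\cA$ is AD-amenable, then by Definition~\ref{def:AD amenable} the bundle $\cA''$ is W*AD-amenable, which by Corollary~\ref{cor:equivalence of AD amenability}~(i) means that the canonical action on $\bk_\weaks(\cA'')$ is W*AD-amenable. Since this action is W*-Morita equivalent to the canonical action on $\bk_\weaks(\cB'')$ by the first part, Proposition~\ref{prop:cornerstone} (invariance of AD-amenability under Morita equivalence, in the W*-case) shows that the latter is W*AD-amenable as well; hence $\cB''$ is W*AD-amenable and $\cB$ is AD-amenable. By symmetry the two conditions are equivalent.

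I expect the main obstacle to be not any hard estimate but the bookkeeping of compatibility: one must check that all the identifications in play are equivariant and fit together. Specifically, the isomorphism $\bk_\weaks(\cB'')\cong\bk(\cB)''$ of Theorem~\ref{the:bidual and wstar algebra of kernels} must intertwine $\beta^\weaks$ with $\beta''$ (which that theorem indeed guarantees), and the bidual-of-the-equivalence construction of Remark~\ref{rem:bidual of pa on module} must respect these isomorphisms, so that their composition is a bona fide W*-equivalence of the canonical actions. Once these naturality points are confirmed, the argument is a formal chain of the three quoted results.
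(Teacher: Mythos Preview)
Your proposal is correct and follows essentially the same route as the paper's proof: both start from the Morita equivalence of the canonical actions on $\bk(\cA)$ and $\bk(\cB)$, pass to biduals via Remark~\ref{rem:bidual of pa on module}, and then identify these with the canonical W*-actions on $\bk_\weaks(\cA'')$ and $\bk_\weaks(\cB'')$ using Theorem~\ref{the:bidual and wstar algebra of kernels}. Your treatment of the ``in particular'' clause via Proposition~\ref{prop:cornerstone} and Corollary~\ref{cor:equivalence of AD amenability} just makes explicit what the paper leaves implicit in its reference to Definition~\ref{def:AD amenable}.
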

\begin{proof}
 The canonical actions on $\bk(\cA)$ and $\bk(\cB)$, $\alpha$ and $\beta$ respectively, are Morita equivalent through a partial action $\gamma$ on a $\bk(\cA)$-$\bk(\cB)$-equivalence bimodule $X$ (as we recalled at the beginning of Section~\ref{sec:ADA-Fell-bundles}).
  Then everything follows from Remark~\ref{rem:bidual of pa on module}, Theorem~\ref{the:bidual and wstar algebra of kernels} and Definitions~\ref{def:equivalence of wstar Fell bundles} and~\ref{def:AD amenable}.
\end{proof}

\begin{corollary}\label{cor:Fell bundle AD amenable and action on the kernels}
 A Fell bundle $\cB$ is AD-amenable if and only if the canonical action on $\bk(\cB),$ $\beta,$ is AD-amenable.
\end{corollary}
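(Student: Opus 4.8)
The plan is to assemble the statement as a short chain of equivalences, every link of which has already been established, so that the corollary becomes a purely formal consequence of the preceding results. First I would unwind the two notions of AD-amenability that appear on the two sides of the biconditional. On the Fell-bundle side, by Definition~\ref{def:AD amenable} the bundle $\cB$ is AD-amenable exactly when its enveloping W*-Fell bundle $\cB''$ is W*AD-amenable; and, by the defining condition (i) in Corollary~\ref{cor:equivalence of AD amenability}, this holds precisely when the canonical W*-action $\beta^\weaks$ on $\bk_\weaks(\cB'')$ is W*AD-amenable. On the action side, applying the definition of AD-amenability for $C^*$-actions to $\beta$ on the $C^*$-algebra $\bk(\cB)$, the action $\beta$ is AD-amenable if and only if its bidual W*-action $\beta''$ on $\bk(\cB)''$ is W*AD-amenable.

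The bridge between the two sides is exactly Theorem~\ref{the:bidual and wstar algebra of kernels}, which furnishes an isomorphism of W*-actions between $\beta^\weaks$ on $\bk_\weaks(\cB'')$ and $\beta''$ on $\bk(\cB)''$. Since W*AD-amenability is manifestly invariant under isomorphism of W*-actions, the two ``middle'' conditions coincide, and the chain closes: $\cB$ is AD-amenable $\Leftrightarrow$ $\beta^\weaks$ on $\bk_\weaks(\cB'')$ is W*AD-amenable $\Leftrightarrow$ $\beta''$ on $\bk(\cB)''$ is W*AD-amenable $\Leftrightarrow$ $\beta$ on $\bk(\cB)$ is AD-amenable. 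This is precisely the asserted equivalence.

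I do not expect a genuine obstacle here, because the hard work has been packaged into Theorem~\ref{the:bidual and wstar algebra of kernels} (and, behind it, Theorem~\ref{thm:enveloping action of the central partial action}, which identifies the W*-enveloping action of the central partial action). The only point I would spell out is the transport of W*AD-amenability along a W*-action isomorphism, and this is immediate from Theorem~\ref{the:ADA-characterisations}: an equivariant W*-isomorphism carries a $G$-equivariant norm-one projection onto an analogous projection for the isomorphic action (equivalently, it pushes forward an approximating net $\{a_i\}$ of finitely supported central functions as in condition (iii)). Hence the corollary follows at once by concatenating the three equivalences above.
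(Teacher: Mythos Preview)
Your proof is correct. You unwind both sides to statements about W*AD-amenability of global W*-actions and then use Theorem~\ref{the:bidual and wstar algebra of kernels} directly to identify $\beta^\weaks$ on $\bk_\weaks(\cB'')$ with $\beta''$ on $\bk(\cB)''$; the chain of equivalences you wrote down is exactly right and each link is justified by the result you cite.

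The paper takes a slightly different route: it recalls that $\cB$ is weakly equivalent to the semidirect product bundle $\cB_\beta$ (a fact imported from \cite{Abadie-Ferraro:Equivalence_of_Fell_Bundles,Abadie-Buss-Ferraro:Morita_Fell}) and then applies Corollary~\ref{cor:invariance of AD amenability under weak equivalence} together with Remark~\ref{rem:AD amenability of partial action and semidirect product bundle}. That argument packages the same content through the weak-equivalence formalism, whereas your argument bypasses weak equivalence entirely and appeals to Theorem~\ref{the:bidual and wstar algebra of kernels} in one step. Your approach is arguably more self-contained (it does not need the external weak-equivalence result $\cB\sim\cB_\beta$), while the paper's approach advertises the invariance-under-weak-equivalence principle as the operative mechanism. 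Either way the substance is the same, since Corollary~\ref{cor:invariance of AD amenability under weak equivalence} itself rests on Theorem~\ref{the:bidual and wstar algebra of kernels}.
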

\begin{proof}
 Just recall that $\cB$ is weakly equivalent to $\cB_\beta$ \cites{Abadie-Ferraro:Equivalence_of_Fell_Bundles,Abadie-Buss-Ferraro:Morita_Fell} and use the Corollary above.
\end{proof}

\subsection{The dual coaction: another picture for the W*-algebra of kernels}\label{sec:dual-coaction}

In this section we want to show that the sectional W*-algebra $W^*_\red(\M)$ of a W*-Fell bundle $\M$ over $G$ carries a canonical $G$-coaction and identify the crossed product by this coaction with the W*-algebra of kernels $\bk_\weaks(\M)$.

Recall that a coaction of $G$ on a W*-algebra $N$ is a faithful unital W*-homo\-morphism $\delta\colon N\to N\bar\otimes W^*_\red(G)$ satisfying $(\delta\otimes\id)\delta=(\id\otimes\delta_G)\delta$, where $\bar\otimes$ denotes the (spatial) tensor product of W*-algebras.
Given such a coaction, the W*-crossed product  is defined as the W*-subalgebra of $N\bar\otimes\Lb(\ell^2(G))$ generated by $\delta(N)$ and $1\otimes\ell^\infty(G)$ where, as usual, $\ell^\infty(G)$ is represented as a W*-subalgebra of $\Lb(\ell^2G)$ via multiplication operators. We omit this representation here for simplicity, that is, we view $\ell^\infty(G)$ as a subalgebra of $\Lb(\ell^2G)$. It turns out that
\begin{equation*}
N\bar\rtimes_\delta  G=\cspn^{\weaks}\{\delta(n)(1\otimes f): n\in N, f\in \ell^\infty(G)\}.
\end{equation*}
Representing $N$ on a Hilbert space or, more generally, on a self-dual Hilbert module $H$, the W*-crossed product $N\bar\rtimes_\delta  G$ gets represented as a W*-subalgebra  of $\Lb(H)\bar\otimes
\Lb(\ell^2G)=\Lb(H\otimes \ell^2G)$. This crossed product carries a canonical $G$-action, the so called dual action $\dual\delta$. It is given on a generator $\delta(n)(1\otimes f)$ by $\dual\delta_t(\delta(n)(1\otimes f))=\delta(n)(1\otimes\tau_t(f))$, where $\tau_t(f)(s):=f(st)$ denotes the right translation $G$-action on $\ell^\infty(G)$. This can also be described as $\dual\delta_t(x)=(1\otimes \rho_t) x(1\otimes \rho_t^{-1})$, where $\rho\colon G\to \Lb(\ell^2G)$ denotes the right regular representation of $G$.

Now, returning to the case of a W*-Fell bundle $\M$, we want to define a coaction $\delta_\M\colon W^*_\red(\M)\to W^*_\red(\M)\bar\otimes W^*_\red(G)$ that acts on generators $\Lambda_t(a)\in M_t$ with $a\in M_t$ by the formula
\begin{equation}\label{eq:dual-coaction-relation}
\delta_\M(\Lambda_t(a))=\Lambda_t(a)\otimes\lambda_t.
\end{equation}
This is therefore an extension of the usual dual coaction on $C^*_\red(\M)\sbe W^*_\red(\M)$. Here  $W^*_\red(G)$ denotes the group W*-algebra of $G$, that is, the W*-subalgebra of $\Lb(\ell^2G)$ generated by the left regular representation $\lambda\colon G\to \Lb(\ell^2G).$

To prove that $\delta_\M$ exists, we proceed as in the $C^*$-algebra situation (see \cite{Abadie:Enveloping}*{Section~8} or \cite{ExelNg:ApproximationProperty}): Let $\M\times G$ be the pullback of $\M$ along the first coordinate projection $G\times G\to G$. This is a W*-Fell bundle over $G\times G$ whose W*-algebra is canonically isomorphic to $W^*_\red(\M\times G)=W^*_\red(\M)\bar\otimes W^*_\red(G)$, in particular we have a canonical W*-embedding
\begin{equation*}W^*_\red(\M)\bar\otimes W^*_\red(G)\sbe \Lb(\ell^2_{\weaks}(\M\times G)).\end{equation*}
Now we define a unitary operator $V$ on the Hilbert W*-module $\ell^2_{\weaks}(\M\times G)$ by the formula
\begin{equation*}
V\zeta(s,t):=\zeta(s,s^{-1}t)\quad\mbox{for all }\zeta\in \ell^2_{\weaks}(\M\times G),\, s,t\in G.
\end{equation*}
Straightforward computations show that this is indeed a unitary operator with adjoint $V^*\zeta(s,t)=\zeta(s,st)$.
Now we define a $\weaks$-continuous injective unital homomorphism $\delta_\M\colon \Lb(\ell^2_{\weaks}(\M))\to \Lb(\ell^2_{\weaks}(\M\times G))$ by
\begin{equation*}\delta_\M(a):=V(a\otimes 1)V^*,\quad a\in W^*_\red(\M).\end{equation*}
It is easy to see that~\eqref{eq:dual-coaction-relation} is satisfied. Moreover, since $\{\Lambda_t(a)\colon a\in M_t,\ t\in G\}$ generates $W^*_\red(\M)$ as a W*-algebra, the above formula restricts to an injective $\weaks$-continuous unital homomorphism
\begin{equation*}\delta_\M\colon W^*_\red(\M)\to W^*_\red(\M)\bar\otimes W^*_\red(G).\end{equation*}
This is indeed a coaction, that is, the coassociativity identity $(\delta_\M\otimes\id)\circ\delta_\M =(\id\otimes\delta_G)\circ \delta_M$ holds, where $\delta_G\colon W^*_\red(G)\to W^*_\red(G)\bar\otimes W^*_\red(G)$ denotes the comultiplication of $W^*_\red(G)$ (which, incidentally,  is the coaction $\delta_\M$ for the trivial one-dimensional Fell bundle $\M=\C\times G$).

\begin{remark}\label{rem:cond-exp}
There is a canonical normal conditional expectation $E\colon W^*_\red(\M)\onto M_e$ given on generators by $E(\Lambda(a))=\delta_{t,e}(a)$ for all $a\in M_t$. This can be proved as in the $C^*$-case, or it can be deduced from the existence of the dual coaction $\delta_\M$ above as follows: Consider the canonical tracial state $\tau\colon W^*_\red(G)\to \C$ given by $\tau(x)=\braket{\delta_e}{x\delta_e}$. Then $E=(\id\otimes\tau)\circ\delta_\M$ is the desired conditional expectation.
\end{remark}

\begin{proposition}\label{pro:kernel-CP}
For a W*-Fell bundle $\M$, we have a canonical isomorphism
\begin{equation*}W^*_\red(\M)\bar\rtimes_{\delta_\M} G\cong \bk_\weaks(\M),\end{equation*}
that identifies a generator $\delta_\M(a)(1\otimes f)\in W^*_\red(\M)\bar\rtimes_{\delta_\M} G$ with the kernel $k_{a,f}(s,t):=a(st^{-1})f(t)$ for $a\in \contc(\M)$ and $f\in \ell^\infty(G)$.
This isomorphism is $G$-equivariant with respect to the dual $G$-action on $W^*_\red(\M)\bar\rtimes_{\delta_\M} G$ and the canonical $G$-action on $\bk_\weaks(\M)$.
\end{proposition}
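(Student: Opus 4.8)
The plan is to reduce the statement to its $C^*$\nb-algebraic counterpart and then pass to $\weaks$\nb-closures. Both algebras can be realised concretely inside $\Lb(\ell^2_\weaks(\M\times G))$ through the identification $\ell^2_\weaks(\M)\otimes\ell^2 G\cong\ell^2_\weaks(\M\times G)$ used above: the crossed product $W^*_\red(\M)\bar\rtimes_{\delta_\M}G$ is by definition the W*\nb-subalgebra generated by $\delta_\M(W^*_\red(\M))$ and $1\otimes\ell^\infty(G)$ inside $W^*_\red(\M)\bar\otimes\Lb(\ell^2 G)\sbe\Lb(\ell^2_\weaks(\M\times G))$, whereas $\bk_\weaks(\M)$ sits in the same algebra via $\pi^\beta$ followed by the diagonal embedding $\ell^\infty(G,\Lb(\ell^2_\weaks(\M)))\into\Lb(\ell^2_\weaks(\M\times G))$.

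First I would record the generating sets. On the kernel side, taking $a\in\contc(\M)$ supported at a single $g\in G$ and $f=\delta_{t_0}\in\ell^\infty(G)$, the kernel $k_{a,f}$ is the elementary kernel supported at $(gt_0,t_0)$; letting $g,t_0$ and the fibre value vary, finite linear combinations of the $k_{a,f}$ exhaust $\bk_c(\M)$, which is $\weaks$\nb-dense in $\bk_\weaks(\M)$. On the crossed product side, $\Lambda(\contc(\M))$ is $\weaks$\nb-dense in $W^*_\red(\M)$ and, since multiplication is separately $\weaks$\nb-continuous on bounded sets, the elements $\delta_\M(\Lambda(a))(1\otimes f)$ with $a\in\contc(\M)$ and $f\in\ell^\infty(G)$ $\weaks$\nb-generate $W^*_\red(\M)\bar\rtimes_{\delta_\M}G$. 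It therefore suffices to match these two families. At the $C^*$\nb-level this matching is exactly the identification $C^*_\red(\M)\rtimes_{\delta_\M}G\cong\bk(\M)$ of \cite{Abadie:Enveloping}*{Section~8} (see also \cite{ExelNg:ApproximationProperty}), which is spatially implemented on $\ell^2(\M)\otimes\ell^2 G$ and sends $\delta_\M(\Lambda(a))(1\otimes f)$ to $\pi^\beta(k_{a,f})$ with $k_{a,f}(s,t)=a(st^{-1})f(t)$. Because $W^*_\red(\M)$ is the $\weaks$\nb-closure of $C^*_\red(\M)$ and $\ell^\infty(G)$ is the $\weaks$\nb-closure of $\contz(G)$, normality of $\delta_\M$ shows the W*\nb-crossed product is the $\weaks$\nb-closure of the reduced $C^*$\nb-crossed product, while $\bk_\weaks(\M)$ is the $\weaks$\nb-closure of $\pi^\beta(\bk(\M))$. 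As the $C^*$\nb-isomorphism is conjugation by a fixed unitary $U$ on $\ell^2(\M)\otimes\ell^2 G$, the normal map $\Ad(U)$ carries one $\weaks$\nb-closure onto the other and restricts to the desired isomorphism $\Phi\colon W^*_\red(\M)\bar\rtimes_{\delta_\M}G\congto\bk_\weaks(\M)$ with $\delta_\M(a)(1\otimes f)\mapsto k_{a,f}$.

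Equivariance is then checked directly on generators, with no further spatial bookkeeping. For the dual action one has $\dual{\delta_\M}_t(\delta_\M(a)(1\otimes f))=\delta_\M(a)(1\otimes\tau_t(f))$ with $\tau_t(f)(r)=f(rt)$, so $\Phi$ sends this to $k_{a,\tau_t f}$, and
\begin{equation*}
k_{a,\tau_t f}(s,r)=a(sr^{-1})f(rt)=a\big(st(rt)^{-1}\big)f(rt)=k_{a,f}(st,rt)=\beta^\weaks_t(k_{a,f})(s,r),
\end{equation*}
using that $\beta^\weaks$ extends the canonical action $\beta_t(k)(s,r)=k(st,rt)$ on $\bk(\M)$. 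Hence $\Phi\circ\dual{\delta_\M}_t=\beta^\weaks_t\circ\Phi$ on generators and, by $\weaks$\nb-continuity, everywhere.

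The hard part will be the spatial computation underlying the generator matching: confirming that the $C^*$\nb-isomorphism of \cite{Abadie:Enveloping} is implemented by a single unitary $U$ on $\ell^2(\M)\otimes\ell^2 G$, and that after conjugating by the shift unitary $V$ the two coordinates of $\ell^2_\weaks(\M\times G)$ decouple so that $\delta_\M(\Lambda(a))(1\otimes f)$ and $\pi^\beta(k_{a,f})$ act by the same formula; keeping track of the three group variables in $\ell^2_\weaks(\M\times G)$ is the delicate step. Once this $C^*$\nb-level identity is in place, the passage to the W*\nb-setting is routine, since every object in sight is the $\weaks$\nb-closure of its $C^*$\nb-predecessor and conjugation by a unitary is automatically normal.
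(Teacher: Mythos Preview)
Your plan is plausible and takes a genuinely different route from the paper. The paper does not realise both algebras inside a common $\Lb(\ell^2_\weaks(\M\times G))$ and conjugate; instead it builds $\ell^2_\weaks(\M)$ into a W*-equivalence bimodule between $M_e$ and the W*-ideal $I\unlhd W^*_\red(\M)\bar\rtimes_{\delta_\M}G$ generated by the projection $1\otimes\chi_e$, identifies $I\cong\Lb(\ell^2_\weaks(\M))$, observes that the dual action has $\weaks$-dense linear orbit of $I$, and concludes by \emph{uniqueness of W*-enveloping actions} (Proposition~\ref{pro:enveloping-vn-partial}) that $W^*_\red(\M)\bar\rtimes_{\delta_\M}G\cong\bk_\weaks(\M)$. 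This sidesteps entirely the spatial bookkeeping you flag as the hard part.

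Your argument has a real gap precisely at that point. You assert that the $C^*$-isomorphism of \cite{Abadie:Enveloping}*{Section~8} is implemented by conjugation by a single unitary $U$ on $\ell^2(\M)\otimes\ell^2 G$, but neither your sketch nor that reference supplies such a $U$ explicitly; the cited proof works with generators and does not obviously package as $\Ad(U)$. Without this, you cannot conclude that the $C^*$-isomorphism is normal for the ambient $\weaks$-topologies, and hence cannot pass to $\weaks$-closures. A concrete attempt to write down $U$ (say $U\zeta(s,r)=\zeta(s,h(s,r))$ for a suitable bijection $h$) runs into a mismatch between the second-variable shift by $\lambda_g$ on the crossed-product side and the fact that the diagonal embedding of $\pi^\beta$ does not shift that variable at all; matching $f(g^{-1}r)$ with $f(g^{-1}sr^{-1})$ forces $h$ to mix both coordinates and an inversion, and carrying this through is exactly the ``delicate step'' you defer. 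It can be made to work, but it is not routine and you have not done it. The paper's enveloping-action argument is shorter because it replaces this computation by an abstract uniqueness statement already available.
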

\begin{proof}
Let $N:=W^*_\red(\M)$ and $\delta:=\delta_\M$.
We show how to turn $\ell^2_\weaks(\M)$ into a W*-Hilbert $N\bar\rtimes_\delta G$-$M_e$-bimodule.

Consider the map $\iota\colon \contc(\M)\to N\otimes_{\alg}\contc(G)\sbe N\bar\otimes\ell^2(G)$ defined by $\iota(\xi)=\delta(\xi)(1\otimes\delta_e)=\sum_{s\in G}\Lambda(\xi(s))\otimes\delta_s$. Here and throughout this proof $\{\delta_s\}_{s\in G}$ will also denote the standard ortonormal basis of $\ell^2(G)$ -- apologies for the overuse of the symbol $\delta$ here! Let $X$ be the $\weaks$-closure of the image of $\iota$ in $N\bar\otimes\ell^2(G)$.
Notice that with respect to the $N$-inner product on $N\bar\otimes\ell^2(G)$ we have
\begin{equation*}\braket{\iota(\xi)}{\iota(\eta)}_N=\sum_{s,t\in G}\braket{\Lambda(\xi(s)\otimes\delta_s)}{\Lambda(\eta(t))\otimes\delta_t}=\Lambda(\braket{\xi}{\eta}_{M_e}),\end{equation*}
for all $\xi,\eta\in \contc(\M)$, where $\braket{\xi}{\eta}_{M_e}$ denotes the $M_e$-valued inner product on $\contc(\M)\sbe \ell^2_\weaks(\M)$. Since $\Lambda$ is a W*-embedding $M_e\into N$, it follows that the image of the $N$-valued inner product on $X$ takes values in $\Lambda(M_e)\cong M_e$ so that $X$ can be viewed as a right W*-Hilbert $M_e$-module and $\iota$ extends to an isomorphism $\ell^2_\weaks(\M)\cong X$ of W*-Hilbert $M_e$-modules.
The advantage of this picture is that $X$ is also canonically a left W*-Hilbert $N\bar\rtimes_\delta G$-module, where the left inner product is defined by $_I\braket{\xi}{\eta}:=\delta(\xi)(1\otimes \chi_e)\delta(\eta^*)\in I$ for $\xi,\eta\in \contc(\M)$. The image of this inner product generates a W*-ideal $I$ of $N\bar\rtimes_\delta G$, namely the W*-ideal  generated by the projection $p:=\chi_e$. It follows that $I\cong \Lb(\ell^2_\weaks(\M))$; this isomorphism identifies $\delta(\xi)(1\otimes p)\delta(\eta^*)$ with $\theta_{\xi,\eta}=\ket{\xi}\bra{\eta}\in \bK(\ell^2_\weaks(\M))\sbe \Lb(\ell^2_\weaks(\M))$, and it is determined by this formula and the fact that it is $\weaks$-continuous.

Next, considering the dual $G$-action $\dual\delta$ on $Q:=N\bar\rtimes_\delta G$, we notice that the linear $G$-orbit of $I$ is $\weaks$-dense. This is because $\dual\delta_{t^{-1}}(\chi_e)=\chi_t,$ so that $\dual\delta_{t^{-1}}(I)$ is the W*-ideal of $N\bar\rtimes_\delta G$ generated by the projection $p_t=\chi_t$, and these projections generate $\ell^\infty(G)$ as a W*-algebra. Therefore $\dual\delta$ can be viewed as the W*-enveloping action of its restriction $\dual\delta|_I$.
On the other hand, the $G$-action on the W*-algebra of kernels $\bk_\weaks(\M)$ is also enveloping for a partial action on $\Lb(\ell^2_\weaks(\M))$. By uniqueness of enveloping W*-actions (Proposition~\ref{pro:enveloping-vn-partial}), to see that $\bk_\weaks(\M)\cong Q$, it is enough to see that the restriction of $\dual\delta$ to $I$ coincides with the partial action on $\Lb(\ell^2_\weaks(\M))$ obtained as restriction of the $G$-action $\beta^\weaks$ on $\bk_\weaks(\M)$. But by definition, $\beta^\weaks$ is the unique $\weaks$-continuous extension of the $G$-action $\beta$ on the $C^*$-algebra of kernels $\bk(\M)$ given by $\beta_r(k)(s,t)=k(sr,tr)$ for a kernel $k\in \bk_c(\M)$. An elementary compact operator $\theta_{\xi,\eta}\in \bK(\ell^2(\M))\sbe \bK(\ell^2_\weaks(\M))$ identifies with the kernel function $k_{\xi,\eta}(s,t):=\xi(s)\eta(t)^*$.
And by \cite{Abadie:Enveloping}*{Proposition~8.1} we have a $C^*$-isomorphism $\bk(\M)\cong B:=C^*_\red(\M)\rtimes_{\delta} G$ that is $G$-equivariant for the dual $G$-action $\dual\delta$ on $B$ and $\beta$ on $\bk(\M)$. Here $\delta$ also denotes the dual coaction of $G$ on $C^*_\red(\M)$; this is a restriction of the dual coaction on $N=W^*_\red(\M)$, denoted by the same symbol $\delta$. The isomorphism $\bk(\M)\cong B$ is given as in the statement (see the proof of Proposition~8.1 in \cite{Abadie:Enveloping}). The $C^*$-algebra of compact operators $\bK(\ell^2(\M))$ identifies, as above, with the $C^*$-ideal $J$ of $B$ generated by $p=\chi_e$. This is $\weaks$-dense in $I$. Since the partial $G$-action on $J$ we get from viewing it as an ideal of $\bk(\M)$ coincides with the partial action coming from the dual action on $B$, the same has to be true for the $\weaks$-closures, that is, via the isomorphism $\Lb(\ell^2_\weaks(\M))\cong I$ the partial action on $I$ we get by restriction of $\dual\delta$ is the partial action we get from $\bk_\weaks(\M)$ by restricting it to the W*-ideal $\Lb(\ell^2_\weaks(\M))$.
\end{proof}

\begin{corollary}\label{cor:duality}
For every W*-Fell bundle $\M$ we have a canonical isomorphism
\begin{equation*}\bk_\weaks(\M)\rtimes_{\beta^\weaks}G\cong W^*_\red(\M)\bar\otimes\Lb(\ell^2G).\end{equation*}
\end{corollary}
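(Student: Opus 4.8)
The plan is to read this off from Proposition~\ref{pro:kernel-CP} together with the crossed-product duality for W*-coactions (the von Neumann algebraic analogue of Imai--Takai/Katayama duality). First, Proposition~\ref{pro:kernel-CP} provides a canonical isomorphism $\bk_\weaks(\M)\cong W^*_\red(\M)\bar\rtimes_{\delta_\M}G$ which is equivariant for the canonical action $\beta^\weaks$ on $\bk_\weaks(\M)$ and the dual action $\dual{\delta_\M}$ on the W*-crossed product. Since an equivariant isomorphism of W*-dynamical systems induces an isomorphism of the associated W*-crossed products, I would apply the crossed-product functor to obtain
\begin{equation*}
\bk_\weaks(\M)\rtimes_{\beta^\weaks}G\cong \bigl(W^*_\red(\M)\bar\rtimes_{\delta_\M}G\bigr)\rtimes_{\dual{\delta_\M}}G.
\end{equation*}

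The right-hand side is exactly the iterated crossed product governed by duality: for any W*-coaction $(N,\delta)$ of $G$ one has a canonical isomorphism $(N\bar\rtimes_\delta G)\rtimes_{\dual\delta}G\cong N\bar\otimes\Lb(\ell^2G)$, the von Neumann version of Imai--Takai duality in which the algebra $\bK(\ell^2G)$ of compact operators is replaced by its bicommutant $\Lb(\ell^2G)$. Taking $N=W^*_\red(\M)$ and $\delta=\delta_\M$ then yields the asserted isomorphism $\bk_\weaks(\M)\rtimes_{\beta^\weaks}G\cong W^*_\red(\M)\bar\otimes\Lb(\ell^2G)$.

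The only genuine content beyond Proposition~\ref{pro:kernel-CP} is this duality theorem, so the main work is to justify it in the present W*-framework. I expect two natural options: either invoke the known von Neumann coaction duality directly, or, to keep the paper self-contained, prove it by the same spatial argument used in the proof of Proposition~\ref{pro:kernel-CP} --- representing $N\bar\rtimes_\delta G$ on the relevant self-dual Hilbert modules and exhibiting the standard unitary implementing the isomorphism with $N\bar\otimes\Lb(\ell^2G)$, then passing to $\weaks$-closures exactly as was done for the reduced constructions. The one point requiring care is that, unlike the $C^*$-case, the stabilising algebra is $\Lb(\ell^2G)$ rather than $\bK(\ell^2G)$; this causes no difficulty here, since all the relevant closures are taken in the $\weaks$-topology, under which $\bK(\ell^2G)$ is $\weaks$-dense in $\Lb(\ell^2G)$.
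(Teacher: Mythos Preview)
Your proposal is correct and matches the paper's own proof essentially verbatim: the paper simply says the result follows from Proposition~\ref{pro:kernel-CP} together with general duality theory for crossed products by W*-coactions, citing Nakagami--Takesaki. Your additional remarks about the spatial approach and the replacement of $\bK(\ell^2G)$ by $\Lb(\ell^2G)$ are fine elaborations but not needed for the argument.
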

\begin{proof}
This follows from Proposition~\ref{pro:kernel-CP} and general duality theory for crossed products by W*-coactions, see \cite{Nakagami-Takesaki:Duality}.
\end{proof}

We recall from \cite{Abadie-Buss-Ferraro:Morita_Fell} that given a Fell subbundle $\cA$ of  $\cB$ we can identify $\bk(\cA)$ with the norm closure of $\bk_c(\cA)$ in $\bk(\cB).$
This inclusion has a W*-counterpart.

\begin{corollary}\label{cor:inclusion of wstar algebra of kernels}
If $\mathcal{N}$ is a W*-Fell subbundle of $\M$ and we view $\bk(\mathcal{N})$ as a $C^*$-subalgebra of $\bk(\M)\subseteq \bk_\weaks(\M),$ then $\bk_\weaks(\mathcal{N})$ is isomorphic to the $\weaks-$closure of $\bk(\mathcal{N})$ in $\bk_\weaks(\M).$
\end{corollary}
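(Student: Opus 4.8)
The plan is to reduce the statement to the corresponding fact for reduced cross-sectional W*-algebras, using the crossed-product description of the W*-algebra of kernels given by Proposition~\ref{pro:kernel-CP}. Write $N:=W^*_\red(\M)$ and $\delta:=\delta_\M$ for the dual coaction. Then Proposition~\ref{pro:kernel-CP} identifies $\bk_\weaks(\M)$ with the W*-crossed product $N\bar\rtimes_\delta G=\cspn^\weaks\{\delta(n)(1\otimes f):n\in N,\ f\in\ell^\infty(G)\}\sbe N\bar\otimes\Lb(\ell^2 G)$, and applying the same proposition to $\mathcal N$ gives $\bk_\weaks(\mathcal N)\cong W^*_\red(\mathcal N)\bar\rtimes_{\delta_{\mathcal N}}G$. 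So the task becomes to realise this smaller crossed product as the $\weaks$-closure of $\bk(\mathcal N)$ inside $N\bar\rtimes_\delta G$.

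First I would check that $W^*_\red(\mathcal N)$ is a $\weaks$-closed, $\delta$-coinvariant subalgebra of $N$ with $\delta_{\mathcal N}=\delta|_{W^*_\red(\mathcal N)}$. It is $\weaks$-closed by the proposition characterising $W^*_\red(\mathcal N)$ as the $\weaks$-closure of $C^*_\red(\mathcal N)$ in $W^*_\red(\M)$; and on the generators $\Lambda_t(a)$ with $a\in N_t$ the defining relation~\eqref{eq:dual-coaction-relation} reads $\delta(\Lambda_t(a))=\Lambda_t(a)\otimes\lambda_t\in W^*_\red(\mathcal N)\bar\otimes W^*_\red(G)$, so coinvariance and the identification of the restricted coaction with the dual coaction of $\mathcal N$ follow from the $\weaks$-continuity of $\delta$. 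Consequently the inclusion $W^*_\red(\mathcal N)\sbe N$ induces an inclusion $W^*_\red(\mathcal N)\bar\rtimes_{\delta_{\mathcal N}}G\sbe N\bar\rtimes_\delta G$ whose image is $\cspn^\weaks\{\delta(n)(1\otimes f):n\in W^*_\red(\mathcal N),\ f\in\ell^\infty(G)\}$.

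It then remains to match this $\weaks$-closed span with the $\weaks$-closure of $\bk(\mathcal N)$. Via \cite{Abadie:Enveloping}*{Proposition~8.1} (whose isomorphism is given by the same formula $k_{a,f}(s,t)=a(st^{-1})f(t)$, hence restricts compatibly to subbundles, using the identification of $\bk(\mathcal N)$ with the norm closure of $\bk_c(\mathcal N)$ in $\bk(\M)$ recalled above) the subalgebra $\bk(\mathcal N)$ corresponds to the reduced $C^*$-crossed product $C^*_\red(\mathcal N)\rtimes_\delta G=\cspn^{\|\cdot\|}\{\delta(a)(1\otimes f):a\in C^*_\red(\mathcal N),\ f\in\contz(G)\}$. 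Taking $\weaks$-closures, the whole corollary reduces to the equality
\[\cspn^\weaks\{\delta(a)(1\otimes f):a\in C^*_\red(\mathcal N),\, f\in\contz(G)\}=\cspn^\weaks\{\delta(n)(1\otimes g):n\in W^*_\red(\mathcal N),\, g\in\ell^\infty(G)\},\]
whose ingredients are that $C^*_\red(\mathcal N)$ is $\weaks$-dense in $W^*_\red(\mathcal N)$ and $\contz(G)$ is $\weaks$-dense in $\ell^\infty(G)\sbe\Lb(\ell^2G)$.

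The step I expect to be the main obstacle is this last double-closure argument, since the inclusion ``$\sbe$'' is immediate but the reverse requires passing two $\weaks$-limits through a product. To handle it I would invoke Kaplansky density to choose \emph{bounded} nets $a_i\to n$ and $f_j\to g$ with $a_i\in C^*_\red(\mathcal N)$ and $f_j\in\contz(G)$, and then use that on bounded subsets of the W*-algebra $N\bar\otimes\Lb(\ell^2G)$ multiplication is jointly $\weaks$-continuous (together with the $\weaks$-continuity of $\delta$) to write $\delta(n)(1\otimes g)=\weaks\lim_{i,j}\delta(a_i)(1\otimes f_j)$, placing it in the left-hand span. This yields the asserted isomorphism $\bk_\weaks(\mathcal N)\cong\cl{\bk(\mathcal N)}^{\,\weaks}$, and since both sides correspond to the dual action under Proposition~\ref{pro:kernel-CP}, the isomorphism moreover intertwines $\beta^\weaks$ on $\bk_\weaks(\mathcal N)$ with the restriction of $\beta^\weaks$ on $\bk_\weaks(\M)$.
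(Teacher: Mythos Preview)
Your approach is essentially the same as the paper's: both reduce the statement to the inclusion $W^*_\red(\mathcal N)\bar\rtimes_{\delta_{\mathcal N}}G\subseteq W^*_\red(\M)\bar\rtimes_{\delta_\M}G$ via Proposition~\ref{pro:kernel-CP}. The paper's proof is a single sentence that takes this inclusion as evident; you have supplied the details the paper omits.

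There is, however, a genuine error in your final step. You write that ``on bounded subsets of the W*-algebra $N\bar\otimes\Lb(\ell^2G)$ multiplication is jointly $\weaks$-continuous''. This is false: in any infinite-dimensional von Neumann algebra multiplication is only \emph{separately} $\weaks$-continuous, even on bounded sets (take the unilateral shift $S$: both $S^n\to 0$ and $(S^*)^n\to 0$ in the $\weaks$-topology, but $(S^*)^nS^n=1$). So the limit $\delta(n)(1\otimes g)=\weaks\lim_{i,j}\delta(a_i)(1\otimes f_j)$ cannot be justified this way.

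The fix is straightforward and does not even need Kaplansky: argue in two passes using separate $\weaks$-continuity. First fix $a\in C^*_\red(\mathcal N)$ and let $f_j\to g$ in $\ell^\infty(G)$ to get $\delta(a)(1\otimes g)$ in the left-hand closure. Then fix $g\in\ell^\infty(G)$ and let $a_i\to n$ (using $\weaks$-density of $C^*_\red(\mathcal N)$ in $W^*_\red(\mathcal N)$ and $\weaks$-continuity of $\delta$) to conclude $\delta(n)(1\otimes g)$ lies in the closure as well. With this correction your argument goes through.
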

\begin{proof}
The inclusion $\bk_\weaks(\mathcal{N})\subseteq \bk_\weaks(\M)$ is just the inclusion $W^*_\red(\mathcal{N})\bar\rtimes_{\delta_\mathcal{N}} G\subseteq W^*_\red(\M)\bar\rtimes_{\delta_\M} G$ provided by Proposition \ref{pro:kernel-CP}.
\end{proof}

\section{Exel's approximation property and AD-amenability}\label{sec:AP-ADA}

The main goal of this section is to compare the notion of amenability in the sense of Anantharaman-Delaroche with the approximation property introduced by Exel in \cite{Exel:Amenability}. We start by recalling Exel's approximation property:

\begin{definition}\label{def:AP}
 A Fell bundle $\cB=\{B_t\}_{t\in G}$ has the approximation property (AP) if there exists a net $\{a_i\}_{i\in I}$ of functions $a_i\colon G\to B_e$ with finite support such that
 \begin{enumerate}[(i)]
  \item $\sup_{i\in I} \|\sum_{r\in G} a_i(r)^*a_i(r)\|<\infty$.
  \item For every $t\in G$ and $b\in B_t$, $\lim_i \| b- \sum_{r\in G} a_i(tr)^*ba_i(r)\|=0$.
 \end{enumerate}
 A partial action $\alpha$ on a $C^*$-algebra has the AP if the semidirect product bundle $\cB_\alpha$ has the AP.
\end{definition}

\begin{remark}\label{rem:equivalent to AP}
Notice that (i) above means that $\{a_i\}_{i\in I}$ is a bounded net when viewed as a net in the Hilbert $B_e$-module $\ell^2(G,B_e)$. Indeed, the original definition of the AP in \cite{Exel:Amenability} uses such nets and Proposition~4.5 in \cite{Exel:Amenability} says that both definitions are equivalent (the difference being whether the supports of the functions are required to be finite or not).

Condition (ii) can also be weakened: it is enough to check the norm convergence in (ii) for $b$ in total subsets of $B_t$, that is, for $b$ in a subset $B_t^0$ spanning a norm-dense subset of  $B_t$ for each $t\in G$.
\end{remark}

As a way of combining Exel's approximation property and amenability in the sense of Anantharaman-Delaroche \cites{Anantharaman-Delaroche:ActionI,Anantharaman-Delaroche:ActionII,Anantharaman-Delaroche:Systemes}, we introduce the following:

\begin{definition}\label{def:WAP for W-Fell bundles}
 A W*-Fell bundle $\M=\{M_t\}_{t\in G}$ has the W*-approximation property (W*AP) if there exists a net of functions $\{a_i\colon G\to M_e\}_{i\in I}$ with finite support such that
 \begin{enumerate}[(i)]
  \item $\sup_{i\in I} \|\sum_{r\in G} a_i(r)^*a_i(r)\|<\infty$, and
  \item for every $t\in G$ and $b\in M_t$, 
  $$\lim_i \sum_{r\in G} a_i(tr)^*ba_i(r)=b$$ in the $\weaks$-topology of $M_t$.
 \end{enumerate}
 We say that a W*-partial action $\gamma$ has the W*AP if the associated W*-Fell bundle $\cB_\gamma$ has the W*AP.

 A ($C^*$-)Fell bundle $\cB$ has the WAP if its W*-enveloping Fell bundle $\cB''$ has the W*AP and a $C^*$-partial action
 $\alpha$ has the WAP if $\alpha''$ has the W*AP.
\end{definition}

 The acronym WAP should be read ``weak approximation property''; the
 reason for this is that we will be able to translate the WAP into a condition which is apparently weaker than the AP (compare the definition of the AP with claim (\ref{item:netbe}) in Theorem \ref{thm:the mega theorem}). For the time being we must treat AD-amenability, the WAP and AP  as possibly non equivalent conditions (and similarly in the W*-case). After Theorem~\ref{thm:the mega theorem}, there will be no point in making this distinction (the W*-version of this being Theorem \ref{thm: WAP iff AD amenable for wstar Fell bundles}).
We recommend the reader to consult the statements of Theorems \ref{thm:the mega theorem} and \ref{thm: WAP iff AD amenable for wstar Fell bundles} at this point to get a feeling of what we want to do next.

Remark~\ref{remark: bidual partial action and bidual bundle} implies that a $C^*$-partial action $\alpha$ has the WAP if and only if $\cB_\alpha$ has the WAP. We shall prove in what follows that AD-amenability and the WAP are equivalent notions, first for global actions and later also for general Fell bundles. This is not trivial, even for global actions, because the AD-amenability of a global action requires the existence of a certain net that takes central values (see Theorem~\ref{the:ADA-characterisations}) while for the WAP this is not explicitly necessary (Definition~\ref{def:WAP for W-Fell bundles}). 

Let $\gamma$ be a (global) action of $G$ on the W*-algebra $N$. As usual, we write $\tilde{\gamma}$ for the action of $G$ on $\ell^\infty(G,N)$ given by $\tilde{\gamma}_t(f)(r)=\gamma_t(f(t^{-1}r))$ and view $N$ as the subalgebra of constant functions in $\ell^\infty(G,N)$. Abusing the notation we also use the same notation for the $G$-action on functions $f\in \ell^2(G,N)$.
The following result gives an explicit characterisation of the W*AP for global actions.

\begin{proposition}
 Let $\gamma$ be a global action of $G$ on a W*-algebra $N$. Then $\gamma$ has the W*AP if and only if there exists a net $\{a_i\}_{i\in I}$ of finitely supported functions $a_i\colon G\to N$ such that $\{a_i\}_{i\in I}$ is bounded in $\ell^2(G,N)$ and $\{\langle a_i,b\tilde{\gamma}_t(a_i)\rangle_2\}_{i\in I}$ $\weaks$\nb-converges to $b$ for all $b\in N$ and $t\in G$.
\end{proposition}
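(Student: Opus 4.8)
The plan is to observe that the statement is essentially a matter of unwinding Definition~\ref{def:WAP for W-Fell bundles} for the semidirect product bundle $\cB_\gamma$ and performing one bookkeeping computation inside that bundle. Since $\gamma$ is global, $\cB_\gamma=\{N\delta_t\}_{t\in G}$ is a W*-Fell bundle with unit fibre $N\delta_e\cong N$, multiplication $(a\delta_s)(b\delta_t)=\gamma_s(\gamma_{s^{-1}}(a)b)\delta_{st}$ and involution $(a\delta_s)^*=\gamma_{s^{-1}}(a)^*\delta_{s^{-1}}$, as recalled earlier in the paper. By definition $\gamma$ has the W*AP exactly when $\cB_\gamma$ does, so I would start by writing out conditions (i) and (ii) of Definition~\ref{def:WAP for W-Fell bundles} for $\M=\cB_\gamma$, using a net $\{a_i\colon G\to N\delta_e\cong N\}$ of finitely supported functions. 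Condition~(i) reads $\sup_i\|\sum_{r}a_i(r)^*a_i(r)\|<\infty$, which is precisely the requirement that $\{a_i\}_{i\in I}$ be bounded in $\ell^2(G,N)$, so the two boundedness hypotheses coincide immediately.

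The key step is the computation of the triple product appearing in condition~(ii). Taking $b\delta_t\in M_t=N\delta_t$ and using the multiplication and involution formulas of $\cB_\gamma$ one gets
\begin{equation*}
\sum_{r\in G}(a_i(tr)\delta_e)^*(b\delta_t)(a_i(r)\delta_e)
=\sum_{r\in G}a_i(tr)^*\,b\,\gamma_t(a_i(r))\,\delta_t .
\end{equation*}
After the substitution $h=tr$ the coefficient of $\delta_t$ becomes $\sum_{h\in G}a_i(h)^*\,b\,\gamma_t(a_i(t^{-1}h))=\langle a_i,b\tilde\gamma_t(a_i)\rangle_2$, using $\tilde\gamma_t(a_i)(h)=\gamma_t(a_i(t^{-1}h))$. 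Thus condition~(ii) for $\cB_\gamma$ says exactly that $\langle a_i,b\tilde\gamma_t(a_i)\rangle_2\,\delta_t\to b\delta_t$ in the $\weaks$-topology of $N\delta_t$, for every $t\in G$ and every $b\in N$ (as $b\delta_t$ ranges over all of $M_t$).

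To finish I would only need to match the two topologies: the isometric identification $N\delta_t\cong N$, $b\delta_t\mapsto b$, is a W*-isomorphism (the $\weaks$-topology of the fibre $M_t$ is the one it inherits as a self-dual $M_e$-module, and it corresponds under this identification to the $\weaks$-topology of $N$). Hence $\langle a_i,b\tilde\gamma_t(a_i)\rangle_2\,\delta_t\to b\delta_t$ in $N\delta_t$ is equivalent to $\langle a_i,b\tilde\gamma_t(a_i)\rangle_2\to b$ in $N$, which is exactly the convergence demanded in~(2). Reading the chain of equivalences in both directions yields the proposition. I do not expect a genuine obstacle here; the only points requiring care are the correct use of the involution and of $\gamma_t$ in the triple product and the change of variables $h=tr$, together with the harmless identification of the $\weaks$-topologies on $N\delta_t$ and $N$.
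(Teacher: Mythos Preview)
Your proposal is correct and follows essentially the same route as the paper: both unwind Definition~\ref{def:WAP for W-Fell bundles} for the semidirect product bundle $\cB_\gamma$, perform the identical triple-product computation $(a_i(tr)\delta_e)^*(b\delta_t)(a_i(r)\delta_e)=a_i(tr)^*b\gamma_t(a_i(r))\delta_t$, reindex via $h=tr$ to recognise $\langle a_i,b\tilde\gamma_t(a_i)\rangle_2$, and invoke the identification of the $\weaks$-topologies on $N\delta_t$ and $N$.
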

\begin{proof}
We view the fibre of $\cB_\gamma$ at $t$ as $N\delta_t$ and denote its elements by $x\delta_t$.
With this notation $x\delta_ty\delta_s = x\gamma_t(y)\delta_{ts}$ and $(x\delta_t)^* = \gamma_{t^{-1}}(x^*)\delta_{t^{-1}}$.
Viewing $x\in N$ as $x\delta_e$, we can then think of a function $a\colon G\to N$ as a function from $G$ to the unit fibre $N\delta_e\equiv N$.
If $a\colon G\to N$ has finite support, then for every $t\in G$ and $b\in N$ we have: $\sum_{r\in G} (a(r)\delta_e)^*(a(r)\delta_e)=\sum_{r\in G}a(r)^*a(r)$ and
\begin{align*}
 \sum_{r\in G} (a(tr)\delta_e)^*b\delta_t a(r)\delta_e
  & = \sum_{r\in G} a(tr)^*b\gamma_t(a(r)))\delta_t\\
  & = \sum_{r\in G} a(r)^*b\gamma_t(a(t^{-1}r)))\delta_t
  = \langle a,b\tilde{\gamma}_t(a)\rangle\delta_t.
\end{align*}
The proof follows directly from the computations above and from the fact that under the identification $N\to N\delta_t,\ x\mapsto x\delta_t$, the $\weaks$-topology of $N\delta_t$ is just the $\weaks$-topology of $N$.
\end{proof}

In order to show that the  W*AD-amenability is equivalent to W*AP for W*-Fell bundles we shall need the following result:

\begin{lemma}\label{lem:characterization of AD amenability with dense subalgebra}
 Let $\gamma$ be a W*-global action of $G$ on $N.$
 Then $\gamma$ is AD-amenable if and only if there exists a $\gamma$-invariant $\weaks$-dense *-subalgebra $A\subseteq N$ and a bounded net $\{a_i\}_{i\in I}\subseteq \ell^2(G,N)$ of functions with finite support such that
for all $b\in A$ and $t\in G,$ $\{\langle a_i,b\tilde{\gamma}_t(a_i)\rangle_2\}_{i\in I}$ $\weaks$-converges to $b$.
\end{lemma}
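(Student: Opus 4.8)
Suppose $\gamma$ is W*AD-amenable and take $A:=N$, which is trivially a $\gamma$-invariant $\weaks$-dense *-subalgebra. By Theorem~\ref{the:ADA-characterisations}(iii) there is a net $\{a_i\colon G\to Z(N)\}_{i\in I}$ of finitely supported functions with $\langle a_i,a_i\rangle_2\le 1$ and $\langle a_i,\tilde\gamma_g(a_i)\rangle_2\to 1$ $\weaks$-ly for every $g$. Since each $a_i(h)$ lies in $Z(N)$ and $\gamma_g$ preserves $Z(N)$, the central factor $a_i(h)^*$ commutes with any $b\in N$, so
\begin{equation*}
\langle a_i,b\tilde\gamma_g(a_i)\rangle_2=\sum_{h}a_i(h)^*b\,\gamma_g(a_i(g^{-1}h))=b\sum_{h}a_i(h)^*\gamma_g(a_i(g^{-1}h))=b\,\langle a_i,\tilde\gamma_g(a_i)\rangle_2.
\end{equation*}
As left multiplication by $b$ is $\weaks$-continuous, the right-hand side converges $\weaks$-ly to $b$, so $\{a_i\}$ witnesses the stated condition (even for $A=N$).

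\textbf{Backward direction: reductions.} This is the substantial implication. First I would observe that replacing $A$ by its norm closure costs nothing: the maps $\Psi_i^t\colon N\to N$, $\Psi_i^t(b):=\langle a_i,b\tilde\gamma_t(a_i)\rangle_2$, are uniformly bounded (by $C:=\sup_i\|\langle a_i,a_i\rangle_2\|$, via Cauchy--Schwarz in $\ell^2(G,N)$), so $\weaks$-convergence on $A$ propagates to $\overline A^{\|\cdot\|}$; hence I may assume $A$ is a $\weaks$-dense $\gamma$-invariant $C^*$-subalgebra. For each finitely supported $\xi\colon G\to N$ set $\phi_\xi(f):=\sum_{h}\xi(h)^*f(h)\xi(h)$, a normal completely positive map $\ell^\infty(G,N)\to N$, and write $\phi_i:=\phi_{a_i}$, which is bounded by $C$. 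Two identities drive the argument: on constants $\phi_i(\mathrm{const}_b)=\Psi_i^e(b)\to b$ for $b\in A$, and the equivariance defect is
\begin{equation*}
\phi_i\circ\tilde\gamma_t-\gamma_t\circ\phi_i=\gamma_t\circ\bigl(\phi_{\tilde\gamma_{t^{-1}}(a_i)}-\phi_{a_i}\bigr),
\end{equation*}
whose right-hand side is controlled in norm by $\|a_i-\tilde\gamma_{t^{-1}}(a_i)\|_2$.

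\textbf{Backward direction: the mean.} A point-$\weaks$ limit point $P$ of $\{\phi_i\}$ exists by compactness of bounded balls of $N$ in the $\weaks$-topology, and it is a completely positive, $C$-bounded map $\ell^\infty(G,N)\to N$ fixing $A\subseteq N$. The goal is to upgrade $P$ to a positive contraction that is simultaneously the identity on \emph{all} of $N$ and $G$-equivariant; by Tomiyama's theorem such a $P$ is automatically a conditional expectation, so it is exactly a $G$-equivariant mean and hence witnesses W*AD-amenability. By Proposition~\ref{prop:AD-amenable-center} one is moreover free to pass to the abelian centre $Z(N)$ and compare the resulting data with the central approximating net of Theorem~\ref{the:ADA-characterisations}(iii).

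\textbf{The main obstacle} is that the hypothesis supplies only \emph{weak*} convergence, and only tested against $A$: this is too weak to force a raw limit point $P$ to restrict to the identity on all of $N$ (a non-normal positive map can fix a $\weaks$-dense subalgebra without being the identity), and it does not make $P$ exactly equivariant, because the equivariance defect above involves $\|a_i-\tilde\gamma_t(a_i)\|_2$, which need not tend to $0$. To overcome this I would run a Day--Namioka convex-combination argument in the spirit of \cite{ozawa2020characterizations}: passing to convex combinations of the $a_i$ (equivalently of the $\phi_i$) converts the weak vanishing of the unitality and equivariance defects into norm vanishing, yielding an approximately invariant net whose limit is an honest normal $G$-equivariant conditional expectation. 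Combined with the reduction to $Z(N)$, this simultaneously resolves the passage from $A$ to $N$ and produces a central approximating net as in Theorem~\ref{the:ADA-characterisations}(iii), establishing the W*AD-amenability of $\gamma$.
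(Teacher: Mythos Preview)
Your forward direction is correct and matches the paper's approach.

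The backward direction, however, has a genuine gap at the point where you invoke a ``Day--Namioka convex-combination argument.'' Two issues combine here. First, the Mazur/Hahn--Banach trick converts \emph{weak} convergence to norm convergence via convex combinations, but your hypothesis only gives $\weaks$-convergence, and the norm closure of a convex set need not equal its $\weaks$-closure. Second, even setting this aside, you identify the equivariance defect as being controlled by $\|a_i-\tilde\gamma_{t^{-1}}(a_i)\|_2$, but you never establish that this quantity tends to zero, even weakly: expanding $\|a_i-\tilde\gamma_t(a_i)\|_2^2$ requires $\langle a_i,a_i\rangle_2\to 1$ and $\langle a_i,\tilde\gamma_t(a_i)\rangle_2\to 1$, while the hypothesis only gives $\langle a_i,b\tilde\gamma_t(a_i)\rangle_2\to b$ for $b\in A$, and $1_N$ need not lie in $A$. (Also, convex combinations of the $a_i$ and of the $\phi_i$ are \emph{not} equivalent: $\phi_\xi$ is quadratic in $\xi$.) Your appeal to Proposition~\ref{prop:AD-amenable-center} is similarly unmotivated: that result tells you amenability of $\gamma$ and of $\gamma|_{Z(N)}$ are equivalent, but gives no mechanism for passing from a net in $\ell^2(G,N)$ to one in $\ell^2(G,Z(N))$.

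The paper's proof takes a quite different route and does not attempt convex combinations at all. It represents $N\subseteq\Lb(H)$, introduces the approximate unit $\Lambda=\{x\in A:0\le x\le 1\}$, and sandwiches the net to form $P_{i,\lambda}(f)=\langle\lambda^{1/2}a_i,f\lambda^{1/2}a_i\rangle$. After taking a pointwise-$\weaks$ cluster point in $i$, one restricts to $\ell^\infty(G,Z(N))$ to obtain maps $Q_\lambda$; crucially, centrality of $f$ makes $\lambda\mapsto Q_\lambda(f)$ \emph{increasing} for $f\ge 0$, so $Q=\lim_\lambda Q_\lambda$ exists. A Hilbert-space computation in $\ell^2_\weaks(G,N)\otimes_N H$ shows $Q(f)b=P(fb)$ for self-adjoint $b\in A$, forcing $Q$ to land in $Z(N)$, and a second such computation yields the approximate equivariance identity $Q_\lambda(\tilde\gamma_t(f))=\gamma_t(Q_{\gamma_{t^{-1}}(\lambda)}(f))$, from which exact equivariance of $Q$ follows after passing to the limit in $\lambda$. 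The $\lambda$-sandwich is the key idea you are missing: it simultaneously handles the passage from $A$ to all of $N$ (via the approximate unit) and provides the monotonicity that makes the double limit converge.
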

\begin{proof}
 The direct implication follows from Theorem~\ref{the:ADA-characterisations} (with $A=N$).
 For the converse we view $N$ as a concrete (unital) von Neumann algebra of operators on some Hilbert space $H$, $N\subseteq \Lb(H),$ and take a *-subalgebra $A\sbe N$ and a net $\{a_i\}_{i\in I}$ as in the statement. For $t\in G$ and $i\in I$ we define $\varphi_i^t\colon N\to N$ by $\varphi^t_i(b):=\braket{a_i}{b\tilde\gamma_t(a_i)}$. Then $\{\varphi^t_i\}_{i\in I}$ is a net of uniformly bounded linear maps (with uniform bound $c:=\sup_i \|a_i\|_2^2<\infty$). By assumption $\varphi_i^t(b)\to b$ in the weak*-topology for every $b\in A$ and $t\in G$. A standard argument shows that the same happens for all $b$ in the norm closure of $A$, which is then a (w*-dense) $C^*$-subalgebra of $N$. Hence we may assume, without loss of generality, that $A$ is already a $C^*$-algebra. In particular we may assume that $A$ is closed by continuous functional calculus and that $\Lambda:=\{x\in A\colon 0\leq x\leq 1\}$ is an approximate unit for $A$ and thus $\weaks$-converges to $1_N$ (this follows from the assumption that $A$ is w*-dense in $N$).

 For each $(i,\lambda)\in I\times \Lambda$ we define
 \begin{equation*}
  P_{i,\lambda}\colon \ell^\infty(G,N)\to N, \ P_{i,\lambda}(f) = \langle \lambda^{1/2} a_i,f \lambda^{1/2}a_i\rangle,
 \end{equation*}
  where the product $f \lambda^{1/2}a_i$ represents the diagonal
  action of $f \lambda^{1/2}\in \ell^\infty(G,N)$ on
  $\ell^2(G,N)$. Each $P_{i,\lambda}$ is a completely positive linear map with norm $\|P_{i,\lambda}\|=\|\lambda^{1/2}a_i\|^2_2\leq c:=\sup_{i\in I}\|a_i\|_2^2<\infty$.

 Let $K$ be the Hilbert space $\ell^2(\Lambda,H)$ and define, for each $i\in I,$ the function
 \begin{equation*}
  P_i\colon \ell^\infty(G,N) \to \Lb(K),\ P_i(f)g|_\lambda=P_{i,\lambda}(f)(g|_\lambda).
 \end{equation*}
 If we view $K=\ell^2(\Lambda,H)$ as the direct sum of $\Lambda$-copies of $H$, then $P_i(f)$ is the ``diagonal'' operator formed by the family $\{P_{i,\lambda}\}_{\lambda}.$
 Thus $P_i$ is completely positive and $\|P_i\|\leq c$ for all $i\in I$.

The set of completely positive maps $Q\colon \ell^\infty(G,N)\to
\Lb(K)$ with $\|Q\|\leq c$ is compact with respect to the topology of
pointwise $\weaks$-convergence, thus there exists a completely
positive map $P\colon \ell^\infty(G,N)\to \Lb(K)$ and a subnet $\{P_{i_j}\}_{j\in J}$ such that $P(f)=\lim_j P_{i_j}(f)$ in the $\weaks$-topology for every $f\in \ell^\infty(G,N).$
By passing to a subnet we may therefore assume that $\{P_i\}_{i\in I}$ converges to $P$ for the pointwise $\weaks$-topology.

 As a consequence of the last paragraph we get that, for each $f\in \ell^\infty(G,N)$ and $\lambda\in \Lambda,$ $\{P_{i,\lambda}(f)\}_{i\in I}$ $\weaks$-converges to some $P_\lambda(f).$
 In fact, the map $P_\lambda\colon \ell^\infty(G,N)\to N,$ $f\mapsto P_\lambda(f),$ is completely positive and $\|P_\lambda\|\leq c$ for all $\lambda\in \Lambda.$

 For each $\lambda\in \Lambda$ we define $Q_\lambda\colon
 \ell^\infty(G,Z(N))\to N\sbe \Lb(H)$ as the restriction of $P_\lambda.$
 We claim that $\{Q_\lambda\}_{\lambda\in \Lambda}$ converges $\weaks$-pointwise.
 Indeed, it suffices to prove that for each positive $f\in \ell^\infty(G,Z(N))$ the net $\{Q_\lambda(f)\}_{\lambda\in \Lambda}$ is increasing.
 Take $\lambda,\mu\in \Lambda$ with $\lambda\leq \mu.$
 Then, for every $h\in H$ and $i\in I$:
 \begin{align*}
  \langle h,P_{i,\lambda}(f)h\rangle
    & = \sum_{r\in G} \langle a_i(r)h,\lambda^{1/2}f(r)\lambda^{1/2} a_i(r)h\rangle \\
    &  = \sum_{r\in G} \langle a_i(r)h,f^{1/2}(r)\lambda f^{1/2}(r)a_i(r)h\rangle \\
    & \leq  \sum_{r\in G} \langle a_i(r)h,f^{1/2}(r)\mu f^{1/2}(r)a_i(r)h\rangle
      = \langle h,P_{i,\mu}(f)h\rangle.
 \end{align*}
 Taking limit in $i$ we get $\langle h,Q_{\lambda}(f)h\rangle \leq \langle h,Q_{\mu}(f)h\rangle$ and it follows $Q_\lambda(f)\leq Q_\mu(f).$
 Let $Q\colon \ell^\infty(G,Z(N))\to N$ be the pointwise $\weaks$-limit of $\{Q_\lambda\}_{\lambda\in \Lambda}.$

 Let us prove that the image of $Q$ is contained in $Z(N).$
 It suffices to show that for $f\in \ell^\infty(G,Z(N))^+$ and a self-adjoint $b\in A,$ $Q(f)b$ is self-adjoint.
 Let $\{P_{\lambda_j}\}_{j\in J}$ be a pointwise $\weaks$-convergent subnet of $\{P_\lambda\}_{\lambda\in \Lambda}.$
 Clearly, both $fb$ and $P(fb)$ are self-adjoint.
 We claim that $Q(f)b=P(fb).$
 Fix $h,k\in H.$
 Using the inner product of $\ell^2_{\weaks}(G,N)\otimes_{N}H $ in the following computations, we deduce
 \begin{align*}
  |\langle h, (Q(f)b-P(fb))k\rangle |
    & = \lim_j \lim_i |\langle \lambda_j^{1/2} a_i\otimes h,f\left( \lambda_j^{1/2}a_i\otimes bk - b\lambda_j^{1/2} a_i\otimes k\right) \rangle  |\\
    & \leq \lim_j \lim_i \sqrt{c}\|h\|\|f\|\| \lambda_j^{1/2}a_i\otimes bk - b\lambda_j^{1/2} a_i\otimes k\|.
 \end{align*}
 The double limit above is zero because $\lim_j \lim_i \| \lambda_j^{1/2}a_i\otimes bk - b\lambda_j^{1/2} a_i\otimes k\|^2$ is
 \begin{align*}
  \lim_j \lim_i \langle bk,\langle a_i,\lambda_j a_i\rangle bk \rangle & + \langle k,\langle a_i,\lambda_j^{1/2}b^2\lambda_j^{1/2} a_i\rangle k \rangle-\\
  & -  \lim_j\lim_i \langle bk,\langle a_i,\lambda_j^{1/2}b\lambda_j^{1/2} a_i\rangle k \rangle
      - \langle k,\langle a_i,\lambda_j^{1/2}b\lambda_j^{1/2} a_i\rangle bk \rangle\\
  = \lim_j \langle bk,\lambda_j bk \rangle  +& \langle k,\lambda_j^{1/2}b^2\lambda_j^{1/2} k \rangle- \langle bk,\lambda_j^{1/2}b\lambda_j^{1/2}  k \rangle
      - \langle k,\lambda_j^{1/2}b\lambda_j^{1/2}  bk \rangle\\
 = \langle bk,bk \rangle +\langle k,&b^2  k \rangle  - \langle bk,b  k \rangle
     - \langle k,bbk \rangle=0.
 \end{align*}
This shows that $Q(f)b=P(fb)$.
From now on we think of $Q$ as a completely positive map from $\ell^\infty(G,Z(N))$ to $Z(N).$

We claim that $Q$ is a projection.
Take $a\in Z(N).$
Then, in the $\wot$ topology:
\begin{equation*}
 Q(a) = \lim_\lambda \lim_i P_{i,\lambda}(a)=\lim_\lambda \lim_i \langle a_i,\lambda a a_i\rangle =\lim_\lambda \lim_i a \langle a_i,\lambda a_i\rangle=\lim_\lambda a\lambda = a.
\end{equation*}
In particular $Q(1)=1,$ so $Q$ is a norm one projection.
The proof will be completed once we show $Q$ is equivariant.

Suppose we can prove, for all $f\in \ell^\infty(G,Z(N)),$ that
\begin{equation}\label{equ:almost equivariance with lambda}
 Q_{\lambda}(\tilde{\gamma}_t(f))=\gamma_t(Q_{\gamma_{t^{-1}}(\lambda)}(f)).
\end{equation}
Since $\{\gamma_t(\lambda)\}_{\lambda\in \Lambda}$ is a subnet of $\{\lambda\}_{\lambda\in \Lambda},$ if we take the $\weaks$-limit in \eqref{equ:almost equivariance with lambda} we obtain
\begin{equation*}
Q(\tilde{\gamma}_t(f))=\lim_\lambda Q_{\lambda}(\tilde{\gamma}_t(f)) =\gamma_t( \lim_{\lambda} Q_{\gamma_{t^{-1}}(\lambda)}(f)) = \gamma_t( Q(f)).
\end{equation*}
Hence the proof will be complete after we show~\eqref{equ:almost equivariance with lambda}.

Fix $f\in \ell^\infty(G,Z(N))^+,$ $\lambda\in \Lambda$ and $t\in G.$
In the $\weaks$-topology:
\begin{align*}
 Q_\lambda(\tilde{\gamma}_t(f))
  & = \lim_i \langle \lambda^{1/2}a_i,\tilde{\gamma}_t(f)\lambda^{1/2} a_i\rangle
    = \lim_i \gamma_t(\langle \tilde{\gamma}_{t^{-1}}(a_i),\gamma_{t^{-1}}(\lambda)f\tilde{\gamma}_{t^{-1}}(a_i)\rangle)
\end{align*}
We know that the net $\{\langle \tilde{\gamma}_{t^{-1}}(a_i),\gamma_{t^{-1}}(\lambda)f\tilde{\gamma}_{t^{-1}}(a_i)\rangle\}_{i\in I}$ $\weaks$-converges.
We only need to prove it $\wot$-converges to $Q_{\gamma_{t^{-1}}(\lambda)}(f).$
To avoid the annoying inverse $t^{-1},$ we change $t$ by $t^{-1}.$

Notice $\langle \tilde{\gamma}_{t}(a_i),\gamma_{t}(\lambda)f\tilde{\gamma}_{t}(a_i)\rangle= \langle  f^{1/2}\tilde{\gamma}_{t}( \lambda^{1/2} a_i),f^{1/2}\tilde{\gamma}_{t}( \lambda^{1/2} a_i)\rangle$ is self-adjoint, and so it is $Q_{\gamma_{t}(\lambda)}(f).$
Thus, it suffices to show that, for all $h\in H,$
\begin{equation*}
 \lim_i\langle h, \langle \tilde{\gamma}_{t}(a_i),\gamma_{t}(\lambda)f\tilde{\gamma}_{t}(a_i)\rangle h\rangle = \langle h,Q_{\gamma_t(\lambda)}(f)h\rangle
\end{equation*}

In any Hilbert space we have $|\|x\|^2-\|y\|^2|\leq (\|x\|+\|y\|)\|x-y\|.$
In particular we use this inequality in $\ell^2_{\weaks}(G,N)\otimes_{N}H $:
\begin{align*}
 \lim_i|\langle h, \langle \tilde{\gamma}_{t}(a_i),&\gamma_{t}(\lambda)f\tilde{\gamma}_{t}(a_i)\rangle h\rangle - \langle h,Q_{\gamma_t(\lambda)}(f)h\rangle | = \\
 & = \lim_i | \|f^{1/2}\tilde{\gamma}_t(\lambda^{1/2}a_i)\otimes h\|^2- \| f^{1/2}\gamma_t(\lambda^{1/2})a_i\otimes h \|^2 |\\
 & \leq \lim_i 2\|f\|^{1/2}\|a_i\|_2\|h\|\|f^{1/2}(\tilde{\gamma}_t(\lambda^{1/2}a_i)-\gamma_t(\lambda^{1/2})a_i)\otimes h \| \\
 & \leq \lim_i 2\|f\|\|a_i\|_2\|h\|\|\gamma_t(\lambda^{1/2})(\tilde{\gamma}_t(a_i)-a_i)\otimes h \| \\
\end{align*}
Moreover, $\{\|a_i\|_2\}_{i\in I}$ is bounded and the limit of $\{\|\gamma_t(\lambda^{1/2})(\tilde{\gamma}_t(a_i)-a_i)\otimes h \|^2\}_{i\in I}$ is
\begin{multline*}
\lim_i \langle h,  \gamma_t(\langle a_i,\lambda a_i\rangle) h\rangle   +
     \langle h, \langle a_i,\gamma_t(\lambda)a_i\rangle h\rangle-\\
-\lim_i \langle h, \langle \tilde{\gamma}_t(a_i),\gamma_t(\lambda)a_i\rangle h\rangle+\langle h, \langle a_i,\gamma_t(\lambda)\tilde{\gamma}_t(a_i)\rangle h\rangle\\
=\langle h,  \gamma_t(\lambda) h\rangle  +
     \langle h, \gamma_t(\lambda) h\rangle- \langle h, \gamma_t(\lambda) h\rangle-\langle h, \gamma_t(\lambda) h\rangle = 0.
\end{multline*}
This implies \eqref{equ:almost equivariance with lambda} and the proof is complete.
\end{proof}

The next remark will be extremely useful to show that W*AD-amenability and the W*AP are equivalent.

\begin{remark}\label{rem:matrix algebras of Fell bundles}
Given a Fell bundle $\cB=\{B_t\}_{t\in G}$ and a finite subset
 $F=\{t_1,\ldots,t_n\}$ of
 $G$, let $\mathbb{M}_F(\cB)$ be the subset of $\bk(\cB)$ formed by
 the kernels supported in $F\times F$. It is readily checked that
 $\mathbb{M}_F(\cB)$ is a *-subalgebra of $\bk(\cB)$. Note that one
 can think of the elements of $\mathbb{M}_F(\cB)$ as the matrices
 $M=\{M_{i,j}\}_{i,j=1}^n$ with entries in $\cB$ such that $M_{i,j}\in
 B_{t_i t_j^{-1}},$ so in this way $\mathbb{M}_F(\cB)$ can be
 identified with a $C^*$-subalgebra of $\mathbb{M}_{|F|}(C^*(\cB))$
 (see \cite{Abadie-Ferraro:Equivalence_of_Fell_Bundles}*{Lemma 2.8}
 for details). Observe that in case $\{e,t^{-1}\}\subset F,$
 $\mathbb{M}_F(\cB)$ contains a copy of the linking algebra of $B_t.$

\par If $\M=\{M_t\}_{t\in G}$ is a W*-Fell bundle, then $\mathbb{M}_F(\M)$ is a W*-algebra. Indeed, we may take a representation $T\colon \M\to \Lb(H)$ which is isometric and a $\weaks-$homeomorphism when restricted to a fibre, which exists by Proposition~\ref{prop:faithful representation of W(B)}. 
 Then we have a representation $T^F\colon \mathbb{M}_F(\M)\to \Lb(H^n)\equiv\mathbb{M}_n(\Lb(H))$ such that $T^F(a_{i,j})_{i,j=1}^n =(T_{a_{i,j}})_{i,j=1}^n.$
 Since $\weaks-$convergence in $\mathbb{M}_n(\Lb(H))$ is equivalent to $\weaks-$convergence in the entries and the subspaces $T(M_t)$ are $\weaks-$closed, the range $T^F(\mathbb{M}_F(\M))$ is a W*-algebra.
 Moreover, $\weaks$ convergence in $\mathbb{M}_F(\M)$ is equivalent to $\weaks-$convergence in the entries.
 Consequently, if $\cB=\{B_t\}_{t\in G}$ is a Fell bundle then $\mathbb{M}_F (\cB)''=\mathbb{M}_F(\cB'').$
\end{remark}

\begin{theorem}\label{thm: WAP iff AD amenable for wstar Fell bundles}
 A W*-Fell bundle is W*AD-amenable if and only if it has the W*AP.
\end{theorem}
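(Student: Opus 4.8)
The plan is to reduce everything to the canonical global action $\beta^\weaks$ on the W*-algebra of kernels. By Corollary~\ref{cor:equivalence of AD amenability} together with Definition~\ref{def:AD amenable}, the W*AD-amenability of $\M$ is equivalent to the W*AD-amenability of $\beta^\weaks$ on $\bk_\weaks(\M)$, and also to that of the central partial action $\sigma$ of $\M$ on $Z(M_e)$. I will therefore establish the two implications ``W*AD-amenable $\Rightarrow$ W*AP'' and ``W*AP $\Rightarrow$ W*AD-amenable'' separately, using Theorem~\ref{the:ADA-characterisations} and Lemma~\ref{lem:characterization of AD amenability with dense subalgebra} as the two engines and Remark~\ref{rem:matrix algebras of Fell bundles} to turn computations with kernels into computations with finite matrices.

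For the implication ``W*AD-amenable $\Rightarrow$ W*AP'' I would start from the approximating net furnished by the central picture. Since $\sigma$ is W*AD-amenable (Proposition~\ref{prop:AD-amenable-center}), its enveloping global action, which by Theorem~\ref{thm:enveloping action of the central partial action} is $\beta^\weaks$ restricted to the centre, admits by Theorem~\ref{the:ADA-characterisations}(iii) a net of finitely supported central functions $\{c_i\}$ with $\braket{c_i}{c_i}_2\le 1$ and $\braket{c_i}{\tilde\sigma_g(c_i)}_2\to 1$ ultraweakly. Cutting these down by the central projection $1_e$ yields finitely supported $a_i\colon G\to Z(M_e)\sbe M_e$, and I claim they form a W*AP net. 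The verification rests on the defining property of the central partial action from Subsection~\ref{sec:central-partial}: a central element passes through a fibre via $\sigma$, so for $b\in M_t$ one has $b\,a_i(r)=\sigma_t(a_i(r))\,b$ on the relevant ideal $I_t$, whence $\sum_r a_i(tr)^*\,b\,a_i(r)=\bigl(\sum_r a_i(tr)^*\sigma_t(a_i(r))\bigr)b$, and the scalar factor $\weaks$-converges to the unit of $I_t$ by the central amenability condition. This gives condition (ii) of Definition~\ref{def:WAP for W-Fell bundles}, while (i) is inherited from $\braket{c_i}{c_i}_2\le 1$.

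For the converse I would invoke Lemma~\ref{lem:characterization of AD amenability with dense subalgebra} for the global action $\beta^\weaks$ on $N:=\bk_\weaks(\M)$, taking the $\beta^\weaks$-invariant, $\weaks$-dense \Star{}subalgebra $A:=\bk_c(\M)=\bigcup_F \mathbb{M}_F(\M)$ of finitely supported kernels. It then suffices to manufacture, out of the W*AP net $\{a_i\colon G\to M_e\}$, a bounded finitely supported net $\{F_i\}\sbe \ell^2(G,N)$ with $\braket{F_i}{b\,\tilde\beta^\weaks_t(F_i)}_2\to b$ in the $\weaks$-topology for all $t\in G$ and all $b\in \mathbb{M}_F(\M)$. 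By Remark~\ref{rem:matrix algebras of Fell bundles}, each $\mathbb{M}_F(\M)$ is a genuine W*-algebra in which $\weaks$-convergence is entrywise, so the tested convergence may be checked entry by entry, where it should collapse to the finite expressions $\sum_r a_i(tr)^*\,b\,a_i(r)$ controlled by the W*AP, with the uniform bound (i) keeping $\{F_i\}$ bounded in $\ell^2(G,N)$.

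The hard part will be exactly this construction in the converse. The W*AP net lives in the unit fibre $M_e$, whereas the amenability net for $\beta^\weaks$ must reproduce arbitrary kernels, whose off-diagonal entries lie in the fibres $M_{st^{-1}}$ and cannot be produced from $M_e$-valued data by any naive diagonal or corner placement (such choices force the output onto a single twisted diagonal and fail to converge to a generic $b$). The device that must rescue the argument is the matrix-algebra truncation of Remark~\ref{rem:matrix algebras of Fell bundles}: inside each $\mathbb{M}_F(\M)$ the fibre elements reappear as off-diagonal matrix entries, and the finitely supported $a_i$ acts on them as a conjugating vector, so that the fibrewise reconstruction $\sum_r a_i(tr)^*\,b\,a_i(r)\to b$ can be assembled into the kernel-level condition required by the Lemma. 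Verifying that finite support, the uniform $\ell^2$-bound, and the $\weaks$-limits are all preserved simultaneously under this assembly, and that testing only on the increasing family $\{\mathbb{M}_F(\M)\}_F$ suffices, is where the genuine work lies.
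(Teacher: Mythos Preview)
Your plan follows exactly the route of the paper: the forward direction via the central partial action and Theorem~\ref{the:ADA-characterisations}(iii), and the converse via Lemma~\ref{lem:characterization of AD amenability with dense subalgebra} applied to the global action $\beta^\weaks$ on $\bk_\weaks(\M)$ with the $\beta^\weaks$-invariant $\weaks$-dense \Star{}subalgebra $\bk_c(\M)=\bigcup_F\mathbb{M}_F(\M)$. The forward direction is essentially complete as you describe it (only the insertion of the projection $p_{t^{-1}}$ before applying $\sigma_t$ deserves an explicit word).

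The genuine gap is in the converse. You correctly locate the difficulty and point to the matrix truncations of Remark~\ref{rem:matrix algebras of Fell bundles}, but you stop precisely where the actual construction is needed; saying that ``$a_i$ acts on them as a conjugating vector'' does not yet produce a net in $\ell^2(G,\bk_\weaks(\M))$. The paper's device is this: enlarge the index set to pairs $\xi=(U,j)$ with $U\subseteq G$ finite and $j$ in the original W*AP index set, and define $a_\xi(r)\in\mathbb{M}_U(\M)\subseteq\bk_\weaks(\M)$ to be the \emph{diagonal} matrix with every diagonal entry equal to $a_j(r)\in M_e$. Then $\|a_\xi\|_2=\|a_j\|_2$ gives the uniform bound, and for a kernel $k$ supported in $U_0\times U_0$, once $U\supseteq U_0\cup U_0t$ one computes that $a_\xi(tr)^*\,k\,\beta^\weaks_t(a_\xi(r))$ lies in $\mathbb{M}_{U_0}(\M)$ with $(p,q)$-entry $a_j(tr)^*\,k(p,q)\,a_j(r)$; since $k(p,q)\in M_{pq^{-1}}$, the W*AP gives entrywise $\weaks$-convergence to $k(p,q)$, hence (by Remark~\ref{rem:matrix algebras of Fell bundles}) $\weaks$-convergence in $\bk_\weaks(\M)$. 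Without this explicit constant-diagonal lift and the double indexing, your sketch does not yet close.
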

\begin{proof}
 Assume that the W*-Fell bundle $\M$ over the group $G$ is W*AD-amenable.
 By Corollary~\ref{cor:equivalence of AD amenability} the central partial action $\gamma$ on $Z:=Z(M_e)$ is W*AD-amenable.
 Let $\delta$ be the W*-enveloping action of $\gamma,$ acting on the commutative W*-algebra $Y.$
 We know that $Z$ is a W*-ideal of $Y$ and that $\delta$ is W*AD-amenable.

 Let $\{\xi_i\}_{i\in I}\subseteq \ell^2(G,Y)$ be a net for $\gamma$ as in Theorem~\ref{the:ADA-characterisations} and let $p\in Y$ be the unit of $Z.$
 We define $a_i:=\xi_i p$ and claim that $\{a_i\}_{i\in I}\subseteq \ell^2(G,Z)$ is a net as in the definition of the W*AP.

 First of all note that $\{a_i\}_{i\in I}$ is bounded because $\langle a_i,a_i\rangle = p \langle \xi_i,\xi_i\rangle,$ for all $i\in I.$
 If $p_t$ is the unit of $Z_t=Z\cap\delta_t(Z),$ then for every $t\in G$ and $x\in M_t$ we have (by the definition of the central partial action):
 \begin{align*}
  \lim_i \sum_{r\in G} a_i(tr)^* xa_i(r)
      & = \lim_i \sum_{r\in G} a_i(tr)^* xp_{t^{-1}}a_i(r)
        = \lim_i \sum_{r\in G} a_i(tr)^*\gamma_t(p_{t^{-1}}a_i(r)) x \\
      & = \lim_i \sum_{r\in G} p\xi_i(tr)^*p_t\delta_t(p) \delta_t(\xi_i(r)) x
        = \lim_i p_t\langle \xi_i,\tilde{\delta}_t(\xi_i)\rangle x\\
      & = p_t x = x,
 \end{align*}
where the limits are taken in the $\weaks$-topology.
This shows that $\M$ has the W*AP.

Now assume that $\M$ has the W*AP.
We will show that the canonical W*-action $\beta^\weaks$ on $\bk_\weaks(\M)$ is W*AD-amenable using Lemma~\ref{lem:characterization of AD amenability with dense subalgebra} and Theorem~\ref{the:ADA-characterisations}.
We set $\gamma:=\beta^\weaks,$ $N:=\bk_\weaks(\M)$ and $A:=\bk_c(\M).$
Recall that $N$ is a W*-completion of $\bk(\M)$ and that $A$ is norm dense in $\bk(\M).$
Hence $A$ is $\weaks$-dense in $N.$
Moreover, $A$ is $\gamma$ invariant because $A$ is invariant under the canonical action on $\bk(\M).$

We claim that for every finite ordered set $F=\{t_1,\ldots,t_n\}$ (a finite sequence without repetitions) $\mathbb{M}_F(\M)$ is a W*-subalgebra of $N.$
This is important because in such a case the convergence in the
topology of $\mathbb{M}_F(\M)$ relative to the
$\weaks$-topology of $N$ is just entrywise $\weaks$-convergence on the
matrix algebra $\mathbb{M}_F(\M).$

Recall that we defined $N=\bk_\weaks(\M)$ as the $\weaks$-closure of the image of the map $\pi^\beta\colon \bk(\M)\to \ell^\infty(G,\Lb(\ell^2_\weaks(\M)))$ (see Section~\ref{ssec:wstar algebra of kernels}).
Thus it suffices to prove that the image of $\rho\colon \mathbb{M}_F(\M)\to \Lb(\ell^2_\weaks(\M)),$ $\rho(k)f(r)=\sum_{s\in G} k(r,s)f(s),$ is a W*-subalgebra.
Here we think of the matrix $k$ as a kernel of compact support.
For all $f,g\in \ell^2_\weaks(\M)$ the map $\mathbb{M}_F(\M)\to M_e,$ $k\mapsto \langle f,\rho(k)g\rangle$ is $\weaks$-continuous.
It follows that the closed unit ball $\rho(\mathbb{M}_F(\M))_1$ is
$\weaks$-compact, therefore $\rho(\mathbb{M}_F(\M))$ is a W*-subalgebra of $\Lb(\ell^2_\weaks(\M)).$

Take a net of functions $\{a_j\}_{j\in J}$ as in the definition of W*AP for $\M.$
Let $\mathcal{F}$ be the set of finite ordered subsets of $G$ and consider $\Xi:=\mathcal{F}\times J$ as a directed set with the order $(U,j)\leq (V,i)$ $\Leftrightarrow$ $U\sbe V$ and $j\leq i$.
For each $\xi=(U,j)\in \Xi$ let $a_\xi\colon G\to \mathbb{M}_U(\M)$ be such that for every $r\in G$, $a_\xi(r)$ is the diagonal matrix with all the entries in the diagonal equal to $a_j(r)$.
Note that $\|\langle a_\xi,a_\xi\rangle \|=\|\langle a_j,a_j\rangle\|.$
Observe also that
  $\gamma_t(\mathbb{M}_U(\M))=\mathbb{M}_{Ut^{-1}}(\M)$.

Fix $t\in G$ and $k\in A.$
Take a finite set $U_0\sbe G$ such that $\supp(k)\sbe U_0\times U_0$.
If $\xi=(U,i)\in \Xi$ is such that $U\supseteq U_0\cup U_0t,$ then
 \begin{equation*}
 \langle a_\xi,k\tilde{\gamma}_t(a_\xi)\rangle  =\sum_{r\in G} a_\xi(tr)^* k\gamma_t(a_\xi(r))  ,
 \end{equation*}
 and $a_\xi(tr)^*k\gamma_t(a_\xi(r))\in \mathbb{M}_{\supp(k)}(\M)$.
 Moreover, considering the left and right entrywise action of $M_e$ on $\mathbb{M}_{\supp(k)}(\M)$, $a_\xi(tr)^*k\gamma_t(a_\xi(r))=a_j(tr)^*ka_j(r)$.
 It is then clear that $\lim_\xi \langle a_\xi,k\tilde{\gamma}_t(a_\xi)\rangle = k$ $\weaks$-entrywise and hence $\weaks$ in $N.$
\end{proof}

\begin{remark}\label{rem:net in the center in the WAP}
 In the proof above we incidentally showed that the net $\{a_i\}_{i\in I}$ in the Definition of the W*AP can be taken in the unit ball of $\ell^2(G,Z(M_e)),$ without altering the definition.
\end{remark}

\begin{corollary}\label{corollary:AD amenability and WAP}
 A W*-partial $\gamma=\{\gamma_t\colon M_{t^{-1}}\to M_t\}$ is W*AD-amenable if and only if it has the W*AP; which specifically means the existence of a bounded net $\{a_i\}_{i\in I}\subseteq \ell^2(G,M)$ of finitely supported functions such that, for all $t\in G$ and $b\in M_t,$ 
 \begin{equation}\label{equ:limit of W*AP}
  b=\lim_i \sum_{s\in G} a_i(ts)^* \gamma_{t}(\gamma_{t^{-1}}(b)a)
 \end{equation}
 in the $\weaks$-topology.
 Moreover, by changing the net one can assume the norm bound of the net to be 1 and the ranges of the functions $a_i$ to be contained in $Z(M);$ in which case it suffices to verify the limit~\eqref{equ:limit of W*AP} with $b$ being the unit of $M_t$ (for all $t\in G$).
\end{corollary}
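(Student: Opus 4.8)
The plan is to assemble the equivalence from the three preceding results and then carry out the bookkeeping needed to rephrase the W*AP of the semidirect product bundle as the explicit condition~\eqref{equ:limit of W*AP}. First I would invoke Remark~\ref{rem:AD amenability of partial action and semidirect product bundle}, which reduces the W*AD-amenability of $\gamma$ to that of its semidirect product bundle $\cB_\gamma$; by Theorem~\ref{thm: WAP iff AD amenable for wstar Fell bundles} this is equivalent to $\cB_\gamma$ having the W*AP, which by Definition~\ref{def:WAP for W-Fell bundles} is exactly what it means for $\gamma$ to have the W*AP. So the equivalence ``W*AD-amenable $\Leftrightarrow$ W*AP'' is immediate, and the real work is only to unwind what the W*AP of $\cB_\gamma$ says in terms of the partial action.

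For this unwinding I would use that the unit fibre of $\cB_\gamma$ is $M_e\delta_e\cong M$, so a finitely supported function $a_i\colon G\to M_e$ is the same as a function $a_i\colon G\to M$, and condition~(i) of the W*AP is literally boundedness in $\ell^2(G,M)$. For condition~(ii) I would take $b\in M_t$, identify it with $b\delta_t\in M_t\delta_t$, and compute the product $\sum_s a_i(ts)^*(b\delta_t)a_i(s)$ using the semidirect product formulas $(a\delta_s)(b\delta_t)=\gamma_s(\gamma_{s^{-1}}(a)b)\delta_{st}$ and $(a\delta_s)^*=\gamma_{s^{-1}}(a)^*\delta_{s^{-1}}$. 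A short calculation, together with the elementary identity $\gamma_t\bigl(\gamma_{t^{-1}}(a_i(ts)^*b)\,a_i(s)\bigr)=a_i(ts)^*\,\gamma_t\bigl(\gamma_{t^{-1}}(b)\,a_i(s)\bigr)$ (which follows by writing $b=1_tb$ and splitting $\gamma_{t^{-1}}$ over the product), shows that the $M_t$-element corresponding to this sum is exactly the right-hand side of~\eqref{equ:limit of W*AP}. Hence the $\weaks$-convergence in~(ii) is precisely~\eqref{equ:limit of W*AP}.

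It then remains to justify the two ``moreover'' statements. The normalisation of the net---norm bound one and values in $Z(M)=Z(M_e)$---is supplied directly by Remark~\ref{rem:net in the center in the WAP}, applied to $\cB_\gamma$. The last claim is where the only genuinely new computation enters: assuming the $a_i$ are central, I would show that checking~\eqref{equ:limit of W*AP} for $b$ equal to the unit $1_t$ of $M_t$ forces it for every $b\in M_t$. Writing $c_i^t:=\sum_s a_i(ts)^*\gamma_t\bigl(a_i(s)1_{t^{-1}}\bigr)$ for the value of the sum at $b=1_t$, centrality of $a_i(s)$ lets one pull $b$ past $\gamma_{t^{-1}}(b)a_i(s)=\gamma_{t^{-1}}(b)\bigl(a_i(s)1_{t^{-1}}\bigr)$ and past $a_i(ts)^*$, yielding $\sum_s a_i(ts)^*\gamma_t(\gamma_{t^{-1}}(b)a_i(s))=b\,c_i^t$ for all $b\in M_t$. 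Since $c_i^t\to 1_t$ in the $\weaks$-topology by hypothesis and $b=b1_t$, the separate $\weaks$-continuity of multiplication gives $b\,c_i^t\to b1_t=b$, completing the reduction. I expect the main obstacle to be purely notational: keeping the identifications $M_t\leftrightarrow M_t\delta_t$ and the central projections $1_t$ straight through the products, since all the conceptual input is already provided by the cited results.
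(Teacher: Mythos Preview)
Your proposal is correct and follows essentially the same route as the paper: reduce to the semidirect product bundle via Remark~\ref{rem:AD amenability of partial action and semidirect product bundle}, apply Theorem~\ref{thm: WAP iff AD amenable for wstar Fell bundles}, unwind the W*AP via the identity $(a\delta_e)^*(b\delta_t)(c\delta_e)=a^*\gamma_t(\gamma_{t^{-1}}(b)c)\delta_t$, and invoke Remark~\ref{rem:net in the center in the WAP} for the central-valued, norm-one refinement. Your computation for the final reduction to $b=1_t$ is exactly the paper's identity $a\gamma_t(\gamma_{t^{-1}}(b)c)=b\,a\gamma_t(\gamma_{t^{-1}}(1_t)c)$ for central $a$, written out in slightly more detail.
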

\begin{proof}
The translation of the W*AP of the semidirect product bundle $\cB_\gamma=\{M_t\delta_t\}_{t\in G}$ to the existence of the net is the easy part, as it follows immediately from the identity $(a\delta_e)^*(b\delta_t)(c\delta_e)=a\gamma_t(\gamma_{t^{-1}}(b)c)\delta_t.$
Now, the claim about the replacement of the net holds by Remark~\ref{rem:net in the center in the WAP} and because if we denote $p_t$ the unit of $M_t,$ then for all $b\in M_t$ we have $a\gamma_t(\gamma_{t^{-1}}(b)c) =b a\gamma_t(\gamma_{t^{-1}}(p_t)c).$
The rest follows at once from the last Theorem above and Remark~\ref{rem:AD amenability of partial action and semidirect product bundle}.
\end{proof}

 We are now in a good position to prove the Fell bundle version of Theorem~\ref{thm: WAP iff AD amenable for wstar Fell bundles}. Basically, we translate the WAP into an approximation property very close to (an apparently weaker version of) Exel's AP. As already mentioned in the introduction, we then use the ideas of \cite{ozawa2020characterizations} to show that WAP and AP are equivalent.

\begin{theorem}\label{thm:the mega theorem}
 For every Fell bundle $\cB$ over a group $G$ the following are equivalent:
 \begin{enumerate}[(i)]
  \item\label{item:AD} $\cB$ is AD-amenable (i.e. $\cB''$ is the W*AD-amenable).
  \item\label{item:WAP} $\cB$ has the WAP (i.e. $\cB''$ has the W*AP).
  \item\label{item:net in center bidual} There exists a bounded
    net $\{a_i\}_{i\in I}\subseteq \ell^2(G,Z(B_e''))$ of functions with
    finite support  such that, for every $t\in G$ and $b\in B_t,$
    $\lim_i \sum_{r\in G}a_i(tr)^*ba_i(r)=b$ in $B_t''$ with respect
    to the $\weaks$-topology.
  \item\label{item:netbe} There exists a bounded net $\{a_i\}_{i\in I}\subseteq \ell^2(G,B_e)$ of functions with finite support such that, for every $t\in G$ and $b\in B_t,$ $\lim_i \sum_{r\in G}a_i(tr)^*ba_i(r)=b$ in the weak topology of $B_t.$
  \item\label{item:AP} $\cB$ has the AP.
 \end{enumerate}
\end{theorem}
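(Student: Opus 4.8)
The plan is to prove the cyclic chain (i)$\Rightarrow$(ii)$\Rightarrow$(iii)$\Rightarrow$(iv)$\Rightarrow$(i) together with the equivalence (iv)$\Leftrightarrow$(v), so that the whole W*-theory enters through results already established and the only genuinely new analytic input is the passage from weak to norm convergence. Two of these links are essentially free. The equivalence (i)$\Leftrightarrow$(ii) is just Theorem~\ref{thm: WAP iff AD amenable for wstar Fell bundles} applied to the bidual bundle $\cB''$, after unwinding Definitions~\ref{def:AD amenable} and~\ref{def:WAP for W-Fell bundles}. For (ii)$\Rightarrow$(iii) I would invoke Remark~\ref{rem:net in the center in the WAP}, which allows the W*AP-net of $\cB''$ to be chosen in the unit ball of $\ell^2(G,Z(B_e''))$ and converging in the $\weaks$-topology of $B_t''$ for \emph{all} $b\in B_t''$; restricting attention to $b\in B_t$ gives precisely (iii). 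Finally (v)$\Rightarrow$(iv) is immediate, since norm convergence implies weak convergence in each $B_t$.

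The implication (iv)$\Rightarrow$(i) is where I would return to the von Neumann side. By Corollary~\ref{cor:Fell bundle AD amenable and action on the kernels} it suffices to make the canonical action $\beta$ on $\bk(\cB)$ AD-amenable, and by Theorem~\ref{the:bidual and wstar algebra of kernels} this is the same as the W*AD-amenability of $\beta^\weaks$ on $N:=\bk_\weaks(\cB'')\cong\bk(\cB)''$. I would verify the latter through Lemma~\ref{lem:characterization of AD amenability with dense subalgebra}, taking the $\weaks$-dense $\beta^\weaks$-invariant \Star{}subalgebra $A:=\bk_c(\cB)$ and following the second half of the proof of Theorem~\ref{thm: WAP iff AD amenable for wstar Fell bundles}: from the net $\{a_i\}$ of (iv) build the diagonal matrix-valued functions $a_\xi$ whose diagonal entries are $a_i$. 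For a fixed kernel $k\in\bk_c(\cB)$ the inner product $\langle a_\xi,k\tilde\gamma_t(a_\xi)\rangle_2$ lies in a fixed matrix algebra $\mathbb{M}_F(\cB'')=\mathbb{M}_F(\cB)''$, and its $(p,q)$-entry equals $\sum_r a_i(tr)^*k_{pq}a_i(r)$ with $k_{pq}\in B_s$ for the appropriate $s$. By (iv) this entry converges weakly to $k_{pq}$, so by Remark~\ref{rem:matrix algebras of Fell bundles} (where $\weaks$-convergence in the W*-algebra $\mathbb{M}_F(\cB)''$ is entrywise) we obtain $\langle a_\xi,k\tilde\gamma_t(a_\xi)\rangle_2\to k$ in the $\weaks$-topology of $N$, and Lemma~\ref{lem:characterization of AD amenability with dense subalgebra} delivers (i).

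To move from the bidual-valued conditions into the $C^*$-valued one I would establish (iii)$\Rightarrow$(iv) by a perturbation. The two statements impose the same convergence requirement and differ only in that (iii) permits values in $Z(B_e'')$ whereas (iv) demands values in $B_e$. Fixing an index $i$, finitely many pairs $(t,b)$ with $b\in B_t$, and a tolerance, I would use the Kaplansky density theorem to replace each of the finitely many values $a_i(r)\in Z(B_e'')$ by an element $c(r)\in B_e$ of no larger norm that is $\sigma$-strong* close to $a_i(r)$. Since the multiplication of the bidual W*-Fell bundle $\cB''$ is jointly $\sigma$-strong* continuous on bounded sets and the supports are finite, the finite sums $\sum_r c(tr)^*bc(r)$ are then $\weaks$-close to $\sum_r a_i(tr)^*ba_i(r)$, hence $\weaks$-close to $b$; as these sums lie in $B_t$ this is weak closeness in $B_t$. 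Indexing the $c$'s over $i$ and the approximation data yields a net witnessing (iv).

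The decisive and, I expect, hardest step is (iv)$\Rightarrow$(v), the convexification of Ozawa--Suzuki. Fixing $t_1,\dots,t_m\in G$, elements $b_j\in B_{t_j}$, and a bound $C$, I would consider the set of tuples $\big(\sum_r a(t_jr)^*b_ja(r)\big)_{j}\in\bigoplus_j B_{t_j}$ obtained from finitely supported $a\colon G\to B_e$ with $\|a\|_2\le C$. The crucial point is that its convex hull is still realized by admissible vectors: given $a^{(1)},\dots,a^{(n)}$ and weights $\lambda_k\ge 0$ summing to $1$, the orthogonal direct sum $\bigoplus_k\sqrt{\lambda_k}\,a^{(k)}$ produces the corresponding convex combination of tuples while keeping the $\ell^2$-bound, provided one allows the vectors to take values in a finite amplification of $B_e$ and then compresses back to a genuine $B_e$-valued net. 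Condition (iv) places the target $(b_j)_j$ in the weak closure of this convex set, whence Mazur's theorem places it in the norm closure; letting the finite data and the tolerance vary produces a net as in Definition~\ref{def:AP}, that is, (v). I expect the main difficulty to be precisely the amplification bookkeeping that makes the achievable set convex without leaving the class of $B_e$-valued vectors; once convexity is in hand, Mazur's theorem does the rest and the remaining links are bookkeeping on top of the results already proved.
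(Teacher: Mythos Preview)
Your overall architecture matches the paper's: the easy links (i)$\Leftrightarrow$(ii), (ii)$\Rightarrow$(iii), (v)$\Rightarrow$(iv) and your (iv)$\Rightarrow$(i) via the matrix-algebra argument are exactly what the paper does. Two steps, however, have gaps.

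In (iii)$\Rightarrow$(iv) your coordinatewise Kaplansky argument controls the individual norms $\|c(r)\|\le\|a_i(r)\|$, but this does \emph{not} control $\|c\|_2^2=\|\sum_r c(r)^*c(r)\|$. From $\|c(r)\|\le\|a_i(r)\|$ you only get $\|c\|_2^2\le\sum_{r\in\supp(a_i)}\|a_i(r)\|^2$, and for central $a_i$ this last sum can be much larger than $\|\sum_r a_i(r)^*a_i(r)\|$; since the supports grow with $i$, no uniform bound survives. The paper fixes this by applying Kaplansky not entrywise but in the linking algebra of $\ell^2(G,B_e'')$: one represents $c\in C_c(G,B_e'')$ as an off-diagonal corner and uses $\sigma$-strong* density of the unit ball of the $C^*$-linking algebra in that of its bicommutant, obtaining approximants $a_j\in C_c(G,B_e)$ with $\|a_j\|_2\le\|c\|_2$ and $\hat\pi(a_j)\to\hat\pi(c)$ strongly. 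Strong (not merely weak) convergence of the column operators is what makes the bilinear expression $\sum_r a_j(tr)^*ba_j(r)$ converge; the paper explicitly warns that WOT convergence of $\hat\pi(a_j)$ would not suffice.

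In (iv)$\Rightarrow$(v) you have the right reduction to Mazur's theorem, but the ``compress back'' from amplifications to $B_e$-valued functions is precisely the point, and it is not free. The paper's mechanism is a translation trick in the group: assuming $G$ infinite (the finite case being amenable hence trivial), one chooses $r_1,\dots,r_m\in G$ so that the translated supports $F'r_k$ are pairwise disjoint, and sets $\tilde a(s)=\sum_k\lambda_k^{1/2}a_{i_k}(sr_k^{-1})$. Disjointness of supports gives $\langle\tilde a,\tilde a\rangle=\sum_k\lambda_k\langle a_{i_k},a_{i_k}\rangle$ and $\sum_s\tilde a(t_js)^*b_j\tilde a(s)=\sum_k\lambda_k\sum_s a_{i_k}(t_js)^*b_ja_{i_k}(s)$, so the convex combination is realized by a genuine $B_e$-valued function with the same $\ell^2$-bound. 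Your ``orthogonal direct sum'' picture is the right intuition, but the realization inside $\ell^2(G,B_e)$ uses the group, not a matrix amplification.
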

\begin{proof}
Theorem~\ref{thm: WAP iff AD amenable for wstar Fell bundles} implies
that (\ref{item:AD}) and (\ref{item:WAP}) are equivalent. By Remark~\ref{rem:net in the center in the WAP}, (\ref{item:WAP}) and
(\ref{item:net in center bidual}) are equivalent.
To prove that (\ref{item:netbe}) implies  (\ref{item:WAP}) we can proceed exactly as in the proof of the converse in Theorem~\ref{thm: WAP iff AD amenable for wstar Fell bundles}, noticing that convergence in the weak topology of $\mathbb{M}_F(\cB)$ is entrywise convergence in the weak topology and, also, $\weaks$-convergence in $\mathbb{M}_F(\cB)''=\mathbb{M}_F(\cB'').$

We now prove that (\ref{item:WAP}) implies (\ref{item:netbe}).
  First we indicate how to approximate elements of $\ell^2(G,B_e'')$ by elements of $\ell^2(G,B_e)$ in a certain particular way.
We start by representing $\ell^2(G,B_e'')$ and $\ell^2(G,B_e)$ faithfully.
Let $\pi\colon \cB''\to \Lb(H)$ be a unital *-representation, fibre-wise faithful and $\weaks$-continuous (which exists by Proposition~\ref{prop:faithful representation of W(B)}).
Define $\rho:=\pi|_{B_e''}\colon B_e''\to \Lb(H)$ and note that we have canonical identifications
\begin{equation*}
 K:= \ell^2(G,H)  =  \ell^2(G,B_e)\otimes_\rho H = \ell^2(G,B_e'')\otimes_\rho H.
\end{equation*}
The proof now continues by representing $\ell^2(G,B_e'')$ and
$\bK(\ell^2(G,B_e''))$ by the process we described right before
Remark~\ref{rem:isomorphism between the centres}. The map $\hat\pi
\colon \ell^2(G,B_e'')\to \Lb(H,K)$ given by $\hat{\pi}(f)h =
f\otimes_\rho h$ is a faithful representation of the W*-tring
$\ell^2(G,B_e'').$ Notice that $\hat{\pi}(f)h$ may be identified with the function $t\mapsto \pi(f(t))h.$ Then we have a canonical nondegenerate representation $\hat{\pi}^l\colon \bK(\ell^2(G,B_e''))\to \Lb(K)$ such that $\hat{\pi}^l(T)\pi(f)=\pi(Tf)$, and thus we get a nondegenerate representation of the linking algebra $L$ of $\ell^2(G,B_e'')$:
\begin{equation*}
 \hat{\pi}^L\colon L\to \Lb(K\oplus H)
\qquad                                          \hat{\pi}^L\left(  \begin{array}{cc}
                                          T & f\\ \tilde{g} & S
                                         \end{array}\right) = \left(  \begin{array}{cc}
                                          \hat{\pi}^l(T) & \hat{\pi}(f)\\ \hat{\pi}(g)^* & \hat{\pi}^r(S)
                                         \end{array}\right),
\end{equation*}
where $\hat{\pi}^r\colon B_e''\to \Lb(H)$ is just $\rho$.

\par Fix an element $c\in C_c(G,B_e'')$.
Using a net in $B_e$ to approximate $c(t)$ (for each $t$ in the finite support of $c$) with respect to the w*-topology, we can construct a net $\{c_i\}_{i\in I}\subseteq C_c(G,B_e)$ such that $\supp(c_i)\subseteq \supp(c)$ and $c_i(t)\to c(t)$ in the $\weaks$-topology for every $t\in G$.
This construction implies that $\{\hat{\pi}(c_i)\}_{i\in I}$ $\wot-$converges to $\hat{\pi}(c)$ because, for all $h\in H$ and $k\in K$:
\begin{align*}
\lim_i\langle\hat{\pi}(c_i)h,k\rangle
=\lim_i \sum_{r\in\supp(c)}\langle\pi(c_i(r))h,k(r)\rangle
=\langle\hat{\pi}(c)h,k\rangle.
\end{align*}
It follows from the previous comments that $\hat{\pi}(c)\in
  \overline{\hat{\pi}(C_c(G,B_e))}^{\wot}$.
Now, according to \cite{MR996436}*{Theorem 4.8} and \cite{MR641217}*{Part I
  Ch. 3}, the unit ball of $\hat{\pi}^L(L)$ is *-strongly dense in the
unit ball of $\hat{\pi}^L(L)''$, and this bicommutant is the $\wot-$closure
of $\hat{\pi}^L(L).$
Hence there exists a net $\{\left( \begin{smallmatrix} T_j & a_j\\ \tilde{b_j} & S_j \end{smallmatrix}
 \right)\}_{j\in J}\sbe L$ in the closed ball of radius $\|\hat{\pi}(c)\|=\|c\|$ such that $\{\hat{\pi}^L\left( \begin{smallmatrix} T_j & a_j\\ \tilde{b_j} & S_j \end{smallmatrix}
 \right)\}_{j\in J}$ converges to $\left( \begin{smallmatrix} 0 & \hat{\pi}(c)\\ 0 & 0 \end{smallmatrix}
 \right)$ *-strongly.
Then, in the strong operator topology:
 \begin{align*}
  \lim_j \left( \begin{smallmatrix} 0 & \hat{\pi}(a_j)\\ 0 & 0 \end{smallmatrix}
 \right)
   & = \lim_j \left( \begin{smallmatrix} 1 & 0\\ 0 & 0 \end{smallmatrix}
 \right)\hat{\pi}^L\left( \begin{smallmatrix} T_j & a_j\\ \tilde{b_j} & S_j \end{smallmatrix}
 \right)\left( \begin{smallmatrix} 0 & 0\\ 0 & 1 \end{smallmatrix}
 \right)
     =  \left( \begin{smallmatrix} 1 & 0\\ 0 & 0 \end{smallmatrix}
 \right) \left( \begin{smallmatrix} 0 & \hat{\pi}(c)\\ 0 & 0 \end{smallmatrix}
 \right)\left( \begin{smallmatrix} 0 & 0\\ 0 & 1 \end{smallmatrix}
 \right)
   = \left( \begin{smallmatrix} 0 & \hat{\pi}(c)\\ 0 & 0 \end{smallmatrix}
 \right).
 \end{align*}
 Now we arrange the supports of the $a_j$'s to be contained in $\supp(c)$.
 Let $P\in \Lb(K)=\Lb(\ell^2(G,H))$ be the multiplication by the indicator function of $\supp(c)$.
 Then, in the strong operator topology: $\lim_j P\hat{\pi}(a_j)=P\hat{\pi}(c)=\hat{\pi}(c)$ and $P\hat{\pi}(a_j)=\hat{\pi}(a_j|_{\supp c})$.
 Thus we are allowed to assume $\supp(a_j)\sbe \supp(c)$ for all $j\in J$.
 We must retain the following facts about the net $\{a_j\}_{j\in J}\sbe \ell^2(G,B_e)$:
 \begin{itemize}
  \item $\supp(a_j)\sbe \supp(c)$ for all $j\in J$.
  \item $\|a_j\|\leq \|c\|$ for all $j\in J,$ with the norm of $\ell^2(G,B_e).$
  \item $\{\hat{\pi}(a_j)\}_{j\in J}$ converges strongly to $\hat{\pi}(c)$.
 \end{itemize}
We claim that these conditions imply, for every $t\in G$, $b\in B_t$ and $\varphi\in B_t',$ that
\begin{equation}\label{equ:wstar convergence}
 \lim_j \varphi\left( \sum_{r\in G}a_j(tr)^*ba_j(r) \right) = \varphi\left( \sum_{r\in G}c(tr)^*bc(r) \right).
\end{equation}
In other words, we claim that the net $\{ \sum_{r\in G}a_j(tr)^*ba_j(r)
\}_{k\in J}$ weakly converges to $\sum_{r\in G}c(tr)^*bc(r)$ in $B_t$.
Indeed, since $\pi|_{B_t''}$ is an isomorphism over its
image, and a homeomorphism considering in $B_t''$ and in
$\mathcal{L}(H)$ the w*-topology and the ultraweak topology ($\sigma
w$-topology), respectively,
it is enough to prove that $\pi(\sum_{r\in
  G}a_j(tr)^*ba_j(r))\stackrel{\sigma w}{\to}\pi(\sum_{r\in
  G}c(tr)^*bc(r))$.

Let $U\colon G\to \Lb(K)=\Lb(\ell^2(G,H))$ be the unitary representation given by $U_tf(r)=f(t^{-1}r)$, and $\pi^G\colon \cB''\to \Lb(K)$ be the $\ell^2$-direct sum of $G$ copies of $\pi$, that is, $\pi^G(b)f(r):=\pi(b)f(r)$.
Notice that $\{ \sum_{r\in G}a_j(tr)^*ba_j(r) \}_{j\in J}$ is bounded because, for all $u,v\in H$,
\begin{equation}\label{equ:inner product}
 \langle u,\pi\left( \sum_{r\in G}a_j(tr)^*ba_j(r)  \right)v\rangle
     = \langle U_t^* \hat{\pi}(a_j)u,\pi^G(b)\hat{\pi}(a_j)v\rangle.
\end{equation}
Since the ultraweak topology coincides with the weak operator topology
on bounded sets, to prove~\eqref{equ:wstar convergence} it is enough
to show that $\{ \pi(\sum_{r\in G}a_j(tr)^*ba_j(r)) \}_{j\in J}$
converges to $\pi(\sum_{r\in G}c(tr)^*bc(r))$ in the $\wot$ topology.
But our construction of $\{a_j\}_{j\in J}$ and
\eqref{equ:inner product} implies
\begin{align*}
 \lim_j \langle u,\pi\left( \sum_{r\in G}a_j(tr)^*ba_j(r)  \right)v\rangle
     & = \lim_j \langle U_t^* \hat{\pi}(a_j)u,\pi^G(b)\hat{\pi}(a_j)v\rangle\\
     & =\langle U_t^* \hat{\pi}(c)u,\pi^G(b)\hat{\pi}(c)v\rangle\\
     & = \langle u,\pi\left( \sum_{r\in G}c(tr)^*bc(r)  \right)v\rangle.
\end{align*}
Therefore \eqref{equ:wstar convergence} holds (note that
\eqref{equ:inner product} does not imply \eqref{equ:wstar convergence}
if we only know that $\pi(a_i)\stackrel{\wot}{\to}\pi(c)$).

\par Now assume that $\cB''$ has the W*AP and take a net $\{c_i\}_{i\in I}$ as
in the definition of the W*AP for $\cB''$, with all the $c_i$'s with
compact support.
Set $M:=\sup_{i\in I}\|\sum_{t\in G}c_i(t)^*c_i(t)\|$ and let
$\mathcal{F}$ and $\mathcal{F}'$ be the families of finite subsets of
$\cB$ and $\uplus_{t\in G} {B_{t,1}'}$, respectively, where ${B_{t,1}'}$ is the
closed unit ball of $B_t'$.
On $\Lambda:=(0,1)\times \mathcal{F}\times \mathcal{F}'$ we consider the canonical order $(\varepsilon,U,V)\leq (\delta,Y,Z)$ $\Leftrightarrow$ $\delta\leq \varepsilon$, $U\sbe Y$ and $V\sbe Z$.
For every $\lambda=(\varepsilon, U,V)\in \Lambda$ there exists $i_0\in I$ such that, for every $t\in G$, $b\in B_t\cap U$ and $\varphi\in B_t'\cap V$,
\begin{equation*}
 \left|\varphi\left(b - \sum_{r\in G} c_{i_0}(tr)^*bc_{i_0}(r)\right)\right|<\varepsilon/2.
\end{equation*}
Our approximation procedure ensures the existence of $a_\lambda\in \ell^2(G,B_e)$ such that: $\supp(a_\lambda)\sbe \supp(c_i)$, $\|a_\lambda\|^2\leq \|c_i\|^2\leq  M$ and
\begin{equation*}
 \left|\varphi\left(b - \sum_{r\in G} a_\lambda (tr)^*ba_\lambda (r)\right)\right|<\varepsilon,
\end{equation*}
for every $t\in G$, $b\in B_t\cap U$ and $\varphi\in B_t'\cap V$.
It is then clear that $\{a_\lambda\}_{\lambda\in \Lambda}$ is a net
satisfying (\ref{item:netbe}).

Note that the AP clearly implies (\ref{item:netbe}), so the proof will be completed once we show the converse also holds.
 This is the part of the proof we adapted from \cite{ozawa2020characterizations}. 
We shall assume $G$ is infinite, for otherwise $G$ is amenable and any Fell bundle over an amenable group has the AP \cite{Exel:Partial_dynamical}.
  
  Let $\{a_i\}_{i\in I}\subseteq \ell^2(G,B_e)$ be a net as in (\ref{item:netbe}).
  We set $C:=\sup_{i\in I} \|a_i\|$ and fix $\varepsilon>0;$ a positive integer $n;$ $t_1,\ldots,t_n\in G$ and $b_j\in B_{t_j}$ for $j=1,\ldots,n.$
  It suffices to construct $\widetilde{a}\in \ell^2(G,B_e)$ with finite support, $\|\widetilde{a}\|\leq C$ and 
  \begin{equation*}
   \left\|b_j-\sum_{s\in G} \widetilde{a}(t_js)^*b_j\widetilde{a}(s)\right\|<\varepsilon
  \end{equation*}
  for all $j=1,\ldots,n.$
  
  Define $E$ as the (Banach space) direct sum $B_{t_1}\oplus \cdots \oplus B_{t_n}.$
  Then the net 
  \begin{equation*}
   \left\{  \left(\sum_{s\in G}a_i(t_1s)^*b_1a_i(s),\ldots,\sum_{s\in G}a_i(t_ns)^*b_na_i(s)\right) \right\}_{i\in I}\subseteq E
  \end{equation*}
  converges to $(b_1,\ldots,b_n)\in E$ in the weak topology.
  By the Hahn-Banach Theorem there exists $i_j\in I$ and $\lambda_j\in [0,+\infty)$ (for $j=1,\ldots,m$) such that  $\lambda_1+\cdots+\lambda_m\leq 1$ and $\| b_j  - \sum_{k= 1}^m  \lambda_k\sum_{s\in G} a_{i_k}(t_js)^*b_ja_{i_k}(s)\|<\varepsilon$ for all $j=1,\ldots,n.$

  Name $F$ the union of the supports of the functions $a_{i_k}$ ($k=1,\ldots,m$) and set $F':=F\cup t_1^{-1}F\cup \cdots \cup t_n^{-1}F.$
  Since $G$ is infinite, there exists $r_1,\ldots,r_m\in G$ such that $(F'r_j)\cap (F'r_k)=\emptyset $ if $j\neq k.$
  From this it follows that $(tFr_j)\cap (Fr_k)=\emptyset$ for all $t=e,t_1,\ldots,t_n$ and $j,k=1,\ldots,m$ with $j\neq k.$
  Also, the support of the function $s\mapsto \lambda_k^{1/2}a_{i_k}(sr_k^{-1})$ is contained in $Fr_k$ for all $k=1,\ldots,m.$ Hence these new functions have disjoint supports and the sum $\widetilde{a}\colon G\to B_e,$ $s\mapsto \sum_{k=1}^m\lambda_k^{1/2}a_{i_k}(sr_k^{-1}),$ may be alternatively described as
  \begin{equation*}
  \widetilde{a}(s)
    =\begin{cases}
         0\mbox{ if }s\notin \cup_{k=1}^m Fr_k\\
          \lambda_k^{1/2}a_{i_k}(sr_k^{-1})\mbox{ if } s\in Fr_k
     \end{cases} .
  \end{equation*}
  
  A simple computation shows that $ \langle \widetilde{a},\widetilde{a}\rangle = \sum_{k=1}^m \lambda_k \langle a_{j_k},a_{j_k}\rangle,$ which implies $\|\widetilde{a}\|^2\leq \sum_{k=1}^m \lambda_kC^2\leq C^2.$
  Moreover, for all $j=1,\ldots,n,$
  \begin{equation*}
   \sum_{s\in G} \widetilde{a}(t_js)^*b_j\widetilde{a}(s) = \sum_{k= 1}^m  \lambda_k\left(\sum_{s\in G} a_{i_k}(t_js)^*b_ja_{i_k}(s)\right)
  \end{equation*}
  and we obtain  $\|b_j-\sum_{s\in G} \widetilde{a}(t_js)^*b_j\widetilde{a}(s)\|<\varepsilon$ for all $j=1,\ldots,n.$
\end{proof}

\begin{remark}\label{rem:on norm bound of the nets}
 By the proof above and Remark~\ref{rem:net in the center in the WAP}, we could replace  (i) in Definition~\ref{def:AP} by $\sup_{i\in I}\|\sum_{r\in G}a_i(r)^*a_i(r)\|\leq 1$ and all the occurrences of ``bounded net'' in Theorem~\ref{thm:the mega theorem} with ``net in the closed unit ball'' and this would still give conditions equivalent to the AP.
\end{remark}

\begin{corollary}\label{cor:WAP and canonical action on kernels}
 A Fell bundle $\cB$  has the AP if and only if the canonical action on its $C^*$-algebra of kernels $\bk(\cB)$ is AD-amenable.
\end{corollary}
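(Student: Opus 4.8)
The plan is to deduce this corollary directly by chaining together two of the principal results already established, namely the omnibus Theorem~\ref{thm:the mega theorem} and Corollary~\ref{cor:Fell bundle AD amenable and action on the kernels}. No new construction is needed: the entire content of these statements has been assembled precisely so that the passage from the approximation property of a Fell bundle to the AD-amenability of a genuine $C^*$-dynamical system becomes a formal matter.

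First I would invoke the equivalence of conditions (\ref{item:AP}) and (\ref{item:AD}) in Theorem~\ref{thm:the mega theorem}, which asserts that a Fell bundle $\cB$ has the AP if and only if $\cB$ is AD-amenable (i.e. $\cB''$ is W*AD-amenable). This is the step that does all the real work, since it incorporates both the translation of Exel's AP into the W*-language of the WAP and the convex-combination argument adapted from \cite{ozawa2020characterizations} that upgrades weak convergence to norm convergence. Having this in hand, the remaining task reduces to identifying AD-amenability of $\cB$ with AD-amenability of the canonical action $\beta$ on $\bk(\cB)$.

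For that identification I would simply cite Corollary~\ref{cor:Fell bundle AD amenable and action on the kernels}, which states exactly that $\cB$ is AD-amenable if and only if the canonical $C^*$-action $\beta$ on $\bk(\cB)$ is AD-amenable. That corollary itself rests on the fact (recalled at the start of Section~\ref{sec:ADA-Fell-bundles}) that $\cB$ is weakly equivalent to the semidirect product bundle $\cB_\beta$, together with the invariance of AD-amenability under weak equivalence (Corollary~\ref{cor:invariance of AD amenability under weak equivalence}). Concatenating the two equivalences yields
\[
\cB \text{ has the AP}
\iff \cB \text{ is AD-amenable}
\iff \beta \text{ on }\bk(\cB)\text{ is AD-amenable},
\]
which is the assertion of the corollary. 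I do not anticipate any genuine obstacle here; the only point worth a word of care is purely bookkeeping, namely to make sure the notion of AD-amenability referenced for $\beta$ is the $C^*$-dynamical one (as in Corollary~\ref{cor:Fell bundle AD amenable and action on the kernels}) rather than a bundle-theoretic surrogate, so that the two cited results are being applied to the same object. Once that is observed, the proof is a one-line composition of prior theorems.
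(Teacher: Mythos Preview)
Your proposal is correct and follows exactly the same approach as the paper: the paper's proof is the single line ``Follows from Theorem~\ref{thm:the mega theorem} and Corollary~\ref{cor:Fell bundle AD amenable and action on the kernels},'' which is precisely the chain of equivalences you spell out.
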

\begin{proof}
 Follows from Theorem~\ref{thm:the mega theorem} and Corollary~\ref{cor:Fell bundle AD amenable and action on the kernels}.
\end{proof}

 Since we used the ideas of \cite{ozawa2020characterizations} to prove Theorem~\ref{thm:the mega theorem}, the Corollary below should come as no surprise.
 
\begin{corollary}[{c.f. Corollary~\ref{corollary:AD amenability and WAP}}]\label{cor:AP and amenability for partial actions}
  A $C^*$-partial action $\alpha=\{\alpha_t\colon A_{t^{-1}}\to A_t\}$ is AD-amenable if and only if it has the AP. AD amenability is equivalent to the existence of a bounded net $\{a_i\}_{i\in I}\subseteq \ell^2(G,A)$ of finitely supported functions such that, for all $t\in G$ and $b\in A_t,$
 \begin{equation*}
  \lim_i \sum_{s\in G} a_i(ts)^*\alpha_t(\alpha_{t^{-1}}(b)a_i(s)) = a 
 \end{equation*}
  in the weak topology of $A.$
  Moreover, by changing the net one may assume it to be norm bounded by 1 and the limit to converge in the norm topology (which implies the AP).
 \begin{proof}
  By \cite{Abadie:Enveloping}, the canonical action on $\bk(\cB_\alpha)$ is a Morita enveloping action for $\alpha.$
  Now Corollaries~\ref{cor:WAP and canonical action on kernels} and~\ref{cor:ADamenability and enveloping} imply $\alpha$ has the AP if and only if it is AD-amenable.  The rest follows as in the proof of Corollary~\ref{corollary:AD amenability and WAP} (use Remark~\ref{rem:on norm bound of the nets}).
 \end{proof}
\end{corollary}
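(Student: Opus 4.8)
The plan is to reduce the entire statement to facts already proved for Fell bundles by passing to the semidirect product bundle $\cB_\alpha=\{A_t\delta_t\}_{t\in G}$. First I would recall from \cite{Abadie:Enveloping} that the canonical action $\beta$ on the $C^*$-algebra of kernels $\bk(\cB_\alpha)$ is a Morita enveloping action for $\alpha$. Feeding this into Corollary~\ref{cor:ADamenability and enveloping}, which says that AD-amenability is detected on any Morita enveloping action, and into Corollary~\ref{cor:WAP and canonical action on kernels}, which identifies the AP of a Fell bundle with AD-amenability of the canonical action on its algebra of kernels, I obtain the chain
\begin{equation*}
\alpha \text{ is AD-amenable} \iff \beta \text{ is AD-amenable} \iff \cB_\alpha \text{ has the AP} \iff \alpha \text{ has the AP},
\end{equation*}
the last step being the definition of the AP for a partial action. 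This establishes the first assertion.

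To extract the explicit net, I would apply Theorem~\ref{thm:the mega theorem} to $\cB_\alpha$. Its condition (\ref{item:netbe}) provides a bounded net $\{a_i\}_{i\in I}$ of finitely supported functions in $\ell^2(G,(\cB_\alpha)_e)=\ell^2(G,A)$ with $\lim_i\sum_{r\in G}a_i(tr)^*\,b\,a_i(r)=b$ weakly, for every $b$ in the fibre over $t$. The only remaining task is to rewrite the fibre product in terms of $\alpha$: using $(a\delta_s)(b\delta_t)=\alpha_s(\alpha_{s^{-1}}(a)b)\delta_{st}$ and $(a\delta_s)^*=\alpha_{s^{-1}}(a)^*\delta_{s^{-1}}$, viewing $b\in A_t$ as $b\delta_t$ and each $a_i(r)\in A$ as $a_i(r)\delta_e$, a direct computation gives
\begin{equation*}
(a_i(tr)\delta_e)^*(b\delta_t)(a_i(r)\delta_e)=a_i(tr)^*\,\alpha_t(\alpha_{t^{-1}}(b)a_i(r))\,\delta_t,
\end{equation*}
so that the abstract approximation becomes exactly the displayed limit of the statement.

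The refinements are then read off from the same results. Remark~\ref{rem:on norm bound of the nets} lets me normalise the net so that $\sup_i\|\langle a_i,a_i\rangle\|\le 1$, and the equivalence in Theorem~\ref{thm:the mega theorem} between the weak-convergence condition (\ref{item:netbe}) and the AP (condition (\ref{item:AP}), with norm convergence) allows me to pass to a new net for which the limit holds in norm; this final net realises the AP. I expect no genuine obstacle here, since all the analytic weight has already been carried by Theorem~\ref{thm:the mega theorem} and the Morita-invariance Corollaries~\ref{cor:ADamenability and enveloping} and~\ref{cor:WAP and canonical action on kernels}. The only new work is bookkeeping: the identification $\ell^2(G,(\cB_\alpha)_e)\cong\ell^2(G,A)$ and the translation of the Fell-bundle product into the twisted partial-action formula above, both of which are routine once the conventions for $\cB_\alpha$ are recalled.
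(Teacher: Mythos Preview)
Your proposal is correct and follows essentially the same route as the paper: invoke \cite{Abadie:Enveloping} to identify $\bk(\cB_\alpha)$ with a Morita enveloping action, chain Corollaries~\ref{cor:ADamenability and enveloping} and~\ref{cor:WAP and canonical action on kernels} for the equivalence, and then unpack Theorem~\ref{thm:the mega theorem} applied to $\cB_\alpha$ (together with Remark~\ref{rem:on norm bound of the nets}) for the explicit net and its refinements. The paper compresses the last part into ``as in the proof of Corollary~\ref{corollary:AD amenability and WAP}'', which is exactly the translation you spell out.
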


From now on there is no point in making any difference between AD-amenability, WAP and AP and we can choose at will which of these conditions to use. We prefer to use the AP in our statements because its definition does not require to go to biduals or to $C^*$-algebras of kernels to be stated, and becuase it was the first notion of amenability to be stated in full generality (i.e. Fell bundles).

\begin{remark}
Notice that by Example~\ref{exa:trivial-partial} AD-amenable partial actions exist in abundance because every \cstar{}algebra $A$ admits an AD-amenable partial action of any fixed discrete group $G$.
This is in contrast with global actions  (which correspond to saturated bundles). Here the situation is different because no non-amenable group can act globally AD-amenably on a finite dimensional non-zero $C^*$-algebra.
\end{remark}

\section{Cross-sectional \texorpdfstring{$C^*-$}{C*-}algebras and the AP}\label{sec:Cross-sec-WAP}

In a previous version of this paper we did not know whether the AP was equivalent to AD-amenability, thus we were in need to prove that the full and reduced cross-sectional $C^*$-algebras of an AD-amenable bundle agree. Today we obtain this fact at least from two sources \cites{ExelNg:ApproximationProperty,Exel:Partial_dynamical}.

We continue with a result involving W*-Fell bundles.

\begin{proposition}
Let $\M=\{M_t\}_{t\in G}$ be a W*-Fell bundle. Then the following assertions are equivalent.
\begin{enumerate}[(i)]
\item  $M_e$ injective and $\M$ has the W*AP (or, equivalently, $\M$
  is W*AD-amenable);
\item $\bk_\weaks(\M)$ is injective and its canonical W*-action $\beta^\weaks$ has the W*AP (or is W*AD-amenable);
\item $\bk_\weaks(\M)\bar\rtimes_{\beta^\weaks} G$ is injective;
\item $W^*_\red(\M)$ is injective.
\end{enumerate}
\end{proposition}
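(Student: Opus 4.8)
The plan is to treat the two amenability conditions and the three injectivity conditions separately, glueing them together with Theorem~\ref{the:ADA-characterisations}(iv) and the duality of Corollary~\ref{cor:duality}. The amenability halves of (i) and (ii) in fact coincide: by Definition~\ref{def:AD amenable} together with Corollary~\ref{cor:equivalence of AD amenability}, the bundle $\M$ is W*AD-amenable exactly when the global action $\beta^\weaks$ on $\bk_\weaks(\M)$ is W*AD-amenable, and Theorem~\ref{thm: WAP iff AD amenable for wstar Fell bundles} turns each of these into the corresponding W*AP (for the global action $\beta^\weaks$ one applies it to the semidirect product bundle $\cB_{\beta^\weaks}$). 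So the real work lies in the injectivity statements.

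For the injectivity half of (i)$\Leftrightarrow$(ii) I would first argue that $M_e$ is injective iff $\bk_\weaks(\M)$ is injective. Recall $\bK_\weaks(\M)\cong\Lb(\ell^2_\weaks(\M))$ and that $\ell^2_\weaks(\M)$ is a self-dual right Hilbert $M_e$-module which is moreover \emph{full}: already the fibre over $e$ contributes all inner products $M_e^*M_e=M_e$, whose weak*-closed span is $M_e$. Thus $\ell^2_\weaks(\M)$ is a W*-equivalence bimodule between $\bK_\weaks(\M)$ and $M_e$, and since injectivity is preserved by W*-Morita equivalence, $M_e$ is injective iff $\bK_\weaks(\M)$ is. Finally, $\bk_\weaks(\M)$ is the W*-enveloping action of $\beta^\weaks|_{\bK_\weaks(\M)}$, so Remark~\ref{rem:abelian-injective} gives that $\bK_\weaks(\M)$ is injective iff $\bk_\weaks(\M)$ is. Combined with the amenability remark this yields (i)$\Leftrightarrow$(ii).

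Next I would establish (ii)$\Leftrightarrow$(iii) by applying Theorem~\ref{the:ADA-characterisations}(iv) to the global W*-action $\beta^\weaks$ on $\bk_\weaks(\M)$: when $\bk_\weaks(\M)$ is injective, $\beta^\weaks$ is W*AD-amenable iff $\bk_\weaks(\M)\bar\rtimes_{\beta^\weaks}G$ is injective. The implication (ii)$\Rightarrow$(iii) is then immediate. For (iii)$\Rightarrow$(ii), the canonical normal conditional expectation $\bk_\weaks(\M)\bar\rtimes_{\beta^\weaks}G\to\bk_\weaks(\M)$ forces $\bk_\weaks(\M)$ to be injective whenever the crossed product is, after which the same instance of Theorem~\ref{the:ADA-characterisations}(iv) returns the W*AD-amenability of $\beta^\weaks$. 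Lastly, (iii)$\Leftrightarrow$(iv) follows from the duality isomorphism $\bk_\weaks(\M)\bar\rtimes_{\beta^\weaks}G\cong W^*_\red(\M)\bar\otimes\Lb(\ell^2G)$ of Corollary~\ref{cor:duality}, together with the fact that tensoring by $\Lb(\ell^2G)$ is a stabilization, hence a W*-Morita equivalence, so $W^*_\red(\M)\bar\otimes\Lb(\ell^2G)$ is injective iff $W^*_\red(\M)$ is.

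The main obstacle I anticipate is the repeated appeal to W*-Morita invariance of injectivity — once for the pair $M_e$, $\bK_\weaks(\M)$ and once for the stabilization appearing in (iii)$\Leftrightarrow$(iv) — which must be justified with some care in the W*-setting (via full corners of injective von Neumann algebras and the fact that injectivity passes to arbitrary corners). One should also verify cleanly that $\ell^2_\weaks(\M)$ is genuinely a full equivalence bimodule and that the W*-crossed product carries the required canonical normal conditional expectation onto $\bk_\weaks(\M)$; once these are in place, the rest is bookkeeping through the cited results.
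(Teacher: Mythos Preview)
Your proposal is correct and follows essentially the same route as the paper: the amenability equivalence in (i)$\Leftrightarrow$(ii) is Definition~\ref{def:AD amenable} plus Theorem~\ref{thm: WAP iff AD amenable for wstar Fell bundles}; the injectivity equivalence runs through the W*-Morita equivalence $M_e\sim\bK_\weaks(\M)=\Lb(\ell^2_\weaks(\M))$ followed by Remark~\ref{rem:abelian-injective}; then (ii)$\Leftrightarrow$(iii) is Theorem~\ref{the:ADA-characterisations}(iv) and (iii)$\Leftrightarrow$(iv) is Corollary~\ref{cor:duality}. The only minor organisational difference is in recovering injectivity of $\bk_\weaks(\M)$ from (iii): you use the normal conditional expectation $\bk_\weaks(\M)\bar\rtimes_{\beta^\weaks}G\onto\bk_\weaks(\M)$, whereas the paper instead uses the conditional expectation $W^*_\red(\M)\onto M_e$ of Remark~\ref{rem:cond-exp} (together with (iii)$\Leftrightarrow$(iv) and the chain $M_e$ injective $\Leftrightarrow\bk_\weaks(\M)$ injective); both are perfectly valid and equally short.
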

\begin{proof}
First notice that $M_e$ is injective if and only if the W*-algebra of kernels $\bk_\weaks(\M)$ is injective.
Indeed, $M_e$ is W*-Morita equivalent to $\Lb(\ell^2_\weaks(\M))$ (via the W*-equivalence bimodule $\ell^2_\weaks(\M))$);
it follows that $M_e$ is injective if and only if $\Lb(\ell^2_\weaks(\M))$ is injective. But $\bk_\weaks(\M)$ carries a W*-action that is  enveloping for a partial W*-action on $\Lb(\ell^2_\weaks(\M))$. The claim now follows from Remark~\ref{rem:abelian-injective}. Also observe that $M_e$ is injective if $W^*_\red(\M)$ is injective because we have a canonical (normal) conditional expectation $W^*_\red(\M)\onto M_e$ (Remark~\ref{rem:cond-exp}).

The discussion above implies that (i) is equivalent to (ii). Since (ii) involves a W*-action, (ii)$\Leftrightarrow$(iii) by Theorem~\ref{the:ADA-characterisations}.
Finally, (iii)$\Leftrightarrow$(iv) by Corollary~\ref{cor:duality}.
\end{proof}

 It is known \cite{Exel:Partial_dynamical}*{Proposition 25.10} that the full and reduced cross-sectional $C^*$-algebras of a Fell bundle with the AP and nuclear unit fibre are nuclear, this are implications (iii)$\Rightarrow$(i) and (iii)$\Rightarrow$(ii) of the proposition below. The converse of this first appeared in \cite{buss2020amenability}*{Corollary 4.23}, where the authors use some of the results of our paper to prove it. 
In a first version of this paper the following proposition was stated with the WAP instead of the AP. But now using our Theorem~\ref{thm:the mega theorem} we can re-state it in terms of the AP, improving the result; this ends up yielding the same result as in \cite{buss2020amenability}*{Corollary 4.23}. Our original proof remains unchanged though.

\begin{proposition}[c.f. \cite{Anantharaman-Delaroche:Systemes}*{Th{\'e}or{\`e}me 4.5}]\label{prop:AD amenability and nuclearity}
 Let $\cB$ be a Fell bundle over a group $G$ with $B_e$ nuclear.
 Then the following are equivalent:
 \begin{enumerate}[(i)]
  \item $C^*(\cB)$ is nuclear.
  \item $C^*_{\red}(\cB)$ is nuclear.
  \item $\cB$ has the AP.
 \end{enumerate}
\end{proposition}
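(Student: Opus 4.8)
The plan is to route everything through the canonical action $\beta$ of $G$ on the $C^*$-algebra of kernels $\bk(\cB)$ and to invoke Anantharaman-Delaroche's characterisation of AD-amenability by nuclearity of reduced crossed products (the last assertion of Theorem~\ref{the:ADA-characterisations}). The first point to settle is that the hypothesis on $B_e$ transfers to $\bk(\cB)$: since $\ell^2(\cB)$ is a $\bK(\cB)$-$B_e$-equivalence bimodule, the ideal $\bK(\cB)=\bK(\ell^2(\cB))$ is Morita equivalent to $B_e$ and hence nuclear; and because $\beta$ is the enveloping action of $\beta|_{\bK(\cB)}$, the algebra $\bk(\cB)$ is the directed union of the finite sums $\sum_{t\in F}\beta_t(\bK(\cB))$, each of which is nuclear (a finite sum of nuclear ideals is nuclear, nuclearity being stable under extensions and quotients). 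As nuclearity passes to directed unions of ideals --- the $C^*$-analogue of the injectivity remark in Remark~\ref{rem:abelian-injective} --- the algebra $\bk(\cB)$ is nuclear.

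With $\bk(\cB)$ nuclear, Theorem~\ref{the:ADA-characterisations} gives that $\beta$ is AD-amenable if and only if the reduced crossed product $\bk(\cB)\rtimes_{\red,\beta}G$ is nuclear. I would then identify this crossed product by duality: the isomorphism $\bk(\cB)\cong C^*_\red(\cB)\rtimes_\delta G$ for the dual coaction $\delta$ (with $\beta$ the dual action), combined with the standard duality for reduced coactions --- the $C^*$-shadow of Corollary~\ref{cor:duality} --- yields
\begin{equation*}
\bk(\cB)\rtimes_{\red,\beta}G\cong C^*_\red(\cB)\otimes\bK(\ell^2G).
\end{equation*}
Tensoring with $\bK(\ell^2G)$ neither creates nor destroys nuclearity, so this crossed product is nuclear precisely when $C^*_\red(\cB)$ is. On the other hand $\beta$ is AD-amenable if and only if $\cB$ has the AP, by Corollary~\ref{cor:WAP and canonical action on kernels} (equivalently, Corollary~\ref{cor:Fell bundle AD amenable and action on the kernels} together with Theorem~\ref{thm:the mega theorem}). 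Chaining these equivalences establishes (ii)$\Leftrightarrow$(iii).

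To close the remaining implications I would argue as follows. The inclusion (i)$\Rightarrow$(ii) is immediate, since $C^*_\red(\cB)$ is a quotient of $C^*(\cB)$ and nuclearity passes to quotients. For (iii)$\Rightarrow$(i), recall that when $\cB$ has the AP one has $C^*(\cB)=C^*_\red(\cB)$ by Exel--Ng \cite{ExelNg:ApproximationProperty}; combining this with the already-established implication (iii)$\Rightarrow$(ii) shows that $C^*(\cB)=C^*_\red(\cB)$ is nuclear. Hence the cycle (i)$\Rightarrow$(ii)$\Leftrightarrow$(iii)$\Rightarrow$(i) closes and all three conditions are equivalent.

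The main obstacle I anticipate is the duality isomorphism $\bk(\cB)\rtimes_{\red,\beta}G\cong C^*_\red(\cB)\otimes\bK(\ell^2G)$: it is the $C^*$-counterpart of Corollary~\ref{cor:duality} and rests on Katayama-type duality for the reduced dual coaction, together with the identification of $\beta$ with the dual action of $\delta$ on $C^*_\red(\cB)\rtimes_\delta G\cong\bk(\cB)$. Everything else --- stability of nuclearity under Morita equivalence, extensions, quotients, directed unions and tensoring with $\bK$, and the equivalence of the AP with AD-amenability --- is either standard or already available in the paper.
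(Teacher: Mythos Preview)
Your proof is correct and follows the same overall strategy as the paper: route through the canonical action $\beta$ on $\bk(\cB)$, establish that $\bk(\cB)$ is nuclear, and apply Anantharaman-Delaroche's theorem. The differences are in the wiring. For nuclearity of $\bk(\cB)$ the paper simply cites \cite{Abadie-Buss-Ferraro:Morita_Fell}*{Theorem~6.3}, whereas you sketch the argument (Morita equivalence of $\bK(\cB)$ with $B_e$, then directed union of finite sums of ideals). For the link between $C^*_{(\red)}(\cB)$ and $\bk(\cB)\rtimes_{(\red),\beta}G$ the paper invokes the weak equivalence $\cB\sim\cB_\beta$ from \cites{Abadie-Buss-Ferraro:Morita_Fell,Abadie-Ferraro:Equivalence_of_Fell_Bundles}, which gives Morita equivalences $C^*(\cB)\sim C^*(\cB_\beta)$ and $C^*_\red(\cB)\sim C^*_\red(\cB_\beta)$ in one stroke; you instead use Katayama duality for the reduced crossed product and then handle (i) separately via the quotient map and Exel--Ng. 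Both routes arrive at the same place---indeed the duality isomorphism you invoke is precisely a sharpened form of the Morita equivalence the paper uses---but the paper's packaging is slightly more symmetric between full and reduced, while yours makes the role of $C^*(\cB)=C^*_\red(\cB)$ under the AP more explicit.
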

\begin{proof}
 Let $\bk(\cB)$ be the $C^*$-algebra of kernels and $\beta$ the canonical action of $G$ on $\bk(\cB)$.
 By the proof of \cite{Abadie-Buss-Ferraro:Morita_Fell}*{Theorem 6.3}, $\bk(\cB)$ is nuclear.
Moreover, by \cite{Anantharaman-Delaroche:Systemes}*{Th{\'e}or{\`e}me 4.5} and  Corollaries~\ref{cor:WAP and canonical action on kernels} and~\ref{cor:AP and amenability for partial actions}, (3) is equivalent to any of the following:
 \begin{itemize}
  \item[(1')] $\bk(\cB)\rtimes_\beta G=C^*(\cB_\beta)$ is nuclear.
  \item[(2')] $\bk(\cB)\rtimes_{\red,\beta} G=C^*_{\red}(\cB_\beta)$ is nuclear.
  \item[(3')] $\beta$ has the AP.
 \end{itemize}
 Since nuclearity is preserved by Morita equivalence of $C^*$-algebras, by \cite{Abadie-Buss-Ferraro:Morita_Fell} and \cite{Abadie-Ferraro:Equivalence_of_Fell_Bundles} we know that ($n$) is equivalent to ($n$'), for $n=1,2,3$.
\end{proof}

When specialised to partial actions the above proposition takes the following form:

\begin{corollary}\label{cor:AD amenability of pa and nuclearity}
 Let $\alpha$ be a partial action of the group $G$ on a nuclear $C^*$-algebra $A$.
 Then the following are equivalent:
 \begin{enumerate}[(i)]
  \item The full crossed product $A\rtimes_{\alpha} G$ is nuclear.
  \item The reduced crossed product $A\rtimes_{\red,\alpha} G$ is nuclear.
  \item $\alpha$ has the AP.
 \end{enumerate}
\end{corollary}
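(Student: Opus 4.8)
The plan is to obtain this as an immediate specialisation of Proposition~\ref{prop:AD amenability and nuclearity}, applied to the semidirect product Fell bundle $\cB_\alpha=\{A_t\delta_t\}_{t\in G}$ associated to $\alpha$. First I would record the three identifications that make the translation work. The unit fibre of $\cB_\alpha$ is $A_e\delta_e$; since $A_e=A$ for every partial action, this fibre is isometrically $*$-isomorphic to $A$ as a $C^*$-algebra, hence nuclear by hypothesis. Next, the full and reduced cross-sectional $C^*$-algebras of a semidirect product bundle are exactly the corresponding crossed products, so that $C^*(\cB_\alpha)=A\rtimes_\alpha G$ and $C^*_{\red}(\cB_\alpha)=A\rtimes_{\red,\alpha}G$; these are standard facts from the theory of (twisted) partial actions \cite{Exel:TwistedPartialActions}. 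Finally, by Definition~\ref{def:AP} the partial action $\alpha$ has the AP precisely when $\cB_\alpha$ does.

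With these identifications in hand, conditions (i), (ii), (iii) of the corollary correspond verbatim to conditions (i), (ii), (iii) of Proposition~\ref{prop:AD amenability and nuclearity} for the bundle $\cB=\cB_\alpha$. Since $B_e=A$ is nuclear, the proposition applies directly and yields the claimed equivalences.

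The hard part is essentially non-existent: the whole statement is a dictionary translation of the Fell bundle result into the language of partial actions. The only point one might wish to spell out for completeness is the identification of $C^*(\cB_\alpha)$ and $C^*_{\red}(\cB_\alpha)$ with the full and reduced crossed products, which follows by matching the universal and regular representations of the bundle against those defining the crossed products; but this is classical and needs no new argument here.
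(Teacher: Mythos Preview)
Your proof is correct and follows the same approach as the paper: both derive the corollary directly from Proposition~\ref{prop:AD amenability and nuclearity} applied to $\cB_\alpha$, using the identifications $B_e\cong A$, $C^*_{(\red)}(\cB_\alpha)=A\rtimes_{(\red),\alpha}G$, and Definition~\ref{def:AP} for item~(iii). The paper's proof is the single line ``Follows directly from the last Proposition above and Corollary~\ref{corollary:AD amenability and WAP}''; your version simply spells out the dictionary more explicitly.
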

\begin{proof}
 Follows directly from the last Proposition above and Corollary~\ref{corollary:AD amenability and WAP}.
\end{proof}

The last two results are examples of a general way of extending known results from $C^*$-actions to Fell bundles.
The trick is to use the weak equivalence of Fell bundles and the canonical action on the $C^*$-algebra of kernels.
We use this very same idea to treat exactness of cross-sectional $C^*$-algebras, but first we introduce the spatial tensor product of a Fell bundle (over a discrete group) and a $C^*$-algebra.
This construction is a special case of the tensor products of Fell bundles developed in~\cite{Abadie:Tensor}.
We recall the basic facts here for the convenience of the reader;  while doing this we denote by $\otimes$ the spatial (minimal) tensor product of $C^*$-algebras.

Take a Fell bundle $\cB$ and a $C^*$-algebra $C.$
Let $L_t$ be the linking algebra of $B_t$ and define $B_t  \otimes C$ as the closure of the algebraic tensor product $B_t\odot C$ in $L_t  \otimes C.$ We claim that $\cB\otimes C:=\{ B_t\otimes C \}_{t\in G}$ is a Fell bundle with a multiplication and involution such that $(a\otimes x)(b\otimes y)=ab\otimes xy$ and $(a\otimes x)^*=a^*\otimes x^*.$

Since the spatial tensor product behaves nicely with respect to $C^*$-subalgebras and we may view $L_t$ as a $C^*$-subalgebra of the $2\times 2$ matrices with entries in $C^*(\cB),$ we get an isometric inclusion $B_t\otimes C\subseteq C^*(\cB)\otimes C$ as the upper right corner of the inclusion $L_t\otimes C\subseteq \mathbb{M}_2(C^*(\cB))\otimes C.$ Then we can use the $C^*$-algebra structure of $C^*(\cB)\otimes C$ to make $\cB\otimes C$ into a Fell bundle (with the desired properties). Actually, to perform this construction one may replace $C^*(\cB)$ with any $C^*$-algebra in which $\cB$ embeds (fibrewise) isometrically and inherits its Fell bundle structure from as for example $C^*_\red(\cB)$.

There exists a unique unitary $U\colon \ell^2(\cB)\otimes C\to \ell^2(\cB\otimes C)$ such that $U(f\otimes c)(t) = f(t)\otimes c.$ Moreover, conjugation by $U$ transforms the images of the canonical tensor product representation $\kappa\colon C^*_\red(\cB)\otimes C\to \Lb(\ell^2(\cB)\otimes C)$ into $C^*_\red(\cB\otimes C)\subseteq \Lb(\ell^2(\cB\otimes C)).$
Since $\kappa$ is faithful, we obtain a $C^*$-isomorphism $C^*_\red(\cB)\otimes C\cong C^*_\red(\cB\otimes C)$ which is the identity when restricted to $\cB\otimes C$ (recall we can construct the Fell bundle $\cB\otimes C$ ``inside'' $C^*_\red(\cB)\otimes C$).

We now state a proposition and a corollary that hold for general locally compact
(Hausdorff) groups. After them we will continue working with discrete
groups. 

\begin{proposition}\label{prop:kB exact iff Be exact}
 If $\cB=\{B_t\}_{t\in G}$ is a Fell bundle over a locally compact group, then $B_e$ is exact if and only if $\bk(\cB)$ is
 exact. 
\end{proposition}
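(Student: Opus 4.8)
The plan is to pass from $B_e$ to $\bk(\cB)$ through the ideal $\bK(\cB)=\bK(\ell^2(\cB))$, using only two features of the $C^*$-algebra of kernels that remain available in the locally compact setting: that $\bK(\cB)$ is an ideal of $\bk(\cB)$, and that the canonical action $\beta$ is the enveloping action of its restriction $\beta|_{\bK(\cB)}$, so that $\bk(\cB)=\cspn\{\beta_t(\bK(\cB)):t\in G\}$. Throughout I will use the standard permanence properties of exactness: it passes to $C^*$-subalgebras, to quotients and to ideals, it is closed under extensions and under inductive limits, it is a Morita invariant, and a $C^*$-algebra is exact precisely when each of its separable $C^*$-subalgebras is exact.

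First I would reduce both ends of the equivalence to $\bK(\cB)$. The Hilbert $B_e$-module $\ell^2(\cB)$ contains the unit fibre $B_e$ as the summand over $e\in G$, and $\langle B_e,B_e\rangle_{B_e}=B_e^*B_e=B_e$, so $\ell^2(\cB)$ is a full Hilbert $B_e$-module. Hence $\bK(\cB)=\bK(\ell^2(\cB))$ is Morita equivalent to $B_e$, and since exactness is a Morita invariant, $B_e$ is exact if and only if $\bK(\cB)$ is exact. It then remains to show that $\bK(\cB)$ is exact if and only if $\bk(\cB)$ is exact.

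One implication is immediate: since $\bK(\cB)$ is a $C^*$-subalgebra of $\bk(\cB)$, exactness of $\bk(\cB)$ forces exactness of $\bK(\cB)$. For the converse, write $A:=\bK(\cB)$ and $B:=\bk(\cB)=\cspn\{\beta_t(A):t\in G\}$. Because $A$ is an ideal, each $\beta_t(A)$ is an ideal of $B$ isomorphic to $A$, and the product of two translates lands in a single translate, since $\beta_s(A)\beta_t(A)\subseteq\beta_s(A)\cap\beta_t(A)$; consequently, for any subset $C\subseteq G$ the closed span $B_C:=\cspn\{\beta_t(A):t\in C\}$ is again a $C^*$-subalgebra of $B$. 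I would then argue separably: every separable $C^*$-subalgebra $D\subseteq B$ is contained in some $B_C$ with $C$ countable, because each element of a countable dense subset of $D$ is a norm-limit of finite sums $\sum_j\beta_{t_j}(a_j)$, and only countably many group elements occur. For countable $C$, the algebra $B_C$ is the increasing union of the finite sums $\sum_{t\in F}\beta_t(A)$ over finite $F\subseteq C$; each such finite sum is exact, being built from finitely many ideals isomorphic to the exact algebra $A$ by repeatedly forming extensions of exact algebras (using that $(I_1+I_2)/I_1\cong I_2/(I_1\cap I_2)$ is a quotient of an exact algebra). As a countable inductive limit of exact algebras, $B_C$ is exact, and hence so is its subalgebra $D$. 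Since all separable subalgebras of $B$ are exact, $B=\bk(\cB)$ is exact.

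The main obstacle is exactly the locally compact generality of this converse implication: over a discrete group one could simply present $\bk(\cB)$ as the directed union of the ideals $\sum_{t\in F}\beta_t(A)$, but for a general locally compact group the linear $G$-orbit is a genuinely continuous object with no such countable exhaustion. The device that resolves this is the reduction to separable subalgebras, which replaces the possibly uncountable orbit by a countable one and thereby recovers the elementary extension-and-inductive-limit argument; crucially, the only structural facts it consumes—the ideal property of $\bK(\cB)$ and the enveloping description of $\beta$—hold verbatim in the locally compact case.
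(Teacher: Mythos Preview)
Your proof is correct and takes a genuinely different route from the paper's. The paper first handles the discrete case by exhibiting $\bk(\cB)$ as the direct limit of the matrix algebras $\mathbb{M}_F(\cB)$ and checking directly, via a tensor-product computation with the linking algebras of the fibres, that each $\mathbb{M}_F(\cB)$ is exact; it then bootstraps to the locally compact case by discretising the group, forming the semidirect product bundle $\cB_\alpha$ of the $G^d$-partial action $\alpha=\beta|_{\bK(\cB)}$, applying the discrete case to $\bk(\cB_\alpha)$, and invoking uniqueness of Morita enveloping actions to conclude that $\bk(\cB)$ is Morita equivalent to $\bk(\cB_\alpha)$. Your argument bypasses both steps: once you know $\bk(\cB)=\cspn\{\beta_t(\bK(\cB)):t\in G\}$ (which the paper also uses, implicitly, when it asserts that $\beta$ is the enveloping $G^d$-action of $\alpha$), the separable reduction plus the permanence of exactness under quotients, extensions and countable inductive limits does all the work, uniformly and without any case distinction or Morita-enveloping machinery. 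The paper's approach has the virtue of making the finite building blocks $\mathbb{M}_F(\cB)$ explicit (these matrix algebras are used elsewhere in the paper), while yours is shorter, more elementary, and transparently independent of discreteness.
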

\begin{proof}
Note that $B_e$ and $\mathcal{K}(L^2(\cB))$ are Morita equivalent, so
one of them is exact if and only if so is the other one. In case
$\bk(\cB)$ is exact, also its ideal $\mathcal{K}(L^2(\cB))$ is exact,
hence $B_e$ is exact. 
\par To prove the converse, suppose first that $G$ is discrete. In the proof of Theorem~\ref{thm: WAP iff AD amenable for wstar Fell bundles} we constructed an inclusion $\mathbb{M}_F(\M)\subseteq \bk_\weaks(\M),$ $F\subseteq G$ being a non empty finite set.
 That inclusion can be used to prove that $\bk(\cB)$ is the direct limit of $\{\mathbb{M}_F(\cB)\}_{F},$ where $F$ runs over the finite subsets of $G.$
 Hence $\bk(\cB)$ is exact if and only if $\mathbb{M}_F(\cB)$ is exact for every finite set $F\subseteq G.$
Assume $B_e$ is exact and take a finite set $F\subseteq G$ and a short exact sequence (s.e.s.) of $C^*$-algebras $I\into A\onto A/I.$
 For every $t\in G$ the linking algebra of $B_t,$ $L_t,$ is an exact $C^*$-algebra because it is Morita equivalent to the ideal $\cspan(B_t^*B_t)$ of $B_e.$
 Thus we get the s.e.s. $L_t \otimes I\into L_t  \otimes A\onto L_t \otimes A/I$ and, by our construction of spatial tensor products, we obtain the following s.e.s. of Fell bundles
 \begin{equation*}
  \cB  \otimes I\into \cB  \otimes A\onto \cB \otimes A/I,
 \end{equation*}
 that by entrywise computation of the arrow produces the s.e.s.
\begin{equation}\label{equ:SEC for tensor matrix}
  \mathbb{M}_F(\cB  \otimes I)\into \mathbb{M}_F(\cB  \otimes A)\onto \mathbb{M}_F(\cB \otimes A/I).
 \end{equation}
Say $F$ has precisely $n$ elements. Then $\mathbb{M}_F(\cB  \otimes I)$ is a $C^*$-subalgebra of 
\begin{equation*}
 \mathbb{M}_n( C^*_\red(\cB\otimes I)) = \mathbb{M}_n( C^*_\red(\cB)\otimes I) = \mathbb{M}_n(\C)\otimes C^*_\red(\cB) \otimes I.
\end{equation*}
Besides, we may also view $\mathbb{M}_F(\cB)  \otimes I$ as a $C^*$-subalgebra of 
\begin{equation*}
 \mathbb{M}_n( C^*_\red(\cB)) \otimes I = \mathbb{M}_n(\C)\otimes C^*_\red(\cB) \otimes I.
\end{equation*}
It then turns out that $\mathbb{M}_F(\cB  \otimes I)$ and $\mathbb{M}_F(\cB)  \otimes I$ get identified inside $\mathbb{M}_n(\C)\otimes C^*_\red(\cB) \otimes I$ and we may write $\mathbb{M}_F(\cB  \otimes I)=\mathbb{M}_F(\cB)  \otimes I.$
This identification works for every $C^*$-algebra $I,$ thus \eqref{equ:SEC for tensor matrix} becomes the s.e.s
 \begin{equation}
  \mathbb{M}_F(\cB)  \otimes I\into \mathbb{M}_F(\cB)  \otimes A\onto \mathbb{M}_F(\cB) \otimes A/I;
 \end{equation}
 and we conclude $\mathbb{M}_F(\cB)$ is exact. 
\par Next we consider the general case. Let $B:=\bk(\cB)$,
$A:=\mathcal{K}(L^2(\cB))$, and $\beta$ the canonical action on
$\bk(\cB)$, but considered as an action of $G^d$, where the latter is
the group $G$ with the discrete topology. Let $\alpha:=\beta|_A$ be
the restriction of $\beta$ to $A$, which is a partial action of $G^d$
on $A$ with enveloping action $\beta$. Note that $A$, the unit fibre
of the semidirect product bundle $\mathcal{B}_\alpha$ over $G^d$,
is exact, because it is Morita equivalent to $B_e$. Then the first
part of the proof implies that 
$\bk(\cB_\alpha)$ is exact. Since both $\beta$ and the canonical
action on $\bk(\cB_\alpha)$ are Morita enveloping actions of $\alpha$,
and Morita enveloping actions are unique up to Morita equivalence, as
shown in \cite{Abadie:Enveloping}*{Proposition~6.3}, we conclude that
$B$ and $\bk(\cB_\alpha)$ are Morita equivalent. Since
$\bk(\cB_{\alpha})$ is exact, it follows that also $B$ is exact.  
\end{proof}

\begin{corollary}\label{cor:exactness and enveloping actions}
 If $B$ is the (Morita) enveloping $C^*$-algebra of a partial action $\alpha$ of a locally compact group $G$ on a $C^*$-algebra $A,$ then $B$ is exact if and only if $A$ is exact.
\end{corollary}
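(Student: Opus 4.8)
The plan is to reduce the statement to Proposition~\ref{prop:kB exact iff Be exact} by passing through the semidirect product Fell bundle of $\alpha$ and its $C^*$-algebra of kernels. The two facts that make this work are that $A$ is the unit fibre of the semidirect product bundle $\cB_\alpha$, and that the Morita enveloping $C^*$-algebra $B$ is Morita equivalent (as a $C^*$-algebra) to $\bk(\cB_\alpha)$.

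First I would form the semidirect product bundle $\cB_\alpha=\{A_t\delta_t\}_{t\in G}$, whose unit fibre $A_e\delta_e$ is isometrically $*$-isomorphic to $A_e=A$. Applying Proposition~\ref{prop:kB exact iff Be exact} to $\cB=\cB_\alpha$ gives that $A$ is exact if and only if $\bk(\cB_\alpha)$ is exact. To identify $B$, I would recall that by \cite{Abadie:Enveloping} the canonical action $\beta$ on $\bk(\cB_\alpha)$ is a Morita enveloping action of $\alpha$. Since $B$ carries a Morita enveloping action of $\alpha$ by hypothesis, and Morita enveloping actions are unique up to Morita equivalence of actions (\cite{Abadie:Enveloping}*{Proposition~6.3}), the algebras $B$ and $\bk(\cB_\alpha)$ are Morita equivalent.

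Finally, since exactness is a Morita invariant of $C^*$-algebras, $B$ is exact if and only if $\bk(\cB_\alpha)$ is exact, and by the first step this holds if and only if $A$ is exact. There is no genuine obstacle here: the corollary is a formal consequence of Proposition~\ref{prop:kB exact iff Be exact} together with the uniqueness up to Morita equivalence of enveloping actions. The only points needing minor care are the identification of the unit fibre of $\cB_\alpha$ with $A$ and the invocation of Morita invariance of exactness, the latter being the same fact already used at the beginning of the proof of Proposition~\ref{prop:kB exact iff Be exact}.
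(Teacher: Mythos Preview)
Your proof is correct and follows essentially the same route as the paper's: reduce to Proposition~\ref{prop:kB exact iff Be exact} via the identification of the unit fibre of $\cB_\alpha$ with $A$, use that the canonical action on $\bk(\cB_\alpha)$ is a Morita enveloping action of $\alpha$ and that such actions are unique up to Morita equivalence (\cite{Abadie:Enveloping}*{Proposition~6.3}), and conclude by Morita invariance of exactness. The paper's argument is just a terser version of what you wrote.
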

\begin{proof}
Recall that the canonical action of $G$ on $\bk(\cB_\alpha)$ is,
up to Morita equivalence, the only Morita enveloping action of
$\alpha$. Thus the statement follows from Proposition~\ref{prop:kB exact
  iff Be exact}.  
\end{proof}

In the rest of the article we deal only with discrete groups. 

In \cite{kirchberg1994commutants}*{Proposition 7.1} Kirchberg proved that the crossed product of an amenable group acting on an exact $C^*$-algebra is exact. This was proved in \cite{buss2019injectivity}*{Theorem 6.1} for AD-amenable actions on  unital exact $C^*$-algebras, but this result holds for non unital algebras too. Indeed, in the more recent paper \cite{buss2020amenability}*{Proposition~7.5} this is extended to general actions of locally compact groups on non-unital $C^*$-algebras. The preprint version of this paper predates this general result, so we keep our proof below.

\begin{proposition}\label{prop:exact and amenable}
 Let $\beta$ be a $C^*$-action of the group $G$ on $B.$
 If $B$ is exact and $\beta$ has the AP (i.e. it is AD-amenable) then $B\rtimes_{\alpha}G=B\rtimes_{\red,\alpha}G$ is exact.
 \begin{proof}
 Consider a s.e.s of $C^*$-algebras $I\into A\onto A/I.$
 Following Kirchberg we now invoke the universality of full crossed products to obtain a s.e.s 
 \begin{equation}\label{equ:ses of full crossed products}
  (B\otimes I)\rtimes_{\beta\otimes \id_I}G\into (B\otimes A)\rtimes_{\beta\otimes \id_A}G \onto (B\otimes (A/I))\rtimes_{\beta\otimes \id_{A/I}}G.
 \end{equation}
 
 The semidirect product bundle of $\beta\otimes \id_I,$ $\cB_{\beta\otimes \id_I},$ is no other thing than $\cB_\beta\otimes I$ and this bundle has the AP by \cite{Exel:Partial_actions}*{Proposition 25.9}.
 Hence,
 \begin{equation*}
  (B\otimes I)\rtimes_{\beta\otimes \id_I}G = C^*(\cB_{\beta}\otimes I)= C^*_\red(\cB_{\beta}\otimes I)= C^*_\red(\cB_\beta)\otimes I = (B\rtimes_{\red,\beta}G)\otimes I;
 \end{equation*}
 and \eqref{equ:ses of full crossed products} becomes the s.e.s
 \begin{equation*}
  (B\rtimes_{\red,\beta}G)\otimes I \into (B\rtimes_{\red,\beta}G)\otimes A \onto (B\rtimes_{\red,\beta}G)\otimes (A/I);
 \end{equation*}
 giving the desired result.
 \end{proof}
\end{proposition}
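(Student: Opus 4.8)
The plan is to follow Kirchberg's strategy: to prove that a $C^*$-algebra $D$ is exact it suffices to show that minimal tensoring by $D$ preserves short exact sequences. So I would set $D:=B\rtimes_\beta G=B\rtimes_{\red,\beta}G$, where the equality of the full and reduced crossed products is supplied by the AD-amenability of $\beta$ through Theorem~\ref{the:ADA-characterisations}. Fixing an arbitrary short exact sequence $I\into A\onto A/I$ of $C^*$-algebras, the goal becomes to prove that
\[ D\otimes I\into D\otimes A\onto D\otimes(A/I) \]
is again exact.

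The first step uses the hypothesis that $B$ is exact to upgrade the given sequence to a $G$-equivariant short exact sequence $B\otimes I\into B\otimes A\onto B\otimes(A/I)$, with the diagonal actions $\beta\otimes\id$; exactness of $B$ is exactly what guarantees exactness in the middle. Applying the universality of the full crossed product, which makes $(-)\rtimes G$ an exact functor on equivariant short exact sequences, I then obtain the short exact sequence
\[ (B\otimes I)\rtimes G\into (B\otimes A)\rtimes G\onto (B\otimes(A/I))\rtimes G. \]

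The crucial point is to identify each term as $D\otimes J$ for $J\in\{I,A,A/I\}$. Here I would note that the semidirect product bundle of $\beta\otimes\id_J$ is precisely $\cB_\beta\otimes J$; since $\cB_\beta$ has the AP and the AP is preserved under tensoring by a $C^*$-algebra (Exel, \cite{Exel:Partial_actions}*{Proposition~25.9}), the bundle $\cB_\beta\otimes J$ also has the AP, so its full and reduced cross-sectional algebras coincide. Combined with the isomorphism $C^*_\red(\cB_\beta\otimes J)\cong C^*_\red(\cB_\beta)\otimes J$ established just before the statement, this gives
\[ (B\otimes J)\rtimes_{\beta\otimes\id}G = C^*_\red(\cB_\beta\otimes J)\cong C^*_\red(\cB_\beta)\otimes J = (B\rtimes_{\red,\beta}G)\otimes J = D\otimes J. \]
Substituting these identifications into the crossed-product sequence yields exactly the desired exact sequence for $D$, and since $I\into A\onto A/I$ was arbitrary, $D$ is exact.

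The main obstacle I expect lies in the bookkeeping around the identification $(B\otimes J)\rtimes G\cong D\otimes J$: one must simultaneously invoke that the full crossed product is exact, that the AP passes to $\cB_\beta\otimes J$, and that $C^*_\red$ commutes with the spatial tensor product, while keeping straight that the AP is precisely what permits replacing the full crossed product (used for exactness of the functor) by the reduced one (used for the tensor-product identity). Checking that these isomorphisms are natural enough to assemble into a commuting diagram of short exact sequences is the delicate point, although each individual identification has already been prepared in the preceding material.
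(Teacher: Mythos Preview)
Your proposal is correct and follows essentially the same route as the paper: start from an arbitrary short exact sequence, use exactness of $B$ and the exactness of the full-crossed-product functor to obtain a short exact sequence of full crossed products, then invoke the AP for each $\cB_\beta\otimes J$ together with the identification $C^*_\red(\cB_\beta\otimes J)\cong C^*_\red(\cB_\beta)\otimes J$ to rewrite it as $D\otimes I\into D\otimes A\onto D\otimes(A/I)$. The paper's proof is slightly terser but uses exactly the same ingredients in the same order; your concern about naturality is not a real obstacle here, since all the identifications involved are canonical.
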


\begin{corollary}
 If $\cB$ is a Fell bundle (over $G$) with the AP and $B_e$ is exact, then so is $C^*_{\red}(\cB)=C^*(\cB).$
 In particular, Proposition~\ref{prop:exact and amenable} extends to $C^*$-partial actions.
\end{corollary}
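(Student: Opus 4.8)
The plan is to transfer the statement to the canonical (global) action $\beta$ of $G$ on the $C^*$-algebra of kernels $\bk(\cB)$, where Proposition~\ref{prop:exact and amenable} applies directly, and then descend back to $\cB$ by Morita equivalence. Recall that $\cB$ is weakly equivalent to the semidirect product bundle $\cB_\beta$ \cites{Abadie-Ferraro:Equivalence_of_Fell_Bundles,Abadie-Buss-Ferraro:Morita_Fell}, so that $C^*_{(\red)}(\cB)$ is Morita equivalent to $C^*_{(\red)}(\cB_\beta)=\bk(\cB)\rtimes_{(\red,)\beta}G$, exactly as already exploited in the proof of Proposition~\ref{prop:AD amenability and nuclearity}.

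Concretely, I would first invoke Corollary~\ref{cor:WAP and canonical action on kernels}: since $\cB$ has the AP, the global action $\beta$ on $\bk(\cB)$ is AD-amenable. Next, since $B_e$ is exact, Proposition~\ref{prop:kB exact iff Be exact} gives that $\bk(\cB)$ is exact. These are precisely the hypotheses of Proposition~\ref{prop:exact and amenable} applied to the global action $\beta$, which therefore yields that $\bk(\cB)\rtimes_\beta G=\bk(\cB)\rtimes_{\red,\beta}G$ is exact. Finally, since $C^*(\cB)=C^*_\red(\cB)$ (as $\cB$ has the AP) is Morita equivalent to this crossed product, and exactness is preserved by Morita equivalence, I conclude that $C^*(\cB)$ is exact.

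For the last assertion, I would specialise to a $C^*$-partial action $\alpha=\{\alpha_t\colon A_{t^{-1}}\to A_t\}$ on an exact $C^*$-algebra $A$ with the AP (equivalently, AD-amenable, by Corollary~\ref{corollary:AD amenability and WAP}). Its semidirect product bundle $\cB_\alpha$ has unit fibre $A$, which is exact, and inherits the AP; moreover $C^*_{(\red)}(\cB_\alpha)=A\rtimes_{(\red,)\alpha}G$. The first part of the corollary then gives that $A\rtimes_\alpha G=A\rtimes_{\red,\alpha}G$ is exact, which is the desired extension of Proposition~\ref{prop:exact and amenable}.

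The only genuinely non-formal points are the two Morita-invariance inputs: that weak equivalence of Fell bundles induces Morita equivalence of the cross-sectional $C^*$-algebras (already used in Proposition~\ref{prop:AD amenability and nuclearity} via \cites{Abadie-Ferraro:Equivalence_of_Fell_Bundles,Abadie-Buss-Ferraro:Morita_Fell}), and that exactness is a Morita invariant. The latter holds since exactness passes to hereditary $C^*$-subalgebras and the two Morita equivalent algebras appear as full corners of their linking algebra; so I expect no real obstacle, the argument being a clean assembly of the results established above.
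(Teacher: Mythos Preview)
Your proposal is correct and follows essentially the same route as the paper: pass to the canonical action $\beta$ on $\bk(\cB)$, use Proposition~\ref{prop:kB exact iff Be exact} and Corollary~\ref{cor:WAP and canonical action on kernels} to verify the hypotheses of Proposition~\ref{prop:exact and amenable}, and then descend via the Morita equivalence $C^*_{(\red)}(\cB)\sim \bk(\cB)\rtimes_{(\red,)\beta}G$. Your added paragraph spelling out the specialisation to partial actions is a helpful elaboration of the ``In particular'' clause that the paper leaves implicit.
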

\begin{proof}
 We know, by Proposition \ref{prop:kB exact iff Be exact}, that $\bk(\cB)$ is exact and the canonical action on $\bk(\cB)$ is AD-amenable (Corollaries \ref{cor:Fell bundle AD amenable and action on the kernels} and \ref{cor:AP and amenability for partial actions}).
 By Proposition~\ref{prop:exact and amenable}, $\bk(\cB)\rtimes_{\red} G=\bk(\cB)\rtimes G$ is exact.
 Since this algebra is Morita equivalent to $C^*_{\red}(\cB)=C^*(\cB),$  $C^*_{\red}(\cB)$ is exact.
\end{proof}

In \cite{Exel:Partial_dynamical}*{Definition~21.19} Exel introduces the following notion of conditional expectation for Fell bundles: if $\A$ is a Fell subbundle of $\B$, a conditional expectation from $\B$ to $\A$ is a map $P\colon \B\to \A$ which restricts to bounded surjective idempotent linear maps $P_g\colon B_g\onto A_g\sbe B_g$ such that $P_e\colon B_e\onto A_e$ is an ordinary conditional expectation and $P_g(b)^*=P_{g^{-1}}(b^*)$ and $P_{gh}(ba)=P_g(b)a$ for all $b\in B_g$ and $a\in A_h$, $g,h\in G$.

In  \cite{Exel:Partial_dynamical}*{pp. 188} Exel asks if the AP passes from a Fell bundle $\cB$ to a Fell subbundle $\cA$ provided a conditional expectation $P\colon \cB\to \cA$ exists. To prove this we consider the $W^*$-Fell bundle $\cA''=\{A_t''\}_{t\in G}$ as a $W^*$-Fell subbundle of $\cB''=\{B_t''\}_{t\in G}$ by identifying $A_t''$ with the $\weaks$-closure of $A_t$ in $B_t''.$ The dual map ${P_g}'\colon A_g'\to B_g',$ $\eta\mapsto \eta\circ P_g,$ between the dual spaces $A_g'$ and $B_g'$ of $A_g$ and $B_g,$ respectively, is $\weaks$-continuous and the bidual map $P_g'':=({P_g}')'\colon B_g''\to A_g''$ is the unique $\weaks$-continuous extension of $P_g.$
Notice that the function $P''\colon \cB''\to \cA'',$ mapping $b\in B_g''$  to $P_g''(b),$ is a conditional expectation between $W^*$-Fell bundles. This situation motivates our Corollary~\ref{cor:WAP to subbundles} (which answers Exel's question) and Corollary~\ref{cor:Exel in W form} (which answers the ``$W^*$-form of Exel's question'').

\begin{theorem}\label{theo:WAP and conditional expectation for Wstar bundles}
Let $\M$ be a W*-Fell bundle over $G,$ $\mathcal{N}$ a W*-Fell subbundle of $\M$ and $P\colon \M\to \mathcal{N}$ a (not necessarily $\weaks-$continuous) conditional expectation.
Then there exists a conditional expectation $P_\bk\colon
\bk_\weaks(\M)\to \bk_\weaks(\mathcal{N})$ which is equivariant with
respect to the canonical W*-actions.
Moreover,  $P_\bk$ can be constructed in such a way that its restriction to $\bk(\M)$ is a conditional expectation onto $\bk(\mathcal{N})$.
\end{theorem}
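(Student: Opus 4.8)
The plan is to define $P_\bk$ by applying the fibrewise expectation $P$ \emph{entrywise} to kernels. On a finitely supported kernel $k\in\bk_c(\M)$ one sets $P_\bk(k)(s,t):=P_{st^{-1}}\bigl(k(s,t)\bigr)\in N_{st^{-1}}$; since $st^{-1}=(sr)(tr)^{-1}$, this formula makes $P_\bk$ commute with the canonical action at once, $P_\bk(\beta_r(k))(s,t)=P_{st^{-1}}(k(sr,tr))=\beta_r(P_\bk(k))(s,t)$, so equivariance will cost nothing. In the language of Remark~\ref{rem:matrix algebras of Fell bundles}, $P_\bk$ restricts on each matrix algebra to the entrywise map $P^F\colon\mathbb{M}_F(\M)\to\mathbb{M}_F(\mathcal N)$, and the whole strategy is to prove that every $P^F$ is a conditional expectation, to assemble these over the finite sets $F$, and finally to pass to the $\weaks$-closure. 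The two module identities I will use repeatedly are Exel's rule $P_{gh}(ba)=P_g(b)a$ (for $b\in M_g$, $a\in N_h$) and its adjoint $P_{gh}(ab)=aP_h(b)$ (for $a\in N_g$, $b\in M_h$); together with the convolution product they give $P^F(xky)=xP^F(k)y$ for $x,y\in\mathbb{M}_F(\mathcal N)$, while idempotence of $P^F$ onto $\mathbb{M}_F(\mathcal N)$ is immediate from that of each $P_g$ onto $N_g$.

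The heart of the matter is to show that each $P^F$ is completely positive and contractive; granting this, Tomiyama's theorem promotes $P^F$ to a genuine conditional expectation onto the $C^*$-subalgebra $\mathbb{M}_F(\mathcal N)$. Complete positivity of $P^F$ amounts to the fibrewise positivity statement that, for any $g_1,\dots,g_m\in G$ and $z_i\in M_{g_i}$, the matrix $\bigl(P_{g_i^{-1}g_j}(z_i^*z_j)\bigr)_{i,j}$ is positive in $\mathbb{M}_m(\mathcal N)$. I expect this to be the main obstacle, and I would prove it by dilating $P$: extending the $N_e$-valued module dilation of the unit-fibre expectation $P_e\colon M_e\onto N_e$ (with pairing $\langle x,y\rangle:=P_e(x^*y)$) to a representation of $\M$ that intertwines the $P_g$, under which the displayed matrix appears as a Gram matrix and is therefore automatically positive. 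Contractivity then follows from $\|P^F\|=\|P^F(1)\|=1$, since $P^F$ sends the unit of $\mathbb{M}_F(\M)$ to the (nonzero) unit of $\mathbb{M}_F(\mathcal N)$.

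Since the maps $P^F$ are visibly compatible under the inclusions $\mathbb{M}_{F'}(\M)\sbe\mathbb{M}_F(\M)$ for $F'\sbe F$, they glue to a contractive idempotent bimodule map $\bk_c(\M)\to\bk_c(\mathcal N)$, which extends by norm continuity to a conditional expectation $\bk(\M)\onto\bk(\mathcal N)$; this is the asserted $C^*$-level statement, and $\bk(\mathcal N)$ sits inside $\bk(\M)$ as required by Corollary~\ref{cor:inclusion of wstar algebra of kernels}. For the W*-extension I would sidestep normality issues (recall $P$ need not be $\weaks$-continuous, so no normal extension can exist in general) by a compactness argument in the spirit of Lemma~\ref{lem:characterization of AD amenability with dense subalgebra}. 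Let $p_F\in\Lb(\ell^2_\weaks(\M))$ be the projection onto $\bigoplus_{t\in F}M_t$, so that $K\mapsto p_FKp_F$ is a normal conditional expectation of $\bk_\weaks(\M)$ onto the corner $\mathbb{M}_F(\M)$; set $\hat P_F:=P^F(p_F\,\cdot\,p_F)\colon\bk_\weaks(\M)\to\bk_\weaks(\mathcal N)$, a net of completely positive contractions. As the set of such contractions is compact for the point-$\weaks$ topology, a cluster point $P_\bk$ exists. Because $p_F\nearrow 1$ and $\hat P_F$ agrees with the entrywise map on kernels supported in $F\times F$, one checks that $P_\bk$ restricts to the $C^*$-expectation on $\bk(\M)$, is the identity on $\bk_\weaks(\mathcal N)$ (hence a conditional expectation onto it), and remains equivariant because $\beta^\weaks$ merely permutes the $p_F$ along the cofinal reindexing $F\mapsto Fr$.
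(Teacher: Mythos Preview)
Your strategy matches the paper's almost exactly: work entrywise on the matrix algebras $\mathbb{M}_F(\M)$, verify the resulting maps $P^F$ are conditional expectations via Tomiyama, compress by $1_F$ (your $p_F$) to globalise, and pass to a point-$\weaks$ cluster point. The compatibility under $F'\sbe F$, the $C^*$-level statement for $\bk(\M)\onto\bk(\mathcal N)$, and the equivariance via the relation $\beta_t^\weaks(1_F)=1_{Ft^{-1}}$ are all as in the paper.

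The one place where you take a harder road is the contractivity of $P^F$. You propose to build a Stinespring-type dilation of $P$ so that the matrices $\bigl(P_{g_i^{-1}g_j}(z_i^*z_j)\bigr)$ become Gram matrices; this is plausible but you leave it as a sketch. The paper bypasses this entirely with a one-line Hilbert-module trick: view $\mathbb{M}_F(\M)$ as a right $M_e$-module $X_{F\M}$ via the trace inner product $\langle A,B\rangle=\operatorname{trace}(A^*B)$, act on it by left multiplication, and use the bimodule identity for $P$ to get $\langle P_F(A)B,C\rangle_{F\mathcal N}=P_e\bigl(\langle AB,C\rangle_{F\M}\bigr)$ for $B,C\in X_{F\mathcal N}$. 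Contractivity of $P_e$ then gives $\|P_F(A)\|\le\|A\|$ directly, with no dilation and no separate complete-positivity argument.

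One caution on your equivariance step: saying ``$\beta^\weaks$ merely permutes the $p_F$ along the cofinal reindexing $F\mapsto Fr$'' is not quite enough, because cluster points of a net need not be unique, so the cofinality alone does not force $\beta_t^\weaks\circ P_\bk\circ\beta_{t^{-1}}^\weaks=P_\bk$. The paper closes this by observing that for $u,w\in\bk_c(\M)$ with supports in $K\times K$, the expression $u\,\hat P_F(x)\,w$ is \emph{eventually constant} in $F$ (equal to $uP_K(1'_Kx1'_K)w$ once $F\supseteq K$), so the limit is actually independent of the chosen subnet when tested this way; equivariance then follows.
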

\begin{proof}
By Corollary \ref{cor:inclusion of wstar algebra of kernels} we can think of $\bk_\weaks(\mathcal{N})$ as a W*-subalgebra of $\bk_\weaks(\M).$
The matrix algebras $\mathbb{M}_F(\M)$ (for $F\subseteq G$ finite) of Remark~\ref{rem:matrix algebras of Fell bundles} form an upward directed set of $C^*$-subalgebras of $\bk_\weaks(\M)$ with norm closure equal to $\bk(\M).$
Moreover, $\mathbb{M}_F(\M)$ is hereditary in $\bk(\M)$ and in the proof of Theorem~\ref{thm: WAP iff AD amenable for wstar Fell bundles} we showed that $\mathbb{M}_F(\M)$ is in fact a W*-subalgebra of $\bk_\weaks(\M).$
Hence the family $\{\mathbb{M}_F(\M)\}_{F}$ is an upward directed family of hereditary W*-subalgebras of $\bk_\weaks(\M)$ whose union is $\weaks-$dense in $\bk_\weaks(\M).$
We denote $1_{F}$ the unit of $\mathbb{M}_F(\M)$. Then $1_F$ may or
may not be equal to the unit of $\mathbb{M}_F(\mathcal{N}),$ which we
denote $1'_F.$

Define, for each finite subset $F=\{t_1,\ldots,t_n\}\subseteq G,$ the map $P_F\colon \mathbb{M}_F(\M)\to \mathbb{M}_F(\mathcal{N})\subseteq \bk_\weaks(\mathcal{N})$ as the entrywise application of $P.$
We claim that this map is a conditional expectation.
Indeed, by Tomiyama's theorem it suffices to prove it is contractive.

As done in Remark~\ref{rem:matrix algebras of Fell bundles}, we regard $\mathbb{M}_F(\M)$ as a $C^*$-subalgebra of $\mathbb{M}_n(C^*(\cB)).$ Now, $\mathbb{M}_n(C^*(\cB))$ is a right $C^*(\cB)-$Hilbert module with entrywise multiplication and the inner product $(A,B)\mapsto \operatorname{trace}(A^*B)=\sum_{j,k=1}^{n} (A_{j,k})^*B_{j,k}.$ The restriction of this Hilbert module structure to $\mathbb{M}_F(\M)$ gives a right $B_e-$Hilbert module $X_{F\M}$ with with inner product $\langle A,B\rangle_{F\M} :=\operatorname{trace}(A^*B).$ Matrix multiplication on the left gives a faithful representation $\mathbb{M}_F(\M)\to \Lb(X_{F\M}).$
If $A\in \mathbb{M}_F(\M)$ and $B,C\in X_{F\mathcal{N}},$ then
\begin{equation*}
 \langle P_F(A)B,C\rangle_{F\mathcal{N}} = \operatorname{trace}(P_F(B^*A^*C))=P(\langle AB,C\rangle_{F\M}).
\end{equation*}
This implies $\|P_F(A)\|\leq \|A\|$ and $P_F$ is contractive.

We can extend $P_F$ to $\bk_\weaks(\M)$ by defining $P_F\colon \bk_\weaks(\M)\to \bk_\weaks(\mathcal{N})$ as $P_F(x)=P_F(1_Fx1_F).$
Then $P_F$ is clearly ccp, in fact it is a conditional expectation over $\mathbb{M}_F(\mathcal{N}).$
In this way we get a net of ccp maps $\{P_F\}_F$ from $\bk_\weaks(\M)$ to $\bk_\weaks(\mathcal{N}).$
Let $P_\bk$ be a pointwise $\weaks-$limit of a converging subnet $\{P_{F_j}\}_j.$
Clearly $P_\bk$ is ccp.
Take $x\in \bk_\weaks(\mathcal{N}).$
Since $P_F(x)=1'_Fx1'_F,$ both $\{P_F(x)\}_F$ and $\{P_{F_j}(x)\}_{F_j}$ $\weaks-$converge to $x.$
Thus $P_\bk(x)=x$ and $P_\bk$ is a conditional expectation.

We claim that $P_\bk$ is equivariant with respect to the canonical W*-actions.
Take $t\in G$ and note that $\beta_t^\weaks(1_F)=1_{Ft^{-1}}$ and that, given $x\in \mathbb{M}_{F}(\mathcal{N}),$ it follows that $P_{Ft^{-1}}(\beta_t^\weaks(x))=\beta^\weaks_t(P_F(x)).$
Considering $\weaks-$limits we have
\begin{align*}
 P_\bk(\beta^\weaks_t(x))
    &  = \lim_j P_{F_j}(1_{F_j}\beta^\weaks_t(x)1_{F_j})
        = \lim_j P_{F_j}(\beta^\weaks_t(1_{F_j t}x1_{F_j t}))\\
    & = \lim_j \beta^\weaks_t(P_{F_j t}(1_{F_j t}x1_{F_j t}))
    = \beta^\weaks_t( \lim_j  P_{F_jt}(1_{F_j t}x1_{F_j t}))
\end{align*}
Thus $\{P_{F_j t}(1_{F_j t}x1_{F_j t})\}_j$ actually has a
$\weaks-$limit (for every $x$) and it suffices to show that this limit is $P_\bk(x).$

Every element $v$ of $\bk_\weaks(\mathcal{N})$ is completely determined by the products $uvw,$ for $u,v\in \bk_c(\M).$
Then it suffices to prove that $\lim_j u P_{F_jt}(1_{F_j t}x1_{F_j t})w = uP_\bk(x)w,$ for all $u,w\in \bk_c(\M).$
Fix $u,w\in \bk_c(\M)$ and take a finite set $K\subseteq G$ such that $K\times K$ contains both the supports of $u$ and $w.$
Since the families $\{F_j\}_{j}$ and $\{F_jt\}_{j}$ are cofinal in the finite subsets of $G$ we have
\begin{align*}
 \lim_j u P_{F_jt}(1_{F_j t}x1_{F_j t})w
    & = \lim_j u 1'_K P_{F_jt}(1_{F_j t}x1_{F_j t})1'_Kw\\
    & = \lim_j u P_{F_jt}(1'_K 1_{F_j t}x1_{F_j t}1'_K)w
      = uP_K(1'_Kx1'_K)v\\
    & = uP_\bk(1'_Kx1'_K)v = u1'_KP_\bk(x)1'_Kv=uP_\bk(x)v.
\end{align*}
Finally, the last statement is clear from the computations
  above. In fact if $\mathcal{F}:=\{F\subseteq G:F\textrm{ is
    finite}\}$, then it is easy to see also that the $C^*$-limits of the
  direct systems $\{\mathbb{M}_F(\M)\}_{F\in\mathcal{F}}$ and
  $\{\mathbb{M}_F(\mathcal{N})\}_{F\in \mathcal{F}}$ are $\bk(\M)$ and
  $\bk(\mathcal{N})$ respectively, and that
  $P_\bk|_{\bk(\M)}$ is the limit of the direct system
  $\{\mathbb{M}_F(\M)\}_{F\in
    \mathcal{F}}\stackrel{P_F}{\to}\{\mathbb{M}_F(\mathcal{N})\}_{F\in
    \mathcal{F}}$.
\end{proof}

\begin{corollary}\label{cor:Exel in W form}
Let $\M$ be a W*-Fell bundle over $G,$ $\mathcal{N}$ a W*-Fell subbundle of $\M$ and $P\colon \M\to \mathcal{N}$ a (not necessarily $\weaks-$continuous) conditional expectation.
 If $\M$ has the W*AP then so does $\mathcal{N}.$
\end{corollary}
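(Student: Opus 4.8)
The plan is to move the problem to the canonical global W*-actions on the W*-algebras of kernels, where W*AD-amenability is literally the amenability of an honest global action and hence transports along $G$-equivariant conditional expectations. First I would record the translation: by Theorem~\ref{thm: WAP iff AD amenable for wstar Fell bundles}, together with Definition~\ref{def:AD amenable} and Corollary~\ref{cor:equivalence of AD amenability}, the bundle $\M$ has the W*AP if and only if the canonical global W*-action $\beta^\weaks$ on $\bk_\weaks(\M)$ is W*AD-amenable, and the identical equivalence holds for $\mathcal{N}$. So it suffices to prove that the canonical action on $\bk_\weaks(\mathcal{N})$ is W*AD-amenable, given that the one on $\bk_\weaks(\M)$ is.

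The main input is Theorem~\ref{theo:WAP and conditional expectation for Wstar bundles}, which, applied to $P\colon\M\to\mathcal{N}$, yields a $G$-equivariant conditional expectation $P_\bk\colon \bk_\weaks(\M)\to\bk_\weaks(\mathcal{N})$; here $\bk_\weaks(\mathcal{N})$ is viewed as a $\beta^\weaks$-invariant W*-subalgebra of $\bk_\weaks(\M)$ through Corollary~\ref{cor:inclusion of wstar algebra of kernels}. With this in hand the rest is a formal composition argument. Let $\Phi\colon \ell^\infty(G,\bk_\weaks(\M))\onto\bk_\weaks(\M)$ be a positive, contractive, $G$-equivariant projection whose restriction to the constant functions is the identity; its existence is precisely the W*AD-amenability of $\beta^\weaks$ on $\bk_\weaks(\M)$. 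Since $\bk_\weaks(\mathcal{N})$ is $\beta^\weaks$-invariant, $\ell^\infty(G,\bk_\weaks(\mathcal{N}))$ is invariant under the diagonal action and sits equivariantly inside $\ell^\infty(G,\bk_\weaks(\M))$, so I would then set
\begin{equation*}
Q := P_\bk\circ\Phi|_{\ell^\infty(G,\bk_\weaks(\mathcal{N}))}\colon \ell^\infty(G,\bk_\weaks(\mathcal{N}))\longrightarrow\bk_\weaks(\mathcal{N}).
\end{equation*}
The map $Q$ is positive and contractive, being a composition of such maps, and it is $G$-equivariant since $\Phi$, the inclusion, and $P_\bk$ all are. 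Moreover, for a constant function with value $x\in\bk_\weaks(\mathcal{N})$ one has $\Phi(x)=x$ (as $\Phi$ fixes the constants of $\bk_\weaks(\M)$) and then $P_\bk(x)=x$ (as $P_\bk$ restricts to the identity on $\bk_\weaks(\mathcal{N})$), so $Q$ is the identity on constants. Thus $Q$ witnesses the W*AD-amenability of the canonical action on $\bk_\weaks(\mathcal{N})$, whence $\mathcal{N}$ is W*AD-amenable and therefore has the W*AP.

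The genuinely substantial work is already carried out in Theorem~\ref{theo:WAP and conditional expectation for Wstar bundles}; what remains is only the permanence of W*AD-amenability under $G$-equivariant conditional expectations, which is routine. The two points that deserve care are that the inclusion $\bk_\weaks(\mathcal{N})\sbe\bk_\weaks(\M)$ is equivariant for the canonical actions — so that the restricted action really is the canonical action on $\bk_\weaks(\mathcal{N})$ and $\ell^\infty(G,\bk_\weaks(\mathcal{N}))$ is a diagonal-invariant subalgebra, both guaranteed by the equivariance assertion in Theorem~\ref{theo:WAP and conditional expectation for Wstar bundles} and Corollary~\ref{cor:inclusion of wstar algebra of kernels} — and that the definition of W*AD-amenability does not demand normality of the projection, so the possible failure of $P_\bk$ to be $\weaks$-continuous is harmless.
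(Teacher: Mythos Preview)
Your proof is correct and follows essentially the same route as the paper: translate W*AP to W*AD-amenability of the canonical action on $\bk_\weaks$, invoke Theorem~\ref{theo:WAP and conditional expectation for Wstar bundles} to obtain the equivariant conditional expectation $P_\bk$, and then conclude by permanence of W*AD-amenability along equivariant conditional expectations. The only difference is that the paper cites \cite{Anantharaman-Delaroche:ActionI}*{Proposition~3.8} for this last permanence step, whereas you spell out the short composition argument $Q=P_\bk\circ\Phi|_{\ell^\infty(G,\bk_\weaks(\mathcal{N}))}$ directly.
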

\begin{proof}
 Recall that $\M$ has the W*AP iff the canonical W*-action on $\bk_\weaks(\M)$ is W*AD-amenable.
 Then everything follows from \cite{Anantharaman-Delaroche:ActionI}*{Proposition 3.8} and the Theorem above.
\end{proof}

 A Fell subbundle $\cA=\{A_t\}_{t\in G}$ of $\cB=\{B_t\}_{t\in G}$ is hereditary if $A_eB_tA_e\subseteq A_t$ for all $t\in G,$ which actually implies $A_rB_sA_t\subseteq A_{rst}$ for all $r,s,t\in G$ (i.e. $\cA\cB\cA\subseteq \cA$).
The condition of $A_e$ being hereditary in $B_e$ does not guarantee $\cA$ is hereditary in $\cB$  \cite{Exel:Partial_dynamical}*{pp 188} but it is enough to show that $\cA$ has the AP if $\cB$ does \cite{Exel:Partial_dynamical}*{Proposition 21.32}.

 If $\cA$ is hereditary in $\cB$ then $\cA''$ is hereditary in $\cB''$ and we may define a conditional expectation $P\colon \cB''\to \cA''$ by $P(b)=q bq,$ where $q\in B_e''$ is the unit of $A_e''.$
This motivates the following result (which is an immediate consequence of the last corollary above).

\begin{corollary}\label{cor:WAP to subbundles}
Let $\cA$ be a Fell subbundle of $\cB.$ If $\cB$ has the AP and the associated inclusion $\A''\into \B''$ admits a conditional expectation, then $\cA$ also has the AP.
\end{corollary}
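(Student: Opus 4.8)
The plan is to reduce the statement to Corollary~\ref{cor:Exel in W form} by passing to biduals, the bridge being the equivalence between the AP and the W*AP established in Theorem~\ref{thm:the mega theorem}. Concretely, I would first invoke the equivalence of~(\ref{item:WAP}) and~(\ref{item:AP}) in Theorem~\ref{thm:the mega theorem}: the hypothesis that $\cB$ has the AP says, by definition, that its enveloping W*-Fell bundle $\cB''$ has the W*AP. In this way the whole problem is transported to the level of the W*-Fell bundle $\cB''$ and its subbundle $\cA''$.

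Next I would record that $\cA''=\{A_t''\}_{t\in G}$ is a W*-Fell subbundle of $\cB''=\{B_t''\}_{t\in G}$, each $A_t''$ being identified with the $\weaks$-closure of $A_t$ in $B_t''$. This is exactly the identification discussed right before Theorem~\ref{theo:WAP and conditional expectation for Wstar bundles}, and it is legitimate because $A_e''$ is $\weaks$-closed in $B_e''$ by construction, which (as noted there) forces all the fibres to be $\weaks$-closed. By hypothesis the inclusion $\cA''\into\cB''$ admits a conditional expectation $P\colon\cB''\to\cA''$ in the Fell-bundle sense. This is precisely the input required by Corollary~\ref{cor:Exel in W form}, applied with $\M=\cB''$ and $\mathcal{N}=\cA''$; since $\cB''$ has the W*AP, that corollary yields at once that $\cA''$ has the W*AP.

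Finally I would translate the conclusion back. By Definition~\ref{def:WAP for W-Fell bundles} the W*AP of $\cA''$ means exactly that $\cA$ has the WAP, and then the equivalence of~(\ref{item:WAP}) and~(\ref{item:AP}) in Theorem~\ref{thm:the mega theorem}, now applied to $\cA$, upgrades this to the desired statement that $\cA$ has the AP.

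I do not expect a genuine obstacle here: the hard work has already been carried out in Corollary~\ref{cor:Exel in W form} (which itself rests on Theorem~\ref{theo:WAP and conditional expectation for Wstar bundles}) and in the equivalence AP $\Leftrightarrow$ W*AP of Theorem~\ref{thm:the mega theorem}. The only point demanding a little care is verifying that the conditional expectation supplied by the hypothesis is of the correct type---a Fell-bundle conditional expectation between the W*-Fell bundles $\cB''$ and $\cA''$, not required to be $\weaks$-continuous---so that Corollary~\ref{cor:Exel in W form} applies verbatim; this is built directly into the hypothesis.
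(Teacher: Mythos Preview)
Your proposal is correct and follows exactly the approach the paper intends: the paper states (in the text immediately preceding the corollary) that the result is an immediate consequence of Corollary~\ref{cor:Exel in W form}, and you have simply spelled out the translation through Theorem~\ref{thm:the mega theorem} that makes this immediate.
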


\section{AD-amenability as a spectral property}\label{sec:commutative-unit-fibre}

This section is dedicated to study amenability  of Fell bundles $\cB=\{B_t\}_{t\in G}$ whose unit fibre are Morita equivalent to a commutative $C^*$-algebra. We start by constructing an action of $G$ on the spectrum $\widehat{B}_e$ (consisting of equivalence classes of irreducible representations).

 Let $X$ be a locally compact Hausdorff space and $Y\equiv {}_{\contz(X)}Y_{B_e}$ an equivalence bimodule.
We let the multiplier algebras $M(\contz(X))=C_b(X)$ and $M(B_e)$ act on the left and right of $Y$ by extending the actions of $\contz(X)$ and $B_e,$ respectively, and identify the centre $ZM(B_e)$ with $C_b(\widehat{B}_e)$via the Dauns-Hofmann Theorem. Recall from \cite{raeburn1998morita}*{Proposition 5.7} that $Y$ induces a homeomorphism $h\colon X\to \widehat{B}_e$ such that $(f\circ h) y = yf$ for all $f\in C_b(\widehat{B}_e)$ and $y\in Y.$

 We view $M(B_e)$ as a $C^*$-subalgebra of $B_e''$ and this gives an inclusion $ZM(B_e)\subseteq Z(B_e'').$ The bidual $Y''$ induces an isomorphism $\pi\colon Z(B_e'')\to \contz(X)''$ such that $\pi(f)y = yf$ for all $y\in Y''$ and $f\in Z(B_e'').$ Hence, $Z(B_e'')$ is the $\weaks-$closure of $\contz(\widehat{B}_e)$ and $\pi(f)=f\circ h$ for all $f\in C_b(\widehat{B}_e).$

 From now on we make no distinction between $X$ and $\widehat{B}_e,$ so we may write $C_b(X)=ZM(B_e)$ and $\contz(X)''=Z(B_e'').$ Let $\theta=\{\theta_t\colon X_{t^{-1}}\to X_t\}_{t\in G}$ be the partial action of $G$ on $X\equiv \widehat{B}_e$ defined by $\cB,$ as constructed in \cite{Abadie-Abadie:Ideals}. The $C^*$-form of $\theta$ is the $C^*$-partial action $\alpha=\{\alpha_t\colon \contz(X_{t^{-1}})\to \contz(X_t)\}_{t\in G},$ which is determined by the identity $\alpha_t(f)=f\circ\theta_{t^{-1}}.$ If we consider each fibre $B_t$ as a $M(B_e)-M(B_e)-$bimodule, then \cite{raeburn1998morita}*{Proposition 5.7} implies $\alpha_t(f)b = bf$ for all $t\in G,$ $b\in B_t$ and $f\in \contz(X_{t^{-1}}).$ Notice that this last condition determines $\alpha_t$ uniquely. We call $\theta$ and $\alpha$ the spectral partial action and the spectral $C^*$-partial action determined by $\cB,$ respectively.

\begin{proposition}\label{prop:amenability bundle is quasi comm fibre}
 Let $\cB=\{B_t\}_{t\in G}$ be a Fell bundle such that $B_e$ is Morita equivalent to a commutative $C^*$-algebra, set $X:=\widehat{B}_e$ and name $\alpha$ the spectral $C^*$-partial action of $G$ on $\contz(X)$ determined by $\cB.$
 Then $\alpha''$ is the central partial action of $\cB''.$
 Consequently, $\cB$ has the AP (i.e. $C^*_\red(\cB)$ is nuclear) if and only if $\alpha$ has the AP (i.e. $\contz(X)\rtimes_{\red,\alpha}G$ is nuclear).
 \begin{proof}
  Keeping the notation we were using before the statement, a straightforward $\weaks-$continuity argument implies $\alpha''_t(f)b = bf$ for all $t\in G,$ $b\in B''_t$ and $f\in \contz(X_{t^{-1}})'';$ this means that $\alpha''$ is the central partial action of $\cB''.$ The rest is a straightforward consequence of Proposition~\ref{prop:AD amenability and nuclearity} and Theorem~\ref{thm:the mega theorem} because $B_e$ is nuclear.
 \end{proof}
\end{proposition}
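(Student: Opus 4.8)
The plan is to prove the structural identity first — that $\alpha''$ coincides with the central partial action $\sigma=\{\sigma_t\colon Z(I_{t^{-1}})\to Z(I_t)\}$ of $\cB''$, where $I_t$ denotes the W*-ideal of $B_e''$ generated by $B_t''(B_t'')^*$ — and then to read off the nuclearity equivalence by concatenating results already proved. Before starting the analytic part I would record that, under the identification $Z(B_e'')\cong\contz(X)''$ set up just before the statement, the domain $\contz(X_{t^{-1}})''$ of $\alpha_t''$ equals the domain $Z(I_{t^{-1}})$ of $\sigma_t$: the Morita equivalence $B_e\sim\contz(X)$ matches the ideal $\cspn(B_{t^{-1}}B_{t^{-1}}^*)$ with $\contz(X_{t^{-1}})$, and taking $\weaks$-closures of these ideals in the respective biduals gives $Z(I_{t^{-1}})=\contz(X_{t^{-1}})''$. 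This makes the comparison of $\alpha_t''$ and $\sigma_t$ meaningful.

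The heart of the argument is a two-stage $\weaks$-continuity extension of the defining relation $\alpha_t(f)b=bf$, which holds for $f\in\contz(X_{t^{-1}})$ and $b\in B_t$. First, fixing $b\in B_t$, the maps $f\mapsto\alpha_t''(f)b$ and $f\mapsto bf$ are both $\weaks$-continuous from $Z(I_{t^{-1}})$ into $B_t''$ — the former because $\alpha_t''$ is a W*-isomorphism followed by left multiplication by $b$, which is separately $\weaks$-continuous in $\cB''$, the latter for the same reason — and they agree on the $\weaks$-dense subset $\contz(X_{t^{-1}})$, hence on all of $Z(I_{t^{-1}})$. Second, fixing $f\in Z(I_{t^{-1}})$, the maps $b\mapsto\alpha_t''(f)b$ and $b\mapsto bf$ are $\weaks$-continuous on $B_t''$ and agree on the $\weaks$-dense subspace $B_t$, hence everywhere. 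This yields $\alpha_t''(f)b=bf$ for all $f\in Z(I_{t^{-1}})$ and $b\in B_t''=M_t$. Since the central isomorphism $\sigma_t$ attached to the equivalence bimodule $M_t$ is characterised by precisely this relation, the uniqueness clause of Remark~\ref{rem:isomorphism between the centres} forces $\alpha_t''=\sigma_t$ for every $t$, so $\alpha''=\sigma$.

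With the structural identity established, the nuclearity equivalence is a chain of cited facts. By Theorem~\ref{thm:the mega theorem}, $\cB$ has the AP iff $\cB''$ is W*AD-amenable; by Corollary~\ref{cor:equivalence of AD amenability} this holds iff the central partial action $\sigma=\alpha''$ is W*AD-amenable, which by definition means $\alpha$ is AD-amenable, and by Corollary~\ref{cor:AP and amenability for partial actions} this is equivalent to $\alpha$ having the AP. For the nuclearity translations I would observe that $B_e$ is nuclear, being Morita equivalent to the commutative (hence nuclear) algebra $\contz(X)$ and nuclearity being a Morita invariant; then Proposition~\ref{prop:AD amenability and nuclearity} gives that $\cB$ has the AP iff $C^*_\red(\cB)$ is nuclear, while Corollary~\ref{cor:AD amenability of pa and nuclearity} applied to the nuclear algebra $\contz(X)$ gives that $\alpha$ has the AP iff $\contz(X)\rtimes_{\red,\alpha}G$ is nuclear. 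The main obstacle is the careful bookkeeping in the $\weaks$-continuity extension — in particular verifying that the two domains $\contz(X_{t^{-1}})''$ and $Z(I_{t^{-1}})$ really coincide so that the uniqueness of the central isomorphism applies; once that is secured the remainder is formal.
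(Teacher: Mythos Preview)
Your proof is correct and follows the same approach as the paper: the paper's ``straightforward $\weaks$-continuity argument'' is exactly your two-stage extension of the relation $\alpha_t(f)b=bf$ to $f\in\contz(X_{t^{-1}})''$ and $b\in B_t''$, and your chain of citations for the nuclearity equivalence simply unpacks the paper's invocation of Proposition~\ref{prop:AD amenability and nuclearity} and Theorem~\ref{thm:the mega theorem} (your additional use of Corollary~\ref{cor:equivalence of AD amenability} and Corollary~\ref{cor:AP and amenability for partial actions} makes explicit the passage through the central partial action that the paper leaves implicit). The domain-matching step you flag as the main obstacle is precisely the content of the discussion preceding the proposition in the paper.
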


 The bundle $\cB$ and the one associated to $\alpha,$ $\cB_\alpha=\{\contz(X_t)\delta_t\}_{t\in G},$ may be very different or ``far from being isomorphic''. For example, $B_e$ may not be commutative while $\contz(X)\equiv \contz(X)\delta_e$ always is. Not even the identity $B_e=\contz(X)$ ensures that $\cB$ is isomorphic to $\cB_\alpha,$ this is so because $\cB$ may contain some ``twist''.
 
 According to Exel \cite{Exel:TwistedPartialActions}, a \emph{twisted partial action} of the group $G$ on the $C^*$-algebra $A$ is a pair $(\gamma, \omega)$ where $\gamma=\{\gamma_t\colon A_{t^{-1}}\to A_t\}_{t\in G}$ is a set of $C^*$\nb-isomorphisms between $C^*$-ideals of $A$ and $\omega=\{\omega(s,t)\in M(A_s\cap A_{st})\}_{s,t\in G}$ (the ``twist'') is a set of unitary operators.
 For $(\gamma,\omega)$ to be a twisted partial action the following conditions must be satisfied for all $r,s,t\in G:$
 \begin{enumerate}
  \item $A_e=A$ and $\gamma_e\colon A_e\to A_e$ is the identity.
  \item $\gamma_r(A_{r^{-1}}\cap A_s)= A_r\cap A_{rs}.$
  \item $\gamma_r(\gamma_s(a))=\omega(r,s)\gamma_{rs}(a)\omega(r,s)^*,$ for all $a\in A_{s^{-1}}\cap A_{s^{-1}r^{-1}}.$
  \item $\omega(t,e)=\omega(e,t)=1.$
  \item $\gamma_r(a\omega(s,t))\omega(r,st)=\gamma_r(a)\omega(r,s)\omega(rs,t),$ for all $a\in A_{r^{-1}}\cap A_s\cap A_{st}.$
 \end{enumerate}
As shown by Exel, the conditions above imply that the Banach bundle $\cB_{\gamma,\omega}:=\{ A_t\delta_t \}_{t\in G}$  is a Fell bundle when equipped with the product and involution:
\begin{equation*}
(a\delta_s)\cdot (b\delta_t):=\omega(s,t)\gamma_s(\gamma_s^{-1}(a)b)\delta_{st},\quad (a\delta_s)^*:=\omega(s^{-1},s)^*\gamma_{s^{-1}}(a^*)\delta_{s^{-1}},\end{equation*}
where $s,t\in G,$ $a\in A_s$ and $b\in A_t.$

Condition (4) and the formula for the product of $\cB_{\gamma,\omega}$ imply that one may forget $\omega$ when determining  the $A-A-$bimodule structure on the fibres $A_t\delta_t.$ Hence, the central partial action of $(\cB_{\gamma,\omega})''$ is completely determined by $\gamma,$ implying that $\omega$ plays no r\^{o}le in the AD-amenability of $\cB_{\gamma,\omega}.$ More can be said about this. Notice that the definition of twisted partial action does not require $\gamma=\{\gamma_t\colon A_{t^{-1}}\to A_t\}_{t\in G}$ to be $C^*$-partial action, but we can still construct the family of W*-isomorphism $\gamma''=\{\gamma_t''\colon A_{t^{-1}}''\to A_t''\}_{t\in G}$ and the central partial action of $(\cB_{\gamma,\omega})''$ turns out to be the set of restrictions $\gamma''|_{Z(A'')}:=\{Z(A_{t^{-1}}'')\to Z(A_t''), \ a\mapsto \gamma_t''(a)\}_{t\in G}$.

Assume all the unitaries $\omega(s,t)$ are central (which is the case if $A$ is commutative). Then conditions (1-3) imply that $\gamma$ is a $C^*$-partial action. In this case, the restriction $\gamma''|_{Z(A'')}$  is the central partial action of  both $(\cB_\gamma)''=\cB_{\gamma''}$ and $(\cB_{\gamma,\omega})''.$ Consequently, $\cB_\gamma$ is AD-amenable if and only if $\cB_{\gamma,\omega}$ is.
If $A$ is nuclear, then $C^*_\red(\cB_{\gamma,\omega})$ is nuclear if and only if $A\rtimes_{\gamma,\red}G$ is.
In case $A$ is Morita equivalent to a commutative $C^*$-algebra, the spectral partial actions of $\cB_\gamma$ and $\cB_{\gamma,\omega}$ are (both) the partial action $\hat{\gamma}$ of $G$ on $\hat{A}$ defined by $\gamma,$ see \cite{Abadie:Enveloping}*{Proposition 7.2}.
More precisely, for every $t\in G$ the map $\hat{\gamma}_t\colon \hat{A}_{t^{-1}}\to \hat{A}_t$ is the homeomorphism between the spectrums induced by the $C^*$-isomorphism $\gamma_t\colon A_{t^{-1}}\to A_t.$ 
The central partial action of $\cB_{\gamma,\omega}''$ is then $\hat{\gamma}''$ and $C^*_\red(\cB_{\gamma,\omega})$ is nuclear if and only if $\contz(\hat{A})\rtimes_{\hat{\gamma},\red}G$ is nuclear.
\subsection{Fell bundles with commutative unit fibre}
 Let $\cB=\{B_t\}_{t\in G}$ be a Fell bundle with $B_e=\contz(X)$ commutative ($X\equiv \widehat{B}_e$). As before we denote by $\theta$ and $\alpha$ the spectral partial actions of $G$ on $X$ and $\contz(X)$ defined by $\cB$, respectively.

The main result in \cite{Exel:TwistedPartialActions} states that every \emph{regular} Fell bundle is isomorphic to one associated to a twisted partial action.
The regularity of $\cB$ concerns the structure of the fibres $B_t$ as imprimitivity $\contz(X_t)$-$\contz(X_{t^{-1}})$-bimodules.
Since the $C^*$-algebras $\contz(X_t)$ are commutative, such imprimitivity bimodules are necessarily given as $\contz$-sections of a certain (complex) line bundle $L_t$ over $X_t$.
The $\contz$-section $\contz(L_t)$ of such a line bundle may be viewed as an imprimitivity $\contz(X_t)$-$\contz(X_{t^{-1}})$-bimodule; using the isomorphism $\alpha_t\colon \contz(X_{t^{-1}})\congto \contz(X_t)$ we may also view $\contz(L_t)$ as an imprimitivity $\contz(X_t)$-$\contz(X_{t^{-1}})$-bimodule which is then isomorphic to $B_t$. The regularity of $B_t$ is then equivalent to $L_t$ being topologically trivial as a complex line bundle. This is always the case for Fell bundles associated with twisted partial actions but it might be not the case in general, so that our original Fell bundle $\cB$ is not necessarily isomorphic to  a twisted bundle $\cB_{\alpha,\omega}$, not even as Banach bundles. However, as already explained,  the AP does not see these differences.

 AD-amenability of $\cB,$ which is equivalent to $\contz(X)\rtimes_{\red,\alpha}G$ being nuclear, can also be described using a groupoid description that crossed product. To explain this, let us first
recall that the partial action $\theta$ of $G$ on $X$ yields a locally
compact Hausdorff \'etale transformation groupoid
$\Gamma=X\rtimes_\theta G$ (see \cite{Abadie:On_partial}).  We call $\Gamma$ the \emph{spectral groupoid} of $\cB$. As a set it consists of pairs $(x,t)$ with $t\in G$ and $x\in X_{t^{-1}}$. The source and range maps are $\s(x,t):=x$ and $\rg(x,t):=t\cdot x:=\theta_t(x)$ and multiplication and inversion are given by
\begin{equation*}(x,s)\cdot (y,t)=(y,st),\quad (x,t)^{-1}=(t\cdot x,t^{-1})\quad \mbox{for }x=t\cdot y.\end{equation*}
The topology is the one inherited from the product topology on $X\times G$. The domains $X_t$ give rise to subsets $\Gamma_t:=X_{t}\times\{t^{-1}\}\sbe \Gamma$ that are clopen bisections of $\Gamma$ (although the domains $X_t$ are only assumed to be open in $X$). Hence $\Gamma$ decomposes as a disjoint union $\Gamma=\sqcup_{t\in G}\Gamma_t$ of clopen subsets. In particular the vector space $\contc(\Gamma)$ identifies canonically with the algebraic direct sum $\oplus^{\alg}_{t\in G}\contc(\Gamma_t)$, that is, functions $\zeta\in \contc(\Gamma)$ correspond bijectively to finite sets of functions $\zeta_t\in \contc(\Gamma_t)$, $t\in G$. This identification extends to a canonical isomorphism $C^*_{(\red)}(\Gamma)\cong \contz(X)\rtimes_{(\red),\alpha} G$,  where the parenthesis $(\red)$ indicates the identity hols for full and reduced crossed products.

Next we relate amenability of $\cB$ in terms of amenability of its spectral groupoid. Amenable groupoids are defined and studied mainly in \cite{Renault_AnantharamanDelaroche:Amenable_groupoids}. We shall use the characterisation from \cite{Brown-Ozawa:Approximations}*{Lemma~5.6.14} that says that an \'etale groupoid $\Omega$ is amenable if and only if there is a net $(\zeta_i)\sbe \contc(\Omega)$ with $\|\zeta_i\|_2\leq 1$ for all $i$ and $(\zeta_i^**\zeta_i)(\gamma)\to 1$ uniformly for $\gamma$ in compact subsets of $\Omega$. One of the main results in this direction states that $\Omega$ is amenable if and only if $C^*_{(\red)}(\Omega)$ is nuclear.

We want to perform a finer analysis by identifying $C^*_\red(\cB)$ with some kind of $C^*$-algebra associated to the spectral groupoid $\Gamma.$
To do this we identify each fibre $B_t$ with the sections $\contz(L_t)$ of a line bundle $L_t$ over $X_t,$ as we explained before. The disjoint union $L:=\sqcup_{t\in G} L_t$ can then be viewed as a line bundle over $\Gamma=\sqcup_{t\in G}\Gamma_t$. Moreover, with the Fell bundle structure inherited from $\cB$, $L$ is indeed a Fell line bundle over $\Gamma$; such a Fell bundle is also usually viewed as a \emph{twist} over $\Gamma$. By construction we get an obvious identification $\contc(\Gamma,L)\cong \contc(\cB)$ that extends to an isomorphism $C^*_{(\red)}(\Gamma,L)\cong C^*_{(\red)}(\cB)$. In other words, we have described every Fell bundle over a discrete group with commutative unit fibre in terms of a twisted groupoid. This result can be deduced from the constructions and results in \cite{BussExel:Regular.Fell.Bundle} that describe Fell bundles over inverse semigroups with commutative fibres over idempotents (also call semi-abelian Fell bundles in \cite{BussExel:Regular.Fell.Bundle}) in a similar way via twisted groupoids. The Fell bundles in \cite{BussExel:Regular.Fell.Bundle} are assumed to be saturated, but the same constructions can also be done in general for non-saturated ones; alternatively, one can view a non-saturated Fell bundle over $G$ as a saturated Fell bundle over the inverse semigroup $S(G)$ constructed by Exel in \cite{Exel:Partial_actions}, see \cite{BussExel:InverseSemigroupExpansions}.

Using the description of $\cB$ in terms of a twisted groupoid $(\Gamma,L)$ and that AD-amenability is equivalent to nuclearity of the corresponding $C^*$-algebras, we can also interpret the above result as the statement that $C^*_{(\red)}(\Gamma,L)$ is nuclear if and only if $C^*_{(\red)}(\Gamma)$ is nuclear. In other words, nuclearity of a twisted groupoid $C^*$-algebra is independent of the twist. Indeed, in this form this result is already known, see \cite{Takeishi:Nuclearity}.

 Notice we do not really need $B_e$ to be commutative to construct the spectral groupoid of $\cB,$ it suffices to assume $B_e$ is Morita equivalent to a commutative $C^*$-algebra. Thus the preceeding discussion and Proposition \ref{prop:amenability bundle is quasi comm fibre} produce the following.

\begin{corollary}
Let $\cB=\{B_t\}_{t\in G}$ be a Fell bundle with $B_e$ Morita equivalent to a commutative $C^*$-algebra, set $X\equiv \widehat{B}_e$ and let  ($\alpha$) $\theta$ be the spectral ($C^*$-)partial action of $G$ on ($C_0(X)$) $X.$
If $\Gamma:=X\rtimes_\theta G$ is the spectral groupoid of $\cB,$ then the following are equivalent:
\begin{enumerate}[(i)]
\item $\cB$ has the AP, that is to say $C^*_{(\red)}(\cB)$ is nuclear.
\item $\cB_\alpha$ has the AP, meaning that $C^*_{(\red)}(\cB_\alpha)=C^*_{(\red)}(\Gamma)=\contz(X)\rtimes_{(\red),\alpha} G $ is nuclear.
\item $\Gamma$ is amenable or, equivalently, $C^*_{(\red)}(\Gamma)$ is nuclear.
\item for every twisted partial action of the form $(\alpha,\omega),$ the corresponding Fell bundle $\cB_{\alpha,\omega}$ has the AP.
\end{enumerate}
\end{corollary}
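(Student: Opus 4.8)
The plan is to run the four conditions back to the single assertion that the groupoid $C^*$-algebra $C^*_{(\red)}(\Gamma)$ is nuclear, taking advantage of the fact that the reference bundle $\cB_\alpha$ has commutative (hence nuclear) unit fibre $\contz(X)$.

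First I would obtain (i)$\Leftrightarrow$(ii) directly from Proposition~\ref{prop:amenability bundle is quasi comm fibre}. Since $B_e$ is Morita equivalent to a commutative algebra, that proposition identifies $\alpha''$ with the central partial action of $\cB''$ and concludes that $\cB$ has the AP precisely when $\alpha$ does; by Definition~\ref{def:AP} the latter means exactly that $\cB_\alpha$ has the AP.

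For (ii)$\Leftrightarrow$(iii) I would use the identifications recorded in the discussion preceding the statement, namely $C^*_{(\red)}(\cB_\alpha)=\contz(X)\rtimes_{(\red),\alpha}G\cong C^*_{(\red)}(\Gamma)$. Because the unit fibre of $\cB_\alpha$ is the nuclear algebra $\contz(X)$, Proposition~\ref{prop:AD amenability and nuclearity} gives that $\cB_\alpha$ has the AP if and only if $C^*_\red(\cB_\alpha)=C^*(\cB_\alpha)$ is nuclear. On the groupoid side, the cited fact that an \'etale groupoid $\Omega$ is amenable if and only if $C^*_{(\red)}(\Omega)$ is nuclear turns nuclearity of $C^*_{(\red)}(\Gamma)$ into amenability of $\Gamma$. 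Composing these with the isomorphism above yields (ii)$\Leftrightarrow$(iii); the coincidence of full and reduced algebras under amenability is what justifies writing the parenthetical $(\red)$ on both sides.

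Finally I would treat (ii)$\Leftrightarrow$(iv). The implication (iv)$\Rightarrow$(ii) is trivial, since the trivial twist $\omega\equiv 1$ gives $\cB_{\alpha,1}=\cB_\alpha$. For (ii)$\Rightarrow$(iv) I would invoke the twist-independence established earlier: when the unit fibre is commutative every unitary $\omega(s,t)$ is central, so $\alpha$ is an honest $C^*$-partial action and the central partial action of $(\cB_{\alpha,\omega})''$ equals that of $(\cB_\alpha)''$, namely $\alpha''$. Hence $\cB_{\alpha,\omega}$ is AD-amenable exactly when $\cB_\alpha$ is, which by Theorem~\ref{thm:the mega theorem} is the same as having the AP. The main obstacle is genuinely just assembly: the only substantive external input is the cited amenability$=$nuclearity theorem for \'etale groupoids, and the sole point requiring care is checking that the isomorphism $C^*_{(\red)}(\cB_\alpha)\cong C^*_{(\red)}(\Gamma)$ is compatible with the simultaneous ``full $=$ reduced'' phenomenon on both sides.
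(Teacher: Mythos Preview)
Your proposal is correct and follows essentially the same route as the paper: the corollary is stated there as an immediate consequence of Proposition~\ref{prop:amenability bundle is quasi comm fibre} together with the preceding discussion on twist-independence and the groupoid description of $C^*_{(\red)}(\cB_\alpha)$, and you have accurately unpacked those ingredients into the three pairwise equivalences. The only difference is cosmetic --- the paper compresses everything into a single sentence, while you spell out which result handles which implication.
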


 We close this article with an example relating nuclearity of graph $C^*$-algebras, AP and exactness of free groups.

\begin{example}
In \cite{Exel:Partial_dynamical}*{Proposition~37.9}  Exel provides a partial crossed product description for the $C^*$-algebra of every directed graph $E=(\s,\rg\colon E^1\to E^0)$ with no sinks (i.e. $s^{-1}(v)\not=\emptyset$ for all $v\in E^0$). In other words, we have an isomorphism
\begin{equation*}C^*(E)\cong \contz(X)\rtimes_\alpha G\end{equation*}
for a certain partial action $\alpha$ of the free group $G=\F_n$ on $n=|E^1|$ generators (this can be infinite), and $X$ is a certain (totally disconnected) locally compact Hausdorff space. The exact description of this space and the partial action is slightly complicated in general but it simplifies under certain regularity conditions on $E$. For instance, if every vertex $v\in E^0$ is \emph{regular} in the sense that $\rg^{-1}(v)$ is non-empty and finite, $X$ is just the infinite path space $E^\infty$ of $E$.

Regardless of how $X$ and the partial action $\alpha$ above are defined, using that graph $C^*$-algebras are always nuclear (a well-known fact, see \cite{Kumjian-Pask:C-algebras_directed_graphs}*{Proposition~2.6}), it follows from our previous theorem that $\alpha$ has the AP. Indeed, Exel gives a more direct proof of this fact in \cite{Exel:Partial_dynamical}*{Theorem~37.10}.

We shall give more details about the partial action $\alpha$ and its amenability in what follows in the case of the graph $E$ that describes the Cuntz algebra $\mathcal{O}_n$, that is, the graph with one vertex and $n$ loops with $2\leq n<\infty$. This is a special and representative case. This is a finite graph that has no sinks or sources. In this case, $X\cong \{1,\ldots, n\}^\infty$ is Cantor space
and $G=\F_n$ is the free group on $n$ generators that we also view as the free group generated by $E^1$. The partial action $\alpha$ is defined as follows: the domains $D_g$ for $g\in \F_n$ are defined in terms of the cylinders $X_a=\{a\mu:\mu\in X\}$ if $g\in \F_n$ can be written in reduced form as $g=ab^{-1}$ for $a,b\in E^*$, the set of finite paths viewed as elements of $\F_n$.
In this case $D_{g^{-1}}=\cont(X_b)$ and $D_g=\cont(X_a)$ and $\alpha_g\colon D_{g^{-1}}\congto D_g$ is given $\alpha_g(f)=f\circ\theta_g^{-1}$, where $\theta_g\colon X_b\congto X_a$ is the canonical homeomorphism sending $b\mu\mapsto a\mu$. If $g$ is not of the form $ab^{-1}$, then $D_g$ is defined to be the zero ideal (and $\alpha_g$ is the zero map).

The AP for $\alpha$ means the existence of a net of finitely supported functions $\xi_i\colon G\to \cont(X)$ that is uniformly bounded for the $\ell^2$-norm and satisfying
\begin{equation}\label{eq:AP-partial}
\braket{\xi_i}{a\tilde\alpha_g(\xi_i)}_2:=\sum_{h\in G}\xi_i(h)^*\alpha_g(\alpha_g^{-1}(a\xi_i(g^{-1}h)))\to a
\end{equation}
for all $g\in G$ and $a\in D_g$. Notice that all the ideals $D_g$ are unital here. If $1_g$ denotes its unit (so that $D_g=A\cdot 1_g$), then~\eqref{eq:AP-partial} is equivalent to
\begin{equation*}\sum_{h\in \F_n}\xi_i(h)\alpha_g(1_{g^{-1}}\xi_i(g^{-1}h))\to 1_g\end{equation*}
for all $g\in G$. One explicit sequence $\xi_i\colon G\to C(X)$ that gives the AP for this partial action can be defined by
$\xi_i(g)=\frac{1}{\sqrt{i}}1_g$ if $g\in \F_n^+$ (the positive cone of $\F_n$) with length $|g|\leq i$ and $\xi_i(g)=0$ otherwise.
Recall that $1_g$ denotes the characteristic function on the cylinder set $X_g=\{g\mu:\mu\in X=E^\infty\}$ which makes sense because $g$ is positive.

The fact that all domain ideals $D_g$ are unital also implies that $\alpha$ has an enveloping global action and we know from Corollary~\ref{cor:ADamenability and enveloping} that this global action also has the AP. Indeed, a concrete description of the enveloping action for the partial action of $\F_n$ on $X$ is as follows: instead of considering only positive words, we also consider their inverses, that is, we consider the generators of $\F_n$ and their inverses, and then look at all infinite reduced words on this new alphabet. This yields a new space, denoted $\bar X$ that naturally contains $X$ as a clopen subspace. 
Now notice that $\F_n$ naturally acts (globally) on $\bar X$ by (left) concatenation and the partial action on $X$ is just the restriction of this global action. Moreover, the global action of $\F_n$ on $\bar X$ is known to be amenable: this action can be viewed as the action on a certain boundary of $\F_n$, and this is an amenable action, see \cite{Anantharaman-Delaroche:Amenability}*{Examples~2.7(4)} and \cite{Brown-Ozawa:Approximations}*{Proposition~5.1.8}. Indeed, this is the standard way to see that $\F_n$ is an exact group.
\end{example}

\begin{bibdiv}
  \begin{biblist}
\bib{Abadie-Abadie:Ideals}{article}{
  author={Abadie, Beatriz},
  author={Abadie, Fernando},
  title={Ideals in cross-sectional \(\textup C^*\)\nobreakdash -algebras of Fell bundles},
  journal={Rocky Mountain J. Math.},
  volume={47},
  date={2017},
  number={2},
  pages={351--381},
  issn={0035-7596},
  doi={10.1216/RMJ-2017-47-2-351},
}

\bib{Abadie:Enveloping}{article}{
  author={Abadie, Fernando},
  title={Enveloping actions and Takai duality for partial actions},
  journal={J. Funct. Anal.},
  volume={197},
  date={2003},
  number={1},
  pages={14--67},
  issn={0022-1236},
  doi={10.1016/S0022-1236(02)00032-0},
}

\bib{Abadie:On_partial}{article}{
  author={Abadie, Fernando},
  title={On partial actions and groupoids},
  journal={Proc. Amer. Math. Soc.},
  volume={132},
  year={2004},
  number={4},
  pages={1037--1047},
  issn={0002-9939},
  doi={10.1090/S0002-9939-03-07300-3},
}

\bib{Abadie:Tensor}{article}{
  author={Abadie, Fernando},
  title={Tensor products of Fell bundles over discrete groups},
  status={eprint},
  note={\arxiv {funct-an/9712006}},
  date={1997},
}

\bib{Abadie-Buss-Ferraro:Morita_Fell}{article}{
    AUTHOR = {Abadie, Fernando},
    AUTHOR = {Buss, Alcides},
    AUTHOR = {Ferraro, Dami\'{a}n},
     TITLE = {Morita enveloping {F}ell bundles},
   JOURNAL = {Bull. Braz. Math. Soc. (N.S.)},
    VOLUME = {50},
      YEAR = {2019},
    NUMBER = {1},
     PAGES = {3--35},
      ISSN = {1678-7544},
       DOI = {10.1007/s00574-018-0088-6},
       URL = {https://doi.org/10.1007/s00574-018-0088-6},
}

\bib{Abadie-Ferraro:Equivalence_of_Fell_Bundles}{article}{
    AUTHOR = {Abadie, Fernando},
    AUTHOR = {Ferraro, Dami\'{a}n},
     TITLE = {Equivalence of {F}ell bundles over groups},
   JOURNAL = {J. Operator Theory},
    VOLUME = {81},
      YEAR = {2019},
    NUMBER = {2},
     PAGES = {273--319},
      ISSN = {0379-4024},
       DOI = {10.1007/s13171-017-0119-1},
       URL = {https://doi.org/10.1007/s13171-017-0119-1},
}

\bib{Anantharaman-Delaroche:ActionI}{article}{
  author={Anantharaman-Delaroche, C.},
  title={Action moyennable d'un groupe localement compact sur une alg\`ebre de von {N}eumann},
  journal={Math. Scand.},
  volume={45},
  year={1979},
  number={2},
  pages={289--304},
  issn={0025-5521},
  doi={10.7146/math.scand.a-11844},
}

\bib{Anantharaman-Delaroche:ActionII}{article}{
  author={Anantharaman-Delaroche, C.},
  title={Action moyennable d'un groupe localement compact sur une alg\`ebre de von {N}eumann. {II}},
  journal={Math. Scand.},
  volume={50},
  year={1982},
  number={2},
  pages={251--268},
  issn={0025-5521},
  doi={10.7146/math.scand.a-11958},
}

\bib{Anantharaman-Delaroche:Amenability}{article}{
  author={Anantharaman-Delaroche, C.},
  title={Amenability and exactness for dynamical systems and their $C^*$\nobreakdash -algebras},
  journal={Trans. Amer. Math. Soc.},
  volume={354},
  date={2002},
  number={10},
  pages={4153--4178},
  issn={0002-9947},
  doi={10.1090/S0002-9947-02-02978-1},
}

\bib{Anantharaman-Delaroche:Systemes}{article}{
  author={Anantharaman-Delaroche, C.},
  title={Syst\`emes dynamiques non commutatifs et moyennabilit\'e},
  journal={Math. Ann.},
  volume={279},
  date={1987},
  number={2},
  pages={297--315},
  issn={0025-5831},
  doi={10.1007/BF01461725},
}

\bib{Ara-Exel-Katsura:Dynamical_systems}{article}{
  author={Ara, Pere},
  author={Exel, Ruy},
  author={Katsura, Takeshi},
  title={Dynamical systems of type $(m,n)$ and their $\textup{C}^*$\nobreakdash-algebras},
  journal={Ergodic Theory Dynam. Systems},
  volume={33},
  date={2013},
  number={5},
  pages={1291--1325},
  issn={0143-3857},
  review={\MR{3103084}},
  doi={10.1017/S0143385712000405},
}

\bib{Renault_AnantharamanDelaroche:Amenable_groupoids}{book}{
  author={Anantharaman-Delaroche, C.},
  author={Renault, Jean},
  title={Amenable groupoids},
  series={Monographies de L'Enseignement Math\'ematique},
  volume={36},
  publisher={L'Enseignement Math\'ematique, Geneva},
  year={2000},
  pages={196},
  isbn={2-940264-01-5},
}

\bib{MR2188261}{book}{
  author={Blackadar, Bruce},
  title={Operator algebras},
  series={Encyclopaedia of Mathematical Sciences},
  volume={122},
  note={Theory of $C^*$\nobreakdash -algebras and von Neumann algebras; Operator Algebras and Non-commutative Geometry, III},
  publisher={Springer},
  place={Berlin},
  date={2006},
  pages={xx+517},
  isbn={978-3-540-28486-4},
  isbn={3-540-28486-9},
  doi={10.1007/3-540-28517-2},
}

\bib{Blecher-Merdy:Operator}{book}{
  author={Blecher, David P.},
  author={Le Merdy, Christian},
  title={Operator algebras and their modules---an operator space approach},
  series={London Mathematical Society Monographs. New Series},
  volume={30},
  note={Oxford Science Publications},
  publisher={The Clarendon Press, Oxford University Press, Oxford},
  year={2004},
  pages={x387},
  isbn={0-19-852659-8},
  doi={10.1093/acprof:oso/9780198526599.001.0001},
}

\bib{blecher1997selfdual}{article}{
  title={On selfdual Hilbert modules},
  author={Blecher, David P},
  journal={Fields Inst. Commun},
  volume={13},
  pages={65--79},
  year={1997}
}

\bib{Brown-Ozawa:Approximations}{book}{
  author={Brown, Nathanial P.},
  author={Ozawa, Narutaka},
  title={$C^*$\nobreakdash -algebras and finite-dimensional approximations},
  series={Graduate Studies in Mathematics},
  volume={88},
  publisher={Amer. Math. Soc.},
  place={Providence, RI},
  date={2008},
  pages={xvi+509},
  isbn={978-0-8218-4381-9},
  isbn={0-8218-4381-8},
}

\bib{buss2019injectivity}{article}{
  title={Injectivity, crossed products, and amenable group actions},
  author={Buss, Alcides},
  author={Echterhoff, Siegfried},
  author={Willett, Rufus},
  journal={arXiv preprint arXiv:1904.06771},
  year={2019},
}

\bib{buss2020amenability}{article}{
  title={Amenability and weak containment for actions of locally compact groups on {C}*-algebras},
  author={Buss, Alcides},
  author={Echterhoff, Siegfried},
  author={Willett, Rufus},
  journal={arXiv preprint arXiv:2003.03469},
  year={2020}
}

\bib{BussExel:InverseSemigroupExpansions}{article}{
  author={Buss, Alcides},
  author={Exel, Ruy},
  title={Inverse semigroup expansions and their actions on \(C^*\)\nobreakdash -algebras},
  journal={Illinois J. Math.},
  volume={56},
  date={2012},
  number={4},
  pages={1185--1212},
  issn={0019-2082},
  eprint={http://projecteuclid.org/euclid.ijm/1399395828},
}

\bib{BussExel:Regular.Fell.Bundle}{article}{
  author={Buss, Alcides},
  author={Exel, Ruy},
  title={Twisted actions and regular Fell bundles over inverse semigroups},
  journal={Proc. Lond. Math. Soc. (3)},
  volume={103},
  date={2011},
  number={2},
  pages={235--270},
  issn={0024-6115},
  doi={10.1112/plms/pdr006},
}

\bib{Buss-Exel-Meyer:Reduced}{article}{
  author={Buss, Alcides},
  author={Exel, Ruy},
  author={Meyer, Ralf},
  title={Reduced \(C^*\)\nobreakdash -algebras of Fell bundles over inverse semigroups},
  journal={Israel J. Math.},
  date={2017},
  volume={220},
  number={1},
  pages={225--274},
  issn={0021-2172},
  doi={10.1007/s11856-017-1516-9},
}

\bib{MR641217}{book}{
  author={Dixmier, Jacques},
  title={von Neumann algebras},
  series={North-Holland Mathematical Library},
  volume={27},
  publisher={North-Holland Publishing Co.},
  place={Amsterdam, New York},
  date={1981},
  pages={xxxviii+437},
  isbn={0-444-86308-7},
}

\bib{Doran-Fell:Representations}{book}{
  author={Doran, Robert S.},
  author={Fell, James M. G.},
  title={Representations of $^*$\nobreakdash -algebras, locally compact groups, and Banach $^*$\nobreakdash -algebraic bundles. Vol. 1},
  series={Pure and Applied Mathematics},
  volume={125},
  publisher={Academic Press Inc.},
  place={Boston, MA},
  date={1988},
  pages={xviii+746},
  isbn={0-12-252721-6},
}

\bib{Doran-Fell:Representations_2}{book}{
  author={Doran, Robert S.},
  author={Fell, James M. G.},
  title={Representations of $^*$\nobreakdash -algebras, locally compact groups, and Banach $^*$\nobreakdash -algebraic bundles. Vol. 2},
  series={Pure and Applied Mathematics},
  volume={126},
  publisher={Academic Press Inc.},
  place={Boston, MA},
  date={1988},
  pages={i--viii and 747--1486},
  isbn={0-12-252722-4},
}

\bib{Exel:Amenability}{article}{
  author={Exel, Ruy},
  title={Amenability for Fell bundles},
  journal={J. Reine Angew. Math.},
  volume={492},
  date={1997},
  pages={41--73},
  issn={0075-4102},
  doi={10.1515/crll.1997.492.41},
}

\bib{Exel:Partial_actions}{article}{
  author={Exel, Ruy},
  title={Partial actions of groups and actions of inverse semigroups},
  journal={Proc. Amer. Math. Soc.},
  volume={126},
  date={1998},
  number={12},
  pages={3481--3494},
  issn={0002-9939},
  doi={10.1090/S0002-9939-98-04575-4},
}

\bib{Exel:Partial_dynamical}{book}{
  author={Exel, Ruy},
  title={Partial dynamical systems, Fell bundles and applications},
  series={Mathematical Surveys and Monographs},
  volume={224},
  date={2017},
  pages={321},
  isbn={978-1-4704-3785-5},
  isbn={978-1-4704-4236-1},
  publisher={Amer. Math. Soc.},
  place={Providence, RI},
}

\bib{Exel:TwistedPartialActions}{article}{
  author={Exel, Ruy},
  title={Twisted partial actions: a classification of regular $C^*$\nobreakdash -algebraic bundles},
  journal={Proc. London Math. Soc. (3)},
  volume={74},
  date={1997},
  number={2},
  pages={417--443},
  issn={0024-6115},
  doi={10.1112/S0024611597000154},
}

\bib{ExelNg:ApproximationProperty}{article}{
  author={Exel, Ruy},
  author={Ng, {Ch}i-Keung},
  title={Approximation property of $C^*$\nobreakdash -algebraic bundles},
  journal={Math. Proc. Cambridge Philos. Soc.},
  volume={132},
  date={2002},
  number={3},
  pages={509--522},
  issn={0305-0041},
  doi={10.1017/S0305004101005837},
}

\bib{MR3795739}{article}{
  author={Ferraro, Dami\'an},
  title={Construction of globalizations for partial actions on rings, algebras, $\mathrm {C}^*$-algebras and Hilbert bimodules},
  journal={Rocky Mountain J. Math.},
  volume={48},
  date={2018},
  number={1},
  pages={181--217},
  issn={0035-7596},
  doi={10.1216/RMJ-2018-48-1-181},
}

\bib{kirchberg1994commutants}{article}{
  title={Commutants of unitaries in UHF algebras and functorial properties of exactness},
  author={Kirchberg, Eberhard},
  journal={Journal fur die Reine und Angewandte Mathematik},
  volume={452},
  pages={39--78},
  year={1994},
  publisher={Berlin, W. de Gruyter.},
}

\bib{Kumjian-Pask:C-algebras_directed_graphs}{article}{
  author={Kumjian, Alex},
  author={Pask, David},
  title={$C^*$\nobreakdash -algebras of directed graphs and group actions},
  journal={Ergodic Theory Dynam. Systems},
  volume={19},
  date={1999},
  number={6},
  pages={1503--1519},
  issn={0143-3857},
  doi={10.1017/S0143385799151940},
}

\bib{mckee2020amenable}{article}{
  title={Amenable and inner amenable actions and approximation properties for crossed products by locally compact groups},
  author={McKee, Andrew},
  author={Pourshahami, Reyhaneh},
  journal={arXiv preprint arXiv:2012.14455},
  year={2020}
}

\bib{Nakagami-Takesaki:Duality}{book}{
  author={Nakagami, Yoshiomi},
  author={Takesaki, Masamichi},
  title={Duality for crossed products of von {N}eumann algebras},
  series={Lecture Notes in Mathematics},
  volume={731},
  publisher={Springer, Berlin},
  year={1979},
  pages={ix+139},
  isbn={3-540-09522-5},
}

\bib{ozawa2020characterizations}{article}{
  title={On characterizations of amenable $C^*$-dynamical systems and new examples},
  author={Ozawa, Narutaka},
  author={Suzuki, Yuhei},
  journal={arXiv preprint arXiv:2011.03420},
  year={2020}
}

\bib{MR0355613}{article}{
  author={Paschke, William L.},
  title={Inner product modules over $B^{\ast }$-algebras},
  journal={Trans. Amer. Math. Soc.},
  volume={182},
  year={1973},
  pages={443--468},
  issn={0002-9947},
  doi={10.2307/1996542},
  url={https://doi.org/10.2307/1996542},
}

\bib{Pedersen:Cstar_automorphisms}{book}{
  author={Pedersen, Gert K.},
  title={$C^*$\nobreakdash -algebras and their automorphism groups},
  series={London Mathematical Society Monographs},
  volume={14},
  publisher={Academic Press},
  place={London},
  date={1979},
  pages={ix+416},
  isbn={0-12-549450-5},
}

\bib{raeburn1998morita}{book}{
  title={Morita equivalence and continuous-trace ${C}^* $-algebras},
  author={Raeburn, Iain},
  author={Williams, Dana P},
  number={60},
  year={1998},
  publisher={American Mathematical Soc.}
}

\bib{RieffelMoritaEquivalenceCandW}{article}{
    AUTHOR = {Rieffel, Marc A.},
     TITLE = {Morita equivalence for {$C\sp{\ast} $}-algebras and
              {$W\sp{\ast} $}-algebras},
   JOURNAL = {J. Pure Appl. Algebra},
    VOLUME = {5},
      YEAR = {1974},
     PAGES = {51--96},
      ISSN = {0022-4049},
       DOI = {10.1016/0022-4049(74)90003-6},
       URL = {https://doi.org/10.1016/0022-4049(74)90003-6},
}

\bib{RieffelInducedRep}{article}{
    AUTHOR = {Rieffel, Marc A.},
     TITLE = {Induced representations of {$C\sp{\ast} $}-algebras},
   JOURNAL = {Advances in Math.},
    VOLUME = {13},
      YEAR = {1974},
     PAGES = {176--257},
      ISSN = {0001-8708},
       DOI = {10.1016/0001-8708(74)90068-1},
       URL = {https://doi.org/10.1016/0001-8708(74)90068-1},
}

\bib{MR3589332}{article}{
  author={Suzuki, Yuhei},
  title={Elementary constructions of non-discrete $\mathrm C^*$-simple groups},
  journal={Proc. Amer. Math. Soc.},
  volume={145},
  date={2017},
  number={3},
  pages={1369--1371},
  issn={0002-9939},
  doi={10.1090/proc/13301},
}

\bib{Takeishi:Nuclearity}{article}{
  author={Takeishi, Takuya},
  title={On nuclearity of $C^*$\nobreakdash -algebras of Fell bundles over \'etale groupoids},
  journal={Publ. Res. Inst. Math. Sci.},
  volume={50},
  date={2014},
  number={2},
  pages={251--268},
  issn={0034-5318},
  doi={10.4171/PRIMS/132},
}

\bib{Zl83}{article}{
  title={A characterization of ternary rings of operators},
  author={Zettl, Heinrich},
  journal={Advances in Mathematics},
  volume={48},
  number={2},
  pages={117--143},
  year={1983},
  publisher={Academic Press},
}

\bib{MR996436}{collection}{
  title={Operator algebras and applications. Vol. I},
  series={London Mathematical Society Lecture Note Series},
  volume={135},
  editor={Evans, David E.},
  editor={Takesaki, Masamichi},
  note={Structure theory; $K$-theory, geometry and topology},
  publisher={Cambridge University Press, Cambridge},
  date={1988},
  pages={viii+244},
  isbn={0-521-36843-X},
  doi={10.1017/CBO9780511662270},
}
  \end{biblist}
\end{bibdiv}

\vskip 1pc

\end{document}